\newtheoremstyle{mystyle}
  {}
  {}
  {\normalfont}
  { }
  {\bfseries}
  {}
  {10pt}
  { }
\theoremstyle{mystyle}
\newtheorem{thm}{Theorem}
\newtheorem{prop}{Proposition}
\newtheorem{lem}{Lemma}
\newtheorem{cor}{Corollary}
\newtheorem{rmk}{Remark}
\newcommand{\dd}{\mathrm d}
\newcommand{\EE}{\mathbb E}
\newcommand{\GG}{\mathscr G_{i-1}^n}
\newcommand{\DeX}{\Delta X_i}
\newcommand{\Xt}{X_{t_i}}
\newcommand{\Xs}{X_{t_{i-1}}}
\newcommand{\TT}{\mathsf T}
\newcommand{\Hi}{\mathcal H_{1,n}}
\newcommand{\Hd}{\mathcal H_{2,n}}
\newcommand{\tr}{\mathrm{tr}}
\newcommand{\diag}{\mathrm{diag}}
\newcommand{\Ro}{R_{i-1}(1,\theta)}
\newcommand{\Ri}{R_{i-1}(h,\theta)}
\newcommand{\Rd}{R_{i-1}(h^2,\theta)}
\newcommand{\lto}{\longrightarrow}
\newcommand{\pto}{\stackrel{p}{\longrightarrow}}
\newcommand{\dto}{\stackrel{d}{\longrightarrow}}
\newcommand{\wto}{\stackrel{w}{\longrightarrow}}
\newcommand{\ato}{\stackrel{\mathrm{a.s.}}{\longrightarrow}}
\newcommand{\Dea}{\vartheta_\alpha}
\newcommand{\Deb}{\vartheta_\beta}
\begin{document}
\title[
Estimation for change point of ergodic diffusion processes]
{
Estimation for change point
of discretely observed ergodic diffusion processes 
}
\author[Y. Tonaki]{Yozo Tonaki}
\author[Y. Kaino]{Yusuke Kaino}
\address[Y. Tonaki]
{Graduate School of Engineering Science, Osaka University, 1-3, 
Machikaneyama, Toyonaka, Osaka, 560-8531, Japan}
\address[Y. Kaino]{
Graduate School of Engineering Science, Osaka University, 1-3, 
Machikaneyama, Toyonaka, Osaka, 560-8531, Japan}
\author[M. Uchida]{Masayuki Uchida}
\address[M. Uchida]{
Graduate School of Engineering Science, 
and Center for Mathematical Modeling and Date Science,
Osaka University, 1-3, Machikaneyama, Toyonaka, Osaka, 560-8531, Japan}

\keywords{
adaptive tests, change point estimation, diffusion processes, 
discrete observations, test for parameter change}

\maketitle

\begin{abstract}
We treat the change point problem 
in ergodic diffusion processes from discrete observations.
Tonaki et al. (2020)
proposed adaptive tests 
for detecting changes in the diffusion and drift parameters 
in ergodic diffusion models.
When any changes are detected by this method,
the next question to be considered is where the change point is.
Therefore, we propose the method to estimate the change point of the parameter 
for two cases: 
the case where there is a change in the diffusion parameter, 
and 
the case where there is no change in the diffusion parameter
but a change in the drift parameter.
Furthermore, we present rates of convergence and distributional results 
of the change point estimators. 
Some examples and simulation results are also given.
\end{abstract}

\section{Introduction}\label{sec1}
We consider a $d$-dimensional diffusion process $\{X_t\}_{t\ge0}$ 
satisfying the stochastic differential equation:
\begin{align}\label{sde}
\begin{cases}
\dd X_t=b(X_t,\beta)\dd t+a(X_t,\alpha)\dd W_t,\\
X_0=x_0,
\end{cases}
\end{align}
where parameter space
$\Theta=\Theta_A\times\Theta_B$, which is a  
compact convex subset of 
$\mathbb R^p\times\mathbb R^q$, 
$\theta=(\alpha,\beta)\in\Theta$ is an unknown parameter and
$\{W_t\}_{t\ge0}$ 
is a $d$-dimensional standard Wiener process.
The diffusion coefficient 
$a:\mathbb R^d\times\Theta_A\lto\mathbb R^d\otimes\mathbb R^d$ and
the drift coefficient $b:\mathbb R^d\times\Theta_B\lto\mathbb R^d$
are known except for the parameter $\theta$, 
and the true parameter $\theta^*=(\alpha^*,\beta^*)$ belongs to $\mathrm{Int\,}\Theta$.
We assume that the solution of \eqref{sde} exists, and 
$P_\theta$ and $\EE_\theta$ denote the law of the 
solution and the expectation with respect to $P_\theta$, respectively. 
Let $\{\Xt\}_{i=0}^n$ be discrete observations, 
where $t_i=t_i^n=ih_n$, 
and $\{h_n\}$ is a positive sequence with $h=h_n\lto0$, $T=nh\lto\infty$ and 
$nh^2\lto0$ as $n\to\infty$.


We deal with the change point problem of parameters in diffusion process models.
The change point problem consists of two parts:
detection of the parameter change and estimation of the change point.
For example, suppose that we obtain the paths shown in Figure \ref{fig_intro1}.
Given such data, 
the first thing we would consider 
is whether any changes occur in that path.
The paths in Figure \ref{fig_intro1} clearly shows that some change occurs, 
but the paths in Figure \ref{fig_intro2} may be difficult to find a change.
The first step of the change point problem 
is to investigate whether there is a change in the data or not, 
which is ``detection of the parameter change".
Suppose that we can detect a change in the paths 
in Figure \ref{fig_intro1} or \ref{fig_intro2}.
In Figure \ref{fig_intro1}, 
we can visually see that the change occurs at $t=60$ 
in the left figure and at $t=40$ in the right figure, 
but the paths in Figure \ref{fig_intro2} are not even visually recognizable, 
so we have no idea where the change occurs.
The goal of the change point problem
is to investigate the change point when we see that there is a change in the data, 
which is``estimation of the change point". 
\begin{figure}
\centering
\includegraphics[keepaspectratio,width=140mm]{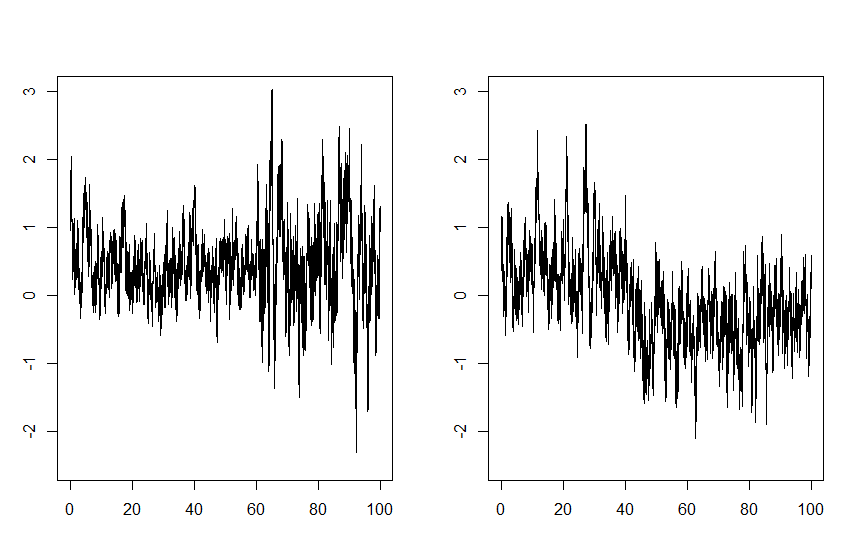}
\caption{
Sample paths of the 
hyperbolic diffusion model 
$\dd X_t=\Bigl(\beta+\frac{\gamma X_t}{\sqrt{1+X_t^2}}\Bigr)\dd t+\alpha\dd W_t$
whose parameter changes form 
$(\alpha,\beta,\gamma)=(0.75,1,3)$ to $(1.5,1,3)$ at $t=60$ (left),
and changes from
$(\alpha,\beta,\gamma)=(1,1,3)$ to $(1,-1,3)$ at $t=40$ (right).
}
\label{fig_intro1}
\includegraphics[keepaspectratio,width=140mm]{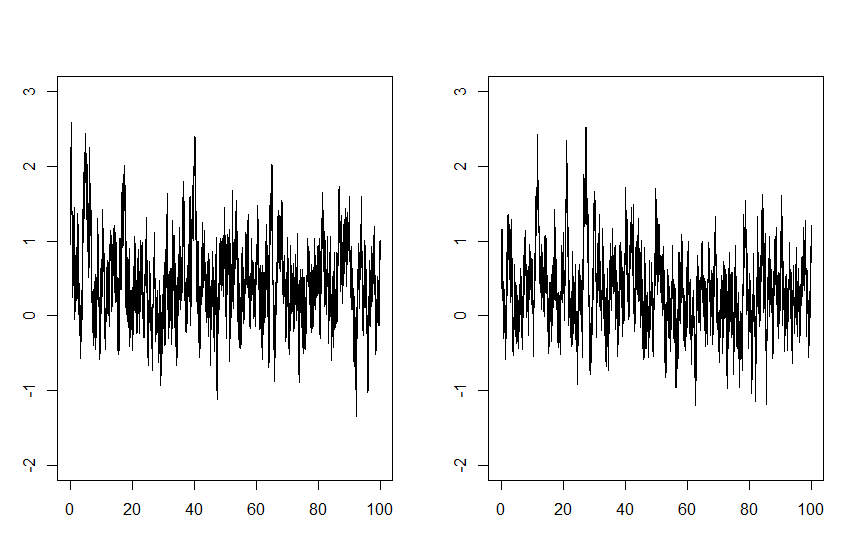}
\caption{
Sample paths of the 
hyperbolic diffusion model 
$\dd X_t=\Bigl(\beta+\frac{\gamma X_t}{\sqrt{1+X_t^2}}\Bigr)\dd t+\alpha\dd W_t$
whose parameter changes form 
$(\alpha,\beta,\gamma)=(1,1,3)$ to $(1.05,1,3)$ at $t=30$ (left),
and changes from
$(\alpha,\beta,\gamma)=(1,1,3)$ to $(1,0.5,3)$ at $t=50$ (right).
}
\label{fig_intro2}
\end{figure}

The change point problem for diffusion processes from discrete observations
has been studied by many researchers. 
For non-ergodic diffusion processes, see 
De Gregorio and Iacus (2008)
and
Iacus and Yoshida (2012).
For ergodic diffusion processes, we can refer to 
Song and Lee (2009), 
Lee (2011),
Negri and Nishiyama (2017),
Song (2020),
and
Tonaki et al. (2020).
De Gregorio and Iacus (2008)
considered the detection of the parameter change in diffusion parameter 
and the estimation of the change point, 
and Iacus and Yoshida (2012) 
studied the estimating problem of the change point 
in diffusion parameter.
In contrast, 
Song and Lee (2009), 
Lee (2011)
and Song (2020)
considered the detection of the parameter change in diffusion parameter, 
and 
Negri and Nishiyama (2017)
and 
Tonaki et al. (2020)
considered the detection of the parameter change in diffusion and drift parameters.
Specifically, 
Negri and Nishiyama (2017)
proposed simultaneous test for changes in diffusion and drift parameters, 
and 
Tonaki et al. (2020)
proposed adaptive tests for those changes.

When we consider the change point problem of the diffusion processes, 
we first need to investigate whether there are any change points in the given data.
Negri and Nishiyama (2017) considered the following hypothesis testing problem:
\begin{center}
$H_0^\theta:\theta=(\alpha, \beta)$ does not change over $0\le t\le T$\quad
v.s.\quad
$H_1^\theta:$ not $H_0^\theta$. 
\end{center}
They considered the simultaneous test for changes in diffusion and drift parameters, 
but even if any changes are detected, 
we can not determine which parameter changed.
In order to solve this problem, 
Tonaki et al. (2020)
considered the adaptive tests for changes in diffusion and drift parameters.
Specifically, consider the following steps.
First, consider the following hypothesis testing problem.
\begin{center}
$H_0^\alpha:\alpha$ does not change over $0\le t\le T$\quad
v.s.\quad
$H_1^\alpha:$ not $H_0^\alpha$. 
\end{center}
If $H_0^\alpha$ is not rejected, 
then consider the following hypothesis testing problem.
\begin{center}
$H_0^\beta:\beta$ does not change over $0\le t\le T$\quad
v.s.\quad
$H_1^\beta:$ not $H_0^\beta$. 
\end{center}
When we find that there is a change,
the next task is to estimate the change point.
Since the study of the estimation of the change point 
for ergodic diffusion processes is still in its infancy,
we consider the estimation of the change point for ergodic diffusion processes.
With the adaptive detection method,
when $H_0^\alpha$ is rejected, 
we can consider the estimation of the change point of the diffusion parameter $\alpha$, 
and when $H_0^\alpha$ is not rejected
and $H_0^\beta$ is rejected, 
we can consider the estimation of the change point of the drift parameter $\beta$, 
which brings us one step closer to our goal.


This paper is organized as follows.
In Section \ref{sec2}, we state the main results.
We propose change point estimators for diffusion and drift parameters
and present rates of convergence and distributional results 
of change point estimators.
Section \ref{sec3} 
discusses the nuisance parameters of the proposed estimators.
Section \ref{sec4}
presents the powers of tests proposed by 
Tonaki et al. (2020).
In Sections \ref{sec5} and \ref{sec6},
we give some examples and simulation studies.
Finally, we provide the proofs in Section \ref{sec7}.

\section{Main Results}\label{sec2}
We consider the following two situations:

\noindent\textbf{Situation I. }
$\alpha^*$ changes in $0\le t\le T$, that is,
there exists $\tau_*^\alpha\in(0,1)$ such that
\begin{align*}
\alpha^*
=
\begin{cases}
\alpha_1^*,\quad t\in[0,\tau_*^\alpha T),\\
\alpha_2^*,\quad t\in[\tau_*^\alpha T, T],
\end{cases}
\end{align*}
where $\alpha_1^*,\alpha_2^*\in\mathrm{Int} \Theta_A$, $\alpha_1^*\neq \alpha_2^*$. 
Now \eqref{sde} can be expressed as follows. 
\begin{align*}
X_t=
\begin{cases}
\displaystyle
X_0+\int_0^t b(X_s,\beta)\dd s+\int_0^t a(X_s,\alpha_1^*)\dd W_s, 
\quad t\in[0,\tau_*^\alpha T)\\
\displaystyle
X_{\tau_*^\alpha T}+\int_{\tau_*^\alpha T}^t b(X_s,\beta)\dd s
+\int_{\tau_*^\alpha T}^t a(X_s,\alpha_2^*)\dd W_s.
\quad t\in[\tau_*^\alpha T,T]
\end{cases}
\end{align*}

\noindent\textbf{Situation II. }
$\alpha^*$ does not change  
and 
$\beta^*$ changes 
in $0\le t\le T$, that is,
there exists $\tau_*^\beta\in(0,1)$ such that
\begin{align*}
\beta^*
=
\begin{cases}
\beta_1^*,\quad t\in[0,\tau_*^\beta T),\\
\beta_2^*,\quad t\in[\tau_*^\beta T, T],
\end{cases}
\end{align*}
where $\beta_1^*,\beta_2^*\in\mathrm{Int}\Theta_B$, $\beta_1^*\neq \beta_2^*$. 
Now \eqref{sde} can be expressed as follows. 
\begin{align*}
X_t=
\begin{cases}
\displaystyle
X_0+\int_0^t b(X_s,\beta_1^*)\dd s+\int_0^t a(X_s,\alpha^*)\dd W_s, 
\quad t\in[0,\tau_*^\beta T)\\
\displaystyle
X_{\tau_*^\beta T}+\int_{\tau_*^\beta T}^t b(X_s,\beta_2^*)\dd s
+\int_{\tau_*^\beta T}^t a(X_s,\alpha^*)\dd W_s.
\quad t\in[\tau_*^\beta T,T]
\end{cases}
\end{align*}

We consider the following two cases:

\noindent\textbf{Case A.}
The parameters $\alpha_1^*$ and $\alpha_2^*$ 
(resp. $\beta_1^*$ and $\beta_2^*$)
depend on $n$ in Situation I (resp. II),

\noindent\textbf{Case B.}
The parameters $\alpha_1^*$ and $\alpha_2^*$ 
(resp. $\beta_1^*$ and $\beta_2^*$)
are fixed and not depend on $n$ in Situation I (resp. II).

For matrices $c\in\mathbb R^{d_1}\otimes\mathbb R^{d_2}$,
we write $c^{\otimes2}=cc^\TT$, 
where $c^\TT$ is the transpose of $c$.
Let $A(x,\alpha)=a(x,\alpha)^{\otimes2}$ and $\DeX=\Xt-\Xs$.
Let $C^{k,\ell}_{\uparrow}(\mathbb R^d\times\Theta)$ be 
the space of all functions $f$ satisfying the following conditions:
\begin{enumerate}
\item[(i)] 
$f$ is continuously differentiable with respect to $x\in\mathbb R^d$ up to
order $k$ for all $\theta\in\Theta$;
\item[(ii)]  
$f$ and all its $x$-derivatives up to order $k$ are 
$\ell$ times continuously differentiable with respect to $\theta\in\Theta$;
\item[(iii)]  
$f$ and all derivatives are of polynomial growth in $x\in\mathbb R^d$ 
uniformly in $\theta\in\Theta$, i.e.,  
$g$ is of polynomial growth in $x\in\mathbb R^d$ uniformly 
in $\theta\in\Theta$ if, for some $C>0$, we have
\begin{align*}
\sup_{\theta\in\Theta}\|g(x,\theta)\|\le C(1+\|x\|)^C.
\end{align*}
\end{enumerate}

We assume the following conditions:
\begin{enumerate}
\renewcommand{\labelenumi}{{\textbf{[C\arabic{enumi}]}}}
\item 
There exists a constant $C$ such that for any $x,y\in\mathbb R^d$, 
\begin{align*}
\sup_{\alpha\in\Theta_A}\|a(x,\alpha)-a(y,\alpha)\|
+\sup_{\beta\in\Theta_B}\|b(x,\beta)-b(y,\beta)\|\le C\|x-y\|.
\end{align*}
\item   
$\displaystyle\sup_t\EE_{\theta}\left[\|X_t\|^k\right]<\infty$
for all $k\ge0$ and $\theta\in\Theta$.
\item $\displaystyle\inf_{x,\alpha}\det \left(A(x,\alpha)\right)>0$. 
\item 
$a\in C^{4,4}_{\uparrow}(\mathbb R^d\times\Theta_A)$
and
$b\in C^{4,4}_{\uparrow}(\mathbb R^d\times\Theta_B)$.
\item 
The solution of \eqref{sde} is ergodic with its invariant measure $\mu_\theta$ 
such that 
\begin{align*}
\int_{\mathbb R^d}\|x\|^k\dd\mu_\theta(x)<\infty\quad\text{for all }\ k\ge0 
\text{ and } \theta\in\Theta,
\end{align*}
and
\begin{align*}
\int_{\mathbb R^d}f(x)\dd \mu_{\theta_n}(x)
\lto
\int_{\mathbb R^d}f(x)\dd \mu_{\theta_0}(x)
\quad\text{for all measurable function } f  
\end{align*}
as $\theta_n\lto\theta_0$.
\end{enumerate}

\subsection{Estimation for the diffusion parameter}\label{sec2.1}
First, we consider Situation I, that is, the estimation for the diffusion parameter.
Write $\Dea=|\alpha_1^*-\alpha_2^*|$ and let
\begin{align*}
\Xi^\alpha(x,\alpha)
&=
\Bigl[
\tr\bigl(
A^{-1}\partial_{\alpha^{\ell_1}}A
A^{-1}\partial_{\alpha^{\ell_2}}A(x,\alpha)
\bigr)
\Bigr]_{\ell_1,\ell_2=1}^p,
\\
\Gamma^\alpha(x,\alpha_1,\alpha_2) 
&=
\tr\bigl(
A^{-1}(x,\alpha_1)A(x,\alpha_2)-I_d
\bigr)
-\log\det A^{-1}(x,\alpha_1)A(x,\alpha_2).
\end{align*}

Now we additionally assume the following conditions:
\begin{enumerate}
\renewcommand{\labelenumi}{{\textbf{[C\arabic{enumi}-I]}}}
\setcounter{enumi}{5}
\item 
There exist estimators $\hat\alpha_k=\hat\alpha_{k,n}\ (k=1,2)$ such that
\begin{align*}
\sqrt{n}(\hat\alpha_k-\alpha_k^*)=O_p(1).
\end{align*}
\renewcommand{\labelenumi}{{\textbf{[A\arabic{enumi}-II]}}}
\end{enumerate}

\begin{enumerate}
\renewcommand{\labelenumi}{{\textbf{[A\arabic{enumi}-I]}}}
\item
$\alpha_1^*$ and $\alpha_2^*$ depend on $n$, 
and
$\Dea=\vartheta_{\alpha,n}$ satisfies the following.
\begin{align*}
\Dea\lto0,\quad
n\Dea^2\lto\infty,\quad
\frac{h}{\Dea^2}\lto\infty,\quad
T\Dea\lto0
\end{align*}
as $n\to\infty$, and
$\Dea^{-1}(\alpha_k^*-\alpha_0)=O(1)$, where $\alpha_0\in\mathrm{Int}\,\Theta_A$,
\item
For the following three functions 
and for any $r\in(1,2)$ such that $nh^r\lto\infty$,
\begin{align*}
\max_{[n^{1/r}]\le k\le n}
\left|
\frac1k\sum_{i=[n\tau_*^\alpha]+1}^{[n\tau_*^\alpha]+k}f(\Xs)
-\int_{\mathbb R^d}f(x)\dd\mu_{\alpha_0}(x)
\right|\pto0.
\end{align*}
\begin{enumerate}
\item
$\Xi^\alpha(x,\alpha_0)$,
\item
$\partial_\alpha\Xi^\alpha(x,\alpha_0)$,
\item 
$\partial_\alpha^3
\Bigl(
\tr(A^{-1}(x,\alpha)A(x,\alpha_0))
-\log\det A^{-1}(x,\alpha)A(x,\alpha_0)
\Bigr)\Bigr|_{\alpha=\alpha_0}$.
\end{enumerate} 
\end{enumerate}

\begin{enumerate}
\renewcommand{\labelenumi}{{\textbf{[B\arabic{enumi}-I]}}}
\item
$\displaystyle
\inf_x \Gamma^\alpha(x,\alpha_1^*,\alpha_2^*)>0$.
\item
There exists a constant $C>0$ such that
\begin{enumerate}
\item
$\displaystyle\sup_{x,\alpha_k}
\bigl|
\partial_{(\alpha_1,\alpha_2)}\Gamma^\alpha(x,\alpha_1,\alpha_2)
\bigr|<C$,
\item 
$\displaystyle\sup_{x,\alpha_k}
\left|
\left[
\tr\Bigl(
\bigl(A^{-1}(x,\alpha_1)-A^{-1}(x,\alpha_2)\bigr)
\partial_{\alpha^{\ell}} A(x,\alpha_3)
\Bigr)
\right]_{\ell=1}^p
\right|<C$,
\item 
$\displaystyle\sup_{x,\theta}|Q(x,\theta)|<C$, 
\end{enumerate}
where 
$Q(x,\theta)$ is the coefficient of $h^2$ of $\EE_\theta[(\Xt-\Xs)^{\otimes2}|\GG]$, 
that is,
$\EE_\theta[(\Xt-\Xs)^{\otimes2}|\GG]=hA(\Xs,\alpha)+h^2Q(\Xs,\theta)+\cdots$.
Here, $\GG=\sigma\left[\{W_s\}_{s\le t_i^n}\right]$.
\end{enumerate}

\begin{rmk}\label{rmk1}
See Section \ref{sec3} for how to construct the estimators $\hat\alpha_k$
that satisfy \textbf{[C6-I]}. 
If $a(x,\alpha)=\sigma(x)c(\alpha)$ 
for 
$\sigma:\mathbb R^d\lto\mathbb R^d\otimes\mathbb R^d$, 
$c:\mathbb R^p\lto\mathbb R^d\otimes\mathbb R^d$, 
then \textbf{[A2-I]}, \textbf{[B1-I]} and \textbf{[B2-I]}(a),(b)
hold because the functions of (a)-(c) of \textbf{[A2-I]}, 
$\Gamma^\alpha(x,\alpha_1,\alpha_2)$, 
$\partial_{(\alpha_1,\alpha_2)}\Gamma^\alpha(x,\alpha_1,\alpha_2)$
and
\begin{align*}
\left[
\tr\Bigl(
\bigl(A^{-1}(x,\alpha_1)-A^{-1}(x,\alpha_2)\bigr)
\partial_{\alpha^{\ell}} A(x,\alpha_3)
\Bigr)
\right]_{\ell=1}^p
\end{align*}
do not depend on $x$.
Therefore, the Ornstein-Uhlenbeck process and hyperbolic diffusion model, 
which will be described later in Section \ref{sec5}, 
are models that satisfy \textbf{[A2-I]}, 
and these models are examples in Case A.
Hyperbolic diffusion model also satisfies \textbf{[B2-I]}(c),  
and this model is an example in Case B, see Section \ref{sec5.7}.
If the diffusion coefficient is $a(x,\alpha)=\alpha$, then
\textbf{[A1-I]} and \textbf{[A2-I]} can be reduced to the following condition:
\begin{enumerate}
\renewcommand{\labelenumi}{{\textbf{[A\arabic{enumi}'-I]}}}
\item
$\alpha_1^*$ and $\alpha_2^*$ depend on $n$, 
and
$\Dea=\vartheta_{\alpha,n}$ satisfies the following.
\begin{align*}
\Dea\lto0,\quad
n\Dea^2\lto\infty,\quad
T\Dea\lto0
\end{align*}
as $n\to\infty$, and
$\Dea^{-1}(\alpha_k^*-\alpha_0)=O(1)$, where $\alpha_0\in\mathrm{Int}\,\Theta_A$.
\end{enumerate}
Therefore, if $\{X_t\}_{t\ge0}$ after the parameter change
is stationary, then
\textbf{[A1-I]} and \textbf{[A2-I]} can be reduced to the following condition:
\begin{enumerate}
\renewcommand{\labelenumi}{{\textbf{[A\arabic{enumi}''-I]}}}
\item
$\alpha_1^*$ and $\alpha_2^*$ depend on $n$, 
and
$\Dea=\vartheta_{\alpha,n}$ satisfies the following.
\begin{align*}
\Dea\lto0,\quad
n\Dea^2\lto\infty,\quad
\frac{h}{\Dea^2}\lto\infty
\end{align*}
as $n\to\infty$, and
$\Dea^{-1}(\alpha_k^*-\alpha_0)=O(1)$, where $\alpha_0\in\mathrm{Int}\,\Theta_A$,
\end{enumerate}
That is, if $\{X_t\}_{t\ge0}$ after the parameter change 
is stationary and we assume \textbf{[A1'-I]}, 
then \textbf{[A1-I]} and \textbf{[A2-I]} hold, see Section \ref{sec5.2}.
\end{rmk}

In Situation I, we set
\begin{align*}
&F_i(\alpha)
=
\tr\left(A^{-1}(\Xs,\alpha)\frac{(\DeX)^{\otimes 2}}{h}\right)
+\log\det A(\Xs,\alpha),\\
&\Phi_n(\tau:\alpha_1,\alpha_2)
=
\sum_{i=1}^{[n\tau]}F_i(\alpha_1)+\sum_{i=[n\tau]+1}^nF_i(\alpha_2)
\end{align*}
and 
propose
\begin{align*}
\hat\tau_n^\alpha
=\underset{\tau\in[0,1]}{\mathrm{argmin}}\,\Phi_n(\tau:\hat\alpha_1,\hat\alpha_2)
\end{align*}
as an estimator of $\tau_*^\alpha$.

In Case A, we set for $v\in\mathbb R$,
\begin{align*}
e_\alpha
&=\lim_{n\to\infty}\Dea^{-1}(\alpha_1^*-\alpha_2^*),\\
\mathcal J_\alpha
&=\frac12e_\alpha^\TT 
\int_{\mathbb R^d}
\Xi^\alpha(x,\alpha_0)
\dd\mu_{\alpha_0}(x)
e_\alpha,\\
\mathbb F(v)
&=-2\mathcal J_\alpha^{1/2}\mathcal W(v)+\mathcal J_\alpha|v|,
\end{align*}
where $\mathcal W$ is a two sided standard Wiener process.
\begin{thm}\label{th1}
Suppose that \textbf{[C1]}-\textbf{[C5]} and \textbf{[C6-I]} hold in Situation I.
Then, under \textbf{[A1-I]} and \textbf{[A2-I]} in Case A, 
\begin{align*}
n\Dea^2(\hat\tau_n^\alpha-\tau_*^\alpha)
\dto
\underset{v\in\mathbb R}{\mathrm{argmin}}\,\mathbb F(v).
\end{align*}
\end{thm}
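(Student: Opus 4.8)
The plan is to localize the contrast function around the true change index $k_*:=[n\tau_*^\alpha]$ and to identify the limit of the appropriately rescaled objective with $\mathbb F$. Since $\hat\tau_n^\alpha$ minimizes $\tau\mapsto\Phi_n(\tau:\hat\alpha_1,\hat\alpha_2)$ and the location of the minimum is unchanged by subtracting the constant $\Phi_n(\tau_*^\alpha:\hat\alpha_1,\hat\alpha_2)$, I would work with the process
\begin{align*}
\mathbb M_n(v)
=\Phi_n\Bigl(\tfrac{k_*+[v/\Dea^2]}{n}:\hat\alpha_1,\hat\alpha_2\Bigr)
-\Phi_n\Bigl(\tfrac{k_*}{n}:\hat\alpha_1,\hat\alpha_2\Bigr),
\qquad v\in\mathbb R,
\end{align*}
so that $n\Dea^2(\hat\tau_n^\alpha-\tau_*^\alpha)=\mathrm{argmin}_v\,\mathbb M_n(v)+o_p(1)$. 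For $v>0$ this is the sum of the $[v/\Dea^2]$ increments $F_i(\hat\alpha_1)-F_i(\hat\alpha_2)$ over the post-change indices $i\in(k_*,k_*+[v/\Dea^2]]$, whose true diffusion value is $\alpha_2^*$, while for $v<0$ it is the sum of $F_i(\hat\alpha_2)-F_i(\hat\alpha_1)$ over the pre-change indices, whose true value is $\alpha_1^*$. The target is the weak convergence $\mathbb M_n\To\mathbb F$ on compacts in $v$, after which the argmin continuous mapping theorem yields the assertion, provided the argmin is tight over $\mathbb R$ and $\mathbb F$ has a unique minimizer almost surely.

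Next I would split each increment into its $\GG$-conditional mean (the drift) and a martingale difference. For a post-change index, using $\EE[(\DeX)^{\otimes2}|\GG]=hA(\Xs,\alpha_2^*)+h^2Q(\Xs,\cdot)+\cdots$ from \textbf{[B2-I]}(c), the conditional mean of $F_i(\alpha_1^*)-F_i(\alpha_2^*)$ equals $\Gamma^\alpha(\Xs,\alpha_1^*,\alpha_2^*)$ up to an Euler term of order $h\,\Dea$, controlled by \textbf{[B2-I]}(b),(c). Because $\Gamma^\alpha(x,\alpha,\alpha)=0$ and $\partial_{\alpha_1}\Gamma^\alpha(x,\alpha_2^*,\alpha_2^*)=0$, a second order Taylor expansion gives $\Gamma^\alpha(\Xs,\alpha_1^*,\alpha_2^*)=\frac12\Dea^2 e_\alpha^\TT\Xi^\alpha(\Xs,\alpha_0)e_\alpha+o(\Dea^2)$, the third-order remainder being uniform by \textbf{[A2-I]}(c). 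Summing over $[v/\Dea^2]$ indices and averaging with the localized ergodic limit \textbf{[A2-I]}(a) turns the drift into $\frac12 v\,e_\alpha^\TT\int_{\mathbb R^d}\Xi^\alpha(x,\alpha_0)\dd\mu_{\alpha_0}(x)e_\alpha=\mathcal J_\alpha v$; the pre-change side is symmetric, so the drift converges to $\mathcal J_\alpha|v|$. Here the real-time width of the window, $[v/\Dea^2]h\sim vh/\Dea^2\to\infty$ by $h/\Dea^2\to\infty$, is what makes the ergodic average applicable, while $T\Dea\to0$ together with $n\Dea^2\to\infty$ forces $h/\Dea\to0$, killing the accumulated Euler bias.

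For the fluctuation, the martingale difference is, to leading order, $\tr\bigl((A^{-1}(\Xs,\alpha_1^*)-A^{-1}(\Xs,\alpha_2^*))((\DeX)^{\otimes2}/h-A(\Xs,\alpha_2^*))\bigr)$, whose $\GG$-conditional second moment is $2\Dea^2 e_\alpha^\TT\Xi^\alpha(\Xs,\alpha_0)e_\alpha+o(\Dea^2)$ by the Gaussian identity $\mathrm{Var}(\tr(M Z^{\otimes2}))=2\tr(M\Sigma M\Sigma)$ for $Z\sim N(0,\Sigma)$, after the reduction $\tr(MAMA)=\Dea^2 e_\alpha^\TT\Xi^\alpha e_\alpha+o(\Dea^2)$ using $\partial_\alpha A^{-1}=-A^{-1}(\partial_\alpha A)A^{-1}$. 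Summing and using \textbf{[A2-I]}(a) again, the predictable quadratic variation on $(0,v]$ tends to $4\mathcal J_\alpha v$, and by the martingale functional central limit theorem (the Lindeberg/Lyapunov condition verified through the moment bounds of \textbf{[C2]} and the derivative bounds of \textbf{[B2-I]}(a),(b)) the centered process converges to $2\mathcal J_\alpha^{1/2}\mathcal W(v)$ with $\mathcal W$ a two-sided Wiener process, the two half-lines producing independent Brownian motions glued at the origin. Combining drift and fluctuation gives $\mathbb M_n\To\mathbb F$.

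Two auxiliary reductions must be carried out alongside. First, the plug-in error has to be shown negligible: since $\sqrt n(\hat\alpha_k-\alpha_k^*)=O_p(1)$ by \textbf{[C6-I]} and $n\Dea^2\to\infty$, we have $\hat\alpha_k-\alpha_k^*=o_p(\Dea)$, so replacing $\hat\alpha_k$ by $\alpha_k^*$ throughout $\mathbb M_n$ changes it by $o_p(1)$ uniformly on compacts; this is the step that uses the Lipschitz-in-$\alpha$ bounds of \textbf{[B2-I]}(a),(b) and the regularity of \textbf{[A2-I]}(b). Second, to pass from convergence on compacts to convergence of the argmin over all of $\mathbb R$, one needs the localization $n\Dea^2(\hat\tau_n^\alpha-\tau_*^\alpha)=O_p(1)$, which holds because the drift $\mathcal J_\alpha|v|$ grows linearly while the fluctuation grows like $|v|^{1/2}$, so for $|v|$ large the objective is positive with high probability; the strict positivity \textbf{[B1-I]} of $\Gamma^\alpha$ underpins the corresponding non-asymptotic tail bound. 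The main obstacle, in my view, is exactly this uniform error control: simultaneously handling the Euler discretization bias, the third-order Taylor remainder of $\Gamma^\alpha$, and the plug-in error, each summed over a window of random length of order $|v|/\Dea^2$ and each shown to be $o_p(1)$ uniformly in $v$, all under the tightly balanced rates $\Dea\to0$, $n\Dea^2\to\infty$, $h/\Dea^2\to\infty$, $T\Dea\to0$; the martingale FCLT and the argmin mapping are comparatively standard once these estimates are in place.
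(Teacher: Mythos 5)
Your architecture is the same as the paper's: localize the contrast at $[n\tau_*^\alpha]$, show the plug-in error from $\hat\alpha_k$ is uniformly negligible on compacts (the paper's Lemma \ref{lem2}), prove weak convergence of the localized contrast to $\mathbb F$ by splitting each summand into conditional mean and martingale difference, using a martingale FCLT together with the localized ergodic condition \textbf{[A2-I]} (Lemma \ref{lem3}), establish tightness $n\Dea^2(\hat\tau_n^\alpha-\tau_*^\alpha)=O_p(1)$, and conclude by an argmin-mapping lemma (Lemma \ref{lem1}). Your rate bookkeeping is also correct: $h/\Dea=T\Dea/(n\Dea^2)\to0$ kills the accumulated Euler bias, and $h/\Dea^2\to\infty$ is what makes the real-time window length $vh/\Dea^2$ diverge so that the ergodic average applies.

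The genuine gap is in the tightness step, which is the technical core of the paper's proof and which you essentially defer. First, you anchor the tail bound in \textbf{[B1-I]}, but \textbf{[B1-I]} is a Case B hypothesis that is not assumed in Theorem \ref{th1}; worse, it degenerates in Case A, since $\alpha_1^*,\alpha_2^*\to\alpha_0$ forces $\Gamma^\alpha(x,\alpha_1^*,\alpha_2^*)=O(\Dea^2)$, so no fixed positive lower bound on $\Gamma^\alpha$ exists along the sequence. The paper instead normalizes the signal term by $\Dea^2([n\tau]-[n\tau_*^\alpha])$, Taylor-expands $\tr\bigl(A_{i-1}^{-1}(\hat\alpha_1)A_{i-1}(\hat\alpha_2)-I_d\bigr)-\log\det A_{i-1}^{-1}(\hat\alpha_1)A_{i-1}(\hat\alpha_2)$ to second order around $\alpha_0$, bounds it below by $\bigl(\tfrac12\lambda_1[\Xi_{i-1}^\alpha(\alpha_0)]+r_{i-1}\bigr)|\hat\alpha_1-\hat\alpha_2|^2$, and uses \textbf{[A2-I]} to make the ergodic average of $\lambda_1[\Xi_{i-1}^\alpha(\alpha_0)]$ bounded away from zero, together with \textbf{[C6-I]} to show $|\hat\alpha_1-\hat\alpha_2|\asymp\Dea$ with high probability. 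Second, your heuristic ``drift linear versus fluctuation $|v|^{1/2}$'' is pointwise in $v$; to bound $P(n\Dea^2(\hat\tau_n^\alpha-\tau_*^\alpha)>M)$ one needs $\sup_\tau|\mathcal M_n^\alpha(\tau:\hat\alpha_1,\hat\alpha_2)|/\bigl(\Dea^2([n\tau]-[n\tau_*^\alpha])\bigr)$ controlled uniformly over the \emph{unbounded} set $\{\tau: n\Dea^2(\tau-\tau_*^\alpha)>M\}$, and for this the paper invokes the uniform H\'ajek-Renyi maximal inequality (Lemma 2 of Iacus and Yoshida (2012)), summing tail bounds over all window lengths $j>M/\Dea^2-1$ to obtain a bound of order $1/(\delta^2M)$; without a maximal inequality of this type the argmin over all of $\mathbb R$ is not controlled and Lemma \ref{lem1} cannot be applied. (Your appeals to \textbf{[B2-I]}(b),(c) for the Euler remainder and to \textbf{[B2-I]}(a),(b) for the plug-in step are likewise to hypotheses not assumed in Theorem \ref{th1}, but those uses are harmless: they can be replaced by the polynomial-growth and moment conditions \textbf{[C2]}, \textbf{[C4]} via the paper's $R_{i-1}(\cdot,\theta)$ calculus.)
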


\begin{thm}\label{th2}
Suppose that \textbf{[C1]}-\textbf{[C5]} and \textbf{[C6-I]} 
hold in Situation I.
Then,  
under \textbf{[B1-I]} and  \textbf{[B2-I]} in Case B,
\begin{align*}
n(\hat\tau_n^\alpha-\tau_*^\alpha)=O_p(1).
\end{align*}
\end{thm}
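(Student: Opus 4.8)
\emph{Proof proposal.}
The plan is to pass to the integer change-point index. Since $\Phi_n(\tau:\hat\alpha_1,\hat\alpha_2)$ depends on $\tau$ only through $[n\tau]$, it is a step function of $\tau$, and we may take $\hat\tau_n^\alpha=\hat k/n$ where $\hat k$ minimizes
$G_n(k)=\sum_{i=1}^kF_i(\hat\alpha_1)+\sum_{i=k+1}^nF_i(\hat\alpha_2)$ over $k\in\{0,\dots,n\}$. Writing $k^*=[n\tau_*^\alpha]$, it suffices to show $\hat k-k^*=O_p(1)$, i.e.\ that for every $\varepsilon>0$ there is $M>0$ with $P(|\hat k-k^*|>M)<\varepsilon$ for all large $n$. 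Because $G_n(\hat k)\le G_n(k^*)$, the event $\{\hat k-k^*>M\}$ forces
\[
\sum_{i=k^*+1}^{k^*+j}\bigl[F_i(\hat\alpha_1)-F_i(\hat\alpha_2)\bigr]\le0\quad\text{for some }j>M,
\]
and $\{\hat k-k^*<-M\}$ forces $\sum_{i=k^*-j+1}^{k^*}[F_i(\hat\alpha_2)-F_i(\hat\alpha_1)]\le0$ for some $j>M$. I would treat the first event; the second is identical after exchanging $\alpha_1^*,\alpha_2^*$.

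The crux is a positive-drift lower bound for these increments. For $i>k^*$ the data are generated by $\alpha_2^*$, so using $\EE_\theta[(\DeX)^{\otimes2}|\GG]=hA(\Xs,\alpha_2^*)+h^2Q(\Xs,\theta)+\cdots$ one obtains
\[
\EE\bigl[F_i(\alpha_1^*)-F_i(\alpha_2^*)\bigm|\GG\bigr]
=\Gamma^\alpha(\Xs,\alpha_1^*,\alpha_2^*)+r_{i,n},
\]
where the remainder satisfies $\sup_i|r_{i,n}|\le Ch(1+\|\Xs\|)^C$ by the bound on $Q$ in \textbf{[B2-I]}(c) and the $C^{4,4}$-smoothness \textbf{[C4]}. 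By \textbf{[B1-I]} we have $\Gamma^\alpha(\Xs,\alpha_1^*,\alpha_2^*)\ge c>0$ uniformly in $\Xs$, so the conditional drift of each summand is bounded below pointwise by $c$ and no ergodic averaging is needed for the drift itself. (For the case $k<k^*$ the relevant drift is $\Gamma^\alpha(\Xs,\alpha_2^*,\alpha_1^*)$; its uniform positivity follows from \textbf{[B1-I]} together with the ellipticity \textbf{[C3]}, since both quantities vanish exactly when $A(x,\alpha_1^*)=A(x,\alpha_2^*)$.) Setting $\xi_i=F_i(\alpha_1^*)-F_i(\alpha_2^*)-\EE[\,\cdot\,|\GG]$ yields a martingale-difference array with conditional variances bounded via \textbf{[C2]}, \textbf{[C3]} and \textbf{[C4]}.

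Next I would replace $\hat\alpha_k$ by $\alpha_k^*$ through a Taylor expansion,
$\sum_{i=k^*+1}^{k^*+j}[F_i(\hat\alpha_1)-F_i(\alpha_1^*)]=\bigl(\sum_i\partial_\alpha F_i(\alpha_1^*)\bigr)(\hat\alpha_1-\alpha_1^*)+\tfrac12\sum_i\partial_\alpha^2F_i(\tilde\alpha)(\hat\alpha_1-\alpha_1^*)^{\otimes2}$,
and likewise for $\hat\alpha_2$. By \textbf{[C6-I]} the factors $\hat\alpha_k-\alpha_k^*$ are $O_p(n^{-1/2})$, while the normalized partial sums $\sup_j j^{-1}|\sum_{i=k^*+1}^{k^*+j}\partial_\alpha F_i(\alpha_1^*)|$ and $\sup_j j^{-1}|\sum_i\partial_\alpha^2F_i|$ are $O_p(1)$ by \textbf{[C2]}, \textbf{[C3]}, \textbf{[C4]}, the derivative bounds \textbf{[B2-I]} and the ergodicity \textbf{[C5]}; likewise $\sum_i r_{i,n}=O_p(jh)$. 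Hence all these contributions are $o_p(j)$ uniformly in $j\le n$, and
\[
\sum_{i=k^*+1}^{k^*+j}\bigl[F_i(\hat\alpha_1)-F_i(\hat\alpha_2)\bigr]
\ge cj+\sum_{i=k^*+1}^{k^*+j}\xi_i+o_p(j).
\]
A H\'ajek--R\'enyi maximal inequality for the martingale $\sum\xi_i$ then gives
$P\bigl(\exists\,j>M:\ \sum_{i=k^*+1}^{k^*+j}\xi_i\le-\tfrac{c}{2}j\bigr)\le C\sum_{j>M}j^{-2}\le C/M$, uniformly in $n$. Therefore $P(\hat k-k^*>M)\le C/M+o(1)$, and letting first $n\to\infty$ and then $M\to\infty$ gives the claim; the bound for $\hat k-k^*<-M$ is symmetric.

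I expect the main obstacle to be the uniform-in-$j$ control of the stochastic terms---the martingale fluctuation and, above all, the $\hat\alpha_k\to\alpha_k^*$ replacement error---over the entire growing window $M<j\le n-k^*$ rather than at a fixed $j$. The positive drift is the easy analytic input from \textbf{[B1-I]}; the delicate part is the H\'ajek--R\'enyi bound together with the uniform $O_p(1)$ bounds on the normalized derivative sums, complicated by the fact that $\hat\alpha_k$ depend on the whole sample and are not $\GG$-measurable. Isolating the non-adaptive sum $\sum_i\partial_\alpha F_i(\alpha_1^*)$, which has a clean martingale-plus-drift structure, before multiplying by the $O_p(n^{-1/2})$ estimation error is what keeps this step tractable.
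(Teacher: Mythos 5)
Your skeleton is the same as the paper's: reduce $\{n(\hat\tau_n^\alpha-\tau_*^\alpha)>M\}$ to the event that some window sum $\sum_{i=k^*+1}^{k^*+j}\bigl[F_i(\hat\alpha_1)-F_i(\hat\alpha_2)\bigr]$, $j>M$, is nonpositive; split each summand into a compensator bounded below by \textbf{[B1-I]}, a martingale difference, and remainders; and kill the martingale part with a H\'ajek--R\'enyi bound of order $C/M$. The genuine gap is in your replacement step. You Taylor-expand the whole summand, $\sum_i[F_i(\hat\alpha_1)-F_i(\alpha_1^*)]=\bigl(\sum_i\partial_\alpha F_i(\alpha_1^*)\bigr)(\hat\alpha_1-\alpha_1^*)+\cdots$, which forces you to prove $\sup_{M<j\le n}\,j^{-1}\bigl|\sum_{i=k^*+1}^{k^*+j}\partial_\alpha F_i(\alpha_1^*)\bigr|=O_p(1)$, together with analogous uniform bounds for the second-derivative sums and for $\sum_i r_{i,n}$ (whose summands are $O(h(1+\|\Xs\|)^C)$, not $O(h)$). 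In Case B nothing delivers this: the compensators $\EE_{\alpha_2^*}[\partial_\alpha F_i(\alpha_1^*)|\GG]$ are polynomial-growth functions of $\Xs$ that are not uniformly bounded (\textbf{[B2-I]}(b) bounds the contraction $\tr\bigl((A^{-1}(\alpha_1)-A^{-1}(\alpha_2))\partial_\alpha A(\alpha_3)\bigr)$, which is a different quantity), and the uniform window-average condition that would control normalized sums anchored at the change point is \textbf{[A2-I]} --- a Case-A assumption, not available here; bare ergodicity \textbf{[C5]} gives no uniformity over all windows $M<j\le n$. The crude bound (window sum $\le$ full-sample sum) yields only $O_p(n/M)$, which after multiplying by $|\hat\alpha_1-\alpha_1^*|=O_p(n^{-1/2})$ is $O_p(\sqrt{n}/M)$, not $o_p(1)$; similarly $\sum_i|r_{i,n}|\le Ch\sum_{i\le n}(1+\|\Xs\|)^C=O_p(nh)=O_p(T)\to\infty$. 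You flag this uniformity as ``the main obstacle,'' but under the stated assumptions it cannot be pushed through; it must be avoided.

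The paper's proof is engineered to avoid exactly this. It never expands the martingale part around the true values: it localizes $(\hat\alpha_1,\hat\alpha_2)$ in fixed compact neighborhoods $\mathcal O_{\alpha_1^*}\times\mathcal O_{\alpha_2^*}$ (probability cost $\epsilon$ by \textbf{[C6-I]}) and applies the uniform-in-parameter H\'ajek--R\'enyi inequality (Lemma 2 of Iacus and Yoshida (2012)) to the family of martingale differences $F_i(\alpha_1)-F_i(\alpha_2)-\EE_{\alpha_2^*}[F_i(\alpha_1)-F_i(\alpha_2)|\GG]$ indexed by $(\alpha_1,\alpha_2)$, getting $\gamma_\alpha(M)=C'/(\delta^2 M)$ directly. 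The compensator is handled per summand, with no averaging: by \textbf{[B2-I]}(a), $\Gamma^\alpha_{i-1}(\hat\alpha_1,\hat\alpha_2)\ge\Gamma^\alpha_{i-1}(\alpha_1^*,\alpha_2^*)-c(|\hat\alpha_1-\alpha_1^*|+|\hat\alpha_2-\alpha_2^*|)$, and \textbf{[B1-I]} gives the pointwise lower bound. The remainder $\varrho_n^\alpha$ is also bounded per summand via \textbf{[B2-I]}(b),(c) and $\sqrt n$-consistency, with the residual tail $\frac{h^2}{M}\sum_{i\le n}R_{i-1}(1,\theta)=O_p(nh^2/M)=o_p(1)$ using $nh^2\to0$. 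This is why \textbf{[B2-I]} is stated as sup-in-$x$ bounds: they are the substitute for the ergodic-average control your route implicitly requires. (A smaller point: for $\hat k<k^*$ you need $\inf_x\Gamma^\alpha(x,\alpha_2^*,\alpha_1^*)>0$; your one-line appeal to \textbf{[C3]} is too quick, though the claim is recoverable from \textbf{[B1-I]} by an eigenvalue argument, and the paper is equally terse on this symmetric case.)
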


\begin{cor}\label{cor1}
Suppose that \textbf{[C1]}-\textbf{[C5]} and \textbf{[C6-I]} 
hold in Situation I.
If for $\epsilon_1\in[0,1)$ there exists $\delta_1\in(0,\frac{1-\epsilon_1}{2})$ 
such that $nh^{1/(\epsilon_1+\delta_1)}\lto0$, then,  
under \textbf{[B1-I]}, \textbf{[B2-I]}(a) and (b) in Case B,
\begin{align*}
n^{\epsilon_1}(\hat\tau_n^\alpha-\tau_*^\alpha)=o_p(1).
\end{align*}
Particularly, for $\epsilon_1\in[0,\frac12)$ there exists always $\delta_1$ 
regardless of $h$.
\end{cor}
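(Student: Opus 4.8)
The plan is to mimic the proof of Theorem~\ref{th2}, replacing the use of the uniform bound \textbf{[B2-I]}(c) on $Q$ by a cruder moment estimate that is paid for by the sampling condition $nh^{1/(\epsilon_1+\delta_1)}\lto0$. Writing $k_*=[n\tau_*^\alpha]$ and $\hat k=[n\hat\tau_n^\alpha]$, the conclusion $n^{\epsilon_1}(\hat\tau_n^\alpha-\tau_*^\alpha)=o_p(1)$ is equivalent to $P(|\hat k-k_*|\ge\eta n^{1-\epsilon_1})\to0$ for every $\eta>0$, so I would fix $\eta$ and set $M=\eta n^{1-\epsilon_1}$. Since $\hat\tau_n^\alpha$ minimises $\Phi_n(\cdot:\hat\alpha_1,\hat\alpha_2)$, on the event $\hat k>k_*$ one has
\begin{align*}
0\ge\Phi_n(\hat\tau_n^\alpha:\hat\alpha_1,\hat\alpha_2)-\Phi_n(\tau_*^\alpha:\hat\alpha_1,\hat\alpha_2)
=\sum_{i=k_*+1}^{\hat k}\bigl(F_i(\hat\alpha_1)-F_i(\hat\alpha_2)\bigr),
\end{align*}
and symmetrically for $\hat k<k_*$; I treat the first case, the second being identical with $\alpha_1^*,\alpha_2^*$ interchanged.

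First I would isolate the signal. For the indices $i\in(k_*,\hat k]$ the data are driven by $\alpha_2^*$, so \textbf{[C1]}--\textbf{[C4]} and an It\^o--Taylor expansion give
\begin{align*}
\EE_\theta\bigl[F_i(\alpha_1^*)-F_i(\alpha_2^*)\mid\GG\bigr]
=\Gamma^\alpha(\Xs,\alpha_1^*,\alpha_2^*)+h\,\rho(\Xs,\theta)+o(h),
\end{align*}
where $\rho(x,\theta)=\tr\bigl((A^{-1}(x,\alpha_1^*)-A^{-1}(x,\alpha_2^*))Q(x,\theta)\bigr)$. By \textbf{[B1-I]} the leading term is bounded below by $c_0:=\inf_x\Gamma^\alpha(x,\alpha_1^*,\alpha_2^*)>0$, so with $m=\hat k-k_*$ the predictable main part is at least $c_0m$, and I decompose
\begin{align*}
\sum_{i=k_*+1}^{\hat k}\bigl(F_i(\hat\alpha_1)-F_i(\hat\alpha_2)\bigr)
\ge c_0m-|\mathcal M_m|-|\mathcal D_m|-|\mathcal P_m|,
\end{align*}
where $\mathcal M_m$ is the centred fluctuation of $F_i(\alpha_1^*)-F_i(\alpha_2^*)$ about its $\GG$-conditional mean, $\mathcal D_m=h\sum_{i=k_*+1}^{k_*+m}\rho(\Xs,\theta)$ (plus an $o(h)$-per-term remainder) is the discretisation bias, and $\mathcal P_m$ collects the parameter-substitution error, linearised by a Taylor expansion of $F_i$ in $(\hat\alpha_1,\hat\alpha_2)$ about $(\alpha_1^*,\alpha_2^*)$.

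The terms $\mathcal M_m$ and $\mathcal P_m$ I expect to control for every $\epsilon_1\in[0,1)$ with no condition on $h$. Indeed $\mathcal M_m$ is a martingale with conditional variance $O(m)$ by \textbf{[C2]}--\textbf{[C3]}, so a H\'ajek--R\'enyi maximal inequality gives $P(\max_{m\ge M}|\mathcal M_m|/m\ge c_0/3)\le C/M\to0$ since $M\to\infty$; and as \textbf{[C6-I]} yields $\hat\alpha_k-\alpha_k^*=O_p(n^{-1/2})$ while \textbf{[B2-I]}(a),(b) bound the $\GG$-conditional means of the relevant $\alpha$-derivatives of $F_i$, one obtains $|\mathcal P_m|=O_p(mn^{-1/2})$, whose ratio to the signal is $O_p(n^{-1/2})=o_p(1)$.

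The crux, and the only place the sampling condition enters, is $\mathcal D_m$: without \textbf{[B2-I]}(c) the factor $Q$, hence $\rho$, is no longer uniformly bounded but only of polynomial growth by \textbf{[C4]}, so I cannot use an $O(mh)$ estimate uniformly in $x$. Instead I would bound crudely
\begin{align*}
\max_{M\le m\le n}\frac{|\mathcal D_m|}{m}
\le\frac{h}{M}\sum_{i=1}^n\bigl|\rho(\Xs,\theta)\bigr|+o(h),
\end{align*}
and control $\sum_{i=1}^n|\rho(\Xs,\theta)|$ via \textbf{[C2]}, whose $L^p$-norm is $O(n)$ for every $p$; the right-hand side is then $O_p(hn/M)=O_p(hn^{\epsilon_1}/\eta)$. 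Choosing the moment order together with the strict buffer $\delta_1\in(0,\tfrac{1-\epsilon_1}{2})$ upgrades this $O_p$ to an $o_p$ precisely when $hn^{\epsilon_1+\delta_1}\to0$, i.e. $nh^{1/(\epsilon_1+\delta_1)}\lto0$, forcing $|\mathcal D_m|/m<c_0/3$ uniformly over $m\ge M$ with probability tending to one. Combining the three estimates makes the excess contrast strictly positive throughout $m\ge\eta n^{1-\epsilon_1}$ with probability $\to1$, whence $|\hat k-k_*|<\eta n^{1-\epsilon_1}$, the claim. For $\epsilon_1\in[0,\tfrac12)$ one may pick $\delta_1$ with $\epsilon_1+\delta_1<\tfrac12$, and then $hn^{\epsilon_1+\delta_1}\le hn^{1/2}\to0$ follows from the standing assumption $nh^2\lto0$, so no extra requirement on $h$ is needed. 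The main obstacle is exactly this uniform maximal control of $\mathcal D_m$ once the clean bound on $Q$ is unavailable.
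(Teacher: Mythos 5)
Your proposal is correct, and its skeleton is the same as the paper's: the excess contrast over the region $\{n^{\epsilon_1}(\tau-\tau_*^\alpha)>M\}$ is split into a signal bounded below by $\inf_x\Gamma^\alpha(x,\alpha_1^*,\alpha_2^*)>0$ via \textbf{[B1-I]} (with the estimation error absorbed through \textbf{[C6-I]} and \textbf{[B2-I]}(a)), a martingale fluctuation controlled by the H\'ajek-Renyi maximal inequality (Lemma 2 of Iacus and Yoshida (2012)), a parameter-substitution term of relative size $O_p(n^{-1/2})$ via \textbf{[B2-I]}(b), and a discretisation bias that is no longer uniformly $O(h)$ once \textbf{[B2-I]}(c) is dropped, and is instead bounded crudely by $\frac{h}{M}\sum_{i}R_{i-1}(1,\theta)=O_p(hn^{\epsilon_1})$ using only the polynomial-growth/moment conditions — exactly the paper's trade of \textbf{[B2-I]}(c) for the sampling condition. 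The one structural difference is the normalisation: the paper measures all three terms against the shrinking threshold $n^{-\delta_1}([n\tau]-[n\tau_*^\alpha])$, so its H\'ajek-Renyi bound becomes $\frac{C}{M}n^{\epsilon_1+2\delta_1-1}$ (this is precisely where the restriction $\delta_1<\frac{1-\epsilon_1}{2}$ is consumed) and its bias bound becomes $O_p(n^{\epsilon_1+\delta_1}h)$ (where $nh^{1/(\epsilon_1+\delta_1)}\lto0$ is consumed), whereas you keep constant thresholds of size $c_0/3$, so your martingale bound is $C/M\lto0$ with $M=\eta n^{1-\epsilon_1}$, needing only $\epsilon_1<1$ and no condition on $h$, and your bias bound $O_p(hn^{\epsilon_1})$ uses the hypothesis only through the implication $hn^{\epsilon_1+\delta_1}\lto0\Rightarrow hn^{\epsilon_1}\lto0$. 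Your bookkeeping is thus marginally more economical: it reveals that the upper restriction on $\delta_1$ is an artifact of the paper's weighting and that the weaker condition $hn^{\epsilon_1}\lto0$ already suffices for the conclusion. The only point to tighten in a full write-up is the uniformity behind $|\mathcal P_m|=O_p(mn^{-1/2})$: since $\partial_\alpha F_i$ is evaluated at random intermediate points and itself contains a centred (martingale) part, you need the uniform-in-parameter version of the H\'ajek-Renyi inequality over a neighbourhood of $(\alpha_1^*,\alpha_2^*)$ — the same device the paper uses — before multiplying by $\hat\alpha_k-\alpha_k^*$.
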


\begin{rmk}
Although we could not specify the distribution of $n(\hat\tau_n^\alpha-\tau_*^\alpha)$ 
in Case B, we have strong results in this case.
Actually, in Case B,  
we see $\hat\tau_n^\alpha-\tau_*^\alpha=O_p(n^{-1})$. 
However, if we choose $\Dea=n^{-\nu}$, $0<\nu<\frac12$ 
that satisfies \textbf{[A1-I]}, 
we have 
$n^{1-2\nu}(\hat\tau_n^\alpha-\tau_*^\alpha)=O_p(1)$
in Case A, but 
$n^{1-2\nu}(\hat\tau_n^\alpha-\tau_*^\alpha)=o_p(1)$
in Case B.
\end{rmk}

\subsection{Estimation for the drift parameter}\label{sec2.2}
Next, we consider Situation II, that is, the estimation for the drift parameter.
Write $\Deb=|\beta_1^*-\beta_2^*|$ and let
\begin{align*}
\Xi^\beta(x,\alpha,\beta)
&=
\Bigl[
\partial_{\beta^{\ell_1}}b(x,\beta)^\TT
A^{-1}(x,\alpha)\partial_{\beta^{\ell_2}}b(x,\beta)
\Bigr]_{\ell_1,\ell_2=1}^q,
\\
\Gamma^\beta(x,\alpha,\beta_1,\beta_2)
&= 
\tr
\bigl[
A^{-1}(x,\alpha)(b(x,\beta_1)-b(x,\beta_2))^{\otimes2}
\bigr].
\end{align*}
 
Now we additionally assume the following conditions:
\begin{enumerate}
\renewcommand{\labelenumi}{{\textbf{[C\arabic{enumi}-II]}}}
\setcounter{enumi}{5}
\item 
There exist estimators 
$\hat\alpha=\hat\alpha_n$, $\hat\beta_k=\hat\beta_{k,n}\ (k=1,2)$ 
such that
\begin{align*}
\sqrt{n}(\hat\alpha-\alpha^*)=O_p(1),\quad
\sqrt{T}(\hat\beta_k-\beta_k^*)=O_p(1).
\end{align*}
\end{enumerate}

\begin{enumerate}
\renewcommand{\labelenumi}{{\textbf{[A\arabic{enumi}-II]}}}
\item
$\beta_1^*$ and $\beta_2^*$ depend on $n$ and 
$\Deb=\vartheta_{\beta,n}$ satisfies the following.
\begin{align*}
\Deb\lto0,\quad
T\Deb^2\lto\infty, \quad
T\Deb^4\lto0
\end{align*}
as $n\to\infty$, and
$\Deb^{-1}(\beta_k^*-\beta_0)=O(1)$, where $\beta_0\in\mathrm{Int}\, \Theta_B$.
\item
For the following three functions 
and for any $r\in(1,2)$ such that $nh^r\lto\infty$,
\begin{align*}
\max_{[n^{1/r}]\le k\le n}
\left|
\frac1k\sum_{i=[n\tau_*^\beta]+1}^{[n\tau_*^\beta]+k}f(\Xs)
-\int_{\mathbb R^d}f(x)\dd\mu_{(\alpha^*,\beta_0)}(x)
\right|\pto0.
\end{align*}
\begin{enumerate}
\item
$\Xi^\beta(x,\alpha^*,\beta_0)$,
\item
$\partial_\beta\Xi^\beta(x,\alpha^*,\beta_0)$,
\item 
$\partial_\beta^3
\Bigl(
\tr(A^{-1}(x,\alpha^*)
(b(x,\beta)-b(x,\beta_0))^{\otimes2}
\Bigr)\Bigr|_{\beta=\beta_0}$.
\end{enumerate}

\item There exists an integer $m\ge 4$ such that 
$n^{m/2-1}h^{(m-1)/2}\lto\infty$, $h^{-1/2}\Deb^{m-2}\lto0$ 
and
$b\in C_{\uparrow}^{4,m}(\mathbb R^d\times\Theta_B)$.
\end{enumerate}

\begin{enumerate}
\renewcommand{\labelenumi}{{\textbf{[B\arabic{enumi}-II]}}}
\item
$\displaystyle
\inf_x\Gamma^\beta(x,\alpha^*,\beta_1^*,\beta_2^*)>0$.
\item
There exists a constant $C>0$ such that
\begin{enumerate}
\item

$\displaystyle\sup_{x,\alpha,\beta_k}
\left|
\partial_{(\alpha,\beta_1,\beta_2)}
\Gamma^\beta(x,\alpha,\beta_1,\beta_2)
\right|
<C$, 
\item 
$\displaystyle
\sup_{x,\alpha,\beta_k}
\left|
\Bigl[
\tr
\Bigl(
A^{-1}(x,\alpha)
\partial_{\beta^{\ell}}b(x,\beta_1)
\bigl(
b(x,\beta_2)-b(x,\beta_3)
\bigr)^\TT
\Bigr)
\Bigr]_{\ell=1}^q
\right|<C$. 
\end{enumerate}
\end{enumerate}


\begin{rmk}
See Section \ref{sec3} for how to construct the estimators $\hat\beta_k$
that satisfy \textbf{[C6-II]}. 
If $\{X_t\}_{t\ge0}$ after the parameter change is stationary, then 
\textbf{[A1-II]} and \textbf{[A2-II]} can be reduced to the following condition:
\begin{enumerate}
\renewcommand{\labelenumi}{{\textbf{[A\arabic{enumi}'-II]}}}
\item
$\beta_1^*$ and $\beta_2^*$ depend on $n$ and 
$\Deb=\vartheta_{\beta,n}$ satisfies the following.
\begin{align*}
\Deb\lto0,\quad
T\Deb^2\lto\infty
\end{align*}
as $n\to\infty$, and
$\Deb^{-1}(\beta_k^*-\beta_0)=O(1)$, where $\beta_0\in\mathrm{Int}\, \Theta_B$.
\end{enumerate}
That is, if $\{X_t\}_{t\ge0}$ after the parameter change 
is stationary and we assume \textbf{[A1'-II]}, 
then \textbf{[A1-II]} and \textbf{[A2-II]} hold, see Section \ref{sec5.2}.
The hyperbolic diffusion model is one of the models that satisfy 
\textbf{[B1-II]} and \textbf{[B2-II]}, see Section \ref{sec5.7}.
\end{rmk}

In Situation II, we set 
\begin{align*}
&G_i(\beta|\alpha)
=
\tr\left(A^{-1}(\Xs,\alpha)
\frac{(\DeX-hb(\Xs,\beta))^{\otimes 2}}{h}\right),\\
&\Psi_n(\tau:\beta_1,\beta_2|\alpha)
=
\sum_{i=1}^{[n\tau]}G_i(\beta_1|\alpha)+\sum_{i=[n\tau]+1}^nG_i(\beta_2|\alpha)
\end{align*}
and propose
\begin{align*}
\hat\tau_n^\beta
=\underset{\tau\in[0,1]}{\mathrm{argmin}}\,
\Psi_n(\tau:\hat\beta_1,\hat\beta_2|\hat\alpha)
\end{align*}
as an estimator of $\tau_*^\beta$.

In Case A, we set for $v\in\mathbb R$,
\begin{align*}
e_\beta
&=\lim_{n\to\infty}\Deb^{-1}(\beta_1^*-\beta_2^*),\\
\mathcal J_\beta
&=e_\beta^\TT 
\int_{\mathbb R^d}
\Xi^\beta(x,\alpha^*,\beta_0)
\dd\mu_{(\alpha^*,\beta_0)}(x)
e_\beta,\\
\mathbb G(v)
&=-2\mathcal J_\beta^{1/2}\mathcal W(v)+\mathcal J_\beta|v|.
\end{align*}
\begin{thm}\label{th3}
Suppose that \textbf{[C1]}-\textbf{[C5]} and \textbf{[C6-II]} hold in Situation II.
Then, under \textbf{[A1-II]}-\textbf{[A3-II]} in Case A, 
\begin{align*}
T\Deb^2(\hat\tau_n^\beta-\tau_*^\beta)
\dto
\underset{v\in\mathbb R}{\mathrm{argmin}}\,
\mathbb G(v).
\end{align*}
\end{thm}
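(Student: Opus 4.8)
The plan is to follow the $\mathrm{argmin}$-localization strategy already used for Theorem \ref{th1}, adapted to the drift contrast $\Psi_n$ and to the $\sqrt T$-rate of the drift estimators. First I would reparametrize around the true change point by setting $\tau=\tau_*^\beta+v/(T\Deb^2)$ and introduce the localized object
\[
\mathbb G_n(v)
=\Psi_n\Bigl(\tau_*^\beta+\tfrac{v}{T\Deb^2}:\hat\beta_1,\hat\beta_2|\hat\alpha\Bigr)
-\Psi_n\bigl(\tau_*^\beta:\hat\beta_1,\hat\beta_2|\hat\alpha\bigr),
\]
so that, by invariance of the minimizer under the affine change of variable, $T\Deb^2(\hat\tau_n^\beta-\tau_*^\beta)=\mathrm{argmin}_{v}\,\mathbb G_n(v)$. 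It then suffices to prove that $\mathbb G_n\To\mathbb G$ weakly, locally uniformly in $v$, and to apply the continuous mapping theorem for $\mathrm{argmin}$; since $\mathbb G(v)=-2\mathcal J_\beta^{1/2}\mathcal W(v)+\mathcal J_\beta|v|$ is a two-sided Brownian motion with a V-shaped deterministic drift, its minimizer is a.s.\ unique and the limiting $\mathrm{argmin}$ is well defined.

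Next I would establish the weak convergence at the true parameters, i.e.\ for $\mathbb G_n$ with $(\hat\alpha,\hat\beta_1,\hat\beta_2)$ replaced by $(\alpha^*,\beta_1^*,\beta_2^*)$; treat $v>0$, the case $v<0$ being symmetric and supplying the two sidedness of $\mathcal W$. For the relevant indices $[n\tau_*^\beta]<i\le[n\tau]$ the true drift is $b(\Xs,\beta_2^*)$, and a direct expansion of the quadratic form gives
\[
G_i(\beta_1^*|\alpha^*)-G_i(\beta_2^*|\alpha^*)
=2\bigl(b(\Xs,\beta_2^*)-b(\Xs,\beta_1^*)\bigr)^\TT A^{-1}(\Xs,\alpha^*)\DeX
+h\bigl(b_1^\TT A^{-1}b_1-b_2^\TT A^{-1}b_2\bigr),
\]
with $b_k=b(\Xs,\beta_k^*)$. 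Substituting the semimartingale decomposition $\DeX=hb(\Xs,\beta_2^*)+a(\Xs,\alpha^*)\Delta W_i+(\text{remainder})$ separates each summand into a predictable part equal to $h\,\Gamma^\beta(\Xs,\alpha^*,\beta_1^*,\beta_2^*)$ and a martingale part $2(b_2-b_1)^\TT A^{-1}a(\Xs,\alpha^*)\Delta W_i$. Using $b(\cdot,\beta_k^*)-b(\cdot,\beta_0)=\Deb\,\partial_\beta b(\cdot,\beta_0)\,e_\beta+o(\Deb)$, the leading order of the predictable sum over a window of $\approx v/(h\Deb^2)$ indices is, by the ergodic averaging in \textbf{[A2-II]}(a), $v\,e_\beta^\TT\int\Xi^\beta(x,\alpha^*,\beta_0)\dd\mu_{(\alpha^*,\beta_0)}(x)\,e_\beta=\mathcal J_\beta|v|$; the martingale part has conditional variance $\approx4\mathcal J_\beta|v|$ and, by a martingale central limit theorem (Lindeberg being verified through the moment bounds of \textbf{[C2]} and \textbf{[A3-II]}), converges to $-2\mathcal J_\beta^{1/2}\mathcal W(v)$. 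Hence $\mathbb G_n(v)\dto\mathbb G(v)$ in finite-dimensional distributions.

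The delicate part is controlling the plug-in error, i.e.\ replacing $(\alpha^*,\beta_1^*,\beta_2^*)$ by $(\hat\alpha,\hat\beta_1,\hat\beta_2)$. Taylor expanding $\Psi_n$ in its parameters and using \textbf{[C6-II]} ($\sqrt T(\hat\beta_k-\beta_k^*)=O_p(1)$, $\sqrt n(\hat\alpha-\alpha^*)=O_p(1)$) together with the uniform ergodic and derivative bounds in \textbf{[A2-II]}(b),(c) and \textbf{[B2-II]}, one sees that the correction terms over the localization window are of smaller order precisely because of the balance conditions $T\Deb^2\lto\infty$ (the signal gap dominates the estimation noise) and $T\Deb^4\lto0$ (quadratic corrections are negligible) in \textbf{[A1-II]}, while \textbf{[A3-II]} supplies the higher moments needed to bound the $h$-order remainder in the Euler approximation uniformly along the window. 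I expect this uniform plug-in control---and, relatedly, the maximal-inequality/tightness estimate that localizes the $\mathrm{argmin}$---to be the main obstacle: one must show, via the maximal form of the ergodic condition in \textbf{[A2-II]} (the $\max_{[n^{1/r}]\le k\le n}$ bound) and Markov's inequality with the moments from \textbf{[A3-II]}, that for large $|v|$ the V-shaped drift $\mathcal J_\beta|v|$ dominates the fluctuations uniformly, so that $\mathrm{argmin}_v\,\mathbb G_n(v)=O_p(1)$ and no spurious far-field minimizer appears.

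Finally, combining the finite-dimensional convergence, the tightness of the localized contrast on compacts, and the negligibility of the plug-in error, I would conclude $\mathbb G_n\To\mathbb G$ in the sense required by the $\mathrm{argmin}$ continuous mapping theorem; the a.s.\ uniqueness of the minimizer of $\mathbb G$ then gives $T\Deb^2(\hat\tau_n^\beta-\tau_*^\beta)=\mathrm{argmin}_v\,\mathbb G_n(v)\dto\mathrm{argmin}_v\,\mathbb G(v)$, as asserted.
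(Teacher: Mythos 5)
Your proposal follows essentially the same route as the paper: the paper's Lemma \ref{lem1} is your argmin continuous-mapping step, Lemma \ref{lem5} is your martingale-CLT-plus-ergodic-averaging convergence of the localized contrast at the true parameters, Lemma \ref{lem4} is your plug-in error control, and the body of the paper's proof of Theorem \ref{th3} is your maximal-inequality argument (via the uniform H\'ajek-Renyi bound) showing that the V-shaped drift dominates the fluctuations so that the rescaled argmin is $O_p(1)$. One small correction: the derivative bounds you attribute to \textbf{[B2-II]} are not available here, since Theorem \ref{th3} is a Case A result and does not assume \textbf{[B2-II]}; the paper instead controls the plug-in remainders through the polynomial-growth bounds implied by \textbf{[C4]} together with the higher-order Taylor conditions of \textbf{[A3-II]}.
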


\begin{thm}\label{th4}
Suppose that \textbf{[C1]}-\textbf{[C5]} and \textbf{[C6-II]} hold in Situation II.
Then,  
under \textbf{[B1-II]}, \textbf{[B2-II]} in Case B, 
\begin{align*}
T(\hat\tau_n^\beta-\tau_*^\beta)=O_p(1).
\end{align*}
\end{thm}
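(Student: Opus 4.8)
The plan is to run the classical argmin-plus-peeling argument for change point estimators, using that in Case B the contrast separation $\Gamma^\beta$ stays bounded away from $0$. Write $\hat k=[n\hat\tau_n^\beta]$ and $k_*=[n\tau_*^\beta]$. Because $T=nh$ and $\hat\tau_n^\beta-\tau_*^\beta=(\hat k-k_*)/n+O(n^{-1})$, we have $T(\hat\tau_n^\beta-\tau_*^\beta)=h(\hat k-k_*)+o(1)$, so it suffices to prove $h|\hat k-k_*|=O_p(1)$; equivalently, for each $\epsilon>0$ I must produce $M>0$ with $P(h|\hat k-k_*|>M)<\epsilon$ for all large $n$. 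Setting $\Phi_n(k)=\sum_{i=1}^kG_i(\hat\beta_1|\hat\alpha)+\sum_{i=k+1}^nG_i(\hat\beta_2|\hat\alpha)$, the definition of $\hat\tau_n^\beta$ gives $\Phi_n(\hat k)\le\Phi_n(k_*)$, and telescoping yields, for $\hat k>k_*$, $\Phi_n(\hat k)-\Phi_n(k_*)=\sum_{i=k_*+1}^{\hat k}\bigl[G_i(\hat\beta_1|\hat\alpha)-G_i(\hat\beta_2|\hat\alpha)\bigr]\le0$, with the mirror-image identity (summing $G_i(\hat\beta_2|\hat\alpha)-G_i(\hat\beta_1|\hat\alpha)$ over $\hat k<i\le k_*$) when $\hat k<k_*$.

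Next I would establish the signal--noise structure of each summand on the ``wrong'' side of the change. Using $G_i(\beta_1|\alpha)-G_i(\beta_2|\alpha)=-2\bigl(b(\Xs,\beta_1)-b(\Xs,\beta_2)\bigr)^\TT A^{-1}(\Xs,\alpha)\DeX+h\bigl[b(\Xs,\beta_1)^\TT A^{-1}b(\Xs,\beta_1)-b(\Xs,\beta_2)^\TT A^{-1}b(\Xs,\beta_2)\bigr]$ and taking $\EE_\theta[\,\cdot\,|\GG]$ with the true post-change drift $\beta_2^*$, I get for $i>k_*$ that $\EE_\theta[G_i(\beta_1^*|\alpha^*)-G_i(\beta_2^*|\alpha^*)|\GG]=h\Gamma^\beta(\Xs,\alpha^*,\beta_1^*,\beta_2^*)+\Rd$, which by \textbf{[B1-II]} is at least $h\gamma_0$ with $\gamma_0:=\inf_x\Gamma^\beta(x,\alpha^*,\beta_1^*,\beta_2^*)>0$, while the centered part is a martingale difference of conditional variance $O(h)$. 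To replace $(\alpha^*,\beta_1^*,\beta_2^*)$ by the estimators I would Taylor expand the summand in the parameters, bound the occurring derivatives uniformly in $x$ and in the parameters by \textbf{[B2-II]}, and insert the rates $\sqrt n(\hat\alpha-\alpha^*)=O_p(1)$ and $\sqrt T(\hat\beta_k-\beta_k^*)=O_p(1)$ from \textbf{[C6-II]}. Counting orders shows the replacement error over a block of $m$ indices is $O_p(\sqrt{hm/T})+O_p(hm/\sqrt T)$, i.e. $o(hm)$ uniformly for $m\ge M/h$ since $T\to\infty$; hence the drift part of the replacement is absorbed into the signal and its martingale part into the noise, leaving $\sum_{i=k_*+1}^{k_*+m}[G_i(\hat\beta_1|\hat\alpha)-G_i(\hat\beta_2|\hat\alpha)]\ge\tfrac{\gamma_0}{2}hm-|\mathcal N_m|$ for large $n$, where $\mathcal N_m$ aggregates the martingale terms and has predictable quadratic variation $O(hm)$.

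Tightness then follows by dyadic peeling. On $\{\hat k-k_*=m\}$ with $m>M/h$ the inequality $\Phi_n(\hat k)\le\Phi_n(k_*)$ forces $|\mathcal N_m|\ge\tfrac{\gamma_0}{2}hm$; covering $\{m>M/h\}$ by blocks $m\in(2^jM/h,2^{j+1}M/h]$ and applying Doob's (or Lenglart's) maximal inequality to $\mathcal N$, whose quadratic variation on such a block is $O(2^jM)$, gives $P\bigl(\sup_{m\le 2^{j+1}M/h}|\mathcal N_m|\ge\tfrac{\gamma_0}{2}2^jM\bigr)\le C/(2^jM)$. Summing the geometric series over $j\ge0$ bounds the overshoot probability by $C'/M<\epsilon$ for $M$ large; the undershoot $\hat k<k_*$ is identical because $\Gamma^\beta$ is symmetric in its last two arguments. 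This yields $h|\hat k-k_*|=O_p(1)$ and hence the claim.

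I expect the main obstacle to be the uniform-in-$m$ control of the parameter-replacement error and its meshing with the peeling. Since $\hat\alpha,\hat\beta_1,\hat\beta_2$ are global statistics, not $\GG$-measurable, the conditional-mean computation cannot be applied to them directly, so the oracle decomposition at $(\alpha^*,\beta_1^*,\beta_2^*)$ followed by a Taylor estimate is indispensable, and one must check that the martingale maximal bound and the remainder bound hold simultaneously across all blocks rather than for a single fixed $m$. The order-of-magnitude bookkeeping that makes the replacement error $o(hm)$ uniformly for $m\ge M/h$ --- relying only on the Case B rates of \textbf{[C6-II]} and the uniform derivative bounds of \textbf{[B2-II]} --- is precisely where the hypotheses earn their keep.
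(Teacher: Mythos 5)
Your proposal is correct, and its skeleton is the paper's: reduce to a tail bound for $P\bigl(T(\hat\tau_n^\beta-\tau_*^\beta)>M\bigr)$, split the increment of $\Psi_n$ over the wrong side of the change into a martingale part, a signal part of size $h\Gamma^\beta_{i-1}$ per summand bounded below via \textbf{[B1-II]}, and a remainder handled by \textbf{[B2-II]} together with the $\sqrt n$- and $\sqrt T$-rates of \textbf{[C6-II]}, then kill the noise with a maximal inequality yielding a $C/M$ bound. The genuine difference is where the randomness of $(\hat\alpha,\hat\beta_1,\hat\beta_2)$ is absorbed. The paper keeps the estimators inside the martingale term: it rewrites $\mathcal M_n^\beta(\tau:\hat\beta_1,\hat\beta_2|\hat\alpha)$ through a mean-value point, dominates it by a supremum over fixed neighborhoods $\mathcal O_{\alpha^*}$, $\mathcal O_{\beta_1^*}$, $\mathcal O_{\beta_2^*}$, and invokes the uniform-in-parameter H\'ajek--Renyi inequality (Lemma 2 of Iacus and Yoshida (2012)). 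You instead center the martingale at the true parameters (an oracle decomposition) and push the entire estimation effect into a replacement error shown to be $o_p(hm)$ uniformly over blocks $m\ge M/h$, after which standard Doob/Lenglart plus dyadic peeling—which is exactly what the H\'ajek--Renyi lemma packages—gives the same $C/M$ bound. Both routes are valid; yours avoids importing the uniform maximal inequality at the price of the uniform-in-$m$ bookkeeping you flag, and that bookkeeping has one subtlety worth making explicit: the Taylor expansion in $\alpha$ must be applied to the \emph{difference} $G_i(\beta_1|\alpha)-G_i(\beta_2|\alpha)$ (as your displayed identity does), because $\partial_\alpha G_i$ by itself contains the term $\tr\bigl(\partial_\alpha A^{-1}(\Xs,\alpha)(\DeX)^{\otimes2}/h\bigr)$, which is $O_p(1)$ per summand with nonvanishing conditional mean; only in the difference do these contributions cancel, leaving $O(h)$ drift and $O(h)$ conditional variance per summand, so that the $\hat\alpha$-replacement is $O_p\bigl(hm/\sqrt n\bigr)+O_p\bigl(\sqrt{hm/n}\bigr)=o_p(hm)$ uniformly. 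Finally, your symmetry argument for the undershoot (conditioning under the pre-change law $\beta_1^*$ and using that $\Gamma^\beta$ is symmetric in its last two arguments) matches the paper's treatment of the mirror case.
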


\begin{rmk}
Although we could not specify the distribution of $T(\hat\tau_n^\beta-\tau_*^\beta)$ 
in Case B, we have strong results in this case.
Actually, in Case B, 
we see $\hat\tau_n^\beta-\tau_*^\beta=O_p(T^{-1})$. 
However, if we choose $\Deb=T^{-\nu}$, $0<\nu<\frac12$ 
that satisfies \textbf{[A1-II]}, 
we have 
$T^{1-2\nu}(\hat\tau_n^\beta-\tau_*^\beta)=O_p(1)$
in Case A, but 
$T^{1-2\nu}(\hat\tau_n^\beta-\tau_*^\beta)=o_p(1)$
in Case B.
\end{rmk}

\section{Estimation of the nuisance parameters $\alpha_k^*$, $\beta_k^*$}\label{sec3}
When the values of the parameters $\alpha_k^*$, $\beta_k^*$ are unknown, 
it is necessary to estimate the parameters $\alpha_k^*$, $\beta_k^*$
for the change point estimation.
In this section, we will discuss estimation of nuisance parameters 
$\alpha_k^*$, $\beta_k^*$.

First, we need the following information to estimate $\alpha_k^*$ or $\beta_k^*$:
\begin{enumerate}
\renewcommand{\labelenumi}{{\textbf{[D\arabic{enumi}]}}}
\item
There exist $\underline\tau^\alpha, \overline\tau^\alpha\in(0,1)$ such that 
$\tau_*^\alpha\in[\underline\tau^\alpha,\overline\tau^\alpha]$.
\item
There exist $\underline\tau^\beta, \overline\tau^\beta\in(0,1)$ such that 
$\tau_*^\beta\in[\underline\tau^\beta,\overline\tau^\beta]$.
\end{enumerate}

If this information is obtained, 
since there are no change points in the intervals 
$[0,\underline\tau^\alpha T]$ and $[\overline\tau^\alpha T,T]$, 
we can estimate $\alpha_1^*$ from the data of $[0,\underline\tau^\alpha T]$
and $\alpha_2^*$ from the data of $[\overline\tau^\alpha T,T]$.
For example,  
we can construct estimators that satisfy \textbf{[C6-I]} or \textbf{[C6-II]}
by the following procedures. 
In Situation I, set
\begin{align*}
\underline{U}_n^{(1)}(\alpha)
&=\sum_{i=1}^{[n\underline\tau^\alpha]}
\Biggl(
\tr\left(
A^{-1}(\Xs,\alpha)\frac{(\DeX)^{\otimes2}}{h}
\right)
+\log\det A(\Xs,\alpha)
\Biggr),\\
\overline{U}_n^{(1)}(\alpha)
&=\sum_{i=[n\overline\tau^\alpha]+1}^{n}
\Biggl(
\tr\left(
A^{-1}(\Xs,\alpha)\frac{(\DeX)^{\otimes2}}{h}
\right)
+\log\det A(\Xs,\alpha)
\Biggr),
\end{align*}
and let
$\hat\alpha_1=\mathrm{arginf}_{\alpha}\,\underline{U}_n^{(1)}(\alpha)$,
and
$\hat\alpha_2=\mathrm{arginf}_{\alpha}\,\overline{U}_n^{(1)}(\alpha)$
as estimators of $\alpha_1^*$ and $\alpha_2^*$, respectively.
Then, we have
\begin{align*}
\sqrt{n}(\hat\alpha_1-\alpha_1^*)=O_p(1),\quad
\sqrt{n}(\hat\alpha_2-\alpha_2^*)
=O_p(1).
\end{align*}
Similarly in Situation II,
set
\begin{align*}
U_n^{(1)}(\alpha)
&=\sum_{i=1}^{n}
\Biggl(
\tr\left(
A_{i-1}^{-1}(\alpha)\frac{(\DeX)^{\otimes2}}{h}
\right)
+\log\det A_{i-1}(\alpha)
\Biggr),\\
\underline{U}_n^{(2)}(\beta|\alpha)
&=\sum_{i=1}^{[n\underline\tau^\beta]}
\tr\left(
A_{i-1}^{-1}(\alpha)\frac{(\DeX-h b_{i-1}(\beta))^{\otimes2}}{h}
\right),\\
\overline{U}_n^{(2)}(\beta|\alpha)
&=\sum_{i=[n\overline\tau^\beta]+1}^{n}
\tr\left(
A_{i-1}^{-1}(\alpha)\frac{(\DeX-h b_{i-1}(\beta))^{\otimes2}}{h}
\right),
\end{align*}
and let
$\hat\alpha=\mathrm{arginf}_{\alpha}\,U_n^{(1)}(\alpha)$,
$\hat\beta_1=\mathrm{arginf}_{\beta}\,\underline{U}_n^{(2)}(\beta|\hat\alpha)$, 
and
$\hat\beta_2=\mathrm{arginf}_{\beta}\,\overline{U}_n^{(2)}(\beta|\hat\alpha)$
as estimators of $\alpha^*$, $\beta_1^*$ and $\beta_2^*$, respectively.
Then, we have
\begin{align*}
\sqrt{T}(\hat\beta_1-\beta_1^*)=O_p(1),\quad
\sqrt{T}(\hat\beta_2-\beta_2^*)=O_p(1).
\end{align*}


Thus, if \textbf{[D1]} or \textbf{[D2]} is satisfied, 
then we can construct estimators that satisfy \textbf{[C6-I]} or \textbf{[C6-II]}.
Next, let us discuss how to find $\underline\tau$ and $\overline\tau$ 
that satisfy \textbf{[D1]} or \textbf{[D2]}.
To find these, 
we need some method to detect changes in the diffusion or drift parameters.
However, in Situations I and II,
the adaptive tests for changes in diffusion and drift parameters proposed 
by 
Tonaki et al. (2020)
makes it possible. 
Namely, we can detect a change in the diffusion or drift parameters 
in the interval $[\tau_1 T,\tau_2 T]$ 
by the following test statistics $\mathcal T_{n}^\alpha(\tau_1,\tau_2)$,
$\mathcal T_{1,n}^\beta(\tau_1,\tau_2)$ and $\mathcal T_{2,n}^\beta(\tau_1,\tau_2)$.
\begin{align*}
\mathcal T_{n}^\alpha(\tau_1,\tau_2)
&=\frac{1}{\sqrt{2d([n\tau_2]-[n\tau_1])}}\max_{1\le k\le [n\tau_2]-[n\tau_1]}
\left|\sum_{i=[n\tau_1]+1}^{[n\tau_1]+k}\hat\eta_i
-\frac{k}{[n\tau_2]-[n\tau_1]}\sum_{i=[n\tau_1]+1}^{[n\tau_2]}\hat\eta_i\right|,
\\
\hat\eta_i
&=\tr\left(A^{-1}(\Xs,\hat\alpha)
\frac{(\Delta X_i)^{\otimes2}}{h}\right),\\
\mathcal T_{1,n}^\beta(\tau_1,\tau_2)
&=\frac{1}{\sqrt{dT(\tau_2-\tau_1)}}\max_{1\le k\le [n\tau_2]-[n\tau_1]}
\left|\sum_{i=[n\tau_1]+1}^{[n\tau_1]+k}\hat\xi_i
-\frac{k}{[n\tau_2]-[n\tau_1]}\sum_{i=[n\tau_1]+1}^{[n\tau_2]}\hat\xi_i\right|,
\\
\hat\xi_i
&=
1_d^\TT a^{-1}(\Xs,\hat\alpha)
(\Xt-\Xs-hb(\Xs,\hat\beta)),\\
\mathcal T_{2,n}^\beta(\tau_1,\tau_2)
&=\frac{1}{\sqrt{T(\tau_2-\tau_1)}}\max_{1\le k\le [n\tau_2]-[n\tau_1]}
\left\|\mathcal I_n^{-1/2}(\tau_1,\tau_2)
\left(\sum_{i=[n\tau_1]+1}^{[n\tau_1]+k}\hat\zeta_i
-\frac{k}{[n\tau_2]-[n\tau_1]}
\sum_{i=[n\tau_1]+1}^{[n\tau_2]}\hat\zeta_i\right)\right\|,
\\
\hat\zeta_i&=\partial_\beta b(\Xs,\hat\beta)^\TT A^{-1}(\Xs,\hat\alpha)
(\Xt-\Xs-hb(\Xs,\hat\beta)),\\
\mathcal I_n(\tau_1,\tau_2)
&=\frac{1}{[n\tau_2]-[n\tau_1]}\sum_{i=[n\tau_1]+1}^{[n\tau_2]}
\partial_\beta b(\Xs,\hat\beta)^\TT A^{-1}(\Xs,\hat\alpha)
\partial_\beta b(\Xs,\hat\beta).
\end{align*}
Below we describe how to find $\underline\tau$ and $\overline\tau$.

First, the assumption is that a change is detected in the interval $[0,T]$.
\begin{enumerate}
\renewcommand{\labelenumi}{{\textbf{U-\arabic{enumi})}}}
\item
Choose $\tau_1^U\in(0,1)$ (e.g., $\tau_1^U=3/4$), and 
investigate the change point in the interval $[0,\tau_1^U T]$.
\begin{enumerate}
\item
If a change is detected, 
set $\overline\tau=\tau_1^U$ and go to L-1). 
\item
If not detected, go to U-2).
\end{enumerate}
\renewcommand{\labelenumi}{{{\textbf{U-}\it\textbf{\alph{enumi}}}\textbf{)}}}
\setcounter{enumi}{10}
\item
Choose $\tau_k^U\in(\tau_{k-1}^U,1)$ (e.g., $\tau_k^U=1-2^{-(k+1)}$), and 
investigate the change point in the interval $[0,\tau_k^U T]$.
\begin{enumerate}
\item
If a change is detected, 
set $\overline\tau=\tau_k^U$ and go to L-1). 
\item
If not detected, go to U-$(k+1)$).
\end{enumerate}
\end{enumerate}

Assume that we chose $\tau_k^U$ as $\overline\tau$ in U-k).
\begin{enumerate}
\renewcommand{\labelenumi}{{\textbf{L-\arabic{enumi})}}}
\item
Choose $\tau_1^L\in(0,\tau_k^U)$ (e.g., $\tau_1^L=1/4$ or $\tau_{k-1}^U$), and 
investigate the change point in the interval $[\tau_1^L T,T]$.
\begin{enumerate}
\item
If a change is detected, 
set $\underline\tau=\tau_1^L$. 
\item
If not detected, go to L-2).
\end{enumerate}
\renewcommand{\labelenumi}{{{\textbf{L-}\it\textbf{\alph{enumi}}}\textbf{)}}}
\setcounter{enumi}{12}
\item
Choose $\tau_m^L\in(0,\tau_{m-1}^L)$ 
(e.g., $\tau_m^L=2^{-(m+1)}$ or $\tau_{k-m}^L$ (if $k>m$)), and 
investigate the change point in the interval $[\tau_m^L T,T]$.
\begin{enumerate}
\item
If a change is detected, 
set $\underline\tau=\tau_m^L$. 
\item
If not detected, go to L-$(m+1)$).
\end{enumerate}
\end{enumerate}
We may also choose $\underline\tau$ and $\overline\tau$ at the same time,
that is,
\begin{enumerate}
\renewcommand{\labelenumi}{{\textbf{\arabic{enumi})}}}
\item
Choose $\tau_1\in(0,1/2)$ (e.g., $\tau_1=1/4$), and 
investigate the change point in the interval $[\tau_1 T,(1-\tau_1)T]$.
\begin{enumerate}
\item
If a change is detected, 
set $\underline\tau=\tau_1$ and $\overline\tau=1-\tau_1$. 
\item
If not detected, go to 2).
\end{enumerate}
\renewcommand{\labelenumi}{{{\it\textbf{\alph{enumi}}}\textbf{)}}}
\setcounter{enumi}{10}
\item
Choose $\tau_k\in(0,\tau_{k-1})$ 
(e.g., $\tau_k=2^{-(k+1)}$), and 
investigate the change point in the interval $[\tau_k T,(1-\tau_k)T]$.
\begin{enumerate}
\item
If a change is detected, 
set $\underline\tau=\tau_k$ and $\overline\tau=1-\tau_k$.
\item
If not detected, go to $k+1$).
\end{enumerate}
\end{enumerate}
We can choose $\underline\tau$ and $\overline\tau$ in the above manner.

\section{The powers of tests in Case A}\label{sec4}
As we saw in Section \ref{sec3}, 
we need estimators of the nuisance parameters $\alpha_k^*$ and $\beta_k^*$
when estimating the change point, 
and have to find $\underline\tau$ and $\overline\tau$ 
that satisfy \textbf{[D1]} or \textbf{[D2]} in order to construct the estimators.
In Section \ref{sec3}, 
we tried to find them using the adaptive tests proposed by 
Tonaki et al. (2020).
When using those tests, 
it is necessary to check whether the consistency holds. 
However, the consistency of the tests in Case B was mentioned, 
but not in Case A.
Thus, in this section, 
we discuss the consistency of the following tests 
$\mathcal T_n^\alpha$, $\mathcal T_{1,n}^\beta$ and $\mathcal T_{2,n}^\beta$
in Case A.
\begin{align*}
\mathcal T_{n}^\alpha
&=\frac{1}{\sqrt{2dn}}\max_{1\le k\le n}
\left|\sum_{i=1}^k\hat\eta_i-\frac{k}{n}\sum_{i=1}^n\hat\eta_i\right|,
\quad
\hat\eta_i
=\tr\left(A^{-1}(\Xs,\hat\alpha)
\frac{(\Delta X_i)^{\otimes2}}{h}\right),\\
\mathcal T_{1,n}^\beta
&=\frac{1}{\sqrt{dT}}\max_{1\le k\le n}
\left|\sum_{i=1}^k\hat\xi_i-\frac{k}{n}\sum_{i=1}^n\hat\xi_i\right|,
\quad
\hat\xi_i=
1_d^\TT a^{-1}(\Xs,\hat\alpha)
(\Xt-\Xs-hb(\Xs,\hat\beta)),\\
\mathcal T_{2,n}^\beta
&=\frac{1}{\sqrt{T}}\max_{1\le k\le n}
\left\|\mathcal I_n^{-1/2}\left(\sum_{i=1}^k\hat\zeta_i-\frac kn
\sum_{i=1}^n\hat\zeta_i\right)\right\|,
\end{align*}
\begin{align*}
\hat\zeta_i&=\partial_\beta b(\Xs,\hat\beta)^\TT A^{-1}(\Xs,\hat\alpha)
(\Xt-\Xs-hb(\Xs,\hat\beta)),\\
\mathcal I_n
&=\frac1n\sum_{i=1}^n
\partial_\beta b(\Xs,\hat\beta)^\TT A^{-1}(\Xs,\hat\alpha)
\partial_\beta b(\Xs,\hat\beta).
\end{align*}

First, we consider the power of the test for the diffusion parameter $\alpha$,
that is, the following hypothesis testing problem:
$H_0^\alpha$ : $\alpha^*$ does not change over $0\le t\le T$,
$H_1^\alpha$ : There exists $\tau_*^\alpha\in(0,1)$ such that
\begin{align*}
\alpha^*
=
\begin{cases}
\alpha_1^*, & t\in[0,\tau_*^\alpha T), \\
\alpha_2^*, & t\in[\tau_*^\alpha T, T],
\end{cases}
\end{align*}
where 
$\alpha_1^*$ and $\alpha_2^*$ depend on $n$, 
and 
$\alpha_1^*\neq\alpha_2^*$. Let $\Dea=|\alpha_1^*-\alpha_2^*|$.
Now, we assume the following conditions:
\begin{enumerate}
\renewcommand{\labelenumi}{{\textbf{[E\arabic{enumi}]}}}
\item 
Under $H_1^\alpha$,
$\Dea\lto0$ and $n\Dea^2\lto\infty$ as $n\to\infty$, 
and
there exist $\alpha_0\in\mathrm{Int}\,\Theta_A$ and $c_k\in\mathbb R^p$ 
such that  
$\vartheta_\alpha^{-1}(\alpha_k^*-\alpha_0)\lto c_k$ 
for $k=1,2$.
\item
Under $H_1^\alpha$,
there exists an estimator $\hat\alpha$ of $\alpha_0$ such that 
$\vartheta_\alpha^{-1}(\hat\alpha-\alpha_0)=O_p(1)$.
\item 
$\displaystyle
\int_{\mathbb R^d}
\left[\tr\bigl(
A^{-1}\partial_{\alpha^\ell} A(x,\alpha_0)
\bigr)
\right]_\ell
\dd\mu_{\alpha_0}(x)(c_1-c_2)\neq0$
under $H_1^\alpha$.
\end{enumerate}

\begin{rmk}
1-dimensional Ornstein-Ulenbeck process and hyperbolic diffusion model are examples of models
which satisfy \textbf{[E2]} and \textbf{[E3]}.
See Section \ref{sec5.3} for some discussion of estimator $\hat\alpha$ 
that satisfies \textbf{[E2]}.
See Section \ref{sec5.4} for models that satisfy \textbf{[E3]}.
\end{rmk}

Let $\epsilon\in(0,1)$ and $w_k(\epsilon)$ denote the upper-$\epsilon$ point of
$\displaystyle\sup_{0\le s\le 1}\|\boldsymbol B_k^0(s)\|$,
that is,
$\displaystyle P(\sup_{0\le s\le 1}\|\boldsymbol B_k^0(s)\|>w_k(\epsilon))=\epsilon$.

\begin{prop}\label{prop3}
Assume \textbf{[C1]}-\textbf{[C5]},  
\textbf{[E1]}-\textbf{[E3]}.
Then, under $H_1^\alpha$, 
\begin{align*}
P\bigl(\mathcal T_n^\alpha>w_1(\epsilon)\bigr)\lto1,
\end{align*}
that is, the test $\mathcal T_n^\alpha$ is consistent.
\end{prop}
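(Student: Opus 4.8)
The plan is to prove the stronger statement $\mathcal T_n^\alpha\pto\infty$ under $H_1^\alpha$; since $w_1(\epsilon)$ is a fixed finite constant this forces $P(\mathcal T_n^\alpha>w_1(\epsilon))\to1$. Writing $S_k=\sum_{i=1}^k\hat\eta_i-\frac kn\sum_{i=1}^n\hat\eta_i$, the trivial bound $\mathcal T_n^\alpha\ge(2dn)^{-1/2}|S_{k_*}|$ with $k_*=[n\tau_*^\alpha]$ reduces everything to the \emph{single} CUSUM value at the true change index, so no uniform control over the maximum is needed. I would first detach the plug-in estimator by setting $\eta_i=\tr(A^{-1}(\Xs,\alpha_0)(\DeX)^{\otimes2}/h)$ and writing $\hat\eta_i=\eta_i+(\hat\eta_i-\eta_i)$; the point is that $\eta_i$ is $\mathscr G_i^n$-measurable and already carries the signal through its regime-dependent conditional mean, while $\hat\eta_i-\eta_i$ is a first-order term in $\hat\alpha-\alpha_0=O_p(\Dea)$ (by \textbf{[E2]}).

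Next I would extract the deterministic signal. Using $\EE_\theta[(\DeX)^{\otimes2}\mid\GG]=hA(\Xs,\alpha)+h^2Q(\Xs,\theta)+\cdots$ gives $\EE[\eta_i\mid\GG]=\tr(A^{-1}(\Xs,\alpha_0)A(\Xs,\alpha_{k(i)}^*))+O(h)$, where $\alpha_{k(i)}^*=\alpha_1^*$ for $i\le k_*$ and $=\alpha_2^*$ for $i>k_*$. Taylor expanding $\alpha'\mapsto\tr(A^{-1}(\Xs,\alpha_0)A(\Xs,\alpha'))$ about $\alpha_0$ yields $d+g(\Xs)^\TT(\alpha_{k(i)}^*-\alpha_0)+O(\Dea^2)$ with $g(x)=[\tr(A^{-1}\partial_{\alpha^\ell}A(x,\alpha_0))]_{\ell=1}^p$. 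Applying the CUSUM operator at $k_*$, the constant $d$ cancels exactly, and the first-order term collapses to $\frac{k_*(n-k_*)}{n}g$-average$\cdot(\alpha_1^*-\alpha_2^*)$. Invoking the ergodic averaging (the analogue of \textbf{[A2]} for these integrands) together with \textbf{[C5]} (both invariant measures $\mu_{(\alpha_k^*,\beta)}$ converge to $\mu_{\alpha_0}$ as $\Dea\to0$), and using $\Dea^{-1}(\alpha_k^*-\alpha_0)\to c_k$ from \textbf{[E1]}, I obtain for the predictable part $M_{k_*}=\sum_{i=1}^{k_*}\EE[\eta_i\mid\GG]-\frac{k_*}{n}\sum_{i=1}^n\EE[\eta_i\mid\GG]$,
\begin{align*}
\frac{1}{n\Dea}M_{k_*}
\pto
\tau_*^\alpha(1-\tau_*^\alpha)
\Bigl(\int_{\mathbb R^d}g(x)\,\dd\mu_{\alpha_0}(x)\Bigr)^{\!\TT}(c_1-c_2),
\end{align*}
a nonzero constant by \textbf{[E3]}.

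It then remains to show all residual contributions are $o(n\Dea)$. The martingale fluctuation $\sum_{i=1}^{k_*}(\eta_i-\EE[\eta_i\mid\GG])$ is a sum of $\{\mathscr G_i^n\}$-martingale differences with variance $O(n)$, hence $O_p(\sqrt n)$; the Taylor remainder sums to $O_p(n\Dea^2)$; the $O(h)$ discretization bias contributes $O(nh\Dea)$ after the partial cancellation between regimes; and the plug-in correction satisfies $\mathrm{CUSUM}_{k_*}(\hat\eta_i-\eta_i)=n\tau_*^\alpha(1-\tau_*^\alpha)(\bar c_1-\bar c_2)^\TT(\hat\alpha-\alpha_0)+O_p(\sqrt n\,\Dea)$, which is $o_p(n\Dea)$ because $\bar c_1-\bar c_2\to0$ (again by \textbf{[C5]}) and $\hat\alpha-\alpha_0=O_p(\Dea)$. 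Since \textbf{[E1]} gives $\sqrt n\,\Dea=\sqrt{n\Dea^2}\to\infty$, $\Dea\to0$ and $h\to0$, each residual is $o(n\Dea)$, so
\begin{align*}
\frac{1}{\sqrt{2dn}}\bigl|S_{k_*}\bigr|
=\frac{\sqrt n\,\Dea}{\sqrt{2d}}\,
\tau_*^\alpha(1-\tau_*^\alpha)
\Bigl|\Bigl(\int_{\mathbb R^d}g\,\dd\mu_{\alpha_0}\Bigr)^{\!\TT}(c_1-c_2)\Bigr|
\bigl(1+o_p(1)\bigr),
\end{align*}
which diverges to $+\infty$ in probability by \textbf{[E1]} and \textbf{[E3]}. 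Hence $\mathcal T_n^\alpha\pto\infty$ and the test is consistent.

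The main obstacle is the rigorous handling of the non-adapted plug-in $\hat\alpha$: since $\hat\eta_i$ is not $\mathscr G_i^n$-measurable, $\hat\eta_i-\EE[\hat\eta_i\mid\GG]$ is not a genuine martingale difference, so one must linearise in $\hat\alpha-\alpha_0$ and bound the $\alpha$-derivative processes uniformly using \textbf{[C2]}--\textbf{[C4]} and the moment bounds. The non-standard feature, compared with a fixed-size change, is that the signal itself shrinks at rate $\Dea$, so the error analysis must be sharp enough to separate the $n\Dea$ signal from the $O_p(\sqrt n)$ noise — precisely what $n\Dea^2\to\infty$ (equivalently $\sqrt n\,\Dea\to\infty$) guarantees. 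A secondary difficulty, justifying ergodic averaging when the invariant law depends on $n$ through $\alpha_k^*$, is resolved by the continuity of $\theta\mapsto\mu_\theta$ in \textbf{[C5]}.
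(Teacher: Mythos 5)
Your proposal is correct and follows essentially the same route as the paper's proof: both lower-bound $\mathcal T_n^\alpha$ by the single CUSUM value at $k_*=[n\tau_*^\alpha]$, linearize $\hat\eta_i$ in $\hat\alpha-\alpha_0$ (using \textbf{[E2]}), extract the signal $\tau_*^\alpha(1-\tau_*^\alpha)\int_{\mathbb R^d}\bigl[\tr\bigl(A^{-1}\partial_{\alpha^\ell}A(x,\alpha_0)\bigr)\bigr]_\ell\,\dd\mu_{\alpha_0}(x)\,(c_1-c_2)\neq0$ from the conditional means after the constant $d$ cancels, and show the martingale fluctuation, discretization bias, and plug-in correction are all negligible at the scale $n\Dea$, so that $\mathcal T_n^\alpha$ diverges at rate $\sqrt{n\Dea^2}\to\infty$. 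The only cosmetic difference is that the paper packages the law-of-large-numbers step via Lemma 9 of Genon-Catalot and Jacod (1993), where you argue directly with conditional variance bounds, and the paper bounds the bias term as $O_p(h/\Dea)$ (which is $o_p(1)$ since $nh^2\to0$ and $n\Dea^2\to\infty$) rather than your slightly optimistic $O(nh\Dea)$; either bound suffices.
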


In the following, we assume that $\alpha^*$ does not change over $0\le t\le T$.
We consider the power of the test for the drift parameter $\beta$,
that is, 
the following hypothesis testing problem:
$H_0^\beta$ : $\beta^*$ does not change over $0\le t\le T$,
$H_1^\beta$ : There exists $\tau_*^\beta\in(0,1)$ such that
\begin{align*}
\beta^*
=
\begin{cases}
\beta_1^*, &t\in[0,\tau_*^\beta T), \\
\beta_2^*, &t\in[\tau_*^\beta T,T],
\end{cases}
\end{align*}
where 
$\beta_1^*$ and $\beta_2^*$ depend on $n$,
and $\beta_1^*\neq\beta_2^*$. Let $\Deb=|\beta_1^*-\beta_2^*|$.
Now, we assume the following conditions:
 
\begin{enumerate}
\renewcommand{\labelenumi}{{\textbf{[F\arabic{enumi}]}}}
\item 
Under $H_1^\beta$,
$\Deb\lto0$ and $T\Deb^2\lto\infty$ as $n\to\infty$, 
and
there exist $\beta_0\in\mathrm{Int}\,\Theta_B$ and $d_k\in\mathbb R^q$ 
such that  
$\Deb^{-1}(\beta_k^*-\beta_0)\lto d_k$ 
for $k=1,2$.
\item
Under $H_1^\beta$,
there exist 
$\bar\beta^*$ with $\bar\beta^*-\beta_0=O(\Deb)$
and
estimators $\hat\alpha$, $\hat\beta$ such that
\begin{align*}
\sqrt{n}(\hat\alpha-\alpha^*)=O_p(1), \quad
\sqrt{T}(\hat\beta-\bar\beta^*)=O_p(1).
\end{align*}
\item 
$\displaystyle
\int_{\mathbb R^d}
1_d^\TT a^{-1}(x,\alpha^*)\partial_\beta b(x,\beta_0)
\dd\mu_{(\alpha^*,\beta_0)}(x)(d_1-d_2)
\neq0$
under $H_1^\beta$.
\end{enumerate}
\begin{rmk}
See 
Kessler (1995,1997), 
Uchida and Yoshida (2011, 2012), 
Tonaki et al. (2020)
for how to construct the estimators $\hat\alpha$ and $\hat\beta$ that satisfy 
\textbf{[F2]}.
See Section \ref{sec5.6} and \ref{sec5.7} for models that satisfy \textbf{[F3]}.
\end{rmk}

\begin{prop}\label{prop4}
Assume \textbf{[C1]}-\textbf{[C5]}, 
\textbf{[F1]}-\textbf{[F3]}.
Then, under $H_1^\beta$, 
\begin{align*}
P\bigl(\mathcal T_{1,n}^\beta>w_1(\epsilon)\bigr)\lto1.
\end{align*}
\end{prop}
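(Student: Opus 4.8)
The plan is to show that $\mathcal T_{1,n}^\beta\pto\infty$; since $w_1(\epsilon)$ is a fixed finite constant (the upper-$\epsilon$ point of $\sup_{0\le s\le1}\|\boldsymbol B_1^0(s)\|$), this at once yields $P(\mathcal T_{1,n}^\beta>w_1(\epsilon))\lto1$. Writing $m=[n\tau_*^\beta]$ and using the It\^o--Taylor expansion of the increment, I decompose
\begin{align*}
\hat\xi_i
=\underbrace{h\,1_d^\TT a^{-1}(\Xs,\hat\alpha)\bigl(b(\Xs,\beta_{k(i)}^*)-b(\Xs,\hat\beta)\bigr)}_{=:\hat\xi_i^{(1)}}
+\underbrace{1_d^\TT a^{-1}(\Xs,\hat\alpha)a(\Xs,\alpha^*)(W_{t_i}-W_{t_{i-1}})}_{=:\hat\xi_i^{(2)}}
+\hat\xi_i^{(3)},
\end{align*}
where $k(i)=1$ for $i\le m$ and $k(i)=2$ for $i>m$, and $\hat\xi_i^{(3)}$ collects the higher-order discretization remainders. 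For $j=1,2,3$ set $\hat\Lambda_k^{(j)}=\sum_{i=1}^k\hat\xi_i^{(j)}-\frac kn\sum_{i=1}^n\hat\xi_i^{(j)}$, so that $\mathcal T_{1,n}^\beta=(dT)^{-1/2}\max_{1\le k\le n}|\hat\Lambda_k^{(1)}+\hat\Lambda_k^{(2)}+\hat\Lambda_k^{(3)}|$.

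The noise part is handled exactly as in the null-limit analysis: since $\hat\alpha\pto\alpha^*$ forces $a^{-1}(\Xs,\hat\alpha)a(\Xs,\alpha^*)\to I_d$, the partial sums of $\hat\xi_i^{(2)}$ behave like those of $1_d^\TT(W_{t_i}-W_{t_{i-1}})$, whence $(dT)^{-1/2}\max_k|\hat\Lambda_k^{(2)}|$ converges to $\sup_{0\le s\le1}\|\boldsymbol B_1^0(s)\|$ and is in particular $O_p(1)$. A standard moment bound on the It\^o--Taylor remainders combined with a maximal inequality gives $(dT)^{-1/2}\max_k|\hat\Lambda_k^{(3)}|\pto0$.

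The heart of the argument is the divergence of $\hat\Lambda_k^{(1)}$ at the true change index $k=m$. By \textbf{[F2]} we have $\hat\alpha-\alpha^*=O_p(n^{-1/2})$ and $\hat\beta-\beta_0=O_p(\Deb)$ (using $T\Deb^2\lto\infty$ from \textbf{[F1]}), and a Taylor expansion gives $b(\Xs,\beta_{k(i)}^*)-b(\Xs,\hat\beta)=\partial_\beta b(\Xs,\beta_0)\bigl((\beta_{k(i)}^*-\beta_0)-(\hat\beta-\beta_0)\bigr)+o(\Deb)$. The $i$-independent term $-(\hat\beta-\beta_0)$ contributes $-h(\hat\beta-\beta_0)$ times the CUSUM of $\phi_i:=1_d^\TT a^{-1}(\Xs,\alpha^*)\partial_\beta b(\Xs,\beta_0)$; by the ergodic theorem \textbf{[C5]} (whose continuity of $\mu_\theta$ in $\theta$, together with $\beta_k^*\to\beta_0$, lets one use $\mu_{(\alpha^*,\beta_0)}$) the latter is of order $O_p(\sqrt T/h)$, so this contribution is $O_p(\sqrt T\,\Deb)=o_p(\sqrt{dT})$ and drops out after normalization. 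For the surviving term, replace $\beta_{k(i)}^*-\beta_0$ by its leading value $\Deb\,d_{k(i)}$ via \textbf{[F1]} and apply \textbf{[C5]}; with $\bar\phi:=\int_{\mathbb R^d}1_d^\TT a^{-1}(x,\alpha^*)\partial_\beta b(x,\beta_0)\,\dd\mu_{(\alpha^*,\beta_0)}(x)$ this yields
\begin{align*}
\hat\Lambda_m^{(1)}
=T\,\tau_*^\beta(1-\tau_*^\beta)\,\Deb\,\bar\phi(d_1-d_2)\bigl(1+o_p(1)\bigr).
\end{align*}

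After division by $\sqrt{dT}$ this equals $d^{-1/2}\sqrt T\,\Deb\,\tau_*^\beta(1-\tau_*^\beta)\bar\phi(d_1-d_2)(1+o_p(1))$, which diverges in probability because $\sqrt T\,\Deb\lto\infty$ by \textbf{[F1]} and $\bar\phi(d_1-d_2)\neq0$ by \textbf{[F3]}. Since $\mathcal T_{1,n}^\beta\ge(dT)^{-1/2}|\hat\Lambda_m^{(1)}+\hat\Lambda_m^{(2)}+\hat\Lambda_m^{(3)}|$ and the first summand diverges while the other two are $O_p(1)$, we obtain $\mathcal T_{1,n}^\beta\pto\infty$, proving the claim. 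The main obstacle is the bookkeeping in the preceding paragraph: one must control the substitution errors from $\hat\alpha,\hat\beta$ and the discretization remainders uniformly in $k$, and verify that the $i$-independent drift contribution genuinely drops to lower order in the CUSUM, so that only the jump at $t=\tau_*^\beta T$ survives to dominate the $O_p(1)$ noise.
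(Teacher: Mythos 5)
Your proof is correct and follows the same skeleton as the paper's: both bound $\mathcal T_{1,n}^\beta$ from below by the CUSUM evaluated at the single index $k=[n\tau_*^\beta]$, expand $\hat\xi_i$ around the true parameters, and show that the drift-change signal, of size $T\Deb\,\tau_*^\beta(1-\tau_*^\beta)\bar\phi(d_1-d_2)\neq0$ by \textbf{[F1]} and \textbf{[F3]}, dominates because $\sqrt{T}\Deb\to\infty$. The differences are organizational rather than conceptual. The paper normalizes the CUSUM at $k=[n\tau_*^\beta]$ by $T\vartheta_\beta$ and applies Lemma 9 of Genon-Catalot and Jacod (1993): the sums of conditional means converge in probability to the nonzero constant and the sums of conditional variances, being $O_p(1/(T\vartheta_\beta^2))$, vanish, so the martingale noise needs no separate treatment; you instead normalize by $\sqrt{dT}$, split off the Wiener-increment part explicitly, and argue it stays $O_p(1)$ while the signal diverges --- equally valid, and in fact only an $O_p(1)$ bound at the single index $m$ is needed, so the uniformity in $k$ and the Brownian-bridge limit you invoke are superfluous. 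A second difference: the paper Taylor-expands around $\bar\beta^*$, for which \textbf{[F2]} gives $\hat\beta-\bar\beta^*=O_p(T^{-1/2})$, so the $\hat\beta$-substitution error is small with no cancellation required; you expand around $\beta_0$, where only $\hat\beta-\beta_0=O_p(\Deb)$ holds, and must rely on the CUSUM cancellation of the $i$-independent term. That does work, but note one inaccuracy: your rate $O_p(\sqrt{T}/h)$ for the centered CUSUM of $\phi_i$ presupposes an $O_p(T^{-1/2})$ CLT-type rate for ergodic averages that \textbf{[C5]} does not supply; plain ergodic convergence gives only $o_p(n)$, hence a contribution $o_p(T\Deb)$ --- not $o_p(\sqrt{dT})$ as you claim, but still negligible relative to the signal $T\Deb$, so your conclusion is unaffected.
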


We additionally assume the following condition:
\begin{enumerate}
\renewcommand{\labelenumi}{{\textbf{[F\arabic{enumi}]}}}
\setcounter{enumi}{3}
\item
There exists an integer $M\ge3$ such that $h^{-1/2}\Deb^{M-1}\lto0$ 
and $b\in C_{\uparrow}^{4,M}(\mathbb R^d\times\Theta_B)$.
\end{enumerate}
\begin{prop}\label{prop5}
Assume \textbf{[C1]}-\textbf{[C5]}, 
\textbf{[F1]}, \textbf{[F2]} and \textbf{[F4]}.
Then, under $H_1^\beta$, 
\begin{align*}
P\bigl(\mathcal T_{2,n}^\beta>w_q(\epsilon)\bigr)\lto1.
\end{align*}
\end{prop}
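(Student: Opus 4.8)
The plan is to prove the stronger statement $\mathcal T_{2,n}^\beta\pto\infty$: since $w_q(\epsilon)$ is the fixed upper-$\epsilon$ point of the tight variable $\sup_{0\le s\le1}\|\boldsymbol B_q^0(s)\|$, divergence of the statistic immediately gives $P(\mathcal T_{2,n}^\beta>w_q(\epsilon))\lto1$. Writing $k^*=[n\tau_*^\beta]$ for the true change index, I bound the maximum below by its value at $k=k^*$,
\begin{align*}
\mathcal T_{2,n}^\beta\ge\frac{1}{\sqrt{T}}\left\|\mathcal I_n^{-1/2}\left(\sum_{i=1}^{k^*}\hat\zeta_i-\frac{k^*}{n}\sum_{i=1}^n\hat\zeta_i\right)\right\|,
\end{align*}
so it suffices to make this single term blow up. Using the Itô--Taylor expansion of $\DeX$ together with $b\in C_\uparrow^{4,M}$ from \textbf{[F4]}, I split each summand as $\hat\zeta_i=\mu_i+m_i+r_i$, where $\mu_i=h\,\partial_\beta b(\Xs,\hat\beta)^\TT A^{-1}(\Xs,\hat\alpha)(b(\Xs,\beta_{(i)}^*)-b(\Xs,\hat\beta))$ is the conditional-mean (signal) part with $\beta_{(i)}^*=\beta_1^*$ for $i\le k^*$ and $\beta_2^*$ otherwise, $m_i=\partial_\beta b(\Xs,\hat\beta)^\TT A^{-1}(\Xs,\hat\alpha)\int_{t_{i-1}}^{t_i}a(X_s,\alpha^*)\dd W_s$ is a martingale-difference (noise) part, and $r_i$ collects the discretization and higher-order Taylor remainders.

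The crux is the signal. By ergodicity \textbf{[C5]} and $\hat\alpha\pto\alpha^*$, $\hat\beta\pto\beta_0$ (from \textbf{[F2]} and $\Deb\to0$), one has $\mathcal I_n\pto\mathcal I:=\int_{\mathbb R^d}\Xi^\beta(x,\alpha^*,\beta_0)\dd\mu_{(\alpha^*,\beta_0)}(x)$, which I take to be positive definite (the standard nondegeneracy making $\mathcal I_n^{-1/2}$ meaningful), so $\mathcal I_n^{-1/2}\pto\mathcal I^{-1/2}$. Taylor-expanding $b(\Xs,\beta_{(i)}^*)-b(\Xs,\hat\beta)$ to first order and applying the ergodic theorem to $\Xi^\beta(\Xs,\hat\alpha,\hat\beta)$ over the pre- and post-change blocks (both stationary laws converging to $\mu_{(\alpha^*,\beta_0)}$ by the continuity in \textbf{[C5]}), the $\hat\beta$-dependence cancels and the centred signal at $k^*$ reduces to
\begin{align*}
\frac{1}{\sqrt{T}}\left(\sum_{i=1}^{k^*}\mu_i-\frac{k^*}{n}\sum_{i=1}^n\mu_i\right)=\sqrt{T}\,\tau_*^\beta(1-\tau_*^\beta)\,\mathcal I(\beta_1^*-\beta_2^*)+o_p\!\left(\sqrt{T}\,\Deb\right).
\end{align*}
Hitting this with $\mathcal I_n^{-1/2}$ yields a vector of norm $\sqrt{T}\,\tau_*^\beta(1-\tau_*^\beta)\|\mathcal I^{1/2}(\beta_1^*-\beta_2^*)\|(1+o_p(1))$. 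By \textbf{[F1]} we have $\beta_1^*-\beta_2^*=\Deb(d_1-d_2)+o(\Deb)$ with $|d_1-d_2|=1$ (immediate from $\Deb=|\beta_1^*-\beta_2^*|$), whence $\|\mathcal I^{1/2}(\beta_1^*-\beta_2^*)\|\ge\lambda_{\min}(\mathcal I)^{1/2}\Deb(1+o(1))>0$; no analogue of \textbf{[F3]} is required because the vector statistic automatically detects $d_1\neq d_2$. Thus the normalised signal has norm of order $\sqrt{T}\,\Deb=\sqrt{T\Deb^2}\lto\infty$ by \textbf{[F1]}.

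It remains to check that noise and remainder cannot cancel this. At $k=k^*$ the martingale part $\frac{1}{\sqrt{T}}\mathcal I_n^{-1/2}\sum_{i=1}^{k^*}m_i$ is $O_p(1)$: freezing the coefficients at $\alpha^*,\beta_0$ changes the sum by $o_p(1)$ via the rates in \textbf{[F2]}, and the resulting frozen martingale has conditional variance $\frac{1}{T}\sum_{i=1}^{k^*}h\,\mathcal I^{-1/2}\Xi^\beta(\Xs,\alpha^*,\beta_0)\mathcal I^{-1/2}\pto\tau_*^\beta I_q$, so a Chebyshev bound gives $O_p(1)$. The remainder $\frac{1}{\sqrt{T}}\sum_{i=1}^{k^*}r_i$ is $o_p(\sqrt{T\Deb^2})$: the drift-discretization bias contributes $O_p(\sqrt{T}\,h)=o_p(1)$ since $\sqrt{T}h=\sqrt{nh^3}\to0$ under $nh^2\to0$, while the quadratic and higher Taylor terms in the drift difference, carried to order $M-1$ using $b\in C_\uparrow^{4,M}$, have normalised size controlled by $h^{-1/2}\Deb^{M-1}\to0$ from \textbf{[F4]}. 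Combining the three parts, the lower bound is $\sqrt{T\Deb^2}\,\tau_*^\beta(1-\tau_*^\beta)\lambda_{\min}(\mathcal I)^{1/2}(1+o_p(1))-O_p(1)\pto\infty$, which proves the proposition. I expect the main obstacle to be precisely the remainder bookkeeping in this last step: simultaneously carrying the Itô--Taylor expansion to the order dictated by \textbf{[F4]} and replacing the plug-in estimators $\hat\alpha,\hat\beta$ by $\alpha^*,\beta_0$ uniformly enough that every discarded term is provably $o_p(\sqrt{T\Deb^2})$.
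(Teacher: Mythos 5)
Your proposal is correct and follows essentially the same route as the paper's proof: bound $\mathcal T_{2,n}^\beta$ below by its value at $k=[n\tau_*^\beta]$, show the centred statistic there has norm of order $\sqrt{T\Deb^2}\lto\infty$ with normalized limit proportional to $\tau_*^\beta(1-\tau_*^\beta)\,\mathcal I(d_1-d_2)$, and observe that regularity of $\mathcal I$ together with $|d_1-d_2|=1$ makes any analogue of \textbf{[F3]} unnecessary. The only cosmetic difference is that the paper centres its Taylor expansion at $\bar\beta^*$ from \textbf{[F2]} and invokes Lemma 9 of Genon-Catalot and Jacod (1993) for the law-of-large-numbers step, whereas you use an explicit signal/martingale/remainder decomposition with a Chebyshev bound; these are interchangeable.
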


\begin{rmk}
For the consistency of the test $\mathcal T_{2,n}^\beta$, 
only \textbf{[F1]}, \textbf{[F2]} and \textbf{[F4]} are sufficient.
This is because 
\begin{enumerate}
\renewcommand{\labelenumi}{{\textbf{[F\arabic{enumi}']}}}
\setcounter{enumi}{2}
\item
$\displaystyle
\int_{\mathbb R^d}
\partial_\beta b(x,\beta_0)^\TT A^{-1}(x,\alpha^*)\partial_\beta b(x,\beta_0)
\dd\mu_{\beta_0}(x)(d_1-d_2)
\neq0$,
\end{enumerate} 
which corresponds to \textbf{[F3]}, 
is always valid since
$\displaystyle
\int_{\mathbb R^d}
\partial_\beta b(x,\beta_0)^\TT A^{-1}(x,\alpha^*)\partial_\beta b(x,\beta_0)
\dd\mu_{\beta_0}(x)$
is regular and 
\begin{align*}
|d_1-d_2|=\lim_{n\to\infty}
|\vartheta_\beta^{-1}(\beta_1^*-\beta_0)-\vartheta_\beta^{-1}(\beta_2^*-\beta_0)|
=\lim_{n\to\infty}|\vartheta_\beta^{-1}(\beta_1^*-\beta_2^*)|=1\neq0.
\end{align*}
\end{rmk}

\section{Examples}\label{sec5}
\subsection{Sufficient condition of assumptions [A2-I] and [A2-II]}\label{sec5.2}
A process $\{X_t\}_{t\ge0}$ with a single change point can be expressed as follows.
There exists a process $\{\tilde X_t\}_{t\ge0}$ such that
$X_t^1=\tilde X_t(\theta_1^*)$, 
$X_0^1=x_0^1$,
$X_t^2=\tilde X_t(\theta_2^*)$, 
$X_0^2=x_0^2$,
$X_{\tau T}^1=X_{\tau T}^2$
and 
\begin{align*}
X_t
=
\begin{cases}
X_t^1,\quad 0\le t< \tau T,\\
X_t^2,\quad \tau T\le t\le T.
\end{cases}
\end{align*}
If $\{X_t^2\}_{t\ge0}$ is stationary and $\theta_2^*\lto\theta_0$, then
we have, for $f\in C_\uparrow^1(\mathbb R^d)$,
\begin{align*}
&\max_{[n^{1/r}]\le k\le n}
\left|
\frac{1}{k}\sum_{i=[n\tau]+1}^{[n\tau]+k}f(\Xs)
-\int_{\mathbb R^d}f(x)\dd\mu_{\theta_0}(x)
\right|\\
&=
\max_{[n^{1/r}]\le k\le n}
\left|
\frac{1}{k}\sum_{i=[n\tau]+1}^{[n\tau]+k}f(\Xs^2)
-\int_{\mathbb R^d}f(x)\dd\mu_{\theta_0}(x)
\right|\\
&{\stackrel{d}{=}}
\max_{[n^{1/r}]\le k\le n}
\left|
\frac{1}{k}\sum_{i=1}^{k}f(\Xs^2)
-\int_{\mathbb R^d}f(x)\dd\mu_{\theta_0}(x)
\right|
\pto0
\end{align*}
and \textbf{[A2-I]} or \textbf{[A2-II]} holds.

\subsection{Model that satisfies the assumption [E2]}\label{sec5.3}
As an example of the model that satisfies \textbf{[E2]}, 
we consider the $d$-dimensional diffusion process 
\begin{align*}
X_t=
\begin{cases}
\displaystyle
X_0+\int_0^tb(X_s,\beta)\dd s
+\int_0^t\sigma(X_s)\delta(\boldsymbol\alpha_1^*)\dd W_s, 
\quad t\in[0, \tau_*^\alpha T)\\
\displaystyle
X_{\tau_*^\alpha T}+\int_{\tau_*^\alpha T}^tb(X_s,\beta)\dd s
+\int_{\tau_*^\alpha T}^t\sigma(X_s)\delta(\boldsymbol\alpha_2^*)\dd W_s, 
\quad t\in[\tau_*^\alpha T, T]
\end{cases}
\end{align*}
where
$\sigma:\mathbb R^d\lto\mathbb R^d\otimes\mathbb R^d$, 
$\delta(\boldsymbol\alpha)=\diag(\alpha_1,\ldots,\alpha_d)$,
$\boldsymbol\alpha=(\alpha_1,\ldots,\alpha_d)^\TT$, 
$\alpha_1,\ldots,\alpha_d>0$. 
The true value of the parameters are
\begin{align*}
\boldsymbol\alpha_1^*
=\left(
\begin{array}{c}
\alpha_{1,1}^*\\
\vdots\\
\alpha_{1,d}^*
\end{array}
\right),\ 
\boldsymbol\alpha_2^*
=\left(
\begin{array}{c}
\alpha_{2,1}^*\\
\vdots\\
\alpha_{2,d}^*
\end{array}
\right),
\end{align*}
which convergence to $\boldsymbol\alpha_0=(\alpha_{0,1},\ldots,\alpha_{0,d})^\TT$.
We define
\begin{align*}
U^{(1)}_n(\boldsymbol\alpha)
=
\sum_{i=1}^n
\Biggl(
\tr\left(
A_{i-1}^{-1}(\boldsymbol\alpha)\frac{(\DeX)^{\otimes2}}{h}
\right)
+\log\det A_{i-1}(\boldsymbol\alpha)
\Biggr)
\end{align*}
and set the estimator
$\hat{\boldsymbol\alpha}
=\mathrm{arginf}_{\boldsymbol\alpha}\,U^{(1)}_n(\boldsymbol\alpha)$.
Then, we have 
\begin{align}\label{c2}
\Dea^{-1}(\hat{\boldsymbol\alpha}-\boldsymbol\alpha_0)=O_p(1).
\end{align}
\begin{proof}[Proof of \eqref{c2}]
We see
\begin{align*}
\partial_{\alpha_j} U_n^{(1)}(\boldsymbol\alpha)
=\frac{2}{\alpha_j}
\left(
\frac{-1}{\alpha_j^2}\sum_{i=1}^n
\tr\left(
[\sigma(\Xs)^\TT]^{-1}
\delta(\mathrm e_j)
\sigma(\Xs)^{-1}
\frac{(\DeX)^{\otimes2}}{h}
\right)
+n
\right),
\end{align*}
where $\mathrm e_j=(0,\ldots,1,\ldots,0)^\TT$. 
Then, we have 
\begin{align*}
\hat\alpha_j
=
\sqrt{
\frac{1}{n}\sum_{i=1}^n
\tr\left(
[\sigma(\Xs)^\TT]^{-1}
\delta(\mathrm e_j)
\sigma(\Xs)^{-1}
\frac{(\DeX)^{\otimes2}}{h}
\right)
}
\end{align*}
On the other hand, we define
\begin{align*}
\underline{U}_n^{(1)}(\boldsymbol\alpha)
&=
\sum_{i=1}^{[n\tau_*^\alpha]}
\Biggl(
\tr\left(
A_{i-1}^{-1}(\boldsymbol\alpha)\frac{(\DeX)^{\otimes2}}{h}
\right)
+\log\det A_{i-1}(\boldsymbol\alpha)
\Biggr),\\
\overline{U}_n^{(1)}(\boldsymbol\alpha)
&=
\sum_{i=[n\tau_*^\alpha]+1}^n
\Biggl(
\tr\left(
A_{i-1}^{-1}(\boldsymbol\alpha)\frac{(\DeX)^{\otimes2}}{h}
\right)
+\log\det A_{i-1}(\boldsymbol\alpha)
\Biggr)
\end{align*}
and set
$\hat{\boldsymbol\alpha}_1
=\mathrm{arginf}_{\boldsymbol\alpha}\,\underline{U}_n^{(1)}(\boldsymbol\alpha)$, 
$\hat{\boldsymbol\alpha}_2
=\mathrm{arginf}_{\boldsymbol\alpha}\,\overline{U}_n^{(1)}(\boldsymbol\alpha)$.
In the same way, we have
\begin{align*}
\hat\alpha_{1,j}
&=\sqrt{\frac1{[n\tau_*^\alpha]} \sum_{i=1}^{[n\tau_*^\alpha]} 
\tr\left(
[\sigma(\Xs)^\TT]^{-1}
\delta(\mathrm e_j)
\sigma(\Xs)^{-1}
\frac{(\DeX)^{\otimes2}}{h}
\right)},\\
\hat\alpha_{2,j}
&=\sqrt{\frac1{n-[n\tau_*^\alpha]} \sum_{i=[n\tau_*^\alpha]+1}^n 
\tr\left(
[\sigma(\Xs)^\TT]^{-1}
\delta(\mathrm e_j)
\sigma(\Xs)^{-1}
\frac{(\DeX)^{\otimes2}}{h}
\right)}.
\end{align*}
Noting that
$\vartheta_\alpha^{-1}(\hat{\boldsymbol\alpha}_k-\boldsymbol\alpha_0)=O_p(1)$
and
\begin{align*}
\hat\alpha_j
=\sqrt{\frac{[n\tau_*^\alpha]}{n}\hat\alpha_{1,j}^2
+\frac{n-[n\tau_*^\alpha]}{n}\hat\alpha_{2,j}^2},
\end{align*}
we obtain
\begin{align*}
\|\hat{\boldsymbol\alpha}-\boldsymbol\alpha_0\|
\le \sum_{j=1}^d
|\hat\alpha_j-\alpha_{0,j}|
&=\sum_{j=1}^d
\frac{|\hat\alpha_j^2-\alpha_{0,j}^2|}{|\hat\alpha_j+\alpha_{0,j}|}\\
&=\sum_{j=1}^d
\frac{1}{|\hat\alpha_j+\alpha_{0,j}|}
\left|
\frac{[n\tau_*^\alpha]}{n}(\hat\alpha_{1,j}^2-\alpha_{0,j}^2)
+\frac{n-[n\tau_*^\alpha]}{n}(\hat\alpha_{2,j}^2-\alpha_{0,j}^2)
\right|\\
&\le
\sum_{j=1}^d
\frac{1}{|\hat\alpha_j+\alpha_{0,j}|}
\left(
|\hat\alpha_{1,j}-\alpha_{0,j}||\hat\alpha_{1,j}+\alpha_{0,j}|
+|\hat\alpha_{2,j}-\alpha_{0,j}||\hat\alpha_{2,j}+\alpha_{0,j}|
\right)\\
&\le 
\sum_{j=1}^d
\vartheta_\alpha
\left(\frac{|\hat\alpha_{1,j}+\alpha_{0,j}|}{|\hat\alpha_j+\alpha_{0,j}|}
\vartheta_\alpha^{-1}|\hat\alpha_{1,j}-\alpha_{0,j}|
+\frac{|\hat\alpha_{2,j}+\alpha_{0,j}|}{|\hat\alpha_j+\alpha_{0,j}|}
\vartheta_\alpha^{-1}|\hat\alpha_{2,j}-\alpha_{0,j}|
\right).
\end{align*}
From $\hat{\boldsymbol\alpha}_k\pto\boldsymbol\alpha_0$ and
$\hat{\boldsymbol\alpha}\pto\boldsymbol\alpha_0$,
we have
$\dfrac{|\hat\alpha_{k,j}+\alpha_{0,j}|}
{|\hat\alpha_j+\alpha_{0,j}|}=O_p(1)$ 
and
$\vartheta_\alpha^{-1}(\hat{\boldsymbol\alpha}-\boldsymbol\alpha_0)=O_p(1)$.
\end{proof}
From the above, 
1-dimensional Ornstein-Uhlenbeck process and hyperbolic diffusion model 
are models which satisfy \textbf{[E2]} 
because the diffusion coefficient is $a(x,\alpha)=\alpha$.


\subsection{Model that satisfies the assumption [E3] and model that does not}\label{sec5.4}
First, as an example of a model that satisfies \textbf{[E3]},
we consider the $d$-dimensional diffusion process with the diffusion coefficient
\begin{align*}
a(x,\boldsymbol\alpha)=
\sigma(x)
\diag(\alpha_1,\ldots,\alpha_d),
\end{align*}
where 
$\sigma:\mathbb R^d\lto\mathbb R^d\otimes\mathbb R^d$, 
$\boldsymbol\alpha=(\alpha_1,\ldots,\alpha_d)^\TT$, $\alpha_1,\ldots, \alpha_d>0$.
The true value of the parameters are
$\boldsymbol\alpha_1^*=\boldsymbol\alpha_0+\Dea\boldsymbol c_1$
and
$\boldsymbol\alpha_2^*=\boldsymbol\alpha_0+\Dea\boldsymbol c_2$,
where
$\boldsymbol\alpha_0=(\alpha_{0,1},\ldots,\alpha_{0,3})^\TT$, 
$\boldsymbol c_1=(c_{1,1},\ldots, c_{1,d})^\TT$, 
$\boldsymbol c_2=(c_{2,1},\ldots, c_{2,d})^\TT$.
Now we have
\begin{align*}
A(x,\boldsymbol\alpha)
&=\sigma(x)\diag(\alpha_1^2,\ldots,\alpha_d^2)\sigma(x)^\TT,\\
A^{-1}(x,\boldsymbol\alpha)
&=[\sigma(x)^\TT]^{-1}
{\mathrm{diag}}(1/\alpha_1^2,\ldots,1/\alpha_d^2)\sigma(x)^{-1},\\
\partial_{\alpha_j}A(x,\boldsymbol\alpha)
&=\sigma(x)\diag(0,\ldots,2\alpha_j,\ldots,0)\sigma(x)^\TT,\\
\tr\left(
A^{-1}\partial_{\alpha_j}A(x,\boldsymbol\alpha)
\right)
&=
\tr\Bigl(
[\sigma(x)^\TT]^{-1}\diag(0,\ldots,2/\alpha_j,\ldots,0)\sigma(x)^\TT
\Bigr)
=\frac{2}{\alpha_j},
\end{align*}
\begin{align*}
\int_{\mathbb R^d}
\Bigl[
\tr\bigl(
A^{-1}\partial_{\alpha^\ell} A(x,\boldsymbol\alpha_0)
\bigr)
\Bigr]_\ell
\dd\mu_{\alpha_0}(x)(\boldsymbol c_1-\boldsymbol c_2)
=\sum_{j=1}^d\frac{2(c_{1,j}-c_{2,j})}{\alpha_{0,j}}.
\end{align*}
Therefore if
\begin{align}\label{eq5.4-1}
\sum_{j=1}^d\frac{c_{1,j}-c_{2,j}}{\alpha_j}\neq0,
\end{align}
then \textbf{[E3]} holds. 
Especially, we have \eqref{eq5.4-1} if any of the following cases:
\begin{enumerate}
\item
$c_{1,j}-c_{2,j}\ge0$ for all $1\le j\le d$,
and 
$c_{1,j}-c_{2,j}>0$ for some $1\le j\le d$.
\item
$c_{1,j}-c_{2,j}\le 0$ for all $1\le j\le d$,
and 
$c_{1,j}-c_{2,j}<0$ for some $1\le j\le d$.
\end{enumerate}
That is, \textbf{[E3]} holds when only $\alpha_j$ $(1\le j\le d)$ changes.
From the above, 
1-dimensional Ornstein-Uhlenbeck process and hyperbolic diffusion model 
are models which satisfy \textbf{[E3]} 
because the diffusion coefficient is $a(x,\alpha)=\alpha$.


Next, as an example of a model that does not satisfy \textbf{[E3]}, 
we consider the 2-dimensional diffusion process with diffusion coefficient
\begin{align*}
a(x,\boldsymbol\alpha)=
\left(
\begin{array}{cc}
\alpha_1 & \alpha_2\\
0 & \alpha_3 
\end{array}
\right),
\quad(\alpha_1,\alpha_3>0).
\end{align*}
The true value of the parameters are
\begin{align*}
\boldsymbol\alpha_0
=\left(
\begin{array}{c}
\alpha_{0,1} \\
\alpha_{0,2} \\
\alpha_{0,3}
\end{array}
\right),\ 
\boldsymbol\alpha_1^*
=\boldsymbol\alpha_0
+\vartheta_\alpha
\left(
\begin{array}{c}
c_{1,1} \\
c_{1,2}\\
c_{1,3}
\end{array}
\right),\ 
\boldsymbol\alpha_2^*
=\boldsymbol\alpha_0
+\vartheta_\alpha
\left(
\begin{array}{c}
c_{2,1} \\
c_{2,2}\\
c_{2,3} 
\end{array}
\right).
\end{align*}
Now we have
\begin{align*}
A(x,\boldsymbol\alpha)
&=
\left(
\begin{array}{cc}
\alpha_1 & \alpha_2\\
0 & \alpha_3 
\end{array}
\right)
\left(
\begin{array}{cc}
\alpha_1 & 0\\
\alpha_2 & \alpha_3 
\end{array}
\right)
=
\left(
\begin{array}{cc}
\alpha_1^2+\alpha_2^2 & \alpha_2\alpha_3\\
\alpha_2\alpha_3 & \alpha_3^2 
\end{array}
\right),\\ 
A^{-1}(x,\boldsymbol\alpha)
&=
\frac{1}{\alpha_1^2\alpha_3^2}
\left(
\begin{array}{cc}
\alpha_3^2 & -\alpha_2\alpha_3\\
-\alpha_2\alpha_3 & \alpha_1^2+\alpha_2^2 
\end{array}
\right),
\end{align*}
\begin{align*}
\partial_{\alpha_1}A(x,\boldsymbol\alpha)
=
\left(
\begin{array}{cc}
2\alpha_1 & 0\\
0 & 0 
\end{array}
\right),\ 
\partial_{\alpha_2}A(x,\boldsymbol\alpha)
=
\left(
\begin{array}{cc}
2\alpha_2 & \alpha_3\\
\alpha_3 & 0 
\end{array}
\right),\ 
\partial_{\alpha_3}A(x,\boldsymbol\alpha)
=
\left(
\begin{array}{cc}
0 & \alpha_2\\
\alpha_2 & 2\alpha_3 
\end{array}
\right),
\end{align*}
\begin{align*}
\tr\left(
A^{-1}\partial_{\alpha_1}A(x,\boldsymbol\alpha)
\right)
&=\frac{2\alpha_1\alpha_3^2}{\alpha_1^2\alpha_3^2}
=\frac{2}{\alpha_1},\\
\tr\left(
A^{-1}\partial_{\alpha_2}A(x,\boldsymbol\alpha)
\right)
&=\frac{\alpha_2\alpha_3^2-\alpha_2\alpha_3^2}{\alpha_1^2\alpha_3^2}
=0,\\
\tr\left(
A^{-1}\partial_{\alpha_3}A(x,\boldsymbol\alpha)
\right)
&=\frac{-\alpha_2^2\alpha_3+2\alpha_1^2\alpha_3+\alpha_2^2\alpha_3}{\alpha_1^2\alpha_3^2}
=\frac{2}{\alpha_3},
\end{align*}

\begin{align}\label{eq5.4-2}
\int_{\mathbb R^2}
\left(
\frac{2}{\alpha_{0,1}},
0,
\frac{2}{\alpha_{0,3}}
\right)
\dd\mu_{\boldsymbol \alpha_0}(x)
\left(
\begin{array}{c}
c_{1,1}-c_{2,1}\\
c_{1,2}-c_{2,2}\\ 
c_{1,3}-c_{2,3}
\end{array}
\right)
=
\frac{2(c_{1,1}-c_{2,1})}{\alpha_{0,1}}
+\frac{2(c_{1,3}-c_{2,3})}{\alpha_{0,3}}.
\end{align}
Therefore, \eqref{eq5.4-2} does not equal $0$ in the following cases:
\begin{enumerate}
\item
$c_{1,j}-c_{2,j}\ge0$ for all $j=1, 3$,
and 
$c_{1,j}-c_{2,j}>0$ for some $j=1, 3$.
\item
$c_{1,j}-c_{2,j}\le 0$ for all $j=1, 3$,
and 
$c_{1,j}-c_{2,j}<0$ for some $j=1, 3$.
\end{enumerate}
However \eqref{eq5.4-2} equals $0$ when 
$c_{1,1}-c_{2,1}=c_{1,3}-c_{2,3}=0$, 
hence when only $\alpha_2$ changes, \textbf{[E3]} does not hold.

\subsection{Ornstein-Uhlenbeck process}\label{sec5.6}
We consider the 1-dimensional Ornstein-Uhlenbeck process
\begin{align*}
\dd X_t=-\beta(X_t-\gamma)\dd t+\alpha\dd W_t,\quad X_0=x_0.\quad
(\alpha,\beta>0, \gamma\in\mathbb R).
\end{align*}
In this subsection, 
we refer to the consistency of tests $\mathcal T_n^\alpha$ and $\mathcal T_{1,n}^\beta$
 in Case A. 

First, let us consider the consistency of the test $\mathcal T_n^\alpha$
in Case A. 
Thus, we consider the following stochastic differential equation
\begin{align*}
X_t=
\begin{cases}
\displaystyle 
X_0-\int_0^t \beta(X_s-\gamma)\dd s+\alpha_1^* W_t,
\quad t\in[0,\tau_*^\alpha T),\\
\displaystyle 
X_{\tau_*^\alpha T}-\int_{\tau_*^\alpha T}^t \beta(X_s-\gamma)\dd s
+\alpha_2^* (W_t-W_{\tau_*^\alpha T}),
\quad t\in[\tau_*^\alpha T,T],\\
\end{cases}
\end{align*}
where
$\alpha_1^*=\alpha_0+\Dea c_1$
and 
$\alpha_2^*=\alpha_0+\Dea c_2$,
which hold \textbf{[E1]}.
Further, from Section \ref{sec5.3} and \ref{sec5.4}, 
\textbf{[E2]} and \textbf{[E3]} hold. 
Therefore the test $\mathcal T_n^\alpha$ is consistent by Proposition \ref{prop3}. 

Next, we investigate the consistency of the test $\mathcal T_{1,n}^\beta$ 
in Case A. 
Thus, we consider the following stochastic differential equation
\begin{align*}
X_t=
\begin{cases}
\displaystyle 
X_0-\int_0^t \beta_1^*(X_s-\gamma_1^*)\dd s+\alpha^* W_t,
\quad t\in[0,\tau_*^\beta T), \\
\displaystyle 
X_{\tau_*^\beta T}-\int_{\tau_*^\beta T}^t \beta_2^*(X_s-\gamma_2^*)\dd s
+\alpha^* (W_t-W_{\tau_*^\beta T}),
\quad t\in[\tau_*^\beta T,T], \\
\end{cases}
\end{align*}
where
\begin{align*}
{\boldsymbol\beta}_0
=\left(
\begin{array}{c}
\beta_0\\
\gamma_0
\end{array}
\right),\ 
{\boldsymbol d}_k
=\left(
\begin{array}{c}
d_{k,1}\\
d_{k,2}
\end{array}
\right),\ 
{\boldsymbol\beta}_k^*
=\left(
\begin{array}{c}
\beta_k^*\\
\gamma_k^*
\end{array}
\right)
={\boldsymbol\beta}_0
+
\Deb{\boldsymbol d}_k,
\end{align*}
which hold \textbf{[F1]}. Further,
\begin{align*}
\int_{\mathbb R}
\frac{1}{\alpha^*}
(-x+\gamma_0,\beta_0)\dd\mu_{(\beta_0,\gamma_0)}(x)
({\boldsymbol d}_1-{\boldsymbol d}_2)
=\left(0,\frac{\beta_0}{\alpha^*}\right)
\left(
\begin{array}{c}
d_{1,1}-d_{2,1}\\
d_{1,2}-d_{2,2}
\end{array}
\right)
=\frac{\beta_0}{\alpha^*}(d_{1,2}-d_{2,2}).
\end{align*}
Therefore, 
if $d_{1,2}-d_{2,2}\neq0$, that is, 
if $\gamma$ changes and  $\beta$ does not change, then
\textbf{[F3]} holds, and the test $\mathcal T_{1,n}^\beta$ is consistent
by Proposition \ref{prop4}.
However, 
when $\beta$ changes and  $\gamma$ does not change,
\textbf{[F3]} does not hold.

\subsection{Hyperbolic diffusion model}\label{sec5.7}
We consider the hyperbolic diffusion model
\begin{align}
\dd X_t=
\left(
\beta-\frac{\gamma X_t}{\sqrt{1+X_t^2}}
\right)\dd t
+\alpha\dd W_t,\quad
X_0=x_0.
\quad(\alpha>0,\beta\in\mathbb R, \gamma>|\beta|).
\label{eq5-5.999}
\end{align}
In this subsection, 
we mention 
the consistency of the tests 
$\mathcal T_n^\alpha$ and $\mathcal T_{1,n}^\beta$ in Case A
and
the fact that this model is an example in Case B.

Let  
$b(x,\boldsymbol\beta)=\beta-\dfrac{\gamma x}{\sqrt{1+x^2}}$
and 
$a(x,\alpha)=\alpha$.
By the same discussion as Ornstein-Uhlenbeck process (Section \ref{sec5.6}), 
the test $\mathcal T_n^\alpha$ is consistent in Case A.
Next, we investigate the consistency of the test $\mathcal T_{1,n}^\beta$ in Case A.
Thus, we consider the following stochastic differential equation
\begin{align*}
X_t=
\begin{cases}
\displaystyle 
X_0+\int_0^t \left(\beta_1^*-\frac{\gamma_1^* X_s}{\sqrt{1+X_s^2}}\right)\dd s
+\alpha^* W_t,
\quad t\in[0,\tau_*^\beta T), \\
\displaystyle 
X_{\tau_*^\beta T}
+\int_{\tau_*^\beta T}^t \left(\beta_2^*-\frac{\gamma_2^* X_s}{\sqrt{1+X_s^2}}\right)\dd s
+\alpha^* (W_t-W_{\tau_*^\beta T}),
\quad t\in[\tau_*^\beta T,T], \\
\end{cases}
\end{align*}
where
\begin{align*}
{\boldsymbol\beta}_0
=\left(
\begin{array}{c}
\beta_0\\
\gamma_0
\end{array}
\right),\ 
{\boldsymbol d}_k
=\left(
\begin{array}{c}
d_{k,1}\\
d_{k,2}
\end{array}
\right),\ 
{\boldsymbol\beta}_k^*
=\left(
\begin{array}{c}
\beta_k^*\\
\gamma_k^*
\end{array}
\right)
={\boldsymbol\beta}_0
+
\Deb{\boldsymbol d}_k,
\end{align*}
which hold \textbf{[F1]}.
The invariant density of the solution in \eqref{eq5-5.999} is  
\begin{align*}
\pi(x)=\frac{m(x)}{M},
\end{align*}
where
\begin{align*}
m(x)=\exp
\left(
\frac{2}{\alpha^2}\left(\beta x-\gamma\sqrt{1+x^2}\right)
\right),\ 
M=\int_{\mathbb R}m(x)\dd x.
\end{align*}
Now, we have
\begin{align*}
\int_{\mathbb R}\partial_{\beta}b(x,\boldsymbol\beta)\pi(x)\dd x
=\int_{\mathbb R}\pi(x)\dd x=1,
\end{align*}
\begin{align*}
\int_{\mathbb R}\partial_{\gamma}b(x,\boldsymbol\beta)\pi(x)\dd x
&=-\frac1M\int_{\mathbb R}
\frac{x}{\sqrt{1+x^2}}
\exp
\left(
\frac{2}{\alpha^2}\left(\beta x-\gamma\sqrt{1+x^2}\right)
\right)
\dd x\\
&=\frac{\alpha^2}{2\gamma}\left(\frac1M
\int_{\mathbb R}
\frac{2}{\alpha^2}\left(\beta-\frac{\gamma x}{\sqrt{1+x^2}}\right)
\exp
\left(
\frac{2}{\alpha^2}\left(\beta x-\gamma\sqrt{1+x^2}\right)
\right)
\dd x
-\frac{2\beta}{\alpha^2}\right)
\end{align*}
and
\begin{align*}
&\int_{\mathbb R}
\frac{2}{\alpha^2}\left(\beta-\frac{\gamma x}{\sqrt{1+x^2}}\right)
\exp
\left(
\frac{2}{\alpha^2}\left(\beta x-\gamma\sqrt{1+x^2}\right)
\right)
\dd x
\\
&=
\left[
\exp
\left(
\frac{2}{\alpha^2}\left(\beta x-\gamma\sqrt{1+x^2}\right)
\right)
\right]_{x=-\infty}^{x=\infty}\\
&=
\left[
\exp
\left(
-\frac{2\gamma}{\alpha^2}\sqrt{1+x^2}
\left(1-\frac{\beta}{\gamma}\frac{x}{\sqrt{1+x^2}}\right)
\right)
\right]_{x=-\infty}^{x=\infty}\\
&=0.
\end{align*}
Hence, we have 
\begin{align*}
\int_{\mathbb R}\partial_{\gamma}b(x,\boldsymbol\beta)\pi(x)\dd x
=-\frac{\beta}{\gamma}.
\end{align*}
From the above, we obtain
\begin{align}
\int_{\mathbb R}
\frac{1}{\alpha^*}
\left(1,-\frac{\beta_0}{\gamma_0}\right)\dd\mu_{(\beta_0,\gamma_0)}(x)
({\boldsymbol d}_1-{\boldsymbol d}_2)
&=\frac{1}{\alpha^*}\left(1,-\frac{\beta_0}{\gamma_0}\right)
\left(
\begin{array}{c}
d_{1,1}-d_{2,1}\\
d_{1,2}-d_{2,2}
\end{array}
\right)\nonumber\\
&=
\frac{1}{\alpha^*}
\left(
(d_{1,1}-d_{2,1})
-\frac{\beta_0}{\gamma_0}(d_{1,2}-d_{2,2})
\right).\label{eq5-999}
\end{align}
In the following cases, 
\textbf{[F3]} holds because equation \eqref{eq5-999} does not equal $0$:
\begin{enumerate}
\renewcommand{\labelenumi}{(\arabic{enumi})}
\item
$\beta$ changes and $\gamma$ does not change, 
\item 
$\beta_0\neq0$, 
$\gamma$ changes and $\beta$ does not change,
\item 
$\beta_0>0$,  
$d_{1,1}-d_{2,1}<0$ (resp. $>0$) and $d_{1,2}-d_{2,2}>0$ (resp. $<0$),
\item 
$\beta_0<0$, 
$d_{1,1}-d_{2,1}<0$ (resp. $>0$) and $d_{1,2}-d_{2,2}<0$ (resp. $>0$).
\end{enumerate}

Finally, in Case B, 
we confirm that the hyperbolic diffusion model satisfies the 
assumptions 
\textbf{[B1-I]}, \textbf{[B2-I]}, \textbf{[B1-II]} and \textbf{[B2-II]}.
Note that it was mentioned in Remark \ref{rmk1} 
that \textbf{[B1-I]} and \textbf{[B2-I]}(a), (b) hold.
Thus, in the following, we verify that 
\textbf{[B2-I]}(c), \textbf{[B1-II]} and \textbf{[B2-II]} hold. 
From the proof of Lemma 1 of Kessler (1997), 
we can express $Q(x,\theta)=L_\theta^2f(y|x)|_{y=x}$,
where $f(y|x)=(y-x)^2$,  
\begin{align*}
L_\theta f(y|x)
&=b(y,\boldsymbol\beta)\partial_y f(y|x)+\frac{A(y,\alpha)}{2}\partial_y^2 f(y|x),
\\
L_\theta^2 f(y|x)
&=L_\theta[L_\theta f](y|x).
\end{align*}
The specific calculation is as follows.
\begin{align*}
L_\theta f(y|x)
&=
2b(y,\boldsymbol\beta)(y-x)+\alpha^2,
\\
\partial_y [L_\theta f](y|x)
&=
2\bigl(
\partial_y b(y,\boldsymbol\beta)(y-x)+b(y,\boldsymbol\beta)
\bigr),
\\
\partial_y^2 [L_\theta f](y|x)
&=
2\bigl(
\partial_y^2 b(y,\boldsymbol\beta)(y-x)+2\partial_y b(y,\boldsymbol\beta)
\bigr),
\\
L_\theta^2 f(y|x)
&=
2b(y,\beta)
\bigl(
\partial_y b(y,\boldsymbol\beta)(y-x)+b(y,\boldsymbol\beta)
\bigr)
+\alpha^2
\bigl(
\partial_y^2 b(y,\boldsymbol\beta)(y-x)+2\partial_y b(y,\boldsymbol\beta)
\bigr),
\end{align*}
and 
$Q(x,\theta)=2\bigl(
b(x,\boldsymbol\beta)^2+\alpha^2\partial_x b(x,\boldsymbol\beta)
\bigr)$.
$b(x,\boldsymbol\beta)$ and $\partial_x b(x,\boldsymbol\beta)$ are bounded, 
so we have $\sup_{x,\theta}|Q(x,\theta)|<C$.
Thus, \textbf{[B2-I]}(c) holds.
Moreover, we have
\begin{align*}
\Gamma^\beta(x,\alpha,\boldsymbol\beta_1,\boldsymbol\beta_2)
&=
\frac{1}{\alpha^2}
\left[
\left(\beta_1-\frac{\gamma_1x}{\sqrt{1+x^2}}\right)
-
\left(\beta_2-\frac{\gamma_2x}{\sqrt{1+x^2}}\right)
\right]^2
\\
&=
\frac{1}{\alpha^2}
\left(
(\beta_1-\beta_2)
-
(\gamma_1-\gamma_2)
\frac{x}{\sqrt{1+x^2}}
\right)^2,
\end{align*}
where $\boldsymbol\beta_k=(\beta_k,\gamma_k)^\TT$.
Therefore, since $-1<\dfrac{x}{\sqrt{1+x^2}}<1$ for $x\in\mathbb R$,
we have 
$\sup_x|\Gamma^\beta(x,\alpha^*,\boldsymbol\beta_1^*,\boldsymbol\beta_2^*)|>0$ 
in the following cases, 
and \textbf{[B1-II]} holds. 
\begin{enumerate}
\renewcommand{\labelenumi}{(\arabic{enumi})}
\item
$\gamma_1^*=\gamma_2^*$,
\item 
$\gamma_1^*\neq\gamma_2^*$ and 
$\beta_1^*-\beta_2^*<-(\gamma_1^*-\gamma_2^*)$,
\item 
$\gamma_1^*\neq\gamma_2^*$ and 
$\beta_1^*-\beta_2^*>\gamma_1^*-\gamma_2^*$.
\end{enumerate}
Furthermore, we see, from 
\begin{align*}
\partial_\alpha \Gamma^\beta(x,\alpha,\boldsymbol\beta_1,\boldsymbol\beta_2)
&=
\frac{-2}{\alpha^3}
\left(
(\beta_1-\beta_2)
-
(\gamma_1-\gamma_2)
\frac{x}{\sqrt{1+x^2}}
\right)^2,
\\
\partial_{\beta_1} \Gamma^\beta(x,\alpha,\boldsymbol\beta_1,\boldsymbol\beta_2)
&=
\frac{2}{\alpha^2}
\left(
(\beta_1-\beta_2)
-
(\gamma_1-\gamma_2)
\frac{x}{\sqrt{1+x^2}}
\right),
\\
\partial_{\gamma_1} \Gamma^\beta(x,\alpha,\boldsymbol\beta_1,\boldsymbol\beta_2)
&=
\frac{-2x}{\alpha^2\sqrt{1+x^2}}
\left(
(\beta_1-\beta_2)
-
(\gamma_1-\gamma_2)
\frac{x}{\sqrt{1+x^2}}
\right),
\end{align*}
$\partial_{\beta_2} \Gamma^\beta(x,\alpha,\boldsymbol\beta_1,\boldsymbol\beta_2)
=-\partial_{\beta_1} \Gamma^\beta(x,\alpha,\boldsymbol\beta_1,\boldsymbol\beta_2)$,
$\partial_{\gamma_2} \Gamma^\beta(x,\alpha,\boldsymbol\beta_1,\boldsymbol\beta_2)
=-\partial_{\gamma_1} \Gamma^\beta(x,\alpha,\boldsymbol\beta_1,\boldsymbol\beta_2)$,
\begin{align*}
\frac{1}{\alpha^2}
\partial_{\beta}b(x,\boldsymbol\beta)
\Bigl(
b(x,\boldsymbol\beta_1)-b(x,\boldsymbol\beta_2)
\Bigr)^2
&=
\frac{1}{\alpha^2}
\left(
(\beta_1-\beta_2)
-
(\gamma_1-\gamma_2)
\frac{x}{\sqrt{1+x^2}}
\right),
\\
\frac{1}{\alpha^2}
\partial_{\gamma}b(x,\boldsymbol\beta)
\Bigl(
b(x,\boldsymbol\beta_1)-b(x,\boldsymbol\beta_2)
\Bigr)^2
&=
\frac{-x}{\alpha^2\sqrt{1+x^2}}
\left(
(\beta_1-\beta_2)
-
(\gamma_1-\gamma_2)
\frac{x}{\sqrt{1+x^2}}
\right)
\end{align*}
and
the boundedness of $\dfrac{x}{\sqrt{1+x^2}}$,
that 
\begin{align*}
&\sup_{x,\alpha,\boldsymbol\beta_k}
\left|
\partial_{(\alpha,\boldsymbol\beta_1,\boldsymbol\beta_2)} 
\Gamma^\beta(x,\alpha,\boldsymbol\beta_1,\boldsymbol\beta_2)
\right|
<C, 
\\
&\sup_{x,\alpha,\boldsymbol\beta_k}
\left|
\frac{1}{\alpha^2}
\partial_{\boldsymbol\beta}b(x,\boldsymbol\beta)
\Bigl(
b(x,\boldsymbol\beta_1)-b(x,\boldsymbol\beta_2)
\Bigr)^2
\right|<C
\end{align*}
and
\textbf{[B2-II]} holds.



\section{Simulations}\label{sec6}


\subsection{Case A}

In this subsection, we consider the $1$-dimensional Ornstein-Uhlenbeck process:
\begin{align*}
\dd X_t=-\beta(X_t-\gamma)\dd t+\alpha\dd W_t,\quad X_0=x_0,
\end{align*}
where $\alpha, \beta>0, \gamma\in\mathbb R$.

In order to check the asymptotic behavior of the estimator $\hat{\tau}_n^\alpha$
in Case A,
we consider the following stochastic differential equation
\begin{align*}
X_t=
\begin{cases}
\displaystyle 
X_0-\int_0^t \beta^* (X_s-\gamma^* )\dd s+\alpha_1^* W_t,
\quad t\in[0,\tau_*^\alpha T), \\
\displaystyle 
X_{\tau_*^\alpha T}-\int_{\tau_*^\alpha T}^t \beta^* (X_s-\gamma^*)\dd s
+\alpha_2^* (W_t-W_{\tau_*^\alpha T}),
\quad t\in[\tau_*^\alpha T,T], \\
\end{cases}
\end{align*}
where
$x_0=2$, 
$\beta^* =1$, $\gamma^* =2$
and
$\tau_{*}^{\alpha} = 0.5$.
The number of iteration is 1000.
We set that the sample size of the data $\{\Xt\}_{i=0}^n$ is $n=10^6$ and
$h_n=n^{-2/3}=10^{-4}$.
Note that 
$T=n h_n=n^{1/3} =10^2$, 
$n h_n^2=n^{-1/3}=10^{-2}$, 
$\Dea=n^{-0.35}\approx 0.0079$, 
$n\Dea^2=n^{0.3}\approx 63.1$, 
$T\Dea=n^{-1/60}\approx 0.79$, 
$\alpha_0=0.1$, 
$\alpha_1^*=\alpha_0+ n^{-0.35}$
and
$\alpha_2^*=\alpha_0=0.1$. 
The existence of a change point in the intervals  
$[1/4T,T]$ and $[0, 3/4T]$ was investigated using the method of 
Tonaki et al. (2020).
In all 1000 iterations, the change point was detected. 
Therefore, we estimated $\hat{\alpha}_1$ and $\hat{\alpha}_2$ with 
$\underline\tau^\alpha=1/4$ and $\overline\tau^\alpha=3/4$, respectively.
The estimates of $\alpha_1^*$, $\alpha_2^*$ and $\tau_*^\alpha$ 
are reported in Table \ref{tab1}.
Moreover, one has that
for $v\in\mathbb R$,
\begin{align*}
e_\alpha
&=\lim_{n\to\infty}\Dea^{-1}(\alpha_1^*-\alpha_2^*)=1,\\
\mathcal J_\alpha
&=\frac12e_\alpha^\TT 
\int_{\mathbb R^d}
\Xi^\alpha(x,\alpha_0)
\dd\mu_{\alpha_0}(x)
e_\alpha
=\frac{2}{\alpha_0^2},
\\
\mathbb F(v)
&=-2\mathcal J_\alpha^{1/2}\mathcal W(v)+\mathcal J_\alpha|v| 
=-2\left(\mathcal J_\alpha^{1/2}\mathcal W(v)-\frac{1}{2}|\mathcal J_\alpha v|\right)
\overset{d}{=}
-2\left(\mathcal W(\mathcal J_\alpha v)-\frac{1}{2}|\mathcal J_\alpha v|\right).
\end{align*}
It follows from Theorem \ref{th1} that
\begin{align*}
n\Dea^2(\hat\tau_n^\alpha-\tau_*^\alpha)
\dto
\underset{v\in\mathbb R}{\mathrm{argmin}}\,\mathbb F(v).
\end{align*}

\noindent
For $v\in \mathbb{R}$,  let $G(v) =  \mathcal W(v) - \frac{1}{2} |v|$
and $\hat{\eta}  = \inf \left\{ \eta \in \mathbb{R}  \ |  \ G(\eta) = \sup_{v\in \mathbb{R}} G(v) \right\}$.
For the probability density function of the distribution of $\hat{\eta}$,
see Lemma 1.6.3 of Cs{\"o}rg{\"o} and Horv{\'a}th (1997).   
From Figure \ref{caseA-alpha}, 
we can see that the distribution of the estimator almost 
corresponds with the asymptotic distribution in Theorem \ref{th1}
and 
the estimators
have good performance.

\begin{table}[h]
\caption{
Mean and standard deviation of the estimators under 
$n=10^6$, $T=100$, $h=10^{-4}$, $\tau_*^\alpha=0.5$,
$\alpha_1^*\approx 0.1079$, $\alpha_2^*=0.1$ 
in Case A.
}

\

\begin{center}
\begin{tabular*}{.4\textwidth}{@{\extracolsep{\fill}}ccc}\hline
$\hat\alpha_1$ & $\hat\alpha_2$ & $\hat\tau_n^\alpha$ 
\rule[0mm]{0cm}{4mm}\\\hline 
0.10795 & 0.10001 & 0.49999  \rule[0mm]{0cm}{4mm}\\ 
(0.00016) & (0.00014) & (0.00041) \\\hline
\end{tabular*}
\end{center}
\label{tab1}
\end{table}

\begin{figure}[h] 
\begin{center}
\includegraphics[width=6cm,pagebox=cropbox,clip]{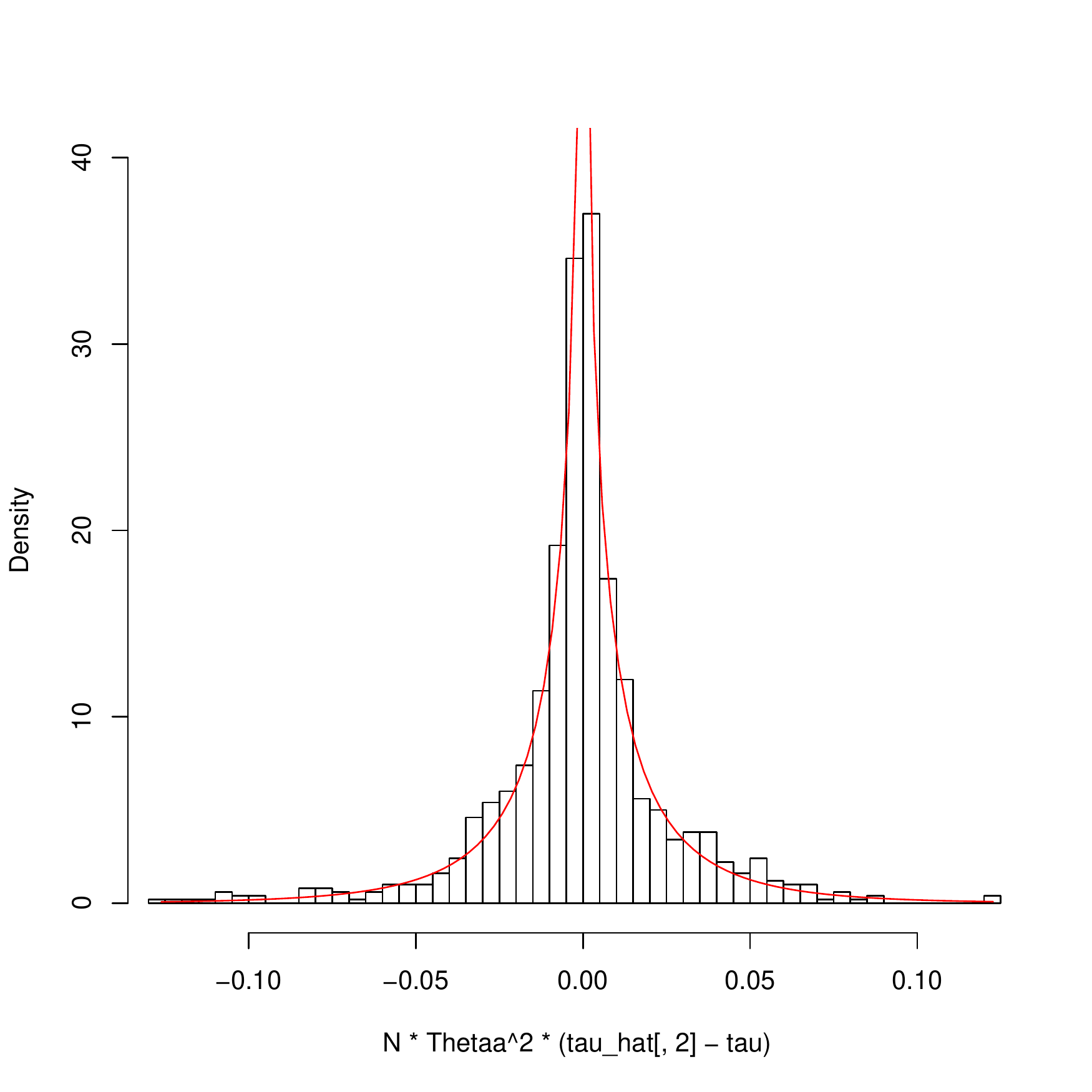}
\includegraphics[width=6cm,pagebox=cropbox,clip]{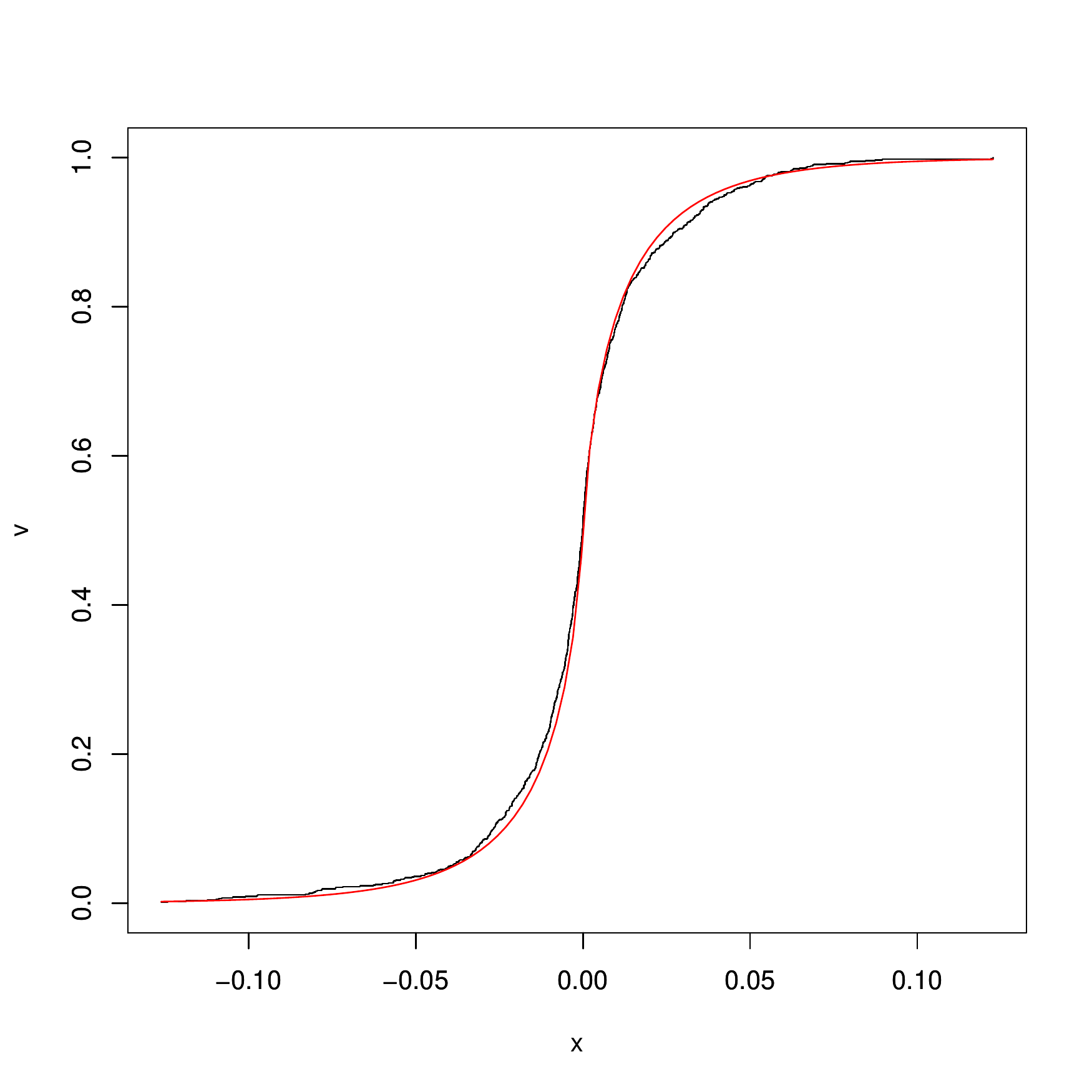}
\caption{
The figure on the left is the histogram of 
$n\Dea^2(\hat\tau_n^\alpha-\tau_*^\alpha)$ (black line) and the theoretical density function (red line).
The figure on the right is the empirical distribution function of 
$n\Dea^2(\hat\tau_n^\alpha-\tau_*^\alpha)$ (black line) and the theoretical distribution function (red line).
}\label{caseA-alpha}
\end{center}
\end{figure}

For the simulations of the estimator $\hat{\tau}_n^{\beta}$ 
in Case A,
we treat the stochastic differential equation as follows.
\begin{align*}
X_t=
\begin{cases}
\displaystyle 
X_0-\int_0^t \beta^*(X_s-\gamma_1^*)\dd s+\alpha^* W_t,
\quad t\in[0,\tau_*^\beta T), \\
\displaystyle 
X_{\tau_*^\beta T}-\int_{\tau_*^\beta T}^t \beta^*(X_s-\gamma_2^*)\dd s
+\alpha^* (W_t-W_{\tau_*^\beta T}),
\quad t\in[\tau_*^\beta T,T], \\
\end{cases}
\end{align*}
where
$x_0=5$, 
$\alpha^*=0.5$,  
$\beta^* =2.5$ 
and
$\tau_{*}^{\beta} = 0.5$.
The number of iteration is 1000.
We set that the sample size of the data $\{\Xt\}_{i=0}^n$ is $n=10^6$ and
$h_n=n^{-4/7} \approx 3.73 \times 10^{-4}$.
Note that 
$T=n h_n =n^{3/7} \approx 3.73 \times 10^{2}$,
$n h_n^2 = n^{-1/7} \approx 0.14$,
$\Deb=n^{-1/8} \approx 0.18$, 
$T\Deb^2 =n^{5/28} \approx 11.8$, 
$T\Deb^4 =n^{-1/14} \approx 0.37$
$\gamma_1^*=5+\Deb$
and 
$\gamma_2^*=5$.
In all iterations, the change point was detected in the intervals $[1/4T,T]$ 
and $[0, 3/4T]$.
Therefore, we estimated $\hat{\boldsymbol\beta}_1$ and $\hat{\boldsymbol\beta}_2$ 
with $\underline\tau^\beta=1/4$ and $\overline\tau^\beta=3/4$, respectively.
The estimates of $\boldsymbol\beta_1^*$, $\boldsymbol\beta_2^*$ and $\tau_*^\beta$ 
are reported in Table \ref{tab2}.
Noting that 
$\boldsymbol \beta_1^*
=(\beta^*, \gamma_1^*)^\TT$, 
$\boldsymbol \beta_2^*
=(\beta^*, \gamma_2^*)^\TT$, 
$\boldsymbol \beta_0
=(\beta^*, \gamma_2^*)^\TT$, 
$\mu_{(\alpha^*,\boldsymbol\beta_0)}\sim
N\bigl(\gamma_2^*,\frac{(\alpha^*)^2}{2\beta^*}\bigr)$
and
\begin{align*}
\Xi^\beta(x,\alpha,\boldsymbol\beta)
=
\frac{1}{\alpha^2}
\left(
\begin{array}{cc}
(x-\gamma)^2 & -\beta(x-\gamma) \\
-\beta(x-\gamma) & \beta^2
\end{array}
\right),
\end{align*}
we obtain that for $v\in\mathbb R$,
\begin{align*}
e_\beta
&=\lim_{n\to\infty}\Deb^{-1}(\boldsymbol\beta_1^*-\boldsymbol\beta_2^*) 
=
\left(
\begin{array}{c}
0\\
1
\end{array}
\right)
,\\
\mathcal J_\beta
&=e_\beta^\TT 
\int_{\mathbb R^d}
\Xi^\beta(x,\alpha^*,\boldsymbol\beta_0)
\dd\mu_{(\alpha^*,\boldsymbol\beta_0)}(x)e_\beta
=
\frac{1}{(\alpha^*)^2}(0,1)
\left(
\begin{array}{cc}
\frac{(\alpha^*)^2}{2\beta^*} & 0 \\
0 & (\beta^*)^2
\end{array}
\right)
\left(
\begin{array}{c}
0\\
1
\end{array}
\right)
=\left(\frac{\beta^*}{\alpha^*}\right)^2
,\\
\mathbb G(v)
&=-2\mathcal J_\beta^{1/2}\mathcal W(v)+\mathcal J_\beta|v| 
\overset{d}{=}
-2\left(\mathcal W(\mathcal J_\beta v)-\frac{1}{2}|\mathcal J_\beta v|\right).
\end{align*}
By Theorem \ref{th3}, we have that
\begin{align*}
T\Deb^2(\hat\tau_n^\beta-\tau_*^\beta)
\dto
\underset{v\in\mathbb R}{\mathrm{argmin}}\,
\mathbb G(v).
\end{align*}
Since Figure \ref{caseA-beta} shows 
that the distribution of the estimator is similar to the asymptotic distribution in Theorem \ref{th3},
the estimator has good behavior.

\begin{table}[h]
\caption{
Mean and standard deviation of the estimators under 
$n=10^6$, $T\approx 3.73\times10^2$, $h\approx 3.73\times 10^{-4}$, 
$\tau_*^\beta=0.5$, $\beta^*=2.5$, 
$\gamma_1^*\approx 5.1778$, $\gamma_2^*=5$ 
in Case A.
}
\begin{center}
\begin{tabular*}{.7\textwidth}{@{\extracolsep{\fill}}ccccc}\hline
 $\hat\beta_1$ & $\hat\gamma_1$ & $\hat\beta_2$ & $\hat\gamma_2$ & $\hat\tau_n^\beta$ 
\rule[0mm]{0cm}{4mm}\\\hline 
2.54981 & 5.17730 & 2.54314 & 4.99981 & 0.49797 \rule[0mm]{0cm}{4mm}\\ 
(0.22565) & (0.02047) & (0.24675) & (0.02030) & (0.01341) \\\hline
\end{tabular*}
\end{center}
\label{tab2}
\end{table}

\begin{figure}[h] 
\begin{center}
\includegraphics[width=6cm,pagebox=cropbox,clip]{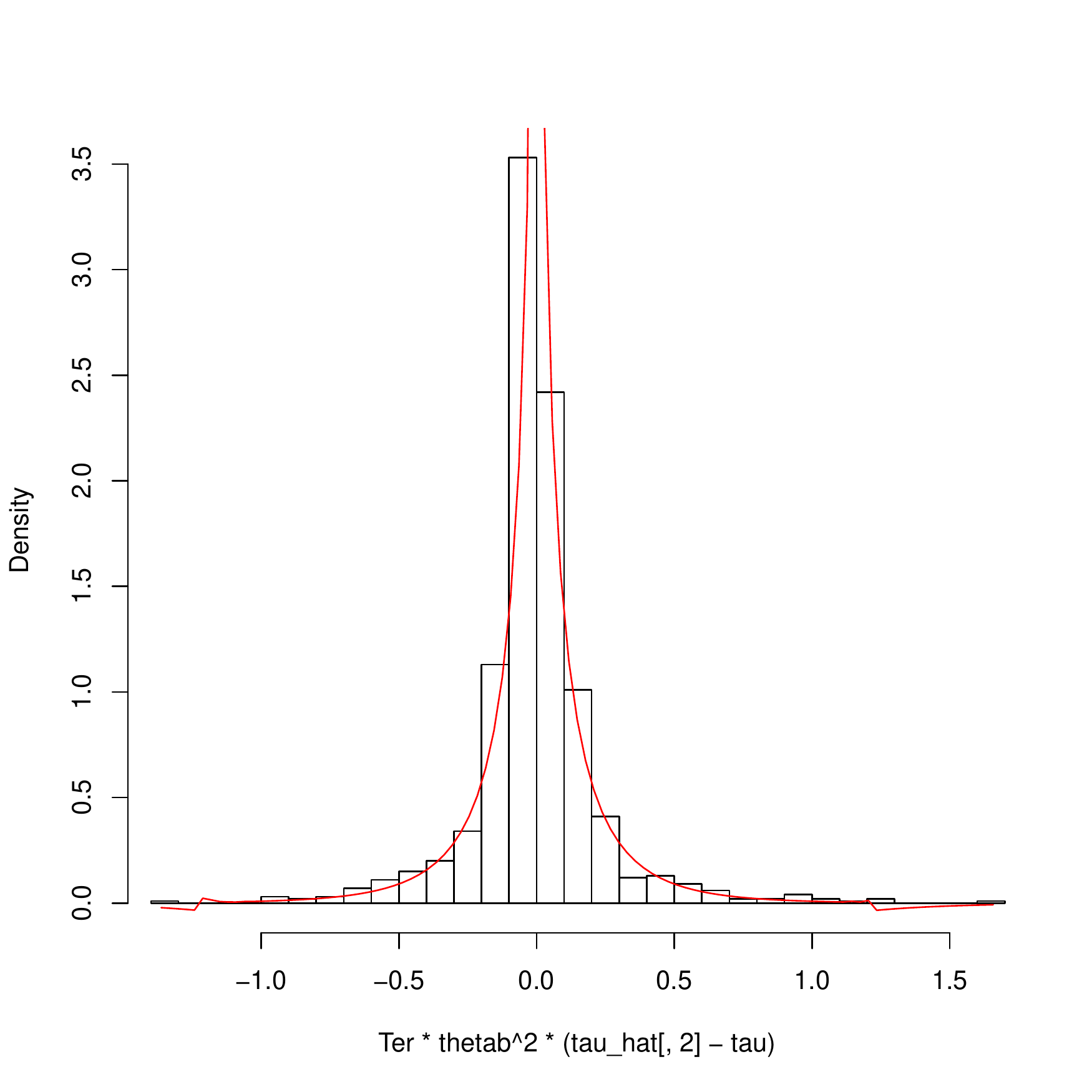}
\includegraphics[width=6cm,pagebox=cropbox,clip]{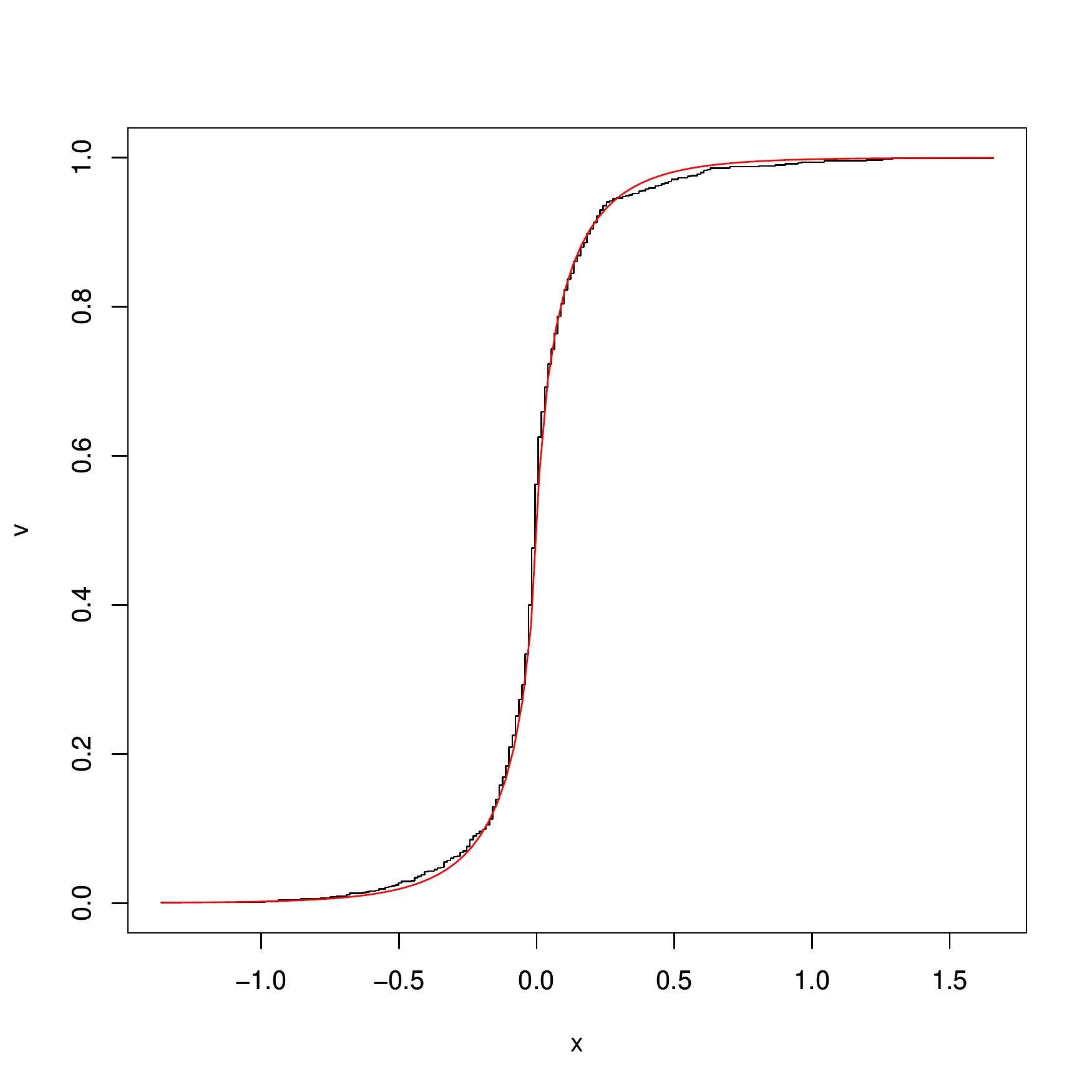}
\caption{
The figure on the left is the histogram of 
$T\Deb^2(\hat\tau_n^\beta-\tau_*^\beta)$ (black line) and the theoretical density function (red line).
The figure on the right is the empirical distribution function of 
$T\Deb^2(\hat\tau_n^\beta-\tau_*^\beta)$ (black line) and the theoretical distribution function (red line).
}\label{caseA-beta}
\end{center}
\end{figure}



\subsection{Case  B}

In this subsection, we consider the hyperbolic diffusion model
\begin{align*}
\dd X_t =
\left(
\beta-\frac{\gamma X_t}{\sqrt{1+X_t^2}}
\right)\dd t
+\alpha\dd W_t,\quad
X_0 = x_0.
\quad(\alpha>0,\beta\in\mathbb R, \gamma>|\beta|).
\end{align*}


For simulations of the estimator $\hat{\tau}_n^{\alpha}$ in Case B,
we study the following stochastic differential equation
\begin{align*}
X_t=
\begin{cases}
\displaystyle 
X_0+\int_0^t \left(\beta^*-\frac{\gamma^* X_s}{\sqrt{1+X_s^2}}\right)\dd s
+\alpha_1^* W_t,
\quad t\in[0,\tau_*^\beta T), \\
\displaystyle 
X_{\tau_*^\beta T}
+\int_{\tau_*^\beta T}^t \left(\beta^*-\frac{\gamma^* X_s}{\sqrt{1+X_s^2}}\right)\dd s
+\alpha_2^* (W_t-W_{\tau_*^\beta T}),
\quad t\in[\tau_*^\beta T,T], \\
\end{cases}
\end{align*}
where
$x_0=2$, 
$\beta^* =0$, $\gamma^* = 1$, 
$\tau_{*}^{\alpha} = 0.5$,
$\alpha_1^*=1$, 
$\alpha_2^*=2$.
The number of iteration is 1000.
We set that the sample size of the data $\{\Xt\}_{i=0}^n$ is $n=10^5$ or $10^6$ and
$h_n=n^{-2/3}$.
The existence of a change point in the intervals $[1/4T,T]$ and $[0, 3/4T]$ was investigated and the change point was detected. 
Therefore, we estimated $\hat{\alpha}_1$ and $\hat{\alpha}_2$ with $\underline\tau^\alpha=1/4$ and $\overline\tau^\alpha=3/4$, respectively.
The estimates of $\alpha_1^*$, $\alpha_2^*$ and $\tau_*^\alpha$ 
are reported in Table \ref{tab3}.
By Theorem \ref{th2}, one has that 
\begin{align*}
n(\hat\tau_n^\alpha-\tau_*^\alpha)=O_p(1).
\end{align*}
Since Figure \ref{caseB-alpha} shows 
that $n(\hat\tau_n^\alpha-\tau_*^\alpha)$ does not diverges  when increasing from $n = 10^5$ to $n = 10^6$,
it seems that 
$n(\hat\tau_n^\alpha-\tau_*^\alpha)$ is $O_p(1)$ in this example.

\begin{table}[h]
\caption{
Mean and standard deviation of the estimators under 
$\tau_*^\alpha=0.5$, $\alpha_1^*=1$, $\alpha_2^*=2$ 
in Case B.
}
\begin{center}
\begin{tabular*}{.7\textwidth}{@{\extracolsep{\fill}}cccccc}\hline
$n$ & $T$ & $h$ & $\hat\alpha_1$ & $\hat\alpha_2$ & $\hat\tau_n^\alpha$ 
\rule[0mm]{0cm}{4mm}\\\hline 
$10^5$ & $46.4$ & $4.64\times10^{-4}$ & 0.99976 & 2.00044 & 0.49521  
\rule[0mm]{0cm}{4mm}\\ 
&&& (0.00471) & (0.00927) & (0.00018) \\
$10^6$ & $10^2$ & $10^{-4}$ &  1.00003 & 1.99998 & 0.49966   \rule[0mm]{0cm}{4mm}\\ 
&&& (0.00143) & (0.00287) & (0.00012) \\\hline
\end{tabular*}
\end{center}
\label{tab3}
\end{table}

\begin{figure}[h] 
\begin{center}
\includegraphics[width=6cm,pagebox=cropbox,clip]{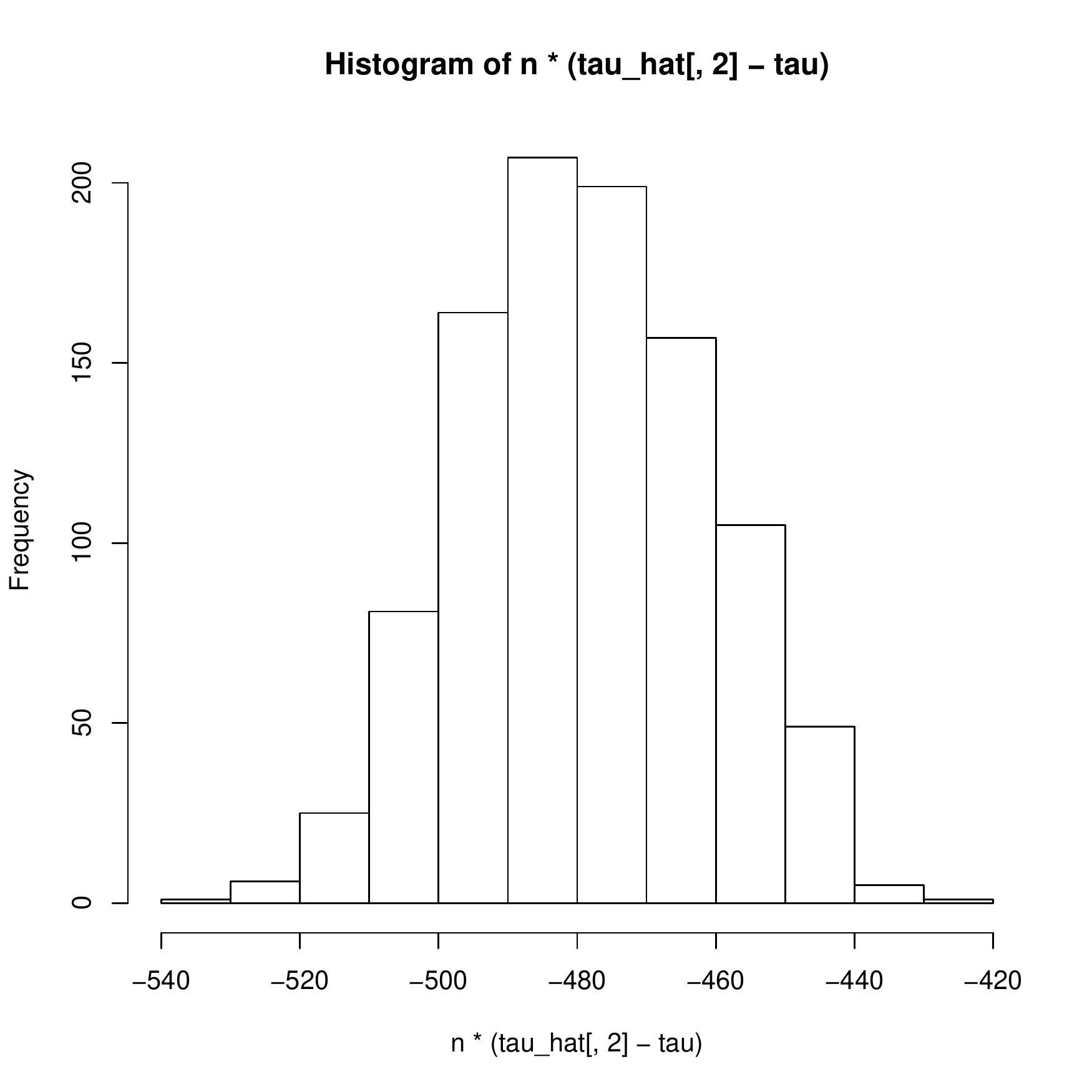}
\includegraphics[width=6cm,pagebox=cropbox,clip]{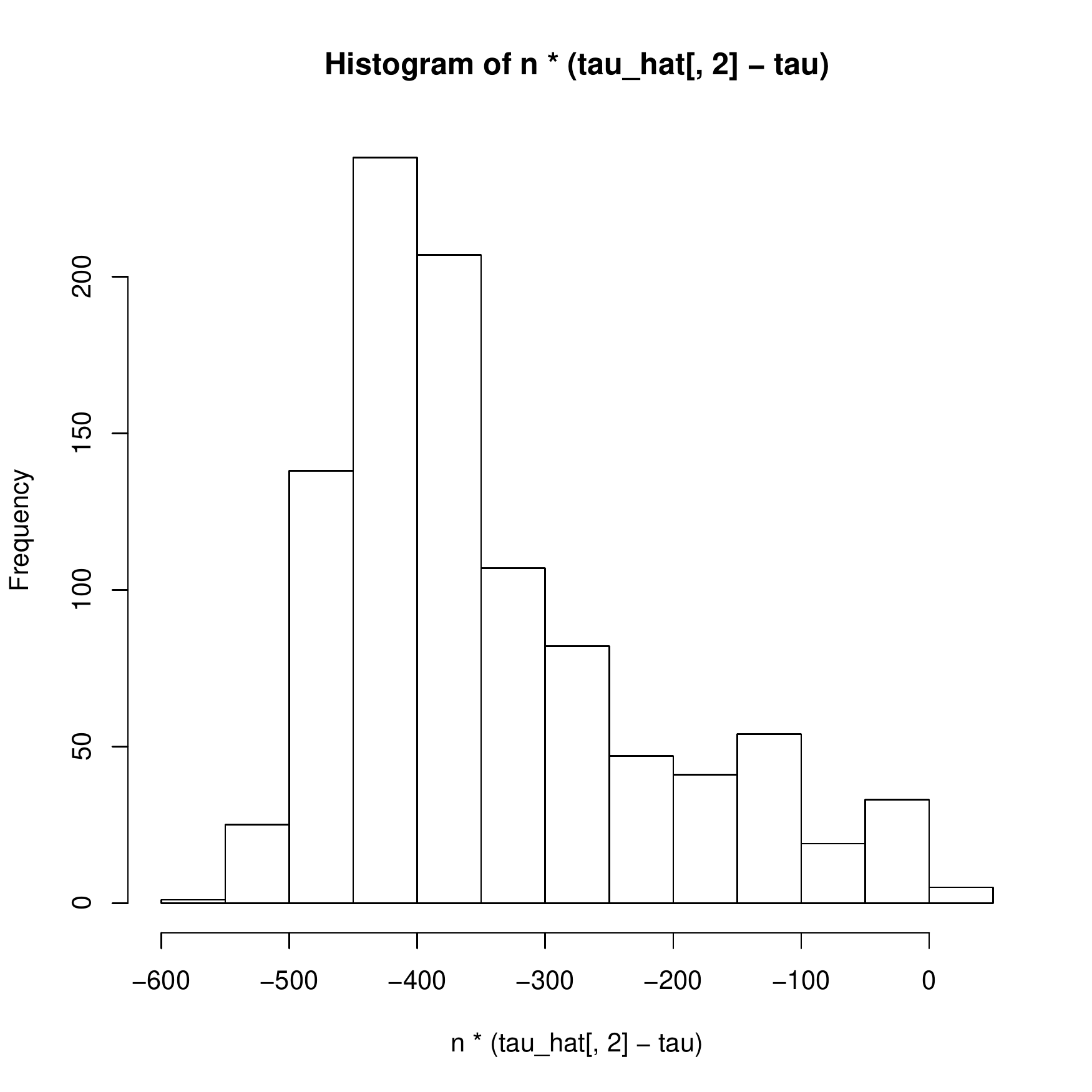}
\caption{The figure on the left is the histogram of 
$n(\hat\tau_n^\alpha-\tau_*^\alpha)$ with $n = 10^5$.
The figure on the right is the histogram of 
$n(\hat\tau_n^\alpha-\tau_*^\alpha)$ with $n = 10^6$.
}\label{caseB-alpha}
\end{center}
\end{figure}


In order to investigate the asymptotic performance of the estimator $\hat{\tau}_n^{\beta}$ in Case B,
we consider the stochastic differential equation as follows.
\begin{align*}
X_t=
\begin{cases}
\displaystyle 
X_0+\int_0^t \left(\beta_1^*-\frac{\gamma^* X_s}{\sqrt{1+X_s^2}}\right)\dd s
+\alpha^* W_t,
\quad t\in[0,\tau_*^\beta T), \\
\displaystyle 
X_{\tau_*^\beta T}
+\int_{\tau_*^\beta T}^t \left(\beta_2^*-\frac{\gamma^* X_s}{\sqrt{1+X_s^2}}\right)\dd s
+\alpha^* (W_t-W_{\tau_*^\beta T}),
\quad t\in[\tau_*^\beta T,T], \\
\end{cases}
\end{align*}
where
$x_0=0.25$, 
$\alpha^*=0.2$,  
$\gamma^* =1.2$, 
$\tau_{*}^{\beta} = 0.5$,  
$\beta_1^*=0.25$, 
$\beta_2^*=-0.25$.
The number of iteration is 1000.
We set that the sample size of the data $\{\Xt\}_{i=0}^n$ is $n=10^6$ or $10^7$ and
$h_n=n^{-4/7}$.
In all iterations, the change point was detected in the intervals $[1/4T,T]$ and 
$[0, 3/4T]$.
Therefore, we estimated $\hat{\beta}_1$ and $\hat{\beta}_2$ with $\underline\tau^\beta=1/4$ and $\overline\tau^\beta=3/4$, respectively.
The estimates of $\boldsymbol\beta_1^*$, $\boldsymbol\beta_2^*$ and $\tau_*^\beta$ 
are reported in Table \ref{tab4}.
It follows from Theorem \ref{th4} that
\begin{align*}
T(\hat\tau_n^\beta-\tau_*^\beta)=O_p(1).
\end{align*}
From Figure \ref{caseB-beta}, 
$T(\hat\tau_n^\beta-\tau_*^\beta)$ does not diverge 
when increasing from $n = 10^6$ to $n = 10^7$.
In this example, it appears that $T(\hat\tau_n^\beta-\tau_*^\beta)$ 
satisfies $O_p(1)$.

\begin{table}[h]
\caption{
Mean and standard deviation of the estimators under  
$\tau_*^\beta=0.5$, $\beta_1^*=0.25$, $\beta_2^*=-0.25$, $\gamma^*=1.2$ 
in Case B.
}
\begin{center}
\begin{tabular*}{.9\textwidth}{@{\extracolsep{\fill}}cccccccc}\hline
$n$ & $T$ & $h$ & $\hat\beta_1$ & $\hat\gamma_1$ & $\hat\beta_2$ & $\hat\gamma_2$ & 
$\hat\tau_n^\beta$ 
\rule[0mm]{0cm}{4mm}\\\hline 
$10^6$ & $3.73\times10^2$ & $3.73\times10^{-4}$ & 0.25964 & 1.24852 & $-0.26001$ & 1.24677 & 0.49884  \rule[0mm]{0cm}{4mm}\\ 
&&& (0.04142) & (0.16744) & (0.04451) & (0.18435) & (0.00191) \\
$10^7$ & $10^3$ & $10^{-4}$ & 0.25216 & 1.21212 & $-0.25489$ & 1.22162 & 0.50002  \rule[0mm]{0cm}{4mm}\\ 
&&& (0.02565) & (0.10281) & (0.02450) & (0.09966) & (0.00055) \\\hline
\end{tabular*}
\label{tab4}
\end{center}
\end{table}

\begin{figure}[h] 
\begin{center}
\includegraphics[width=5cm,pagebox=cropbox,clip]{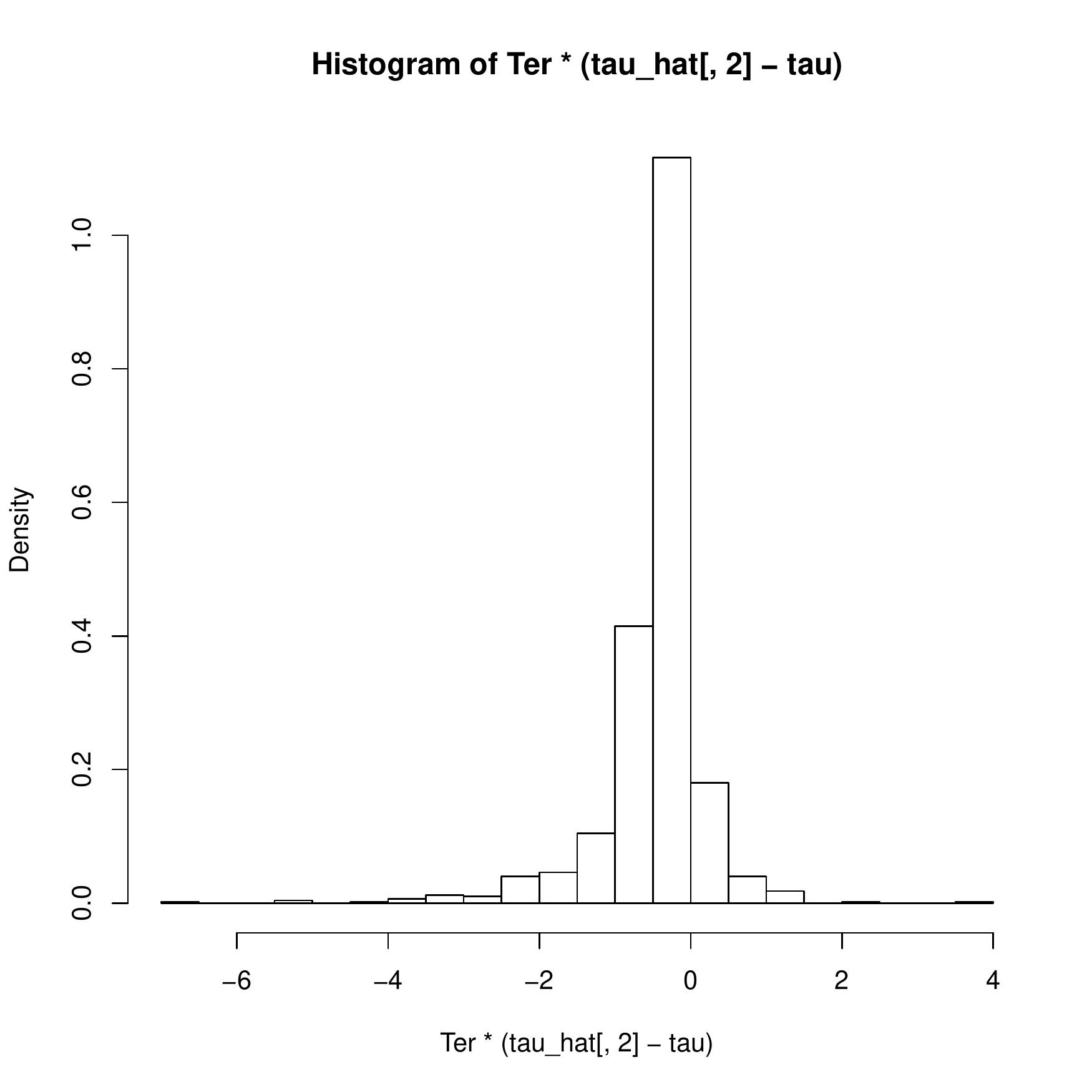}
\includegraphics[width=5cm,pagebox=cropbox,clip]{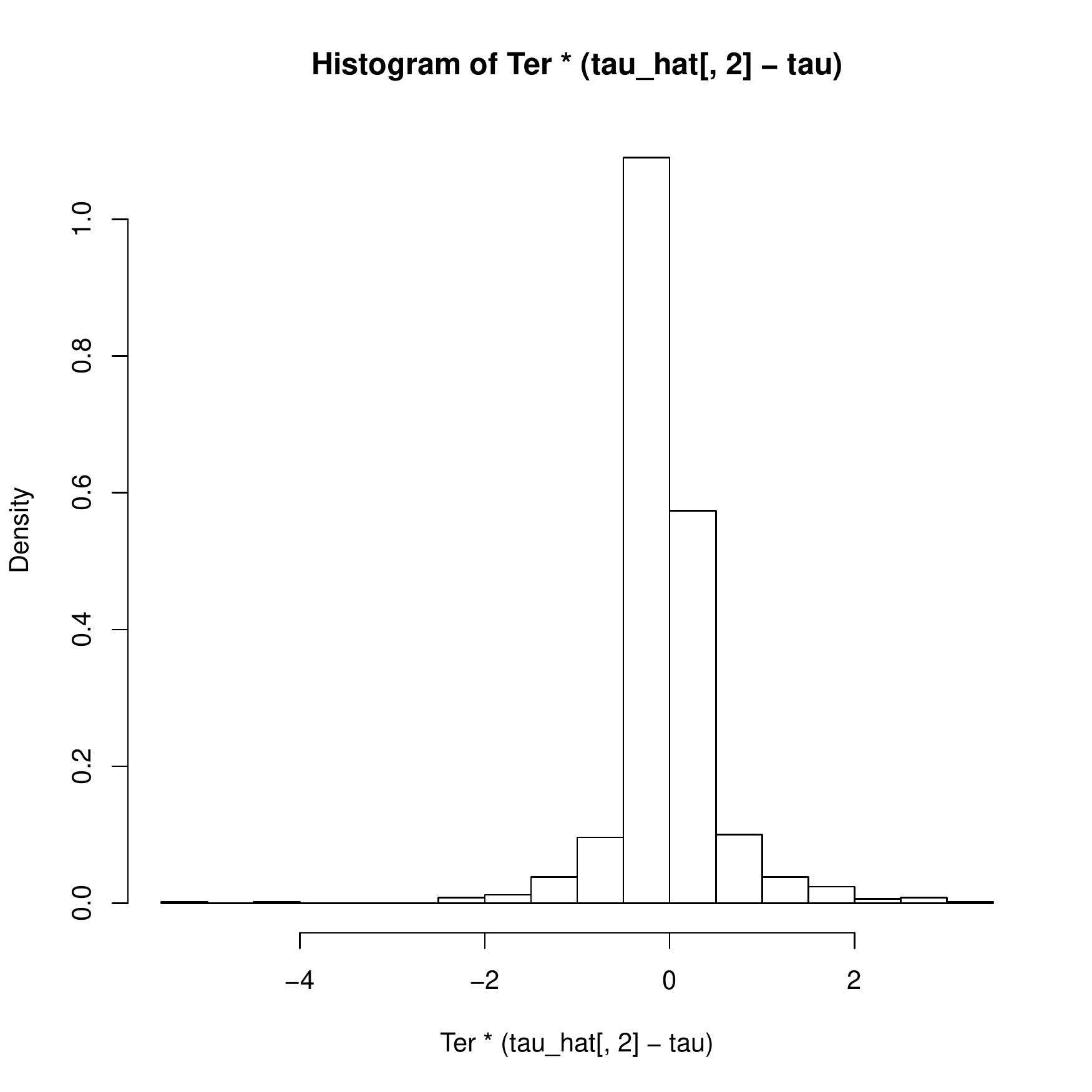}
\caption{The figure on the left is the histogram of 
$T(\hat\tau_n^\beta-\tau_*^\beta)$ with $n = 10^6$.
The figure on the right is the histogram of 
$T(\hat\tau_n^\beta-\tau_*^\beta)$ with $n = 10^7$.
}\label{caseB-beta}
\end{center}
\end{figure}


\section{Proofs}\label{sec7}
Let $\GG=\sigma\left[\{W_s\}_{s\le t_i^n}\right]$, and 
$C$, $C'>0$ denote universal constants.
If 
$f$ is a function on $\mathbb R^d\times \Theta$,
$f_{i-1}(\theta)$
denotes 
the value $f(\Xs,\theta)$.
If $\{u_n\}$ is a positive sequence, $R$ denotes a function 
on $\mathbb R^d\times\mathbb R_{+}\times\Theta$
for which there exists a constant $C>0$ such that 
\begin{align*}
\sup_{\theta\in\Theta}\| R(x,u_n,\theta)\|\le u_n C(1+\|x\|)^C.
\end{align*}
Let $R_{i-1}(u_n,\theta)=R(\Xs,u_n,\theta)$.
We set 
\begin{align*}
A\otimes x^{\otimes k}=\sum_{\ell_1,\ldots,\ell_k=1}^{d_1}
A^{\ell_1,\ldots,\ell_k}x^{\ell_1}\cdots x^{\ell_k},\quad
\text{ for }
A\in\underbrace{\mathbb R^{d_1}\otimes\cdots\otimes\mathbb R^{d_1}}_{k},\ x\in\mathbb R^{d_1}.
\end{align*} 
\begin{lem}\label{lem1}
Let $\Upsilon_n(\tau:\theta_1,\theta_2)$ be a contrast function,
and let $\hat\theta_1$, $\hat\theta_2$ be estimators of $\theta_1$, $\theta_2$,
respectively,
and let 
$\displaystyle
\hat\tau_n
=
\mathrm{argmin}_{\tau\in[0,1]}
\Upsilon _n(\tau:\hat\theta_1,\hat\theta_2)$
be the estimator of $\tau^*$, and let 
$\hat{\mathbb H}_n(v)=
\Upsilon_n(\tau^*+v/r_n:\hat\theta_1,\hat\theta_2)
-\Upsilon_n(\tau^*:\hat\theta_1,\hat\theta_2)$.
If there exist a positive sequence $\{r_n\}$ 
with $r_n\lto\infty$ 
and 
a random field $\mathbb H(v)$ 
that satisfy the following conditions, then 
$\displaystyle r_n(\hat\tau_n-\tau^*)
\dto
\mathrm{argmin}_{v\in\mathbb R}
\mathbb H(v)$.
\renewcommand{\labelenumi}{(\alph{enumi})}
\begin{enumerate}
\item 
$r_n(\hat\tau_n-\tau^*)=O_p(1)$,
\item 
For all $L>0$, 
\begin{align*}
\hat{\mathbb H}_n(v)\wto\mathbb H(v)\quad \text{in } \mathbb D[-L,L].
\end{align*}
\end{enumerate}
\end{lem}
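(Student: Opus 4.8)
The plan is to recognize the statement as an instance of the argmin continuous mapping theorem, and to reduce the convergence of $r_n(\hat\tau_n-\tau^*)$ to the convergence of the minimizer of the normalized contrast field $\hat{\mathbb H}_n$.

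First I would rewrite the estimator on the local scale $v$. Setting $\hat v_n=r_n(\hat\tau_n-\tau^*)$ and using the change of variables $\tau=\tau^*+v/r_n$, the minimization defining $\hat\tau_n$ becomes a minimization in $v$ over the interval $I_n=[-r_n\tau^*,\,r_n(1-\tau^*)]$. Since the subtracted term $\Upsilon_n(\tau^*:\hat\theta_1,\hat\theta_2)$ does not depend on $v$, the point $\hat v_n$ is exactly a minimizer of $v\mapsto\hat{\mathbb H}_n(v)$ over $I_n$,
\begin{align*}
\hat v_n=\underset{v\in I_n}{\mathrm{argmin}}\,\hat{\mathbb H}_n(v),
\end{align*}
so it suffices to prove $\hat v_n\dto\mathrm{argmin}_{v\in\mathbb R}\mathbb H(v)$.

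Next I would use the tightness hypothesis (a) to localize the argmin on a fixed compact set. Given $\epsilon>0$, condition (a) furnishes $L>0$ with $P(|\hat v_n|>L)<\epsilon$ for all large $n$; on the complementary event $\hat v_n$ coincides with a minimizer of $\hat{\mathbb H}_n$ over the fixed interval $[-L,L]$, so the expanding domain $I_n$ may be replaced by $[-L,L]$ up to an event of probability at most $\epsilon$. On $[-L,L]$, condition (b) gives $\hat{\mathbb H}_n\wto\mathbb H$ in $\mathbb D[-L,L]$; because the limit has continuous sample paths (the fields $\mathbb F$ and $\mathbb G$ are two sided Brownian motion with drift), this convergence upgrades to weak convergence in the uniform topology on $[-L,L]$.

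Finally I would apply the continuous-mapping argument for the argmin functional. The key analytic fact is that if $f_n\to f$ uniformly on $[-L,L]$, $f$ is continuous, and $f$ has a unique minimizer $v_0\in(-L,L)$, then every minimizer of $f_n$ over $[-L,L]$ converges to $v_0$; this follows from the well-separation $\inf_{|v-v_0|\ge\delta}f(v)>f(v_0)$, valid for each $\delta>0$ by uniqueness, continuity and compactness. Combining this with the weak convergence and with the almost sure uniqueness of the minimizer of $\mathbb H$ (which makes $\mathrm{argmin}_{v}\mathbb H(v)$ a well-defined random variable, and which holds here since two sided Brownian motion with nonzero drift attains its minimum at a single point a.s.) yields $\hat v_n\dto\mathrm{argmin}_{v\in\mathbb R}\mathbb H(v)$, the desired assertion. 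The main obstacle is the interplay between the growing domain $I_n$ and the fixed-compact convergence supplied by (b): it is precisely the tightness hypothesis (a) that bridges this gap, letting the global minimizer be identified with the minimizer over a large but fixed interval with probability arbitrarily close to one, after which the argmin continuous mapping theorem applies.
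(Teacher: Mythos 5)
Your proposal is correct in substance, but it takes a genuinely different route from the paper. You reduce the lemma to the argmin (argmax) continuous mapping theorem: localize via hypothesis (a), upgrade the $\mathbb D[-L,L]$ convergence of (b) to uniform convergence using continuity of the limit paths, and then invoke the deterministic fact that uniform convergence to a continuous function with a unique interior minimizer forces convergence of minimizers. The paper instead gives a self-contained proof at the level of distribution functions: for each $x$ it bounds $P\bigl(r_n(\hat\tau_n-\tau^*)\le x\bigr)$ by $P\bigl(r_n(\hat\tau_n-\tau^*)\notin[-L,L]\bigr)+P\bigl(\inf_{v\in[-L,x]}\hat{\mathbb H}_n(v)\le \inf_{v\in[x,L]}\hat{\mathbb H}_n(v)\bigr)$ (the remaining intersection event being empty), uses (b) and the portmanteau theorem applied only to the two infimum functionals to pass to $P\bigl(\inf_{v\in[-L,x]}\mathbb H(v)\le\inf_{v\in[x,L]}\mathbb H(v)\bigr)\le P(v^\dagger\notin[-L,L])+P(v^\dagger\le x)$, lets $L\to\infty$ using (a), and then runs the symmetric argument on $P\bigl(r_n(\hat\tau_n-\tau^*)> x\bigr)$ to get the matching lower bound, so that the CDFs converge pointwise to $P(v^\dagger\le x)$. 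What your route buys is brevity and context (the lemma is recognizably a known result in the Kim--Pollard / van der Vaart--Wellner tradition); what the paper's route buys is that it never needs the Skorokhod representation theorem, the extended continuous mapping theorem, or weak convergence in the nonseparable uniform topology --- it only needs that the infimum over a fixed compact subinterval is continuous at continuous paths. One soft spot in your writeup: the final sentence, where you ``combine'' the deterministic argmin fact with weak convergence to conclude convergence in distribution, is exactly the place where Skorokhod representation (or the extended CMT) must be invoked explicitly, together with the a.s. uniqueness and, for each large $L$, interiority of the minimizer of $\mathbb H$ restricted to $[-L,L]$; note that this uniqueness assumption is also used implicitly by the paper (in the step asserting that $\inf_{[-L,x]}\mathbb H\le\inf_{[x,L]}\mathbb H$ and $v^\dagger\in[-L,L]$ force $v^\dagger\le x$), so you are not assuming more than the paper does, but in your argument it carries more weight and should be stated.
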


\begin{proof}
Set 
$\displaystyle v^\dagger=
\mathrm{argmin}_{v\in\mathbb R}
\mathbb H(v)$.
For all $x\in\mathbb R$, 
\begin{align}
P(r_n(\hat\tau_n-\tau^*)\le x)
&\le
P\left(
r_n(\hat\tau_n-\tau^*)\le x,\  
r_n(\hat\tau_n-\tau^*)\in[-L,L],\  
\inf_{v\in[-L,x]}\hat{\mathbb H}_n(v)> \inf_{v\in[x,L]}\hat{\mathbb H}_n(v)
\right)\nonumber\\
&\quad+
P\Bigl(
r_n(\hat\tau_n-\tau^*)\notin[-L,L]
\Bigr)
+P\left(
\inf_{v\in[-L,x]}\hat{\mathbb H}_n(v)\le \inf_{v\in[x,L]}\hat{\mathbb H}_n(v)
\right).\label{eq-a1}
\end{align}
If 
$r_n(\hat\tau_n-\tau^*)\in[-L,L]$
and
$\inf_{v\in[-L,x]}\hat{\mathbb H}_n(v)> \inf_{v\in[x,L]}\hat{\mathbb H}_n(v)$,
then
\begin{align*}
\inf_{v\in[-L,x]}\Upsilon_n\left(\tau^*+\dfrac{v}{r_n}:\hat\theta_1,\hat\theta_2\right)
>\inf_{v\in[x,L]}\Upsilon_n\left(\tau^*+\dfrac{v}{r_n}:\hat\theta_1,\hat\theta_2\right),
\end{align*}
hence, from $\tau^*+\dfrac{x}{r_n}<\hat\tau_n\le \tau^*+\dfrac{L}{r_n}$,
$x< r_n(\hat\tau_n-\tau^*)\le L$ 
and
\begin{align*}
&P\left(
r_n(\hat\tau_n-\tau^*)\le x,\  
r_n(\hat\tau_n-\tau^*)\in[-L,L],\  
\inf_{v\in[-L,x]}\hat{\mathbb H}_n(v)> \inf_{v\in[x,L]}\hat{\mathbb H}_n(v)
\right)\\
&\le
P\Bigl(
r_n(\hat\tau_n-\tau^*)\le x,\ 
x< r_n(\hat\tau_n-\tau^*)\le L
\Bigr)\\
&=0.
\end{align*}
From (b), as $n\to\infty$, \eqref{eq-a1} is evaluated as follows.
\begin{align}
\varlimsup_{n\to\infty}P\Bigl(r_n(\hat\tau_n-\tau^*)\le x\Bigr)
&\le
\varlimsup_{n\to\infty}P\Bigl(
r_n(\hat\tau_n-\tau^*)\notin[-L,L]
\Bigr)
+\varlimsup_{n\to\infty}P\left(
\inf_{v\in[-L,x]}\hat{\mathbb H}_n(v)\le \inf_{v\in[x,L]}\hat{\mathbb H}_n(v)
\right)\nonumber\\
&\le
\sup_{n\in\mathbb N}P\Bigl(
r_n(\hat\tau_n-\tau^*)\notin[-L,L]
\Bigr)
+P\left(
\inf_{v\in[-L,x]}\mathbb H(v)\le \inf_{v\in[x,L]}\mathbb H(v)
\right).\label{eq-a2}
\end{align}
Now we have
\begin{align*}
P\left(
\inf_{v\in[-L,x]}\mathbb H(v)\le \inf_{v\in[x,L]}\mathbb H(v)
\right)
&\le 
P\left(
\inf_{v\in[-L,x]}\mathbb H(v)\le \inf_{v\in[x,L]}\mathbb H(v),\ 
v^\dagger\in[-L,L],\ 
v^\dagger >x
\right)\\
&\quad+
P(v^\dagger\notin[-L,L])
+P(v^\dagger \le x).
\end{align*}
Because if 
$\inf_{v\in[-L,x]}\mathbb H(v)\le \inf_{v\in[x,L]}\mathbb H(v)$ 
and $v^\dagger\in[-L,L]$,
then $-L\le v^\dagger \le x$,
we have
\begin{align*}
P\left(
\inf_{v\in[-L,x]}\mathbb H(v)\le \inf_{v\in[x,L]}\mathbb H(v),\ 
v^\dagger\in[-L,L],\ 
v^\dagger >x
\right)
\le 
P(-L\le v^\dagger \le x,\ v^\dagger >x)
=0.
\end{align*}
Therefore from
\begin{align*}
P\left(
\inf_{v\in[-L,x]}\mathbb H(v)\le \inf_{v\in[x,L]}\mathbb H(v)
\right)
\le 
P(v^\dagger\notin[-L,L])
+P(v^\dagger \le x)
\end{align*}
and (a), as $L\to\infty$, \eqref{eq-a2} is evaluated as follows.
\begin{align}
\varlimsup_{n\to\infty}P\Bigl(r_n(\hat\tau_n-\tau^*)\le x\Bigr)
&\le
\varlimsup_{L\to\infty}\sup_{n\in\mathbb N}
P\Bigl(
r_n(\hat\tau_n-\tau^*)\notin[-L,L]
\Bigr)
+\varlimsup_{L\to\infty}P(v^\dagger\notin[-L,L])
+P(v^\dagger \le x)
\nonumber\\
&=P(v^\dagger \le x).
\label{eq-a3}
\end{align}
In the same way, we have
\begin{align*}
\varlimsup_{n\to\infty}P\Bigl(r_n(\hat\tau_n-\tau^*)> x\Bigr)
&\le
\varlimsup_{n\to\infty}
P\Bigl(
r_n(\hat\tau_n-\tau^*)\notin[-L,L]
\Bigr)
+\varlimsup_{n\to\infty}P\left(
\inf_{v\in[-L,x]}\hat{\mathbb H}_n(v)\ge \inf_{v\in[x,L]}\hat{\mathbb H}_n(v)
\right)\\
&\le
\sup_{n\in\mathbb N}
P\Bigl(
r_n(\hat\tau_n-\tau^*)\notin[-L,L]
\Bigr)
+P\left(
\inf_{v\in[-L,x]}\mathbb H(v)\ge \inf_{v\in[x,L]}\mathbb H(v)
\right)\\
&\le
\sup_{n\in\mathbb N}
P\Bigl(
r_n(\hat\tau_n-\tau^*)\notin[-L,L]
\Bigr)
+P(v^\dagger>x)
+P(v^\dagger\notin[-L,L]).
\end{align*}
As $L\to\infty$, 
\begin{align*}
\varlimsup_{n\to\infty}P\Bigl(r_n(\hat\tau_n-\tau^*)> x\Bigr)
\le
P(v^\dagger>x),
\end{align*}
i.e.,
\begin{align}
\varliminf_{n\to\infty}P\Bigl(r_n(\hat\tau_n-\tau^*)\le x\Bigr)
&=\varliminf_{n\to\infty}\Bigl[1-P\Bigl(r_n(\hat\tau_n-\tau^*)> x\Bigr)\Bigr]
\nonumber\\
&=1-\varlimsup_{n\to\infty}P\Bigl(r_n(\hat\tau_n-\tau^*)> x\Bigr)
\nonumber\\
&\ge1-P(v^\dagger>x)
\nonumber\\
&=P(v^\dagger\le x).
\label{eq-a4}
\end{align}
From \eqref{eq-a3} and \eqref{eq-a4}, we obtain 
$\displaystyle\lim_{n\to\infty}P\Bigl(r_n(\hat\tau_n-\tau^*)\le x\Bigr)
=P(v^\dagger\le x)$ 
and $r_n(\hat\tau_n-\tau^*)\dto v^\dagger$.
\end{proof}

In Case A of Situation I, we set
\begin{align*}
\mathbb F_n(v)
&=\Phi_n\left(\tau_*^\alpha+\frac{v}{n\Dea^2}:\alpha_1^*,\alpha_2^*\right)
-\Phi_n\left(\tau_*^\alpha:\alpha_1^*,\alpha_2^*\right),\\
\hat{\mathbb F}_n(v)
&=\Phi_n\left(\tau_*^\alpha+\frac{v}{n\Dea^2}:\hat\alpha_1,\hat\alpha_2\right)
-\Phi_n\left(\tau_*^\alpha:\hat\alpha_1,\hat\alpha_2\right),\\
\mathcal D_n^\alpha(v)
&=\hat{\mathbb F}_n(v)-\mathbb F_n(v).
\end{align*}
\begin{lem}\label{lem2}
Suppose that \textbf{[C1]}-\textbf{[C5]}, \textbf{[C6-I]},
\textbf{[A1-I]} and \textbf{[A2-I]} hold.
Then, for all $L>0$, 
\begin{align*}
\sup_{v\in[-L,L]}|\mathcal D_n^\alpha(v)|\pto0
\end{align*}
as $n\to\infty$.
\end{lem}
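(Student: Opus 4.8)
The plan is to make $\mathcal D_n^\alpha(v)$ explicit and then bound it by pulling the estimation error outside a martingale-plus-bias sum over a short block of indices. Since $[n(\tau_*^\alpha+v/(n\Dea^2))]=[n\tau_*^\alpha+v/\Dea^2]$, both $\mathbb F_n(v)$ and $\hat{\mathbb F}_n(v)$ are sums of $F_i(\alpha_1)-F_i(\alpha_2)$ over the block of indices lying between $[n\tau_*^\alpha]$ and $[n\tau_*^\alpha+v/\Dea^2]$, so for $v>0$ one gets
\[
\mathcal D_n^\alpha(v)=\sum_{i=[n\tau_*^\alpha]+1}^{[n\tau_*^\alpha+v/\Dea^2]}\Bigl([F_i(\hat\alpha_1)-F_i(\alpha_1^*)]-[F_i(\hat\alpha_2)-F_i(\alpha_2^*)]\Bigr),
\]
with the analogous expression over a block to the left of $[n\tau_*^\alpha]$ when $v<0$. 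The block has length $\asymp|v|/\Dea^2\le L/\Dea^2=o(n)$, and on it the true diffusion parameter is $\alpha_2^*$ (resp.\ $\alpha_1^*$ for $v<0$).

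Next I would treat $I_1=\sum_i[F_i(\hat\alpha_1)-F_i(\alpha_1^*)]$ and $I_2=\sum_i[F_i(\hat\alpha_2)-F_i(\alpha_2^*)]$ separately. The key device, needed because $\hat\alpha_2$ is not $\GG$-measurable on the central block, is to Taylor-expand each summand around the \emph{deterministic} true value $\alpha_k^*$ and extract $(\hat\alpha_k-\alpha_k^*)=O_p(n^{-1/2})$ (by \textbf{[C6-I]}) from the sum:
\[
I_k=\Bigl[\sum_i\partial_\alpha F_i(\alpha_k^*)\Bigr](\hat\alpha_k-\alpha_k^*)+\tfrac12(\hat\alpha_k-\alpha_k^*)^\TT\Bigl[\sum_i\partial_\alpha^2 F_i(\bar\alpha_k)\Bigr](\hat\alpha_k-\alpha_k^*).
\]
I would then split $\partial_\alpha F_i(\alpha_k^*)$ into a conditionally centered part $\tr(\partial_\alpha A^{-1}(\Xs,\alpha_k^*)[(\DeX)^{\otimes2}/h-\EE(\cdot|\GG)])$ and its conditional mean. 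Using $\EE[(\DeX)^{\otimes2}|\GG]=hA(\Xs,\alpha_2^*)+h^2Q+R_{i-1}(h^3,\theta)$ (valid by \textbf{[C4]}, with polynomial growth controlled through \textbf{[C2]}, \textbf{[C5]}), the conditional mean equals $\tr(\partial_\alpha A^{-1}(\Xs,\alpha_k^*)A(\Xs,\alpha_2^*))+\tr(A^{-1}\partial_\alpha A(\Xs,\alpha_k^*))+O(h)$. Since $\partial_\alpha(A^{-1}A)=0$ this leading term vanishes at $\alpha_k^*=\alpha_2^*$, so it is $O(h)$ for $k=2$ and $O(\Dea)+O(h)$ for $k=1$ (as $|\alpha_1^*-\alpha_2^*|=\Dea$), uniformly in $x$ up to polynomial growth.

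The size estimates then follow. By Doob's maximal inequality the centered block sum is $O_p(\sqrt{L/\Dea^2})=O_p(\Dea^{-1})$ uniformly over the $\le L/\Dea^2$ values of $v$; times $O_p(n^{-1/2})$ it is $O_p((n\Dea^2)^{-1/2})\to0$ by \textbf{[A1-I]}. The conditional-mean sums, bounded over the growing block by $(L/\Dea^2)(O(\Dea)+O(h))$ with $N^{-1}\sum_i(1+\|\Xs\|)^C=O_p(1)$ from \textbf{[C2]}, contribute after multiplication by $O_p(n^{-1/2})$ the orders $O_p((n\Dea^2)^{-1/2})$ and $O_p(h/(\Dea^2\sqrt n))$, both vanishing; here $(h/(\Dea^2\sqrt n))^2=(nh^2)/(n\Dea^2)^2\to0$ since $nh^2\to0$ and $n\Dea^2\to\infty$. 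The second-order remainder is $(L/\Dea^2)O_p(1)\cdot O_p(n^{-1})=O_p((n\Dea^2)^{-1})\to0$. Because the deterministic bounds are monotone in $|v|$ (so their supremum sits at $v=\pm L$) while the centered part is handled uniformly by Doob, passing to $\sup_{|v|\le L}$ preserves every estimate, giving $\sup_{|v|\le L}|\mathcal D_n^\alpha(v)|\pto0$. The hard part will be exactly the interplay in (i) the non-adaptedness of $\hat\alpha_2$, resolved by expanding around the deterministic $\alpha_k^*$ and pulling the error out, and (ii) obtaining a genuinely uniform-in-$v$ bound, where the block length $\asymp\Dea^{-2}$ balances the $n^{-1/2}$ rate of $\hat\alpha_k$ so that everything is of order $(n\Dea^2)^{-1/2}$ — precisely where \textbf{[A1-I]}'s condition $n\Dea^2\to\infty$ enters.
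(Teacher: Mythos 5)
Your proposal is correct and follows essentially the same route as the paper's proof: a second-order Taylor expansion of $F_i$ around the deterministic true values $\alpha_k^*$ to pull out $(\hat\alpha_k-\alpha_k^*)=O_p(n^{-1/2})$, a martingale/compensator split of $\partial_\alpha F_i(\alpha_k^*)$ handled by a maximal inequality (the paper cites Hall and Heyde's Theorem 2.11 where you invoke Doob, which is the same device), and the identical order bookkeeping $O_p((n\Dea^2)^{-1/2})+O_p(h/(\sqrt n\Dea^2))+O_p((n\Dea^2)^{-1})=o_p(1)$ via \textbf{[A1-I]} and $nh^2\to0$. Your explicit cancellation $\partial_\alpha(A^{-1}A)=0$ for the compensator matches the paper's formula $\bigl[\tr\bigl(A^{-1}\partial_{\alpha^\ell}A\,A^{-1}(\alpha_k^*)(A(\alpha_2^*)-A(\alpha_k^*))\bigr)\bigr]_\ell+\Ri$, so no gap remains.
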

\begin{proof}
We assume that $v>0$. Then, we can express 
\begin{align}
\mathcal D_n^\alpha(v)
&=\sum_{i=[n\tau_*^\alpha]+1}^{[n\tau_*^\alpha+v/\Dea^2]}
\Bigl(F_i(\hat\alpha_1)-F_i(\hat\alpha_2)\Bigr)
-\sum_{i=[n\tau_*^\alpha]+1}^{[n\tau_*^\alpha+v/\Dea^2]}
\Bigl(F_i(\alpha_1^*)-F_i(\alpha_2^*)\Bigr)
\nonumber\\
&=\sum_{i=[n\tau_*^\alpha]+1}^{[n\tau_*^\alpha+v/\Dea^2]}
\left[
\left(F_i(\alpha_1^*)+\partial_\alpha F_i(\alpha_1^*)(\hat\alpha_1-\alpha_1^*)
+\int_0^1\partial_\alpha^2 F_i(\alpha_1^*+u(\hat\alpha_1-\alpha_1^*))\dd u
\otimes(\hat\alpha_1-\alpha_1^*)^{\otimes2}
\right)\right.
\nonumber\\
&\left.\qquad\qquad
-\left(F_i(\alpha_2^*)
+\partial_\alpha F_i(\alpha_2^*)(\hat\alpha_2-\alpha_2^*)
+\int_0^1\partial_\alpha^2 F_i(\alpha_2^*+u(\hat\alpha_2-\alpha_2^*))\dd u
\otimes(\hat\alpha_2-\alpha_2^*)^{\otimes2}
\right)\right]
\nonumber\\
&\qquad
-\sum_{i=[n\tau_*^\alpha]+1}^{[n\tau_*^\alpha+v/\Dea^2]}
\Bigl(F_i(\alpha_1^*)-F_i(\alpha_2^*)\Bigr)
\nonumber\\
&=\sum_{i=[n\tau_*^\alpha]+1}^{[n\tau_*^\alpha+v/\Dea^2]}
\Bigl(
\partial_\alpha F_i(\alpha_1^*)(\hat\alpha_1-\alpha_1^*)
+\partial_\alpha F_i(\alpha_2^*)(\hat\alpha_2-\alpha_2^*)
\Bigr)
\nonumber\\
&\qquad+
\sum_{i=[n\tau_*^\alpha]+1}^{[n\tau_*^\alpha+v/\Dea^2]}
\left(
\int_0^1\partial_\alpha^2 F_i(\alpha_1^*+u(\hat\alpha_1-\alpha_1^*))\dd u
\otimes(\hat\alpha_1-\alpha_1^*)^{\otimes2}\right.
\nonumber\\
&\left.\qquad\qquad
-\int_0^1\partial_\alpha^2 F_i(\alpha_2^*+u(\hat\alpha_2-\alpha_2^*))\dd u
\otimes(\hat\alpha_2-\alpha_2^*)^{\otimes2}
\right).
\label{eq7.969}
\end{align}
Now we see
\begin{align}
&\sup_{v\in[0,L]}
\left|
\sum_{i=[n\tau_*^\alpha]+1}^{[n\tau_*^\alpha+v/\Dea^2]}
\int_0^1\partial_\alpha^2 F_i(\alpha_k^*+u(\hat\alpha_k-\alpha_k^*))\dd u
\otimes(\hat\alpha_k-\alpha_k^*)^{\otimes2}
\right|
\nonumber\\
&\le
\sum_{i=[n\tau_*^\alpha]+1}^{[n\tau_*^\alpha+L/\Dea^2]}
\left|
\int_0^1\partial_\alpha^2 F_i(\alpha_k^*+u(\hat\alpha_k-\alpha_k^*))\dd u
\right|
|\hat\alpha_k-\alpha_k^*|^2
\nonumber\\
&\le
\frac{1}{n}
\sum_{i=[n\tau_*^\alpha]+1}^{[n\tau_*^\alpha+L/\Dea^2]}
\sup_{\alpha\in\Theta_A}\left|
\partial_\alpha^2 F_i(\alpha)
\right|
\left(\sqrt{n}|\hat\alpha_k-\alpha_k^*|\right)^2
\nonumber\\
&=O_p\left(\frac{1}{n\Dea^2}\right)
=o_p(1)
\label{eq7.968}
\end{align}
and
\begin{align}
&\sum_{i=[n\tau_*^\alpha]+1}^{[n\tau_*^\alpha+v/\Dea^2]}
\partial_\alpha F_i(\alpha_k^*)(\hat\alpha_k-\alpha_k^*)
\nonumber\\
&=
\sum_{i=[n\tau_*^\alpha]+1}^{[n\tau_*^\alpha+v/\Dea^2]}
\Bigl(
\partial_\alpha F_i(\alpha_k^*)
-\EE_{\alpha_2^*}[\partial_\alpha F_i(\alpha_k^*)|\GG]
\Bigr)
(\hat\alpha_k-\alpha_k^*)
+\sum_{i=[n\tau_*^\alpha]+1}^{[n\tau_*^\alpha+v/\Dea^2]}
\EE_{\alpha_2^*}[\partial_\alpha F_i(\alpha_k^*)|\GG]
(\hat\alpha_k-\alpha_k^*).
\label{eq7.967}
\end{align}
By Theorem 2.11 of Hall and Heyde (1980), we have
\begin{align*}
&\EE_{\alpha_2^*}\left[
\frac1n
\sup_{v\in[0,L]}
\left|
\sum_{i=[n\tau_*^\alpha]+1}^{[n\tau_*^\alpha+v/\Dea^2]}
\left(\partial_\alpha F_i(\alpha_k^*)
-\EE_{\alpha_2^*}[\partial_\alpha F_i(\alpha_k^*)|\GG]\right)
\right|^2
\right]\\
&\le
\frac{C}{n}
\sum_{i=[n\tau_*^\alpha]+1}^{[n\tau_*^\alpha+L/\Dea^2]}
\EE_{\alpha_2^*}\left[
\EE_{\alpha_2^*}\left[
\left|\partial_\alpha F_i(\alpha_k^*)
-\EE_{\alpha_2^*}[\partial_\alpha F_i(\alpha_k^*)|\GG]\right|^2
\middle|\GG\right]\right]\\
&\le\frac{C'}{n\Dea^2}
\lto0
\end{align*}
and
\begin{align}
\frac1{\sqrt n}
\sup_{v\in[0,L]}
\left|
\sum_{i=[n\tau_*^\alpha]+1}^{[n\tau_*^\alpha+v/\Dea^2]}
\left(\partial_\alpha F_i(\alpha_k^*)
-\EE_{\alpha_2^*}[\partial_\alpha F_i(\alpha_k^*)|\GG]\right)
\right|=o_p(1).
\label{eq7.966}
\end{align}
Moreover, we see
\begin{align}
&\sup_{v\in[0,L]}
\left|
\sum_{i=[n\tau_*^\alpha]+1}^{[n\tau_*^\alpha+v/\Dea^2]}
\EE_{\alpha_2^*}[\partial_\alpha F_i(\alpha_k^*)|\GG]
\right|
|\hat\alpha_k-\alpha_k^*|
\nonumber
\\
&=\sup_{v\in[0,L]}
\left|
\sum_{i=[n\tau_*^\alpha]+1}^{[n\tau_*^\alpha+v/\Dea^2]}
\biggl(\left[\tr\Bigl(
A_{i-1}^{-1}\partial_{\alpha^\ell}A_{i-1}A_{i-1}^{-1}(\alpha_k^*)
(A_{i-1}(\alpha_2^*)-A_{i-1}(\alpha_k^*))
\Bigr)
\right]_{\ell}
+\Ri
\biggr)
\right|
|\hat\alpha_k-\alpha_k^*|
\nonumber
\\
&=
\sup_{v\in[0,L]}
\left|
\sum_{i=[n\tau_*^\alpha]+1}^{[n\tau_*^\alpha+v/\Dea^2]}
\Bigl(
\Xi_{i-1}^\alpha(\alpha_k^*)(\alpha_2^*-\alpha_k^*)
\right.
\nonumber
\\
&
\qquad+
\int_0^1
(1-u)
\left(
\tr
\left[
A_{i-1}^{-1}\partial_{\alpha^{\ell_1}}A_{i-1}A_{i-1}^{-1}(\alpha_k^*)
\partial_{\alpha^{\ell_3}}\partial_{\alpha^{\ell_2}}A_{i-1}
(\alpha_k^*+u(\alpha_2^*-\alpha_k^*))
\right]
\right)_{\ell_1,\ell_2,\ell_3}
\dd u
\otimes
(\alpha_2^*-\alpha_k^*)^{\otimes2}
\nonumber
\\
&\qquad
+\Ri
\Bigr)
\Biggr|
|\hat\alpha_k-\alpha_k^*|
\nonumber
\\
&=
\sup_{v\in[0,L]}
\left|
\sum_{i=[n\tau_*^\alpha]+1}^{[n\tau_*^\alpha+v/\Dea^2]}
\biggl(
\Xi_{i-1}^\alpha(\alpha_0)(\alpha_2^*-\alpha_k^*)
+\int_0^1
\partial_\alpha\Xi_{i-1}^\alpha(\alpha_0+u(\alpha_k^*-\alpha_0))
\dd u
\otimes(\alpha_2^*-\alpha_k^*)
\otimes(\alpha_k^*-\alpha_0)
\right.
\nonumber
\\
&
\qquad+
\int_0^1
(1-u)
\left(
\tr
\left[
A_{i-1}^{-1}\partial_{\alpha^{\ell_1}}A_{i-1}A_{i-1}^{-1}(\alpha_k^*)
\partial_{\alpha^{\ell_3}}\partial_{\alpha^{\ell_2}}A_{i-1}
(\alpha_k^*+u(\alpha_2^*-\alpha_k^*))
\right]
\right)_{\ell_1,\ell_2,\ell_3}
\dd u
\otimes
(\alpha_2^*-\alpha_k^*)^{\otimes2}
\nonumber
\\
&\qquad
+\Ri
\biggr)
\Biggr|
|\hat\alpha_k-\alpha_k^*|
\nonumber
\\
&\le
\frac{\Dea}{\sqrt{n}}
\sup_{v\in[0,L]}
\left|
\sum_{i=[n\tau_*^\alpha]+1}^{[n\tau_*^\alpha+v/\Dea^2]}
\Xi_{i-1}^\alpha(\alpha_0)
\right|
\Dea^{-1}|\alpha_2^*-\alpha_k^*|
\sqrt{n}|\hat\alpha_k-\alpha_k^*|
\nonumber
\\
&\qquad+
\frac{\Dea^2}{\sqrt n}
\sum_{i=[n\tau_*^\alpha]+1}^{[n\tau_*^\alpha+L/\Dea^2]}
\Ro
\Dea^{-1}|\alpha_2^*-\alpha_k^*|
\Dea^{-1}|\alpha_2^*-\alpha_0|
\sqrt{n}|\hat\alpha_k-\alpha_k^*|
\nonumber
\\
&\qquad+
\frac{\Dea^2}{\sqrt n}
\sum_{i=[n\tau_*^\alpha]+1}^{[n\tau_*^\alpha+L/\Dea^2]}
\Ro
(\Dea^{-1}|\alpha_2^*-\alpha_k^*|)^2
\sqrt{n}|\hat\alpha_k-\alpha_k^*|
+
\frac{1}{\sqrt n}
\sum_{i=[n\tau_*^\alpha]+1}^{[n\tau_*^\alpha+L/\Dea^2]}
\Ri
\sqrt{n}|\hat\alpha_k-\alpha_k^*|
\nonumber
\\
&=O_p\left(\frac{1}{\sqrt{n}\Dea}\right)
+O_p\left(\frac{1}{\sqrt{n}}\right)
+O_p\left(\frac{h}{\sqrt{n}\Dea^2}\right)
\nonumber
\\
&=o_p(1).
\label{eq7.965}
\end{align}
Therefore, from \eqref{eq7.969}-\eqref{eq7.965}, we have
\begin{align*}
\sup_{v\in[0,L]}
|{\mathcal D}_n^\alpha(v)|\pto0.
\end{align*}
By the similar proof, we see
$\sup_{v\in[-L,0]}
|{\mathcal D}_n^\alpha(v)|\pto0$ 
and this proof is complete.
\end{proof}

\begin{lem}\label{lem3}
Suppose that \textbf{[C1]}-\textbf{[C5]}, \textbf{[C6-I]},
\textbf{[A1-I]} and \textbf{[A2-I]} hold.
Then, for all $L>0$,
\begin{align*}
\mathbb F_n(v)\wto\mathbb F(v) \text{ in }\mathbb D[-L,L]
\end{align*}
as $n\to\infty$. 
\end{lem}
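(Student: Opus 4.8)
The plan is to write $\mathbb F_n(v)$ as the sum of a predictable drift and a martingale and to treat the two pieces separately. First I would reduce to the case $v\ge0$; the case $v<0$ is a mirror image, and the two sides turn out to be asymptotically independent because they are built from disjoint increments (those after, resp.\ before, the index $[n\tau_*^\alpha]$), which is exactly what produces the two-sided Wiener process. For $v>0$, telescoping the definition of $\Phi_n$ gives the clean representation
\begin{align*}
\mathbb F_n(v)
=\sum_{i=[n\tau_*^\alpha]+1}^{[n\tau_*^\alpha+v/\Dea^2]}
\bigl(F_i(\alpha_1^*)-F_i(\alpha_2^*)\bigr),
\end{align*}
and here every increment $\DeX$ in the window is governed by the post-change value $\alpha_2^*$. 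Splitting off the conditional mean, set $\zeta_i=(F_i(\alpha_1^*)-F_i(\alpha_2^*))-\EE_{\alpha_2^*}[F_i(\alpha_1^*)-F_i(\alpha_2^*)\mid\GG]$, so that $\mathbb F_n(v)=D_n(v)+M_n(v)$, where $D_n$ is the sum of the conditional means and $M_n(v)=\sum_i\zeta_i$ is a martingale in $v$ relative to $\GG$.

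For the drift $D_n$, I would use that, up to an $\Ri$ remainder controlled by \textbf{[C2]} and \textbf{[C4]}, one has $\EE_{\alpha_2^*}[F_i(\alpha_1^*)-F_i(\alpha_2^*)\mid\GG]=\Gamma^\alpha(\Xs,\alpha_1^*,\alpha_2^*)+\Ri$, since $\EE_{\alpha_2^*}[(\DeX)^{\otimes2}/h\mid\GG]=A(\Xs,\alpha_2^*)+\Ri$. Because $\Gamma^\alpha$ and its first $(\alpha_1,\alpha_2)$-derivatives vanish on the diagonal $\alpha_1=\alpha_2$ and its Hessian there is $\Xi^\alpha$, a second-order Taylor expansion around $\alpha_0$ gives
\begin{align*}
\Gamma^\alpha(\Xs,\alpha_1^*,\alpha_2^*)
=\tfrac12(\alpha_1^*-\alpha_2^*)^\TT\Xi^\alpha(\Xs,\alpha_0)(\alpha_1^*-\alpha_2^*)+\text{(higher order)},
\end{align*}
with the higher-order terms handled by \textbf{[A2-I]}(b),(c). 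Summing the $\approx v/\Dea^2$ terms, replacing the empirical average of $\Xi^\alpha(\Xs,\alpha_0)$ by $\int\Xi^\alpha(x,\alpha_0)\,\dd\mu_{\alpha_0}(x)$ via the uniform ergodic theorem \textbf{[A2-I]}(a), and using $(\alpha_1^*-\alpha_2^*)=\Dea\,\Dea^{-1}(\alpha_1^*-\alpha_2^*)\to\Dea\,e_\alpha$, the factors $\Dea^{\pm2}$ cancel and I obtain $\sup_{v\in[0,L]}|D_n(v)-\mathcal J_\alpha v|\pto0$. The conditions $n\Dea^2\to\infty$, $h/\Dea^2\to\infty$, $T\Dea\to0$ in \textbf{[A1-I]} are exactly what force each remainder to be $o_p(1)$ uniformly on $[0,L]$, in the manner of the estimates \eqref{eq7.968}--\eqref{eq7.965} of Lemma \ref{lem2}.

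For the martingale $M_n$, I would apply a functional martingale central limit theorem (cf.\ Hall and Heyde (1980)). Using the Gaussian leading order $\DeX\approx\sqrt h\,a(\Xs,\alpha_2^*)\xi_i$ with $\xi_i\mid\GG\sim N(0,I_d)$ and the identity $\mathrm{Var}(\xi^\TT C\xi)=2\tr(C^2)$ for symmetric $C$, one finds $\EE_{\alpha_2^*}[\zeta_i^2\mid\GG]=2\tr(C_{i-1}^2)+\Ri$ with $C_{i-1}=a(\Xs,\alpha_2^*)^\TT(A^{-1}(\Xs,\alpha_1^*)-A^{-1}(\Xs,\alpha_2^*))a(\Xs,\alpha_2^*)$. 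A first-order expansion of $C_{i-1}$ in $\alpha_1^*-\alpha_2^*$ yields $\tr(C_{i-1}^2)=(\alpha_1^*-\alpha_2^*)^\TT\Xi^\alpha(\Xs,\alpha_0)(\alpha_1^*-\alpha_2^*)+\text{(h.o.)}$, so by the same ergodic averaging and rescaling the predictable quadratic variation satisfies $\langle M_n\rangle_v\pto 4\mathcal J_\alpha v$. The conditional Lindeberg condition follows from a fourth-moment bound: each $\zeta_i$ is of order $\Dea$ and there are $O(\Dea^{-2})$ of them, whence $\sum_i\EE_{\alpha_2^*}[\zeta_i^4\mid\GG]=O(\Dea^2)\to0$. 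Thus $M_n(v)\wto 2\mathcal J_\alpha^{1/2}\mathcal W(v)$, and since $-\mathcal W\overset{d}{=}\mathcal W$, combining with $D_n(v)\pto\mathcal J_\alpha v$ gives $\mathbb F_n(v)\wto\mathbb F(v)$ on $[0,L]$; the symmetric argument on $[-L,0]$, built from pre-change increments and $\Gamma^\alpha(\Xs,\alpha_2^*,\alpha_1^*)$, completes the convergence in $\mathbb D[-L,L]$. I expect the martingale step to be the main obstacle: making the Gaussian approximation of $(\DeX)^{\otimes2}$ precise enough that both $\langle M_n\rangle_v$ and the Lindeberg bound survive the delicate balance between the small size $O(\Dea)$ of the increments and their large number $O(\Dea^{-2})$, which is precisely where $h/\Dea^2\to\infty$ and $nh^2\to0$ enter to suppress the non-Gaussian It\^o--Taylor corrections.
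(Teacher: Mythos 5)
Your proposal is correct, and it reaches the limit through the same analytic core as the paper: ergodic averaging of $\Xi^\alpha$ over windows of $\approx v/\Dea^2$ summands, a functional martingale CLT with predictable quadratic variation $4\mathcal J_\alpha v$, and a fourth-moment Lindeberg bound of order $\Dea^2$. Where you differ is in the order of the decomposition. The paper Taylor-expands $F_i(\alpha_1^*)-F_i(\alpha_2^*)$ in powers of $\alpha_1^*-\alpha_2^*$ first, writing $\mathbb F_n=\mathbb F_{1,n}+\mathbb F_{2,n}+\bar o_p(1)$, and only then splits each Taylor term into martingale plus compensator: the Wiener limit comes from the martingale part of the first-order term (McLeish), the drift $\mathcal J_\alpha v$ from the compensator of the second-order term, and the two cross pieces are killed separately in \eqref{eq7.958} and \eqref{eq7.948}. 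You perform the Doob decomposition first, $\mathbb F_n=M_n+D_n$ with $D_n=\sum_i\EE_{\alpha_2^*}[F_i(\alpha_1^*)-F_i(\alpha_2^*)\mid\GG]$, identify $D_n$ with $\sum_i\Gamma^\alpha(\Xs,\alpha_1^*,\alpha_2^*)+\Ri$ and expand $\Gamma^\alpha$ about the diagonal, and compute the quadratic variation of $M_n$ exactly via $2\tr(C_{i-1}^2)$; this is precisely the $\mathcal M_n^\alpha+\mathcal A_n^\alpha+\varrho_n^\alpha$ decomposition that the paper reserves for the proofs of Theorems \ref{th1} and \ref{th2}, transplanted into the weak-convergence lemma. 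It buys slightly cleaner bookkeeping (no separate cross-term estimates, since they are absorbed into the exact compensator and the exact conditional variance) at the cost of second-order expansions of $\Gamma^\alpha$ and $\tr(C_{i-1}^2)$, which is exactly where \textbf{[A2-I]}(b),(c) enter, as in the paper; and the sign of the Wiener term is immaterial since $-\mathcal W\overset{d}{=}\mathcal W$. One small correction: your closing remark misattributes the role of $h/\Dea^2\lto\infty$. It is not primarily there to suppress non-Gaussian It\^o--Taylor corrections (those are handled by $h/\Dea\lto0$, which follows from $T\Dea\lto0$ and $n\Dea^2\lto\infty$, together with $nh^2\lto0$); it is needed so that the windows $\{i:[n\tau_*^\alpha]<i\le[n\tau_*^\alpha+v/\Dea^2]\}$ with $v\ge\epsilon_n$ are long enough for the uniform ergodic condition \textbf{[A2-I]} to apply, which is the paper's choice $\epsilon_n\lto0$, $\epsilon_n h/\Dea^2\lto\infty$ in the proof of Lemma \ref{lem2} and of this lemma.
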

\begin{proof}
We consider $v>0$. We have
\begin{align*}
\mathbb F_n(v)
&=\sum_{i=[n\tau_*^\alpha]+1}^{[n\tau_*^\alpha+v/\Dea^2]}
\Bigl(F_i(\alpha_1^*)-F_i(\alpha_2^*)\Bigr)\\
&=\sum_{i=[n\tau_*^\alpha]+1}^{[n\tau_*^\alpha+v/\Dea^2]}
\biggl(
\partial_\alpha F_i(\alpha_2^*)(\alpha_1^*-\alpha_2^*)
+\frac12\partial_\alpha^2 F_i(\alpha_2^*)\otimes(\alpha_1^*-\alpha_2^*)^{\otimes2}
\\
&\qquad
+\int_0^1\frac{(1-u)^2}2\partial_\alpha^3 
F_i(\alpha_2^*+u(\alpha_1^*-\alpha_2^*))\dd u
\otimes(\alpha_1^*-\alpha_2^*)^{\otimes3}
\biggr)\\
&=\sum_{i=[n\tau_*^\alpha]+1}^{[n\tau_*^\alpha+v/\Dea^2]}
\left(
\partial_\alpha F_i(\alpha_2^*)(\alpha_1^*-\alpha_2^*)
+\frac12\partial_\alpha^2 F_i(\alpha_2^*)\otimes(\alpha_1^*-\alpha_2^*)^{\otimes2}
\right)+\bar o_p(1)\\
&=:\mathbb F_{1,n}(v)+\mathbb F_{2,n}(v)+\bar o_p(1),
\end{align*}
where $Y_n(v)=\bar o_p(1)$ denotes $\sup_{v\in[0,L]}|Y_n(v)|=o_p(1)$.
Now, we see
\begin{align}
\mathbb F_{1,n}(v)
=\sum_{i=[n\tau_*^\alpha]+1}^{[n\tau_*^\alpha+v/\Dea^2]}
\Bigl(
\partial_\alpha F_i(\alpha_2^*)
-\EE_{\alpha_2^*}[\partial_\alpha F_i(\alpha_2^*)|\GG]
\Bigr)
(\alpha_1^*-\alpha_2^*)
+\sum_{i=[n\tau_*^\alpha]+1}^{[n\tau_*^\alpha+v/\Dea^2]}
\EE_{\alpha_2^*}[\partial_\alpha F_i(\alpha_2^*)|\GG](\alpha_1^*-\alpha_2^*),
\label{eq7.959}
\end{align}
\begin{align}
\sup_{v\in[0,L]}
\left|
\sum_{i=[n\tau_*^\alpha]+1}^{[n\tau_*^\alpha+v/\Dea^2]}
\EE_{\alpha_2^*}[\partial_\alpha F_i(\alpha_2^*)|\GG](\alpha_1^*-\alpha_2^*)
\right|
\le
\Dea\sum_{i=[n\tau_*^\alpha]+1}^{[n\tau_*^\alpha+L/\Dea^2]}
\Ri
=O_p\left(\frac{h}{\Dea}\right)
=o_p(1),
\label{eq7.958}
\end{align}
\begin{align}
&\sum_{i=[n\tau_*^\alpha]+1}^{[n\tau_*^\alpha+v/\Dea^2]}
\EE_{\alpha_2^*}\left[
\biggl(
\Bigl(
\partial_\alpha F_i(\alpha_2^*)
-\EE_{\alpha_2^*}[\partial_\alpha F_i(\alpha_2^*)|\GG]
\Bigr)
(\alpha_1^*-\alpha_2^*)
\biggr)^2
\middle|\GG\right]
\nonumber
\\
&=(\alpha_1^*-\alpha_2^*)^\TT
\sum_{i=[n\tau_*^\alpha]+1}^{[n\tau_*^\alpha+v/\Dea^2]}
\EE_{\alpha_2^*}\biggl[
\Bigl(
\partial_\alpha F_i(\alpha_2^*)
-\EE_{\alpha_2^*}[\partial_\alpha F_i(\alpha_2^*)|\GG]
\Bigr)^\TT
\nonumber
\\
&\qquad\qquad
\Bigl(
\partial_\alpha F_i(\alpha_2^*)
-\EE_{\alpha_2^*}[\partial_\alpha F_i(\alpha_2^*)|\GG]
\Bigr)
\biggr|\GG\biggr]
(\alpha_1^*-\alpha_2^*)
\nonumber
\\
&=(\alpha_1^*-\alpha_2^*)^\TT
\sum_{i=[n\tau_*^\alpha]+1}^{[n\tau_*^\alpha+v/\Dea^2]}
\left(
2\Xi^\alpha_{i-1}(\alpha_2^*)
+\Ri
\right)
(\alpha_1^*-\alpha_2^*)
\nonumber
\\
&\pto
2e_\alpha^\TT 
\int_{\mathbb R^d}
\Xi^\alpha(x,\alpha_0)\dd\mu_{\alpha_0}(x)
e_\alpha v
=4\mathcal J_\alpha v
\label{eq7.957}
\end{align}
and
\begin{align}
&\sum_{i=[n\tau_*^\alpha]+1}^{[n\tau_*^\alpha+v/\Dea^2]}
\EE_{\alpha_2^*}\left[
\biggl(
\Bigl(
\partial_\alpha F_i(\alpha_2^*)
-\EE_{\alpha_2^*}[\partial_\alpha F_i(\alpha_2^*)|\GG]
\Bigr)
(\alpha_1^*-\alpha_2^*)
\biggr)^4
\middle|\GG\right]
=
\sum_{i=[n\tau_*^\alpha]+1}^{[n\tau_*^\alpha+v/\Dea^2]}
\Dea^4\Ro
\pto0.
\label{eq7.956}
\end{align}
According to Corollary 3.8 of McLeish (1974), we obtain, 
from \eqref{eq7.957} and \eqref{eq7.956},
\begin{align}
&\sum_{i=[n\tau_*^\alpha]+1}^{[n\tau_*^\alpha+v/\Dea^2]}
\Bigl(
\partial_\alpha F_i(\alpha_2^*)
-\EE_{\alpha_2^*}[\partial_\alpha F_i(\alpha_2^*)|\GG]
\Bigr)
(\alpha_1^*-\alpha_2^*)
\wto
-2\mathcal J_\alpha^{1/2}\mathcal W(v)
\text{ in }\mathbb D[0,L].
\label{eq7.955}
\end{align}
Further, from \eqref{eq7.959}, \eqref{eq7.958} and \eqref{eq7.955}, we have
$\mathbb F_{1,n}(v)\wto
-2\mathcal J_\alpha^{1/2}\mathcal W(v)$ in $\mathbb D[0,L]$.

Besides, we see 
\begin{align}
&
\sup_{v\in[0,L]}\left|\sum_{i=[n\tau_*^\alpha]+1}^{[n\tau_*^\alpha+v/\Dea^2]}
\partial_\alpha^2 F_i(\alpha_2^*)\otimes(\alpha_1^*-\alpha_2^*)^{\otimes2}
-2\mathcal J_\alpha v\right|
\nonumber
\\
&\le
\sup_{v\in[0,L]}\left|\sum_{i=[n\tau_*^\alpha]+1}^{[n\tau_*^\alpha+v/\Dea^2]}
\Bigl(
\partial_\alpha^2 F_i(\alpha_2^*)
-\EE_{\alpha_2^*}\left[\partial_\alpha^2 F_i(\alpha_2^*)\middle|\GG\right]
\Bigr)
\otimes(\alpha_1^*-\alpha_2^*)^{\otimes2}
\right|
\nonumber\\
&\qquad
+\sup_{v\in[0,L]}\left|\sum_{i=[n\tau_*^\alpha]+1}^{[n\tau_*^\alpha+v/\Dea^2]}
\EE_{\alpha_2^*}\left[\partial_\alpha^2 F_i(\alpha_2^*)\middle|\GG\right]
\otimes(\alpha_1^*-\alpha_2^*)^{\otimes2}
-2\mathcal J_\alpha v
\right|,
\label{eq7.949}
\end{align}
\begin{align}
\sup_{v\in[0,L]}
\left|
\sum_{i=[n\tau_*^\alpha]+1}^{[n\tau_*^\alpha+v/\Dea^2]}
\Bigl(
\partial_\alpha^2 F_i(\alpha_2^*)
-\EE_{\alpha_2^*}\left[\partial_\alpha^2 F_i(\alpha_2^*)\middle|\GG\right]
\Bigr)
\otimes(\alpha_1^*-\alpha_2^*)^{\otimes2}
\right|
\pto0
\label{eq7.948}
\end{align}
and
\begin{align}
&\sup_{v\in[0,L]}\left|\sum_{i=[n\tau_*^\alpha]+1}^{[n\tau_*^\alpha+v/\Dea^2]}
\EE_{\alpha_2^*}\left[\partial_\alpha^2 F_i(\alpha_2^*)\middle|\GG\right]
\otimes(\alpha_1^*-\alpha_2^*)^{\otimes2}
-2\mathcal J_\alpha v
\right|
\nonumber
\\
&=
\sup_{v\in[0,L]}\left|\sum_{i=[n\tau_*^\alpha]+1}^{[n\tau_*^\alpha+v/\Dea^2]}
\left(
\Xi^\alpha_{i-1}(\alpha_2^*)+\Ri
\right)\otimes(\alpha_1^*-\alpha_2^*)^{\otimes2}
-2\mathcal J_\alpha v
\right|
\nonumber
\\
&\le
\sup_{v\in[0,L]}\left|\sum_{i=[n\tau_*^\alpha]+1}^{[n\tau_*^\alpha+v/\Dea^2]}
\Xi^\alpha_{i-1}(\alpha_0)\otimes(\alpha_1^*-\alpha_2^*)^{\otimes2}
-2\mathcal J_\alpha v
\right|+o_p(1)
\nonumber
\\
&\pto0,
\label{eq7.947}
\end{align}
where \eqref{eq7.948} is obtained by
\begin{align*}
&\EE_{\alpha_2^*}\left[
\sup_{v\in[0,L]}
\left|
\sum_{i=[n\tau_*^\alpha]+1}^{[n\tau_*^\alpha+v/\Dea^2]}
\Bigl(
\partial_\alpha^2 F_i(\alpha_2^*)
-\EE_{\alpha_2^*}\left[\partial_\alpha^2 F_i(\alpha_2^*)\middle|\GG\right]
\Bigr)
\otimes(\alpha_1^*-\alpha_2^*)^{\otimes2}
\right|^2
\right]\\
&\le
C\Dea^4
\sum_{i=[n\tau_*^\alpha]+1}^{[n\tau_*^\alpha+L/\Dea^2]}
\EE_{\alpha_2^*}\left[
\left|
\partial_\alpha^2 F_i(\alpha_2^*)
-\EE_{\alpha_2^*}\left[\partial_\alpha^2 F_i(\alpha_2^*)\middle|\GG\right]
\right|^2
\right]\\
&\le
C'\Dea^2\lto0
\end{align*}
from Theorem 2.11 of Hall and Heyde (1980),
and \eqref{eq7.947} is obtained by 
\begin{align*}
&\sup_{v\in[0,L]}
\left|
\sum_{i=[n\tau_*^\alpha]+1}^{[n\tau_*^\alpha+v/\Dea^2]}
\Xi^\alpha_{i-1}(\alpha_0)\otimes(\alpha_1^*-\alpha_2^*)^{\otimes2}
-2\mathcal J_\alpha v
\right|\\
&\le
\sup_{v\in[0,\epsilon_n]}
\left|
\sum_{i=[n\tau_*^\alpha]+1}^{[n\tau_*^\alpha+v/\Dea^2]}
\Xi^\alpha_{i-1}(\alpha_0)\otimes(\alpha_1^*-\alpha_2^*)^{\otimes2}
-2\mathcal J_\alpha v
\right|\\
&\qquad
+\sup_{v\in[\epsilon_n,L]}
\left|
\sum_{i=[n\tau_*^\alpha]+1}^{[n\tau_*^\alpha+v/\Dea^2]}
\Xi^\alpha_{i-1}(\alpha_0)\otimes(\alpha_1^*-\alpha_2^*)^{\otimes2}
-2\mathcal J_\alpha v
\right|
\\
&\le
\epsilon_n\sup_{v\in[0,\epsilon_n]}
\left|
\frac1v\sum_{i=[n\tau_*^\alpha]+1}^{[n\tau_*^\alpha+v/\Dea^2]}
\Xi^\alpha_{i-1}(\alpha_0)\otimes(\alpha_1^*-\alpha_2^*)^{\otimes2}
-2\mathcal J_\alpha
\right|\\
&\qquad
+L\sup_{v\in[\epsilon_n,L]}
\left|
\frac1v\sum_{i=[n\tau_*^\alpha]+1}^{[n\tau_*^\alpha+v/\Dea^2]}
\Xi^\alpha_{i-1}(\alpha_0)\otimes(\alpha_1^*-\alpha_2^*)^{\otimes2}
-2\mathcal J_\alpha
\right|
\\
&\le
O_p(\epsilon_n)
+L\max_{[n^{1/r}]\le k\le n}
\left|
\frac1k\sum_{i=[n\tau_*^\alpha]+1}^{[n\tau_*^\alpha]+k}
\Xi^\alpha_{i-1}(\alpha_0)\otimes(\Dea^{-1}(\alpha_1^*-\alpha_2^*))^{\otimes2}
-2\mathcal J_\alpha
\right|
\\
&=o_p(1).
\end{align*}
Here $\{\epsilon_n\}_{n=1}^\infty$ is a positive sequence such that 
$\epsilon_n\lto0$ and $\epsilon_nh/\Dea^2\lto\infty$,
and 
$r$ is a constant with
$r\in(1,2)$ and $nh^r\lto\infty$.
From \eqref{eq7.949}-\eqref{eq7.947}, we have 
$\sup_{v\in[0,L]}|\mathbb F_{2,n}(v)-\mathcal J_\alpha v|\pto 0$.
Therefore we obtain 
\begin{align*}
\mathbb F_n(v)\wto-2\mathcal J_\alpha^{1/2}\mathcal W(v)+\mathcal J_\alpha v
\ \text{ in }\ \mathbb D[0,L].
\end{align*}
The argument for $v<0$ is proved as well.
\end{proof}

\begin{proof}[\bf{Proof of Theorem \ref{th1}}]
Let $M\ge1$. We have
\begin{align}
P\left(n\Dea^2|\hat\tau_n^\alpha-\tau_*^\alpha|>M\right)
&=P\left(n\Dea^2(\hat\tau_n^\alpha-\tau_*^\alpha)>M\right)
+P\left(n\Dea^2(\tau_*^\alpha-\hat\tau_n^\alpha)>M\right).
\label{eq7.979}
\end{align}
For $\tau>\tau_*^\alpha$, we have
\begin{align}
\Phi_n(\tau:\alpha_1,\alpha_2)-\Phi_n(\tau_*^\alpha:\alpha_1,\alpha_2)
&=\sum_{i=1}^{[n\tau]}F_i(\alpha_1)+\sum_{i=[n\tau]+1}^n F_i(\alpha_2)
-\sum_{i=1}^{[n\tau_*^\alpha]}F_i(\alpha_1)-\sum_{i=[n\tau_*^\alpha]+1}^n F_i(\alpha_2)
\nonumber
\\
&=\sum_{i=[n\tau_*^\alpha]+1}^{[n\tau]}\Bigl(F_i(\alpha_1)-F_i(\alpha_2)\Bigr).
\end{align}
Now, from
\begin{align*}
F_i(\alpha_1)-F_i(\alpha_2)
&= F_i(\alpha_1)-F_i(\alpha_2)-\EE_{\alpha_2^*}[F_i(\alpha_1)-F_i(\alpha_2)|\GG]\\
&\quad+\tr\left(A_{i-1}^{-1}(\alpha_1)A_{i-1}(\alpha_2)-I_d\right)
-\log\det A_{i-1}^{-1}(\alpha_1)A_{i-1}(\alpha_2)\\
&\quad-\tr\Bigl(
\left(A_{i-1}^{-1}(\alpha_1)-A_{i-1}^{-1}(\alpha_2)\right)
\left(A_{i-1}(\alpha_2)-h^{-1}\EE_{\alpha_2^*}[(\DeX)^{\otimes2}|\GG]\right)
\Bigr),
\end{align*}
we obtain
\begin{align*}
&\Phi_n(\tau:\alpha_1,\alpha_2)-\Phi_n(\tau_*^\alpha:\alpha_1,\alpha_2)
\\
&=\sum_{i=[n\tau_*^\alpha]+1}^{[n\tau]}
\Bigl(
F_i(\alpha_1)-F_i(\alpha_2)-\EE_{\alpha_2^*}[F_i(\alpha_1)-F_i(\alpha_2)|\GG]
\Bigr)\\
&\quad+\sum_{i=[n\tau_*^\alpha]+1}^{[n\tau]}
\Bigl(
\tr\left(A_{i-1}^{-1}(\alpha_1)A_{i-1}(\alpha_2)-I_d\right)
-\log\det A_{i-1}^{-1}(\alpha_1)A_{i-1}(\alpha_2)
\Bigr)\\
&\quad-\sum_{i=[n\tau_*^\alpha]+1}^{[n\tau]}
\tr\Bigl(
\left(A_{i-1}^{-1}(\alpha_1)-A_{i-1}^{-1}(\alpha_2)\right)
\left(A_{i-1}(\alpha_2)-h^{-1}\EE_{\alpha_2^*}[(\DeX)^{\otimes2}|\GG]\right)
\Bigr)\\
&=:
{\mathcal M}_n^\alpha(\tau:\alpha_1,\alpha_2)
+{\mathcal A}_n^\alpha(\tau:\alpha_1,\alpha_2)
+{\varrho}_n^\alpha(\tau:\alpha_1,\alpha_2).
\end{align*}
Let $D_{n,M}^\alpha=\{\tau\in[0,1]|n\Dea^2(\tau-\tau_*^\alpha)>M\}$.
For all $\delta>0$, 
we have
\begin{align*}
P\left(n\Dea^2(\hat\tau_n-\tau_*^\alpha)>M\right)
&\le
P\left(
\inf_{\tau\in D_{n,M}^\alpha}\Phi_n(\tau:\hat\alpha_1,\hat\alpha_2)
\le\Phi_n(\tau_*^\alpha:\hat\alpha_1,\hat\alpha_2)
\right)\\
&=
P\left(
\inf_{\tau\in D_{n,M}^\alpha}\Bigl(\Phi_n(\tau:\hat\alpha_1,\hat\alpha_2)
-\Phi_n(\tau_*^\alpha:\hat\alpha_1,\hat\alpha_2)\Bigr)
\le0
\right)\\
&=P\left(
\inf_{\tau\in D_{n,M}^\alpha}
\Bigl({\mathcal M}_n^\alpha(\tau:\hat\alpha_1,\hat\alpha_2)
+{\mathcal A}_n^\alpha(\tau:\hat\alpha_1,\hat\alpha_2)
+{\varrho}_n^\alpha(\tau:\hat\alpha_1,\hat\alpha_2)
\Bigr)\le0
\right)\\
&\le P\left(
\inf_{\tau\in D_{n,M}^\alpha}
\frac{
{\mathcal M}_n^\alpha(\tau:\hat\alpha_1,\hat\alpha_2)
+{\mathcal A}_n^\alpha(\tau:\hat\alpha_1,\hat\alpha_2)
+{\varrho}_n^\alpha(\tau:\hat\alpha_1,\hat\alpha_2)
}{\Dea^2([n\tau]-[n\tau_*^\alpha])}
\le0
\right)\\
&\le
P\left(
\inf_{\tau\in D_{n,M}^\alpha}
\frac{
{\mathcal M}_n^\alpha(\tau:\hat\alpha_1,\hat\alpha_2)
}{\Dea^2([n\tau]-[n\tau_*^\alpha])}
\le-\delta
\right) 
+P\left(
\inf_{\tau\in D_{n,M}^\alpha}
\frac{
{\mathcal A}_n^\alpha(\tau:\hat\alpha_1,\hat\alpha_2)
}{\Dea^2([n\tau]-[n\tau_*^\alpha])}
\le2\delta
\right)\\
&\quad+P\left(
\inf_{\tau\in D_{n,M}^\alpha}
\frac{
{\varrho}_n^\alpha(\tau:\hat\alpha_1,\hat\alpha_2)
}{\Dea^2([n\tau]-[n\tau_*^\alpha])}
\le-\delta
\right)\\
&\le
P\left(
\sup_{\tau\in D_{n,M}^\alpha}
\frac{
|{\mathcal M}_n^\alpha(\tau:\hat\alpha_1,\hat\alpha_2)|
}{\Dea^2([n\tau]-[n\tau_*^\alpha])}
\ge\delta
\right) 
+P\left(
\inf_{\tau\in D_{n,M}^\alpha}
\frac{
{\mathcal A}_n^\alpha(\tau:\hat\alpha_1,\hat\alpha_2)
}{\Dea^2([n\tau]-[n\tau_*^\alpha])}
\le2\delta
\right)\\
&\quad+P\left(
\sup_{\tau\in D_{n,M}^\alpha}
\frac{
|{\varrho}_n^\alpha(\tau:\hat\alpha_1,\hat\alpha_2)|
}{\Dea^2([n\tau]-[n\tau_*^\alpha])}
\ge\delta
\right)\\
&=:
P_{1,n}^\alpha+P_{2,n}^\alpha+P_{3,n}^\alpha.
\end{align*}
[i] Evaluation of $P_{1,n}^\alpha$. 
Choose $\epsilon>0$ as you like.  
Let 
$\mathcal O_{\alpha}$ be an open neighborhood of $\alpha$.
Because $\partial_\alpha F_i(\alpha)$ is continuous with respect to $\alpha\in\Theta_A$, 
we can choose $\bar\alpha\in\mathcal O_{\hat\alpha_2}$ so that 
\begin{align*}
{\mathcal M}_n^\alpha(\tau:\hat\alpha_1,\hat\alpha_2)
&=\sum_{i=[n\tau_*^\alpha]+1}^{[n\tau]}
\left(
F_i(\hat\alpha_1)-F_i(\hat\alpha_2)
-\left.\EE_{\alpha_2^*}[F_i(\alpha_1)-F_i(\alpha_2)|\GG]\right|_{\alpha_k=\hat\alpha_k}
\right)\\
&=\sum_{i=[n\tau_*^\alpha]+1}^{[n\tau]}
\left(
\partial_\alpha F_i(\bar\alpha)
-\left.\EE_{\alpha_2^*}[\partial_\alpha F_i(\alpha)|\GG]\right|_{\alpha=\bar\alpha}
\right)
(\hat\alpha_1-\hat\alpha_2).
\end{align*}
If $\hat\alpha_k\in\mathcal O_{\alpha_0}$, then
\begin{align*}
|{\mathcal M}_n^\alpha(\tau:\hat\alpha_1,\hat\alpha_2)|
&\le
\left|\sum_{i=[n\tau_*^\alpha]+1}^{[n\tau]}
\left(
\partial_\alpha F_i(\bar\alpha)
-\left.\EE_{\alpha_2^*}[\partial_\alpha F_i(\alpha)|\GG]\right|_{\alpha=\bar\alpha}
\right)\right|
|\hat\alpha_1-\hat\alpha_2|\\
&\le
\sup_{\alpha\in\Theta_A}\left|\sum_{i=[n\tau_*^\alpha]+1}^{[n\tau]}
\left(
\partial_\alpha F_i(\alpha)-\EE_{\alpha_2^*}[\partial_\alpha F_i(\alpha)|\GG]
\right)\right|
|\hat\alpha_1-\hat\alpha_2|\\
&=:
\sup_{\alpha\in\Theta_A}|\mathbb M_n^\alpha(\tau:\alpha)||\hat\alpha_1-\hat\alpha_2|.
\end{align*}
Hence we have
\begin{align}
P_{1,n}^\alpha
&=
P\left(
\sup_{\tau\in D_{n,M}^\alpha}
\frac{
|{\mathcal M}_n^\alpha(\tau:\hat\alpha_1,\hat\alpha_2)|
}{\Dea^2([n\tau]-[n\tau_*^\alpha])}
\ge\delta
\right)
\nonumber\\ 
&\le
P\left(
\sup_{\tau\in D_{n,M}^\alpha}
\frac{|{\mathcal M}_n^\alpha(\tau:\hat\alpha_1,\hat\alpha_2)|}
{\Dea^2([n\tau]-[n\tau_*^\alpha])}
\ge\delta,\ 
|\hat\alpha_1-\hat\alpha_2|\le2\Dea,\ 
\hat\alpha_1,\hat\alpha_2\in\mathcal O_{\alpha_0}
\right)
\nonumber\\
&\qquad+P(|\hat\alpha_1-\hat\alpha_2|>2\Dea)
+P(\hat\alpha_1\notin\mathcal O_{\alpha_0})
+P(\hat\alpha_2\notin\mathcal O_{\alpha_0})
\nonumber\\
&\le
P\left(
\sup_{\tau\in D_{n,M}^\alpha}
\frac{
\sup_{\alpha\in\Theta_A}|{\mathbb M}_n^\alpha(\tau:\alpha)|
}{\Dea^2([n\tau]-[n\tau_*^\alpha])}
|\hat\alpha_1-\hat\alpha_2|
\ge\delta,\ 
|\hat\alpha_1-\hat\alpha_2|\le2\Dea
\right)
\nonumber\\
&\qquad+P(|\hat\alpha_1-\hat\alpha_2|>2\Dea)
+P(\hat\alpha_1\notin\mathcal O_{\alpha_0})
+P(\hat\alpha_2\notin\mathcal O_{\alpha_0})
\nonumber\\
&\le
P\left(
\sup_{\tau\in D_{n,M}^\alpha}
\frac{
2 \sup_{\alpha\in\Theta_A}|{\mathbb M}_n^\alpha(\tau:\alpha)|
}{\Dea([n\tau]-[n\tau_*^\alpha])}
\ge\delta
\right)
\nonumber\\
&\qquad+P(|\hat\alpha_1-\hat\alpha_2|>2\Dea)
+P(\hat\alpha_1\notin\mathcal O_{\alpha_0})
+P(\hat\alpha_2\notin\mathcal O_{\alpha_0}).
\label{eq7.999}
\end{align}
By the uniform version on the H\'ajek-Renyi inequality in Lemma 2 
of Iacus and Yoshida (2012), 
we obtain
\begin{align}
&P\left(
\sup_{\tau\in D_{n,M}^\alpha}
\frac{
2 \sup_{\alpha\in\Theta_A}|{\mathbb M}_n^\alpha(\tau:\alpha)|
}{\Dea([n\tau]-[n\tau_*^\alpha])}
\ge\delta
\right)
\nonumber\\
&=
P\left(
\sup_{\tau\in D_{n,M}^\alpha}
\frac{
2 \sup_{\alpha\in\Theta_A}|{\mathbb M}_n^\alpha(\tau:\alpha)|
}{[n\tau]-[n\tau_*^\alpha]}
\ge\delta\Dea
\right)
\nonumber\\
&\le
P\left(
\max_{j>M/\Dea^2-1}
\frac{2}{j}
\sup_{\alpha}\left|
\sum_{i=[n\tau_*^\alpha]+1}^{[n\tau_*^\alpha]+j}
\left(
\partial_\alpha F_i(\alpha)-
\EE[\partial_\alpha F_i(\alpha)|\GG]
\right)
\right|
\ge\delta\Dea
\right)
\nonumber\\
&\le
\sum_{j>M/\Dea^2-1}
\frac{C}{(\delta\Dea j)^2}
\nonumber\\
&\le
\frac{C'}{(\delta\Dea)^2}\frac{\Dea^2}{M}
=\frac{C'}{\delta^2M}
=:\gamma_\alpha(M),
\label{eq7.998}
\end{align}
Noting that if
$|\hat\alpha_k-\alpha_k^*|\le\Dea/2$ holds, 
then 
$|\hat\alpha_1-\hat\alpha_2|
\le
|\hat\alpha_1-\alpha_1^*|
+|\alpha_1^*-\alpha_2^*|
+|\hat\alpha_2-\alpha_2^*|
\le 2\Dea$, 
i.e., 
\begin{align*}
\{|\hat\alpha_1-\hat\alpha_2|>2\Dea\}
\subset
\left\{|\hat\alpha_1-\alpha_1^*|>\frac{\Dea}{2}\right\}
\cup\left\{|\hat\alpha_2-\alpha_2^*|>\frac{\Dea}{2}\right\},
\end{align*}
and for sufficiently large $n$ so that 
$P\left(
|\hat\alpha_k-\alpha_k^*|>\Dea/2
\right)<\epsilon/2$
because of $\Dea^{-1}(\hat\alpha_k-\alpha_k^*)=o_p(1)$,
we see 
\begin{align}
P(|\hat\alpha_1-\hat\alpha_2|>2\Dea)
&\le
P\left(|\hat\alpha_1-\alpha_1^*|>\frac{\Dea}{2}
\right)
+P\left(
|\hat\alpha_2-\alpha_2^*|>\frac{\Dea}{2}
\right)
<\epsilon.
\label{eq7.997}
\end{align}
From \textbf{[C6-I]} and \textbf{[A2-I]}, 
we have $\hat\alpha_k\pto\alpha_0$ as $n\to\infty$ and 
$P(\hat\alpha_k\notin\mathcal O_{\alpha_0})<\epsilon$ for large $n$.
Therefore, from \eqref{eq7.999}-\eqref{eq7.997} and this, 
we have
$P_{1,n}^\alpha\le\gamma_\alpha(M)+3\epsilon$ for large $n$.

[ii] Evaluation of $P_{2,n}^\alpha$.
If $\hat\alpha_k\in\mathcal O_{\alpha_0}$, then we have
\begin{align*}
&\tr\left(
A_{i-1}^{-1}(\hat\alpha_1)A_{i-1}(\hat\alpha_2)-I_d 
\right)
-\log\det A_{i-1}^{-1}(\hat\alpha_1)A_{i-1}(\hat\alpha_2)\\
&=
\tr\left(
A_{i-1}^{-1}(\hat\alpha_2)A_{i-1}(\hat\alpha_2)-I_d 
\right)
-\log\det A_{i-1}^{-1}(\hat\alpha_2)A_{i-1}(\hat\alpha_2)\\
&\qquad+\left.\partial_\alpha
\Bigl(
\tr\left(
A_{i-1}^{-1}(\alpha)A_{i-1}(\hat\alpha_2)
\right)
-\log\det A_{i-1}^{-1}(\alpha)A_{i-1}(\hat\alpha_2)
\Bigr)
\right|_{\alpha=\hat\alpha_2}
(\hat\alpha_1-\hat\alpha_2)\\
&\qquad+\frac12\left.\partial_\alpha^2
\Bigl(
\tr\left(
A_{i-1}^{-1}(\alpha)A_{i-1}(\hat\alpha_2)
\right)
-\log\det A_{i-1}^{-1}(\alpha)A_{i-1}(\hat\alpha_2)
\Bigr)
\right|_{\alpha=\hat\alpha_2}
\otimes(\hat\alpha_1-\hat\alpha_2)^{\otimes2}\\
&\qquad+
\frac{1}{3!}
\left.\partial_\alpha^3
\Bigl(
\tr\left(
A_{i-1}^{-1}(\alpha)A_{i-1}(\hat\alpha_2)
\right)
-\log\det A_{i-1}^{-1}(\alpha)A_{i-1}(\hat\alpha_2)
\Bigr)
\right|_{\alpha=\hat\alpha_2}
\otimes(\hat\alpha_1-\hat\alpha_2)^{\otimes3}\\
&\qquad+
\int_0^1\frac{(1-u)^3}{3!}
\left.\partial_\alpha^4
\Bigl(
\tr\left(
A_{i-1}^{-1}(\alpha)A_{i-1}(\hat\alpha_2)
\right)
-\log\det A_{i-1}^{-1}(\alpha)A_{i-1}(\hat\alpha_2)
\Bigr)
\right|_{\alpha=\hat\alpha_2+u(\hat\alpha_1-\hat\alpha_2)}\dd u
\otimes(\hat\alpha_1-\hat\alpha_2)^{\otimes4}\\
&=\frac12
\Xi_{i-1}^\alpha(\hat\alpha_2)
\otimes(\hat\alpha_1-\hat\alpha_2)^{\otimes2}\\
&\qquad+
\frac{1}{3!}
\left.\partial_\alpha^3
\Bigl(
\tr\left(
A_{i-1}^{-1}(\alpha)A_{i-1}(\hat\alpha_2)
\right)
-\log\det A_{i-1}^{-1}(\alpha)A_{i-1}(\hat\alpha_2)
\Bigr)
\right|_{\alpha=\hat\alpha_2}
\otimes(\hat\alpha_1-\hat\alpha_2)^{\otimes3}\\
&\qquad+
\int_0^1\frac{(1-u)^3}{3!}
\left.\partial_\alpha^4
\Bigl(
\tr\left(
A_{i-1}^{-1}(\alpha)A_{i-1}(\hat\alpha_2)
\Bigr)
-\log\det A_{i-1}^{-1}(\alpha)A_{i-1}(\hat\alpha_2)
\right)
\right|_{\alpha=\hat\alpha_2+u(\hat\alpha_1-\hat\alpha_2)}\dd u
\otimes(\hat\alpha_1-\hat\alpha_2)^{\otimes4}\\
&=\frac12
\Xi_{i-1}^\alpha(\alpha_0)
\otimes(\hat\alpha_1-\hat\alpha_2)^{\otimes2}
+\frac12
\partial_\alpha\Xi_{i-1}^\alpha(\alpha_0)
\otimes(\hat\alpha_1-\hat\alpha_2)^{\otimes2}
\otimes(\hat\alpha_2-\alpha_0)
\\
&\qquad+
\frac12
\int_0^1(1-u)
\partial_\alpha^2\Xi_{i-1}^\alpha(\alpha_0+u(\hat\alpha_2-\alpha_0))\dd u
\otimes(\hat\alpha_1-\hat\alpha_2)^{\otimes2}
\otimes(\hat\alpha_2-\alpha_0)^{\otimes2}
\\
&\qquad+
\frac{1}{3!}
\left.\partial_\alpha^3
\Bigl(
\tr\left(
A_{i-1}^{-1}(\alpha)A_{i-1}(\hat\alpha_2)
\right)
-\log\det A_{i-1}^{-1}(\alpha)A_{i-1}(\hat\alpha_2)
\Bigr)
\right|_{\alpha=\hat\alpha_2}
\otimes(\hat\alpha_1-\hat\alpha_2)^{\otimes3}\\
&\qquad+
\int_0^1\frac{(1-u)^3}{3!}
\left.\partial_\alpha^4
\Bigl(
\tr\left(
A_{i-1}^{-1}(\alpha)A_{i-1}(\hat\alpha_2)
\right)
-\log\det A_{i-1}^{-1}(\alpha)A_{i-1}(\hat\alpha_2)
\Bigr)
\right|_{\alpha=\hat\alpha_2+u(\hat\alpha_1-\hat\alpha_2)}\dd u
\otimes(\hat\alpha_1-\hat\alpha_2)^{\otimes4}\\
&\ge
\left(
\frac12\lambda_1[\Xi_{i-1}^\alpha(\alpha_0)]+r_{i-1}
\right)
|\hat\alpha_1-\hat\alpha_2|^2,
\end{align*}
where 
$\lambda_1[N]$ denotes the minimum eigenvalue of a symmetric matrix $N$,
and $r_{i-1}$ satisfies 
\begin{align*}
\sup_{\tau\in D_{n,M}^\alpha}
\left|
\frac{1}{[n\tau]-[n\tau_*^\alpha]}
\sum_{i=[n\tau_*^\alpha]+1}^{[n\tau]}r_{i-1}
\right|
=o_p(1)
\end{align*}
from \textbf{[A1-I]} and \textbf{[A2-I]}. 
Therefore we obtain
\begin{align}
P_{2,n}^\alpha
&=P\left(
\inf_{\tau\in D_{n,M}^\alpha}
\frac{
{\mathcal A}_n^\alpha(\tau:\hat\alpha_1,\hat\alpha_2)
}{\Dea^2([n\tau]-[n\tau_*^\alpha])}
\le2\delta
\right)
\nonumber\\
&\le
P\left(
\inf_{\tau\in D_{n,M}^\alpha}
\frac{
{\mathcal A}_n^\alpha(\tau:\hat\alpha_1,\hat\alpha_2)
}{\Dea^2([n\tau]-[n\tau_*^\alpha])}
\le2\delta,\ 
|\hat\alpha_1-\hat\alpha_2|\ge\frac{\Dea}{2},\ 
\hat\alpha_k\in\mathcal O_{\alpha_0}
\right)
\nonumber\\
&\qquad+P\left(
|\hat\alpha_1-\hat\alpha_2|<\frac{\Dea}{2}
\right)
+P\left(
\hat\alpha_1\notin\mathcal O_{\alpha_0}
\right)
+P\left(
\hat\alpha_2\notin\mathcal O_{\alpha_0}
\right)
\nonumber\\
&\le
P\left(
\inf_{\tau\in D_{n,M}^\alpha}
\frac{1}{\Dea^2([n\tau]-[n\tau_*^\alpha])}
\sum_{i=[n\tau_*^\alpha]+1}^{[n\tau]}
\left(
\frac12\lambda_1[\Xi_{i-1}^\alpha(\alpha_0)]+r_{i-1}
\right)
|\hat\alpha_1-\hat\alpha_2|^2
\le2\delta,\ 
|\hat\alpha_1-\hat\alpha_2|\ge\frac{\Dea}{2}
\right)
\nonumber\\
&\qquad+P\left(
|\hat\alpha_1-\hat\alpha_2|<\frac{\Dea}{2}
\right)
+P\left(
\hat\alpha_1\notin\mathcal O_{\alpha_0}
\right)
+P\left(
\hat\alpha_2\notin\mathcal O_{\alpha_0}
\right)
\nonumber\\
&\le
P\left(
\inf_{\tau\in D_{n,M}^\alpha}
\frac{1}{[n\tau]-[n\tau_*^\alpha]}
\sum_{i=[n\tau_*^\alpha]+1}^{[n\tau]}
\left(
\frac12\lambda_1[\Xi_{i-1}^\alpha(\alpha_0)]+r_{i-1}
\right)
\le8\delta 
\right)
\nonumber\\
&\qquad+P\left(
|\hat\alpha_1-\hat\alpha_2|<\frac{\Dea}{2}
\right)
+P\left(
\hat\alpha_1\notin\mathcal O_{\alpha_0}
\right)
+P\left(
\hat\alpha_2\notin\mathcal O_{\alpha_0}
\right).
\label{eq7.989}
\end{align}
According to \textbf{[A2-I]},
if we set
\begin{align*}
\delta=\frac{1}{19}\int_{\mathbb R^d}\lambda_1[\Xi^\alpha(x,\alpha_0)]
\dd\mu_{\alpha_0}(x)>0,
\end{align*}
then for large $n$, 
\begin{align}
&P\left(
\inf_{\tau\in D_{n,M}^\alpha}
\frac{1}{[n\tau]-[n\tau_*^\alpha]}
\sum_{i=[n\tau_*^\alpha]+1}^{[n\tau]}
\left(
\frac12\lambda_1[\Xi_{i-1}^\alpha(\alpha_0)]+r_{i-1}
\right)
\le8\delta
\right)
\nonumber\\
&\le
P\left(
\inf_{\tau\in D_{n,M}^\alpha}
\frac{1}{[n\tau]-[n\tau_*^\alpha]}
\sum_{i=[n\tau_*^\alpha]+1}^{[n\tau]}
\frac12\lambda_1[\Xi_{i-1}^\alpha(\alpha_0)]
\le9\delta
\right)
+
P\left(
\inf_{\tau\in D_{n,M}^\alpha}
\frac{1}{[n\tau]-[n\tau_*^\alpha]}
\sum_{i=[n\tau_*^\alpha]+1}^{[n\tau]}
r_{i-1}
\le-\delta
\right)
\nonumber\\
&\le
P\left(
\inf_{\tau\in D_{n,M}^\alpha}
\frac{1}{[n\tau]-[n\tau_*^\alpha]}
\sum_{i=[n\tau_*^\alpha]+1}^{[n\tau]}
\lambda_1[\Xi_{i-1}^\alpha(\alpha_0)]
\le18\delta
\right)
+
P\left(
\sup_{\tau\in D_{n,M}^\alpha}
\left|
\frac{1}{[n\tau]-[n\tau_*^\alpha]}
\sum_{i=[n\tau_*^\alpha]+1}^{[n\tau]}
r_{i-1}
\right|
\ge\delta
\right)
\nonumber\\
&\le
P\left(
\inf_{\tau\in D_{n,M}^\alpha}
\left(
\frac{1}{[n\tau]-[n\tau_*^\alpha]}
\sum_{i=[n\tau_*^\alpha]+1}^{[n\tau]}
\lambda_1[\Xi_{i-1}^\alpha(\alpha_0)]
-19\delta
\right)
\le-\delta
\right)
+\epsilon
\nonumber\\
&\le
P\left(
\sup_{\tau\in D_{n,M}^\alpha}
\left|
\frac{1}{[n\tau]-[n\tau_*^\alpha]}
\sum_{i=[n\tau_*^\alpha]+1}^{[n\tau]}
\lambda_1[\Xi_{i-1}^\alpha(\alpha_0)]
-19\delta
\right|
\ge\delta
\right)
+\epsilon
\nonumber\\
&\le
P\left(
\sup_{k> M/\Dea^2-1}
\left|
\frac{1}{k}
\sum_{i=[n\tau_*^\alpha]+1}^{[n\tau_*^\alpha]+k}
\lambda_1[\Xi_{i-1}^\alpha(\alpha_0)]
-19\delta
\right|
\ge\delta
\right)
+\epsilon
\nonumber\\
&\le
P\left(
\max_{[n^{1/r}]\le k\le n}
\left|
\frac{1}{k}
\sum_{i=[n\tau_*^\alpha]+1}^{[n\tau_*^\alpha]+k}
\lambda_1[\Xi_{i-1}^\alpha(\alpha_0)]
-19\delta
\right|
\ge\delta
\right)
+\epsilon
\nonumber\\
&\le 2\epsilon.
\label{eq7.988}
\end{align}

Noting that if
$|\hat\alpha_k-\alpha_k^*|\le\Dea/4$ holds, 
then 
$\Dea=|\alpha_1^*-\alpha_2^*|
\le
|\alpha_1^*-\hat\alpha_1|
+|\hat\alpha_1-\hat\alpha_2|
+|\hat\alpha_2-\alpha_2^*|
\le |\hat\alpha_1-\hat\alpha_2|+\Dea/2$, 
i.e., 
\begin{align*}
\left\{|\hat\alpha_1-\hat\alpha_2|<\frac\Dea2\right\}
\subset
\left\{|\hat\alpha_1-\alpha_1^*|>\frac{\Dea}{4}\right\}
\cup\left\{|\hat\alpha_2-\alpha_2^*|>\frac{\Dea}{4}\right\}
\end{align*}
and for sufficiently large $n$ so that 
$P\left(
|\hat\alpha_k-\alpha_k^*|>\Dea/4
\right)<\epsilon/2$
because of $\Dea^{-1}(\hat\alpha_k-\alpha_k^*)=o_p(1)$,
we see
\begin{align}
P\left(
|\hat\alpha_1-\hat\alpha_2|<\frac{\Dea}{2}
\right)
&\le
P\left(|\hat\alpha_1-\alpha_1^*|>\frac{\Dea}{4}
\right)
+P\left(
|\hat\alpha_2-\alpha_2^*|>\frac{\Dea}{4}
\right)
<\epsilon.
\label{eq7.987}
\end{align}
Therefore, from \eqref{eq7.989}-\eqref{eq7.987}, we obtain
$P_{2,n}^\alpha\le5\epsilon$ for large $n$.


[iii] Evaluation of $P_{3,n}^\alpha$.
We have, for large $n$,
\begin{align*}
&\tr
\Bigl(
\left(
A_{i-1}^{-1}(\hat\alpha_1)-A_{i-1}^{-1}(\hat\alpha_2)
\right)
\left(
A_{i-1}(\hat\alpha_2)-h^{-1}\EE_{\alpha_2^*}[(\DeX)^{\otimes2}|\GG]
\right)
\Bigr)\\
&=
\tr
\Bigl(
\left(
A_{i-1}^{-1}(\hat\alpha_1)-A_{i-1}^{-1}(\hat\alpha_2)
\right)
\left(
A_{i-1}(\hat\alpha_2)-A_{i-1}(\alpha_2^*)+\Ri
\right)
\Bigr)\\
&\le
\tr
\Bigl(
\left(
A_{i-1}^{-1}(\hat\alpha_1)-A_{i-1}^{-1}(\hat\alpha_2)
\right)
\left(
A_{i-1}(\hat\alpha_2)-A_{i-1}(\alpha_2^*)
\right)
\Bigr)
+
\left|
A_{i-1}^{-1}(\hat\alpha_1)-A_{i-1}^{-1}(\hat\alpha_2)
\right|
\left|\Ri\right|
\\
&\le
\left[
\tr
\Bigl(
\partial_{\alpha^{\ell_1}}A_{i-1}^{-1}(\hat\alpha_2)
(A_{i-1}(\hat\alpha_2)-A_{i-1}(\alpha_2^*))
\Bigr)
\right]_{\ell_1}
(\hat\alpha_1-\hat\alpha_2)\\
&\qquad+\int_0^1
(1-u)
\left[
\tr
\Bigl(
\partial_{\alpha^{\ell_2}}\partial_{\alpha^{\ell_1}}
A_{i-1}^{-1}(\hat\alpha_2+u(\hat\alpha_1-\hat\alpha_2))
(A_{i-1}(\hat\alpha_2)-A_{i-1}(\alpha_2^*))
\Bigr)
\right]_{\ell_1,\ell_2}
\dd u
\otimes(\hat\alpha_1-\hat\alpha_2)^{\otimes2}\\
&\qquad+
\left|
\int_0^1
\partial_{\alpha}A_{i-1}^{-1}(\hat\alpha_2+u(\hat\alpha_1-\hat\alpha_2))
\dd u
\right|
|\Ri| |\hat\alpha_1-\hat\alpha_2|\\
&=
\left[
\tr\left(
\partial_{\alpha^{\ell_1}}A_{i-1}^{-1}(\hat\alpha_2)
\partial_{\alpha^{\ell_2}}A_{i-1}(\alpha_2^*)
\right)
\right]_{\ell_1,\ell_2}
\otimes(\hat\alpha_1-\hat\alpha_2)
\otimes(\hat\alpha_2-\alpha_2^*)\\
&\qquad+\int_0^1
(1-u)
\left[
\tr\Bigl(
\partial_{\alpha^{\ell_1}}
A_{i-1}^{-1}(\hat\alpha_2)
\partial_{\alpha^{\ell_3}}\partial_{\alpha^{\ell_2}}
A_{i-1}(\alpha_2^*+u(\hat\alpha_2-\alpha_2^*))
\Bigr)
\right]_{\ell_1,\ell_2,\ell_3}
\dd u
\otimes(\hat\alpha_1-\hat\alpha_2)
\otimes(\hat\alpha_2-\alpha_2^*)^{\otimes2}\\
&\qquad+\int_0^1(1-v)\int_0^1
(1-u)
\left[
\tr
\Bigl(
\partial_{\alpha^{\ell_2}}\partial_{\alpha^{\ell_1}}
A_{i-1}^{-1}(\hat\alpha_2+u(\hat\alpha_1-\hat\alpha_2))
\partial_{\alpha^{\ell_3}}A_{i-1}(\alpha_2^*+v(\hat\alpha_2-\alpha_2^*))
\Bigr)
\right]_{\ell_1,\ell_2,\ell_3}
\dd u\dd v\\
&\qquad\qquad
\otimes(\hat\alpha_1-\hat\alpha_2)^{\otimes2}
\otimes(\hat\alpha_2-\alpha_2^*)\\
&\qquad+
\left|
\int_0^1
\partial_{\alpha}A_{i-1}^{-1}(\hat\alpha_2+u(\hat\alpha_1-\hat\alpha_2))
\dd u
\right|
|\Ri| |\hat\alpha_1-\hat\alpha_2|\\
&=
\Xi_{i-1}^\alpha(\alpha_2^*)
\otimes(\hat\alpha_1-\hat\alpha_2)
\otimes(\hat\alpha_2-\alpha_2^*)\\
&\qquad+\int_0^1
\left[
\tr\Bigl(
\partial_{\alpha^{\ell_3}}\partial_{\alpha^{\ell_1}}
A_{i-1}^{-1}(\alpha_2^*+u(\hat\alpha_2-\alpha_2^*))
\partial_{\alpha^{\ell_2}}A_{i-1}(\alpha_2^*)
\Bigr)
\right]_{\ell_1,\ell_2,\ell_3}
\dd u
\otimes(\hat\alpha_1-\hat\alpha_2)
\otimes(\hat\alpha_2-\alpha_2^*)^{\otimes2}\\
&\qquad+\int_0^1
(1-u)
\left[
\tr\Bigl(
\partial_{\alpha^{\ell_1}}
A_{i-1}^{-1}(\hat\alpha_2)
\partial_{\alpha^{\ell_3}}\partial_{\alpha^{\ell_2}}
A_{i-1}(\alpha_2^*+u(\hat\alpha_2-\alpha_2^*))
\Bigr)
\right]_{\ell_1,\ell_2,\ell_3}
\dd u
\otimes(\hat\alpha_1-\hat\alpha_2)
\otimes(\hat\alpha_2-\alpha_2^*)^{\otimes2}\\
&\qquad+\int_0^1(1-v)\int_0^1
(1-u)
\left[
\tr\Bigl(
\partial_{\alpha^{\ell_2}}\partial_{\alpha^{\ell_1}}
A_{i-1}^{-1}(\hat\alpha_2+u(\hat\alpha_1-\hat\alpha_2))
\partial_{\alpha^{\ell_3}}A_{i-1}(\alpha_2^*+v(\hat\alpha_2-\alpha_2^*))
\Bigr)
\right]_{\ell_1,\ell_2,\ell_3}
\dd u\dd v\\
&\qquad\qquad
\otimes(\hat\alpha_1-\hat\alpha_2)^{\otimes2}
\otimes(\hat\alpha_2-\alpha_2^*)\\
&\qquad+
\left|
\int_0^1
\partial_{\alpha}A_{i-1}^{-1}(\hat\alpha_2+u(\hat\alpha_1-\hat\alpha_2))
\dd u
\right|
|\Ri| |\hat\alpha_1-\hat\alpha_2|\\
&=
\Xi_{i-1}^\alpha(\alpha_0)
\otimes(\hat\alpha_1-\hat\alpha_2)
\otimes(\hat\alpha_2-\alpha_2^*)
+\int_0^1
\partial_{\alpha}\Xi_{i-1}^\alpha(\alpha_0+u(\alpha_2^*-\alpha_0))
\dd u
\otimes(\hat\alpha_1-\hat\alpha_2)
\otimes(\hat\alpha_2-\alpha_2^*)
\otimes(\alpha_2^*-\alpha_0)
\\
&\qquad+\int_0^1
\left[
\tr\Bigl(
\partial_{\alpha^{\ell_3}}\partial_{\alpha^{\ell_1}}
A_{i-1}^{-1}(\alpha_2^*+u(\hat\alpha_2-\alpha_2^*))
\partial_{\alpha^{\ell_2}}A_{i-1}(\alpha_2^*)
\Bigr)
\right]_{\ell_1,\ell_2,\ell_3}
\dd u
\otimes(\hat\alpha_1-\hat\alpha_2)
\otimes(\hat\alpha_2-\alpha_2^*)^{\otimes2}\\
&\qquad+\int_0^1
(1-u)
\left[
\tr\Bigl(
\partial_{\alpha^{\ell_1}}
A_{i-1}^{-1}(\hat\alpha_2)
\partial_{\alpha^{\ell_3}}\partial_{\alpha^{\ell_2}}
A_{i-1}(\alpha_2^*+u(\hat\alpha_2-\alpha_2^*))
\Bigr)
\right]_{\ell_1,\ell_2,\ell_3}
\dd u
\otimes(\hat\alpha_1-\hat\alpha_2)
\otimes(\hat\alpha_2-\alpha_2^*)^{\otimes2}\\
&\qquad+\int_0^1(1-v)\int_0^1
(1-u)
\left[
\tr\Bigl(
\partial_{\alpha^{\ell_2}}\partial_{\alpha^{\ell_1}}
A_{i-1}^{-1}(\hat\alpha_2+u(\hat\alpha_1-\hat\alpha_2))
\partial_{\alpha^{\ell_3}}A_{i-1}(\alpha_2^*+v(\hat\alpha_2-\alpha_2^*))
\Bigr)
\right]_{\ell_1,\ell_2,\ell_3}
\dd u\dd v\\
&\qquad\qquad
\otimes(\hat\alpha_1-\hat\alpha_2)^{\otimes2}
\otimes(\hat\alpha_2-\alpha_2^*)\\
&\qquad+
\left|
\int_0^1
\partial_{\alpha}A_{i-1}^{-1}(\hat\alpha_2+u(\hat\alpha_1-\hat\alpha_2))
\dd u
\right|
|\Ri| |\hat\alpha_1-\hat\alpha_2|\\
&\le
\Xi_{i-1}^\alpha(\alpha_0)
\otimes(\hat\alpha_1-\hat\alpha_2)
\otimes(\hat\alpha_2-\alpha_2^*)
\\
&\qquad+
\frac{\Dea^2}{\sqrt n}
\left|\int_0^1
\partial_{\alpha}\Xi_{i-1}^\alpha(\alpha_0+u(\alpha_2^*-\alpha_0))
\dd u
\right|
\Dea^{-1}|\hat\alpha_1-\hat\alpha_2|
\sqrt n|\hat\alpha_2-\alpha_2^*|
\Dea^{-1}|\alpha_2^*-\alpha_0|
\\
&\qquad+
\frac{\Dea}{n}
\left|
\int_0^1
\left[
\tr\Bigl(
\partial_{\alpha^{\ell_3}}\partial_{\alpha^{\ell_1}}
A_{i-1}^{-1}(\alpha_2^*+u(\hat\alpha_2-\alpha_2^*))
\partial_{\alpha^{\ell_2}}A_{i-1}(\alpha_2^*)
\Bigr)
\right]_{\ell_1,\ell_2,\ell_3}
\dd u
\right|
\Dea^{-1}|\hat\alpha_1-\hat\alpha_2|
\left(\sqrt{n}|\hat\alpha_2-\alpha_2^*|\right)^2\\
&\qquad+
\frac{\Dea}{n}
\left|
\int_0^1
(1-u)
\left[
\tr\Bigl(
\partial_{\alpha^{\ell_1}}
A_{i-1}^{-1}(\hat\alpha_2)
\partial_{\alpha^{\ell_3}}\partial_{\alpha^{\ell_2}}
A_{i-1}(\alpha_2^*+u(\hat\alpha_2-\alpha_2^*))
\Bigr)
\right]_{\ell_1,\ell_2,\ell_3}
\dd u
\right|
\Dea^{-1}|\hat\alpha_1-\hat\alpha_2|
\left(\sqrt{n}|\hat\alpha_2-\alpha_2^*|\right)^2\\
&\qquad+
\frac{\Dea^2}{\sqrt n}
\left|
\int_0^1(1-v)\int_0^1
(1-u)
\left[
\tr\Bigl(
\partial_{\alpha^{\ell_2}}\partial_{\alpha^{\ell_1}}
A_{i-1}^{-1}(\hat\alpha_2+u(\hat\alpha_1-\hat\alpha_2))
\partial_{\alpha^{\ell_3}}A_{i-1}(\alpha_2^*+v(\hat\alpha_2-\alpha_2^*))
\Bigr)
\right]_{\ell_1,\ell_2,\ell_3}
\dd u\dd v
\right|\\
&\qquad\qquad
\left(\Dea^{-1}|\hat\alpha_1-\hat\alpha_2|\right)^2
\sqrt{n}|\hat\alpha_2-\alpha_2^*|\\
&\qquad+
h\Dea\left|
\int_0^1
\partial_{\alpha}A_{i-1}^{-1}(\hat\alpha_2+u(\hat\alpha_1-\hat\alpha_2))
\dd u
\right|
\Ro \Dea^{-1}|\hat\alpha_1-\hat\alpha_2|\\
&\le
\frac{\Dea}{\sqrt n}\Xi_{i-1}^\alpha(\alpha_0)
\otimes\Dea^{-1}(\hat\alpha_1-\hat\alpha_2)
\otimes\sqrt{n}(\hat\alpha_2-\alpha_2^*)
+
\left(
\frac{\Dea^2}{\sqrt n}
+\frac{\Dea}{n}
+h\Dea\right)\Ro
\end{align*}
and
\begin{align*}
\sup_{\tau\in D_{n,M}^\alpha}
\frac{|\varrho_n^\alpha(\tau:\hat\alpha_1,\hat\alpha_2)|}
{\Dea^2([n\tau]-[n\tau_*^\alpha])}
&\le
\frac{1}{\sqrt{n}\Dea}
\sup_{\tau\in D_{n,M}^\alpha}
\left|
\frac{1}{[n\tau]-[n\tau_*^\alpha]}
\sum_{i=[n\tau_*^\alpha]+1}^{[n\tau]}
\Xi_{i-1}^\alpha(\alpha_0)
\right|
\Dea^{-1}|\hat\alpha_1-\hat\alpha_2|
\sqrt n|\hat\alpha_2-\alpha_2^*|
\\
&\qquad+
\left(
\frac{\Dea^2}{\sqrt n}
+\frac{\Dea}{n}
+h\Dea\right)
\sup_{\tau\in D_{n,M}^\alpha}
\frac{1}{\Dea^2([n\tau]-[n\tau_*^\alpha])}
\sum_{i=[n\tau_*^\alpha]+1}^{[n\tau]}
\Ro
\\
&\le
O_p\left(
\frac{1}{\sqrt{n}\Dea}
\right)
+O_p(\sqrt{n}\Dea^2)
+O_p(\Dea)
+O_p(T\Dea)
=o_p(1).
\end{align*}
Hence, we see 
\begin{align*}
P_{3,n}^\alpha
&\le
P\left(
\sup_{\tau\in D_{n,M}^\alpha}
\frac{|\varrho_n^\alpha(\tau:\hat\alpha_1,\hat\alpha_2)|}
{\Dea^2([n\tau]-[n\tau_*^\alpha])}
\ge\delta,\ 
\hat\alpha_1,\hat\alpha_2\in\mathcal O_{\alpha_0}
\right)
+P(\hat\alpha_1\notin\mathcal O_{\alpha_0})
+P(\hat\alpha_2\notin\mathcal O_{\alpha_0})\\
&\le 3\epsilon
\end{align*}
for large $n$.


[iv] From the evaluations in Steps [i]-[iii], we have
\begin{align*}
\varlimsup_{n\to\infty}
P(n\Dea^2(\hat\tau_n^\alpha-\tau_*^\alpha)>M)
\le
\gamma_\alpha(M)+11\epsilon
\end{align*}
for any $M\ge1$ and $\epsilon>0$. 
Therefore
\begin{align}
\varlimsup_{M\to\infty}
\varlimsup_{n\to\infty}
P(n\Dea^2(\hat\tau_n^\alpha-\tau_*^\alpha)>M)
\le
11\epsilon.
\label{eq7.978}
\end{align}


Since, for $\tau<\tau_*^\alpha$, 
\begin{align*}
&\Phi_n(\tau:\alpha_1,\alpha_2)-\Phi_n(\tau_*^\alpha:\alpha_1,\alpha_2)\\
&=\sum_{i=[n\tau]+1}^{[n\tau_*^\alpha]}
\Bigl(
F_i(\alpha_2)-F_i(\alpha_1)-\EE_{\alpha_1^*}[F_i(\alpha_2)-F_i(\alpha_1)|\GG]
\Bigr)\\
&\quad+\sum_{i=[n\tau]+1}^{[n\tau_*^\alpha]}
\Bigl(
\tr\left(A_{i-1}^{-1}(\alpha_2)A_{i-1}(\alpha_1)-I_d\right)
-\log\det A_{i-1}^{-1}(\alpha_2)A_{i-1}(\alpha_1)
\Bigr)\\
&\quad
-\sum_{i=[n\tau]+1}^{[n\tau_*^\alpha]}
\tr\Bigl(
\bigl(A_{i-1}^{-1}(\alpha_2)-A_{i-1}^{-1}(\alpha_1)\bigl)
\left(A_{i-1}(\alpha_1)-h^{-1}\EE_{\alpha_1^*}[(\DeX)^{\otimes2}|\GG]\right)
\Bigr),
\end{align*}
we obtain, in the same way as above,
\begin{align}
\varlimsup_{M\to\infty}
\varlimsup_{n\to\infty}
P(n\Dea^2(\tau_*^\alpha-\hat\tau_n^\alpha)>M)
\le
11\epsilon.
\label{eq7.976}
\end{align}
We see, from \eqref{eq7.979}, \eqref{eq7.978} and \eqref{eq7.976},
$\displaystyle\varlimsup_{M\to\infty}
\varlimsup_{n\to\infty}
P(n\Dea^2|\hat\tau_n^\alpha-\tau_*^\alpha|>M)
\le
22\epsilon$,
which shows 
\begin{align}\label{eq7.97}
n\Dea^2(\hat\tau_n^\alpha-\tau_*^\alpha)=O_p(1).
\end{align}
From Lemmas \ref{lem1}-\ref{lem3} and \eqref{eq7.97},  
we obtain
\begin{align*}
n\Dea^2(\hat\tau_n^\alpha-\tau_*^\alpha)
\dto
\underset{v\in\mathbb R}{\mathrm{argmin}}\,\mathbb F(v).
\end{align*}
This completes the proof of Theorem \ref{th1}.
\end{proof}

\begin{proof}[\bf{Proof of Theorem \ref{th2}}]
As with the proof of Theorem \ref{th1},
it is sufficient to show 
\begin{align*}
\varlimsup_{M\to\infty}
\varlimsup_{n\to\infty}
P(n(\hat\tau_n^\alpha-\tau_*^\alpha)>M)
=0.
\end{align*} 

Let $D_{n,M}^\alpha=\{\tau\in[0,1]|n(\tau-\tau_*^\alpha)>M\}$.
For all $\delta>0$, 
we have
\begin{align*}
P\left(n(\hat\tau_n-\tau_*^\alpha)>M\right)
&\le
P\left(
\sup_{\tau\in D_{n,M}^\alpha}
\frac{
|{\mathcal M}_n^\alpha(\tau:\hat\alpha_1,\hat\alpha_2)|
}{[n\tau]-[n\tau_*^\alpha]}
\ge\delta
\right) 
+P\left(
\inf_{\tau\in D_{n,M}^\alpha}
\frac{
{\mathcal A}_n^\alpha(\tau:\hat\alpha_1,\hat\alpha_2)
}{[n\tau]-[n\tau_*^\alpha]}
\le2\delta
\right)\\
&\quad+P\left(
\sup_{\tau\in D_{n,M}^\alpha}
\frac{
|{\varrho}_n^\alpha(\tau:\hat\alpha_1,\hat\alpha_2)|
}{[n\tau]-[n\tau_*^\alpha]}
\ge\delta
\right)\\
&=:
P_{1,n}^\alpha+P_{2,n}^\alpha+P_{3,n}^\alpha.
\end{align*}

[i] Evaluation of $P_{1,n}^\alpha$.
For large $n$, we have
\begin{align*}
P_{1,n}^\alpha
&=
P\left(
\sup_{\tau\in D_{n,M}^\alpha}
\frac{
|{\mathcal M}_n^\alpha(\tau:\hat\alpha_1,\hat\alpha_2)|
}{[n\tau]-[n\tau_*^\alpha]}
\ge\delta
\right)\\
&\le
P\left(
\sup_{\tau\in D_{n,M}^\alpha}
\frac{
|{\mathcal M}_n^\alpha(\tau:\hat\alpha_1,\hat\alpha_2)|
}{[n\tau]-[n\tau_*^\alpha]}
\ge\delta,\ 
\hat\alpha_1\in\mathcal O_{\alpha_1^*},\ 
\hat\alpha_2\in\mathcal O_{\alpha_2^*}
\right)
+P(\hat\alpha_1\notin\mathcal O_{\alpha_1^*})
+P(\hat\alpha_2\notin\mathcal O_{\alpha_2^*})\\
&\le
P\left(
\sup_{\tau\in D_{n,M}^\alpha}
\frac{
\sup_{\alpha_k\in\mathcal O_{\alpha_k^*}}|{\mathcal M}_n^\alpha(\tau:\alpha_1,\alpha_2)|
}{[n\tau]-[n\tau_*^\alpha]}
\ge\delta
\right)
+P(\hat\alpha_1\notin\mathcal O_{\alpha_1^*})
+P(\hat\alpha_2\notin\mathcal O_{\alpha_2^*}).
\end{align*}
By the uniform version on the H\'ajek-Renyi inequality in Lemma 2 
of Iacus and Yoshida (2012), 
we obtain
\begin{align*}
&P\left(
\sup_{\tau\in D_{n,M}^\alpha}
\frac{1}{[n\tau]-[n\tau_*^\alpha]}
\sup_{\alpha_k\in\mathcal O_{\alpha_k^*}}|{\mathcal M}_n^\alpha(\tau:\alpha_1,\alpha_2)|
\ge\delta
\right)\\
&\le
P\left(
\max_{j>M-1}
\frac{1}{j}
\sup_{\alpha_k\in\mathcal O_{\alpha_k^*}}\left|
\sum_{i=[n\tau_*^\alpha]+1}^{[n\tau_*^\alpha]+j}
\left(
F_i(\alpha_1)-F_i(\alpha_2)-
\EE[F_i(\alpha_1)-F_i(\alpha_2)|\GG]
\right)
\right|
\ge\delta
\right)\\
&\le
\sum_{j>M-1}
\frac{C}{(\delta j)^2}\\
&\le
\frac{C'}{\delta^2M}
=:\gamma_\alpha(M).
\end{align*}
From \textbf{[C6-I]}, 
we have  
$P(\hat\alpha_k\notin\mathcal O_{\alpha_k^*})<\epsilon$ for large $n$.
Therefore $P_{1,n}^\alpha\le\gamma_\alpha(M)+2\epsilon$ for large $n$.

[ii] Evaluation of $P_{2,n}^\alpha$.
If $\hat\alpha_k\in\mathcal O_{\alpha_k^*}$, 
then there exists a positive constant $c$ independent of $i$ such that
\begin{align*}
\Gamma_{i-1}^\alpha(\hat\alpha_1,\hat\alpha_2)
&=
\Gamma_{i-1}^\alpha(\alpha_1^*,\alpha_2^*)
+\int_0^1
\left.
\partial_{(\alpha_1,\alpha_2)}
\Gamma_{i-1}^\alpha(\alpha_1,\alpha_2)
\right|_{\alpha_k=\alpha_k^*+u(\hat\alpha_k-\alpha_k^*)}
\dd u
\left(
\begin{array}{c}
\hat\alpha_1-\alpha_1^*\\
\hat\alpha_2-\alpha_2^*
\end{array}
\right)\\
&\ge
\Gamma_{i-1}^\alpha(\alpha_1^*,\alpha_2^*)
-c(|\hat\alpha_1-\alpha_1^*|+|\hat\alpha_2-\alpha_2^*|).
\end{align*}
According to \textbf{[B1-I]},
if we set
\begin{align*}
\delta=\frac{1}{4}\inf_{x}\Gamma^\alpha(x,\alpha_1^*,\alpha_2^*)>0,
\end{align*}
then for large $n$, 
\begin{align*}
P_{2,n}^\alpha
&\le
P\left(
\inf_{\tau\in D_{n,M}^\alpha}
\frac{
{\mathcal A}_n^\alpha(\tau:\hat\alpha_1,\hat\alpha_2)
}{[n\tau]-[n\tau_*^\alpha]}
\le2\delta,\ 
\hat\alpha_1\in\mathcal O_{\alpha_1^*},\ 
\hat\alpha_2\in\mathcal O_{\alpha_2^*}
\right)
+P\left(
\hat\alpha_1\notin\mathcal O_{\alpha_1^*}
\right)
+P\left(
\hat\alpha_2\notin\mathcal O_{\alpha_2^*}
\right)\\
&\le
P\left(
\inf_{\tau\in D_{n,M}^\alpha}
\frac{1}{[n\tau]-[n\tau_*^\alpha]}
\sum_{i=[n\tau_*^\alpha]+1}^{[n\tau]}
\Bigl(
\Gamma_{i-1}^\alpha(\alpha_1^*,\alpha_2^*)
-c(|\hat\alpha_1-\alpha_1^*|+|\hat\alpha_2-\alpha_2^*|)
\Bigr)
\le2\delta 
\right)\\
&\qquad
+P\left(
\hat\alpha_1\notin\mathcal O_{\alpha_1^*}
\right)
+P\left(
\hat\alpha_2\notin\mathcal O_{\alpha_2^*}
\right)\\
&\le
P\left(
\inf_{\tau\in D_{n,M}^\alpha}
\frac{1}{[n\tau]-[n\tau_*^\alpha]}
\sum_{i=[n\tau_*^\alpha]+1}^{[n\tau]}
\Gamma_{i-1}^\alpha(\alpha_1^*,\alpha_2^*)
-c(|\hat\alpha_1-\alpha_1^*|+|\hat\alpha_2-\alpha_2^*|)
\le2\delta
\right)\\
&\qquad
+P\left(
\hat\alpha_1\notin\mathcal O_{\alpha_1^*}
\right)
+P\left(
\hat\alpha_2\notin\mathcal O_{\alpha_2^*}
\right)\\
&\le
P\left(
\inf_{\tau\in D_{n,M}^\alpha}
\frac{1}{[n\tau]-[n\tau_*^\alpha]}
\sum_{i=[n\tau_*^\alpha]+1}^{[n\tau]}
\Gamma_{i-1}^\alpha(\alpha_1^*,\alpha_2^*)
\le3\delta
\right)
+
P\Bigl(
-c(|\hat\alpha_1-\alpha_1^*|+|\hat\alpha_2-\alpha_2^*|)
\le-\delta
\Bigr)
\\
&\qquad
+P\left(
\hat\alpha_1\notin\mathcal O_{\alpha_1^*}
\right)
+P\left(
\hat\alpha_2\notin\mathcal O_{\alpha_2^*}
\right)\\
&\le
P\left(
\inf_{x}\Gamma^\alpha(x,\alpha_1^*,\alpha_2^*)
\le3\delta
\right)
+
P\left(
|\hat\alpha_1-\alpha_1^*|+|\hat\alpha_2-\alpha_2^*|
\ge\frac{\delta}{c}
\right)
+P\left(
\hat\alpha_1\notin\mathcal O_{\alpha_1^*}
\right)
+P\left(
\hat\alpha_2\notin\mathcal O_{\alpha_2^*}
\right)\\
&\le 4\epsilon
\end{align*}
thanks to
\begin{align*}
P\left(
|\hat\alpha_1-\alpha_1^*|+|\hat\alpha_2-\alpha_2^*|
\ge\frac{\delta}{c}
\right)
\le 
P\left(
|\hat\alpha_1-\alpha_1^*|
\ge\frac{\delta}{2c}
\right)
+
P\left(
|\hat\alpha_2-\alpha_2^*|
\ge\frac{\delta}{2c}
\right)
\le 2\epsilon
\end{align*}
from \textbf{[C6-I]}.

[iii] Evaluation of $P_{3,n}^\alpha$.
\begin{align*}
&\tr\Bigl(
\left(
A_{i-1}^{-1}(\hat\alpha_1)-A_{i-1}^{-1}(\hat\alpha_2)
\right)
\left(
A_{i-1}(\hat\alpha_2)-h^{-1}\EE_{\alpha_2^*}[(\DeX)^{\otimes2}|\GG]
\right)
\Bigr)\\
&=
\tr\Bigl(
\left(
A_{i-1}^{-1}(\hat\alpha_1)-A_{i-1}^{-1}(\hat\alpha_2)
\right)
\left(
A_{i-1}^{-1}(\hat\alpha_2)-A_{i-1}^{-1}(\alpha_2^*)
-hQ_{i-1}(\theta^*)+\Rd
\right)
\Bigr)\\
&\le
\int_0^1
\left.
\left[\tr\Bigl(
(A_{i-1}^{-1}(\hat\alpha_1)-A_{i-1}^{-1}(\hat\alpha_2))
\partial_{\alpha^\ell}A_{i-1}(\alpha)
\Bigr)
\right]_{\ell}
\right|_{\alpha=\alpha_2^*+u(\hat\alpha_2-\alpha_2^*)}
\dd u
(\hat\alpha_2-\alpha_2^*)\\
&\qquad+h|A_{i-1}^{-1}(\hat\alpha_1)-A_{i-1}^{-1}(\hat\alpha_2)|
|Q_{i-1}(\theta^*)|+\Rd
\end{align*}
and
\begin{align*}
&\sup_{\tau\in D_{n,M}^\alpha}
\frac{|\varrho_n^\alpha(\tau:\hat\alpha_1,\hat\alpha_2)|}
{[n\tau]-[n\tau_*^\alpha]}\\
&\le
\frac{1}{\sqrt{n}}
\sup_{\tau\in D_{n,M}^\alpha}
\left|
\frac{1}{[n\tau]-[n\tau_*^\alpha]}
\sum_{i=[n\tau_*^\alpha]+1}^{[n\tau]}
\int_0^1
\left.
\left[
\tr\Bigl(
(A_{i-1}^{-1}(\hat\alpha_1)-A_{i-1}^{-1}(\hat\alpha_2))
\partial_{\alpha^\ell}A_{i-1}(\alpha)
\Bigr)
\right]_{\ell}
\right|_{\alpha=\alpha_2^*+u(\hat\alpha_2-\alpha_2^*)}
\dd u
\right|
\\
&\qquad\times
\sqrt n|\hat\alpha_2-\alpha_2^*|
\\
&\qquad+
\sup_{\tau\in D_{n,M}^\alpha}
\frac{h}{[n\tau]-[n\tau_*^\alpha]}
\sum_{i=[n\tau_*^\alpha]+1}^{[n\tau]}
|A_{i-1}^{-1}(\hat\alpha_1)-A_{i-1}^{-1}(\hat\alpha_2)|
|Q_{i-1}(\theta^*)|
\\
&\qquad+
\sup_{\tau\in D_{n,M}^\alpha}
\frac{h^2}{[n\tau]-[n\tau_*^\alpha]}
\sum_{i=[n\tau_*^\alpha]+1}^{[n\tau]}
\Ro
\\
&\le
\frac{1}{\sqrt{n}}
\sup_{x,\alpha_k}
\left|
\left[
\tr\Bigl(
(A^{-1}(x,\alpha_1)-A^{-1}(x,\alpha_2))
\partial_{\alpha^\ell}A(x,\alpha_3)
\Bigr)
\right]_{\ell}
\right|
\sqrt n|\hat\alpha_2-\alpha_2^*|
\\
&\qquad+
h\sup_{x,\alpha_k}
\bigl|A^{-1}(x,\alpha_1)-A^{-1}(x,\alpha_2)\bigr|
\sup_{x,\theta}|Q(x,\theta)|
+
\frac{h^2}{M}
\sum_{i=[n\tau_*^\alpha]+1}^{n}
\Ro
\\
&=o_p(1).
\end{align*}
Hence, we see
\begin{align*}
P_{3,n}^\alpha
&\le
P\left(
\sup_{\tau\in D_{n,M}^\alpha}
\frac{|\varrho_n^\alpha(\tau:\hat\alpha_1,\hat\alpha_2)|}
{[n\tau]-[n\tau_*^\alpha]}
\ge\delta,\ 
\hat\alpha_1\in\mathcal O_{\alpha_1^*},\ 
\hat\alpha_2\in\mathcal O_{\alpha_2^*}
\right)
+P(\hat\alpha_1\notin\mathcal O_{\alpha_1^*})
+P(\hat\alpha_2\notin\mathcal O_{\alpha_2^*})\\
&\le 3\epsilon
\end{align*}
for large $n$.

[iv] From the evaluations in Steps [i]-[iii], we have
\begin{align*}
\varlimsup_{n\to\infty}
P(n(\hat\tau_n^\alpha-\tau_*^\alpha)>M)
\le
\gamma_\alpha(M)+9\epsilon
\end{align*}
for any $M\ge1$ and $\epsilon>0$. 
Therefore
\begin{align*}
\varlimsup_{M\to\infty}
\varlimsup_{n\to\infty}
P(n(\hat\tau_n^\alpha-\tau_*^\alpha)>M)
\le
9\epsilon.
\end{align*}
\end{proof}

\begin{proof}[\bf{Proof of Corollary \ref{cor1}}]
It is sufficient to show, for all $\epsilon_1\in[0,1)$ and $M>0$, 
\begin{align*}
\varlimsup_{n\to\infty}
P(n^{\epsilon_1}(\hat\tau_n^\alpha-\tau_*^\alpha)>M)
=0.
\end{align*} 

Let $D_{n,M}^\alpha=\{\tau\in[0,1]|n^{\epsilon_1}(\tau-\tau_*^\alpha)>M\}$.
Suppose that there exists $\delta_1\in(0,\frac{1-\epsilon_1}{2})$ such that 
$nh^{1/(\epsilon_1+\delta_1)}\lto0$.
We have
\begin{align*}
P\left(n^{\epsilon_1}(\hat\tau_n-\tau_*^\alpha)>M\right)
&\le
P\left(
\sup_{\tau\in D_{n,M}^\alpha}
\frac{
|{\mathcal M}_n^\alpha(\tau:\hat\alpha_1,\hat\alpha_2)|
}{n^{-\delta_1}([n\tau]-[n\tau_*^\alpha])}
\ge1
\right) 
+P\left(
\inf_{\tau\in D_{n,M}^\alpha}
\frac{
{\mathcal A}_n^\alpha(\tau:\hat\alpha_1,\hat\alpha_2)
}{n^{-\delta_1}([n\tau]-[n\tau_*^\alpha])}
\le2
\right)\\
&\quad+P\left(
\sup_{\tau\in D_{n,M}^\alpha}
\frac{
|{\varrho}_n^\alpha(\tau:\hat\alpha_1,\hat\alpha_2)|
}{n^{-\delta_1}([n\tau]-[n\tau_*^\alpha])}
\ge1
\right)\\
&=:
P_{1,n}^\alpha+P_{2,n}^\alpha+P_{3,n}^\alpha.
\end{align*}

[i] Evaluation of $P_{1,n}^\alpha$.
For large $n$, we have
\begin{align*}
P_{1,n}^\alpha
&=
P\left(
\sup_{\tau\in D_{n,M}^\alpha}
\frac{
|{\mathcal M}_n^\alpha(\tau:\hat\alpha_1,\hat\alpha_2)|
}{n^{-\delta_1}([n\tau]-[n\tau_*^\alpha])}
\ge1
\right)\\
&\le
P\left(
\sup_{\tau\in D_{n,M}^\alpha}
\frac{
|{\mathcal M}_n^\alpha(\tau:\hat\alpha_1,\hat\alpha_2)|
}{n^{-\delta_1}([n\tau]-[n\tau_*^\alpha])}
\ge1,\ 
\hat\alpha_1\in\mathcal O_{\alpha_1^*},\ 
\hat\alpha_2\in\mathcal O_{\alpha_2^*}
\right)
+P(\hat\alpha_1\notin\mathcal O_{\alpha_1^*})
+P(\hat\alpha_2\notin\mathcal O_{\alpha_2^*})\\
&\le
P\left(
\sup_{\tau\in D_{n,M}^\alpha}
\frac{
\sup_{\alpha_k\in\mathcal O_{\alpha_k^*}}|{\mathcal M}_n^\alpha(\tau:\alpha_1,\alpha_2)|
}{[n\tau]-[n\tau_*^\alpha]}
\ge n^{-\delta_1}
\right)
+P(\hat\alpha_1\notin\mathcal O_{\alpha_1^*})
+P(\hat\alpha_2\notin\mathcal O_{\alpha_2^*}).
\end{align*}
By the uniform version on the H\'ajek-Renyi inequality in Lemma 2 
of Iacus and Yoshida (2012), 
we obtain
\begin{align*}
&P\left(
\sup_{\tau\in D_{n,M}^\alpha}
\frac{1}{[n\tau]-[n\tau_*^\alpha]}
\sup_{\alpha_k\in\mathcal O_{\alpha_k^*}}|{\mathcal M}_n^\alpha(\tau:\alpha_1,\alpha_2)|
\ge n^{-\delta_1}
\right)\\
&\le
P\left(
\max_{j>M/n^{\epsilon_1-1}-1}
\frac{1}{j}
\sup_{\alpha_k\in\mathcal O_{\alpha_k^*}}\left|
\sum_{i=[n\tau_*^\alpha]+1}^{[n\tau_*^\alpha]+j}
\left(
F_i(\alpha_1)-F_i(\alpha_2)-
\EE[F_i(\alpha_1)-F_i(\alpha_2)|\GG]
\right)
\right|
\ge n^{-\delta_1}
\right)\\
&\le
\sum_{j>M/n^{\epsilon_1-1}-1}
\frac{C}{( n^{-\delta_1} j)^2}\\
&\le
\frac{C'}{n^{-2\delta_1}}
\frac{n^{\epsilon_1-1}}{M}
=\frac{C'}{M}
n^{\epsilon_1+2\delta_1-1}
\lto0.
\end{align*}
From \textbf{[C6-I]}, 
we have  
$P(\hat\alpha_k\notin\mathcal O_{\alpha_k^*})<\epsilon$ for large $n$.
Therefore $P_{1,n}^\alpha\le3\epsilon$ for large $n$.

[ii] Evaluation of $P_{2,n}^\alpha$.
If $\hat\alpha_k\in\mathcal O_{\alpha_k^*}$, 
then there exists a positive constant $c$ independent of $i$ such that
\begin{align*}
\Gamma_{i-1}^\alpha(\hat\alpha_1,\hat\alpha_2)
&=
\Gamma_{i-1}^\alpha(\alpha_1^*,\alpha_2^*)
+\int_0^1
\left.
\partial_{(\alpha_1,\alpha_2)}
\Gamma_{i-1}^\alpha(\alpha_1,\alpha_2)
\right|_{\alpha_k=\alpha_k^*+u(\hat\alpha_k-\alpha_k^*)}
\dd u
\left(
\begin{array}{c}
\hat\alpha_1-\alpha_1^*\\
\hat\alpha_2-\alpha_2^*
\end{array}
\right)\\
&\ge
\Gamma_{i-1}^\alpha(\alpha_1^*,\alpha_2^*)
-c(|\hat\alpha_1-\alpha_1^*|+|\hat\alpha_2-\alpha_2^*|).
\end{align*}
We see, for large $n$, 
\begin{align*}
P_{2,n}^\alpha
&\le
P\left(
\inf_{\tau\in D_{n,M}^\alpha}
\frac{
{\mathcal A}_n^\alpha(\tau:\hat\alpha_1,\hat\alpha_2)
}{n^{-\delta_1}([n\tau]-[n\tau_*^\alpha])}
\le2,\ 
\hat\alpha_1\in\mathcal O_{\alpha_1^*},\ 
\hat\alpha_2\in\mathcal O_{\alpha_2^*}
\right)
+P\left(
\hat\alpha_1\notin\mathcal O_{\alpha_1^*}
\right)
+P\left(
\hat\alpha_2\notin\mathcal O_{\alpha_2^*}
\right)\\
&\le
P\left(
\inf_{\tau\in D_{n,M}^\alpha}
\frac{1}{[n\tau]-[n\tau_*^\alpha]}
\sum_{i=[n\tau_*^\alpha]+1}^{[n\tau]}
\Bigl(
\Gamma_{i-1}^\alpha(\alpha_1^*,\alpha_2^*)
-c(|\hat\alpha_1-\alpha_1^*|+|\hat\alpha_2-\alpha_2^*|)
\Bigr)
\le2 n^{-\delta_1} 
\right)\\
&\qquad
+P\left(
\hat\alpha_1\notin\mathcal O_{\alpha_1^*}
\right)
+P\left(
\hat\alpha_2\notin\mathcal O_{\alpha_2^*}
\right)\\
&\le
P\left(
\inf_{\tau\in D_{n,M}^\alpha}
\frac{1}{[n\tau]-[n\tau_*^\alpha]}
\sum_{i=[n\tau_*^\alpha]+1}^{[n\tau]}
\Gamma_{i-1}^\alpha(\alpha_1^*,\alpha_2^*)
-c(|\hat\alpha_1-\alpha_1^*|+|\hat\alpha_2-\alpha_2^*|)
\le2 n^{-\delta_1}
\right)\\
&\qquad
+P\left(
\hat\alpha_1\notin\mathcal O_{\alpha_1^*}
\right)
+P\left(
\hat\alpha_2\notin\mathcal O_{\alpha_2^*}
\right)\\
&\le
P\left(
\inf_{\tau\in D_{n,M}^\alpha}
\frac{1}{[n\tau]-[n\tau_*^\alpha]}
\sum_{i=[n\tau_*^\alpha]+1}^{[n\tau]}
\Gamma_{i-1}^\alpha(\alpha_1^*,\alpha_2^*)
\le3 n^{-\delta_1}
\right)
+
P\Bigl(
-c(|\hat\alpha_1-\alpha_1^*|+|\hat\alpha_2-\alpha_2^*|)
\le- n^{-\delta_1}
\Bigr)
\\
&\qquad
+P\left(
\hat\alpha_1\notin\mathcal O_{\alpha_1^*}
\right)
+P\left(
\hat\alpha_2\notin\mathcal O_{\alpha_2^*}
\right)\\
&\le
P\left(
\inf_{x}\Gamma^\alpha(x,\alpha_1^*,\alpha_2^*)
\le3 {n^{-\delta_1}}
\right)
+
P\left(
|\hat\alpha_1-\alpha_1^*|+|\hat\alpha_2-\alpha_2^*|
\ge\frac{n^{-\delta_1}}{c}
\right)
+P\left(
\hat\alpha_1\notin\mathcal O_{\alpha_1^*}
\right)
+P\left(
\hat\alpha_2\notin\mathcal O_{\alpha_2^*}
\right)\\
&\le 5\epsilon
\end{align*}
thanks to
\begin{align*}
P\left(
|\hat\alpha_1-\alpha_1^*|+|\hat\alpha_2-\alpha_2^*|
\ge\frac{n^{-\delta_1}}{c}
\right)
\le 
P\left(
n^{\delta_1-1/2}\sqrt n|\hat\alpha_1-\alpha_1^*|
\ge\frac{1}{2c}
\right)
+
P\left(
n^{\delta_1-1/2}\sqrt n|\hat\alpha_2-\alpha_2^*|
\ge\frac{1}{2c}
\right)
\le 2\epsilon
\end{align*}
from \textbf{[C6-I]}.

[iii] Evaluation of $P_{3,n}^\alpha$.
\begin{align*}
&\tr\Bigl(
\left(
A_{i-1}^{-1}(\hat\alpha_1)-A_{i-1}^{-1}(\hat\alpha_2)
\right)
\left(
A_{i-1}(\hat\alpha_2)-h^{-1}\EE_{\alpha_2^*}[(\DeX)^{\otimes2}|\GG]
\right)
\Bigr)\\
&=
\tr\Bigl(
\left(
A_{i-1}^{-1}(\hat\alpha_1)-A_{i-1}^{-1}(\hat\alpha_2)
\right)
\left(
A_{i-1}(\hat\alpha_2)-A_{i-1}(\alpha_2^*)
\right)
\Bigr)
+\Ri
\\
&
\le
\int_0^1
\left.
\left[\tr\Bigl(
(A_{i-1}^{-1}(\hat\alpha_1)-A_{i-1}^{-1}(\hat\alpha_2))
\partial_{\alpha^\ell}A_{i-1}(\alpha)
\Bigr)
\right]_{\ell}
\right|_{\alpha=\alpha_2^*+u(\hat\alpha_2-\alpha_2^*)}
\dd u
(\hat\alpha_2-\alpha_2^*)
+\Ri
\end{align*}
and
\begin{align*}
&\sup_{\tau\in D_{n,M}^\alpha}
\frac{|\varrho_n^\alpha(\tau:\hat\alpha_1,\hat\alpha_2)|}
{n^{-\delta_1}([n\tau]-[n\tau_*^\alpha])}\\
&\le
\frac{1}{n^{-\delta_1+1/2}}
\sup_{\tau\in D_{n,M}^\alpha}
\left|
\frac{1}{[n\tau]-[n\tau_*^\alpha]}
\sum_{i=[n\tau_*^\alpha]+1}^{[n\tau]}
\int_0^1
\left.
\left[
\tr\Bigl(
(A_{i-1}^{-1}(\hat\alpha_1)-A_{i-1}^{-1}(\hat\alpha_2))
\partial_{\alpha^\ell}A_{i-1}(\alpha)
\Bigr)
\right]_{\ell}
\right|_{\alpha=\alpha_2^*+u(\hat\alpha_2-\alpha_2^*)}
\dd u
\right|
\\
&\qquad\times
\sqrt n|\hat\alpha_2-\alpha_2^*|
\\
&
\qquad+
\frac{h}{n^{-\delta_1}}
\sup_{\tau\in D_{n,M}^\alpha}
\frac{1}{[n\tau]-[n\tau_*^\alpha]}
\sum_{i=[n\tau_*^\alpha]+1}^{[n\tau]}
\Ro
\\
&\le
\frac{1}{n^{-\delta_1+1/2}}
\sup_{x,\alpha_k}
\left|
\left[
\tr\Bigl(
(A^{-1}(x,\alpha_1)-A^{-1}(x,\alpha_2))
\partial_{\alpha^\ell}A(x,\alpha_3)
\Bigr)
\right]_{\ell}
\right|
\sqrt n|\hat\alpha_2-\alpha_2^*|
\\
&\qquad+
\frac{h}{n^{-\delta_1}}
\frac{n^{\epsilon_1-1}}{M}
\sum_{i=[n\tau_*^\alpha]+1}^{n}
\Ro
\\
&=
O_p(n^{\delta_1-1/2})
+O_p(n^{\epsilon_1+\delta_1}h)
=
O_p(n^{\delta_1-1/2})
+O_p(nh^{1/(\epsilon_1+\delta_1)})
=o_p(1).
\end{align*}
Hence, we see
\begin{align*}
P_{3,n}^\alpha
&\le
P\left(
\sup_{\tau\in D_{n,M}^\alpha}
\frac{|\varrho_n^\alpha(\tau:\hat\alpha_1,\hat\alpha_2)|}
{n^{-\delta_1}([n\tau]-[n\tau_*^\alpha])}
\ge1,\ 
\hat\alpha_1\in\mathcal O_{\alpha_1^*},\ 
\hat\alpha_2\in\mathcal O_{\alpha_2^*}
\right)
+P(\hat\alpha_1\notin\mathcal O_{\alpha_1^*})
+P(\hat\alpha_2\notin\mathcal O_{\alpha_2^*})\\
&\le 3\epsilon
\end{align*}
for large $n$.

[iv] From the evaluations in Steps [i]-[iii], we have
\begin{align*}
\varlimsup_{n\to\infty}
P(n^{\epsilon_1}(\hat\tau_n^\alpha-\tau_*^\alpha)>M)
\le
11\epsilon
\end{align*}
for any $M>0$ and $\epsilon>0$. 
\end{proof}

In Case A of Situation II, we set
\begin{align*}
\mathbb G_n(v)
&=\Psi_n\left(\tau_*^\beta+\frac{v}{T\Deb^2}
:\beta_1^*,\beta_2^*\middle|\alpha^*\right)
-\Psi_n\left(\tau_*^\beta:\beta_1^*,\beta_2^*\middle|\alpha^*\right),\\
\hat{\mathbb G}_n(v)
&=\Psi_n\left(\tau_*^\beta+\frac{v}{T\Deb^2}:\hat\beta_1,\hat\beta_2\middle|\hat\alpha\right)
-\Psi_n\left(\tau_*^\beta:\hat\beta_1,\hat\beta_2\middle|\hat\alpha\right),\\
\mathcal D_n^\beta(v)
&=\hat{\mathbb G}_n(v)-\mathbb G_n(v).
\end{align*}
\begin{lem}\label{lem4}
Suppose that \textbf{[C1]}-\textbf{[C5]}, \textbf{[C6-II]},
\textbf{[A1-II]}-\textbf{[A3-II]} hold.
Then, for all $L>0$, 
\begin{align*}
\sup_{v\in[-L,L]}|\mathcal D_n^\beta(v)|\pto0
\end{align*}
as $n\to\infty$.
\end{lem}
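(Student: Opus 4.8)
The plan is to follow the proof of Lemma \ref{lem2} almost verbatim, replacing the diffusion contrast $F_i$ by the drift contrast $G_i$ and carefully tracking the two different estimator rates in \textbf{[C6-II]}. I treat $v>0$; the case $v<0$ is identical. Since $n/T=1/h$, shifting $\tau_*^\beta$ by $v/(T\Deb^2)$ moves the summation index by $v/(h\Deb^2)$, so I write $\mathcal D_n^\beta(v)=\sum_{i=[n\tau_*^\beta]+1}^{[n\tau_*^\beta+v/(h\Deb^2)]}\big[(G_i(\hat\beta_1|\hat\alpha)-G_i(\hat\beta_2|\hat\alpha))-(G_i(\beta_1^*|\alpha^*)-G_i(\beta_2^*|\alpha^*))\big]$ and Taylor-expand each $G_i(\hat\beta_k|\hat\alpha)$ jointly in $(\beta,\alpha)$ about $(\beta_k^*,\alpha^*)$ to second order. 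This produces first-order terms $\partial_\beta G_i(\beta_k^*|\alpha^*)(\hat\beta_k-\beta_k^*)$ and $\partial_\alpha G_i(\beta_k^*|\alpha^*)(\hat\alpha-\alpha^*)$, together with second-order remainders, and I must show that each contributes $o_p(1)$ uniformly on $[0,L]$. The relevant scales are $\sqrt T(\hat\beta_k-\beta_k^*)=O_p(1)$, $\sqrt n(\hat\alpha-\alpha^*)=O_p(1)$ and a summation length $\asymp 1/(h\Deb^2)$.

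For the first-order drift terms I use $\partial_\beta G_i(\beta|\alpha)=-2\partial_\beta b(\Xs,\beta)^\TT A^{-1}(\Xs,\alpha)(\DeX-hb(\Xs,\beta))$ and split $\partial_\beta G_i(\beta_k^*|\alpha^*)$ into a $\GG$-martingale difference plus its conditional expectation, exactly as in \eqref{eq7.967}. The martingale part has conditional second moment $4h\Xi^\beta_{i-1}(\alpha^*,\beta_k^*)+R_{i-1}(h^2,\theta)$, so Theorem 2.11 of Hall and Heyde (1980) bounds its supremum in $L^2$ by $C/\Deb^2$; multiplying by $(\hat\beta_k-\beta_k^*)=O_p(T^{-1/2})$ gives $O_p((T\Deb^2)^{-1/2})=o_p(1)$ by \textbf{[A1-II]}. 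For the conditional-expectation part, the Kessler-type expansion (Lemma 1 of Kessler (1997)) gives $\EE_{\beta_2^*}[\DeX-hb(\Xs,\beta_k^*)|\GG]=h(b(\Xs,\beta_2^*)-b(\Xs,\beta_k^*))+R_{i-1}(h^2,\theta)$, which is $R_{i-1}(h^2,\theta)$ for $k=2$ and $O(h\Deb)$ for $k=1$; summing $\asymp 1/(h\Deb^2)$ such terms and multiplying by $O_p(T^{-1/2})$ yields $o_p(1)$, using $T\Deb^2\to\infty$, $nh^2\to0$, and the maximal ergodic bound in \textbf{[A2-II]} to control the partial sums uniformly.

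The first-order $\alpha$-term I keep as the difference $\sum_i[\partial_\alpha G_i(\beta_1^*|\alpha^*)-\partial_\alpha G_i(\beta_2^*|\alpha^*)](\hat\alpha-\alpha^*)$: because $\alpha^*$ is common to both blocks, the leading $\alpha$-contribution cancels and the difference carries an extra factor $b(\Xs,\beta_2^*)-b(\Xs,\beta_1^*)=O(\Deb)$. Expanding $(\DeX-hb(\beta_1^*))^{\otimes2}-(\DeX-hb(\beta_2^*))^{\otimes2}$ produces a cross term of martingale type of size $\sqrt h\,\Deb$ and a deterministic term of size $h\Deb^2$; the former summed is $O_p(1)$ and the latter $o_p(1)$, so after multiplying by $(\hat\alpha-\alpha^*)=O_p(n^{-1/2})$ the whole $\alpha$-term is $o_p(1)$. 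Among the second-order remainders, the $\partial_\alpha^2G$ contribution is the direct analogue of \eqref{eq7.968}: since $\partial_\alpha^2G_i$ retains the factor $(\DeX-hb)^{\otimes2}/h\asymp A$ it is $O(1)$, so against $(\hat\alpha-\alpha^*)^{\otimes2}=O_p(n^{-1})$ it contributes $O_p((T\Deb^2)^{-1})=o_p(1)$. The remaining remainders ($\partial_\beta^2G$ and the mixed $\partial_\alpha\partial_\beta G$) share the delicate feature handled below.

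The main obstacle is the $\partial_\beta^2G$ remainder (and, by the same mechanism, the mixed and the $\alpha$ cross terms above). Unlike $\partial_\alpha^2F_i$ in Lemma \ref{lem2}, the derivative $\partial_\beta^2G_i=-2\partial_\beta^2b(\Xs,\beta)^\TT A^{-1}(\Xs,\alpha)(\DeX-hb(\Xs,\beta))+2h\Xi^\beta_{i-1}(\alpha,\beta)$ has \emph{no} $O(1)$ part: its deterministic piece is $O(h)$ and the other piece only $O(\sqrt h)$, so a term-by-term absolute bound fails, as $T^{-1}\cdot(h\Deb^2)^{-1}\cdot\sqrt h$ need not vanish. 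The resolution, and the place where \textbf{[A3-II]} enters, is to re-center $\DeX-hb(\Xs,\beta)=(\DeX-hb(\Xs,\beta_2^*))+h(b(\Xs,\beta_2^*)-b(\Xs,\beta))$: the first summand is the genuine post-change noise, whose supremum over the deterministic argument is a martingale controlled by the uniform H\'ajek--R\'enyi inequality (Lemma 2 of Iacus and Yoshida (2012)), giving $O_p(\Deb^{-1})$, which against $|\hat\beta_k-\beta_k^*|^2=O_p(T^{-1})$ contributes $O_p(\Deb/(T\Deb^2))=o_p(1)$; the second summand supplies the extra $h$ needed to render the drift remainders summable. The higher-order Kessler expansion of the conditional moments of $\DeX$, for which $b\in C_\uparrow^{4,m}(\mathbb R^d\times\Theta_B)$ and the balances $n^{m/2-1}h^{(m-1)/2}\to\infty$ and $h^{-1/2}\Deb^{m-2}\to0$ of \textbf{[A3-II]} are required, is exactly what makes these remainders $o_p(1)$. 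Combining all pieces gives $\sup_{v\in[0,L]}|\mathcal D_n^\beta(v)|\pto0$, and the symmetric argument for $v<0$ completes the proof.
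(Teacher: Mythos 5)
Your proposal is sound in substance, but it resolves the central difficulty by a genuinely different mechanism from the paper's. The paper does not stop at second order: it expands $G_i(\hat\beta_k|\alpha^*)$ in $\beta$ up to the order $m$ of \textbf{[A3-II]}, so that every derivative $\partial_\beta^j G_i$ with $1\le j\le m-1$ is evaluated at the \emph{fixed} points $\beta_k^*$, where the ordinary (non-uniform) Hall--Heyde maximal inequality controls the martingale parts and Kessler-type conditional expectations together with \textbf{[A2-II]} control the compensators; the only evaluation at a random intermediate point occurs in the $m$-th order remainder, which is disposed of by the crude bound $O_p\bigl(T^{-m/2}h^{-1/2}\Deb^{-2}\bigr)=O_p\bigl((n^{m/2-1}h^{(m-1)/2})^{-1}(T\Deb^2)^{-1}\bigr)$, and this is exactly where $n^{m/2-1}h^{(m-1)/2}\lto\infty$ enters (the companion condition $h^{-1/2}\Deb^{m-2}\lto0$ is used in Lemma \ref{lem5}, not here). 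You instead stop at order two and attack the random-point $\partial_\beta^2 G_i$ remainder head on: re-centering $\DeX-hb_{i-1}(\beta)$ at the post-change value $\beta_2^*$, bounding the noise part uniformly in $\beta$ and in the partial-sum index by the parameter-uniform H\'ajek--R\'enyi inequality of Iacus and Yoshida, which gives $O_p(\Deb^{-1})\cdot O_p(T^{-1})=o_p(1)$, while the re-centering residue is deterministic of order $h$ given $\Xs$ and sums to $O_p(\Deb^{-2})\cdot O_p(T^{-1})=O_p((T\Deb^2)^{-1})$; your treatment of the $\alpha$-dependence through the between-block difference (carrying a factor $O(\Deb)$) plays the same role as the paper's mixed $\partial_\beta\partial_\alpha$ term with its factor $\hat\beta_1-\hat\beta_2$. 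The trade-off is clear: your route needs a maximal inequality that is uniform in the parameter but, as far as this lemma is concerned, never actually uses the rate conditions of \textbf{[A3-II]} (only the smoothness already in \textbf{[C4]} plus \textbf{[A1-II]}, \textbf{[A2-II]}), whereas the paper's route needs \textbf{[A3-II]} but only ever invokes maximal inequalities at fixed parameter values for the delicate martingale terms.

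Two points should be repaired in your write-up. First, the closing claim that the ``higher-order Kessler expansion of the conditional moments of $\DeX$,'' requiring $b\in C_{\uparrow}^{4,m}(\mathbb R^d\times\Theta_B)$ and the balances of \textbf{[A3-II]}, ``is exactly what makes these remainders $o_p(1)$'' does not correspond to any step of your own argument and misstates what those conditions are for; it should be deleted or replaced by the explicit estimates you already sketched, which are complete without it. Second, a harmless slip: in the $\alpha$-term, the deterministic piece $h\,(b_{i-1}(\beta_2^*)-b_{i-1}(\beta_1^*))^{\otimes2}$ summed over the $\asymp v/(h\Deb^2)$ indices is $O(1)$, not $o_p(1)$; your conclusion survives only because of the subsequent factor $\hat\alpha-\alpha^*=O_p(n^{-1/2})$, exactly as you then use it.
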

\begin{proof}
We assume that $v>0$. Then, we can express 
\begin{align}
\mathcal D_n^\beta(v)
&=\sum_{i=[n\tau_*^\beta]+1}^{[n\tau_*^\beta+v/h\Deb^2]}
\left(
G_i(\hat\beta_1|\hat\alpha)-G_i(\hat\beta_2|\hat\alpha)
\right)
-\sum_{i=[n\tau_*^\beta]+1}^{[n\tau_*^\beta+v/h\Deb^2]}
\Bigl(
G_i(\beta_1^*|\alpha^*)-G_i(\beta_2^*|\alpha^*)
\Bigr)
\nonumber
\\
&=
\sum_{i=[n\tau_*^\beta]+1}^{[n\tau_*^\beta+v/h\Deb^2]}
\biggl(
G_i(\hat\beta_1|\alpha^*)-G_i(\hat\beta_2|\alpha^*)
+
\int_0^1
\left.\left(
\partial_\alpha G_i(\hat\beta_1|\alpha)
-\partial_\alpha G_i(\hat\beta_2|\alpha)
\right)
\right|_{\alpha=\alpha^*+u(\hat\alpha-\alpha^*)}
\dd u
(\hat\alpha-\alpha^*)
\biggr)
\nonumber
\\
&\qquad-
\sum_{i=[n\tau_*^\beta]+1}^{[n\tau_*^\beta+v/h\Deb^2]}
\Bigl(
G_i(\beta_1^*|\alpha^*)-G_i(\beta_2^*|\alpha^*)
\Bigr)
\nonumber
\\
&=
\sum_{i=[n\tau_*^\beta]+1}^{[n\tau_*^\beta+v/h\Deb^2]}
\Biggl(
G_i(\beta_1^*|\alpha^*)-G_i(\beta_2^*|\alpha^*)
+
\sum_{j=1}^{m-1}
\frac{1}{j!}
\left(
\partial_\beta^j G_i(\beta_1^*|\alpha^*)
\otimes(\hat\beta_1-\beta_1^*)^{\otimes j}
-\partial_\beta^j G_i(\beta_2^*|\alpha^*)
\otimes(\hat\beta_2-\beta_2^*)^{\otimes j}
\right)
\nonumber
\\
&\qquad\qquad+
\int_0^1
\frac{(1-u)^{m-1}}{(m-1)!}
\partial_\beta^{m} G_i(\beta_1^*+u(\hat\beta_1-\beta_1^*)|\alpha^*)
\dd u
\otimes(\hat\beta_1-\beta_1^*)^{\otimes m}
\nonumber
\\
&\qquad\qquad-
\int_0^1
\frac{(1-u)^{m-1}}{(m-1)!}
\partial_\beta^{m} G_i(\beta_2^*+u(\hat\beta_2-\beta_2^*)|\alpha^*)
\dd u
\otimes(\hat\beta_2-\beta_2^*)^{\otimes m}
\nonumber
\\
&\qquad\qquad+
\int_0^1\int_0^1
\partial_\beta\partial_\alpha 
G_i(\hat\beta_2+v(\hat\beta_1-\hat\beta_2)|\alpha^*+u(\hat\alpha-\alpha^*))
\dd u\dd v
\otimes(\hat\alpha-\alpha^*)
\otimes(\hat\beta_1-\hat\beta_2)
\Biggr)
\nonumber
\\
&\qquad-
\sum_{i=[n\tau_*^\beta]+1}^{[n\tau_*^\beta+v/h\Deb^2]}
\Bigl(
G_i(\beta_1^*|\alpha^*)-G_i(\beta_2^*|\alpha^*)
\Bigr)
\nonumber
\\
&=
\sum_{i=[n\tau_*^\beta]+1}^{[n\tau_*^\beta+v/h\Deb^2]}
\sum_{j=1}^{m-1}
\frac{1}{j!}
\left(
\partial_\beta^j G_i(\beta_1^*|\alpha^*)
\otimes(\hat\beta_1-\beta_1^*)^{\otimes j}
-\partial_\beta^j G_i(\beta_2^*|\alpha^*)
\otimes(\hat\beta_2-\beta_2^*)^{\otimes j}
\right)
\nonumber
\\
&\qquad+
\sum_{i=[n\tau_*^\beta]+1}^{[n\tau_*^\beta+v/h\Deb^2]}
\biggl(
\int_0^1
\frac{(1-u)^{m-1}}{(m-1)!}
\partial_\beta^{m} G_i(\beta_1^*+u(\hat\beta_1-\beta_1^*)|\alpha^*)
\dd u
\otimes(\hat\beta_1-\beta_1^*)^{\otimes m}
\nonumber
\\
&\qquad\qquad-
\int_0^1
\frac{(1-u)^{m-1}}{(m-1)!}
\partial_\beta^{m} G_i(\beta_2^*+u(\hat\beta_2-\beta_2^*)|\alpha^*)
\dd u
\otimes(\hat\beta_2-\beta_2^*)^{\otimes m}
\nonumber
\\
&\qquad\qquad+
\int_0^1\int_0^1
\partial_\beta\partial_\alpha 
G_i(\hat\beta_2+v(\hat\beta_1-\hat\beta_2)|\alpha^*+u(\hat\alpha-\alpha^*))
\dd u\dd v
\otimes(\hat\alpha-\alpha^*)
\otimes(\hat\beta_1-\hat\beta_2)
\biggr).
\label{eq7.399}
\end{align}
Now we see
\begin{align}
&\sup_{v\in[0,L]}
\left|
\sum_{i=[n\tau_*^\beta]+1}^{[n\tau_*^\beta+v/h\Deb^2]}
\int_0^1
\frac{(1-u)^{m-1}}{(m-1)!}
\partial_\beta^{m} G_i(\beta_j^*+u(\hat\beta_j-\beta_j^*)|\alpha^*)
\dd u
\otimes(\hat\beta_k-\beta_k^*)^{\otimes m}
\right|
\nonumber
\\
&\le
\sum_{i=[n\tau_*^\beta]+1}^{[n\tau_*^\beta+L/h\Deb^2]}
\left|
\int_0^1
\frac{(1-u)^{m-1}}{(m-1)!}
\partial_\beta^{m} G_i(\beta_k^*+u(\hat\beta_k-\beta_k^*)|\alpha^*)
\dd u
\right|
|\hat\beta_k-\beta_k^*|^{m}
\nonumber
\\
&\le
\frac{1}{T^{m/2}}
\sum_{i=[n\tau_*^\beta]+1}^{[n\tau_*^\beta+L/h\Deb^2]}
\sup_{\beta\in\Theta_B}\left|
\partial_\beta^{m} G_i(\beta|\alpha^*)
\right|
\left(\sqrt{T}|\hat\beta_k-\beta_k^*|\right)^{m}
\nonumber
\\
&
=O_p\left(
\frac{1}{T^{m/2}h^{1/2}\Deb^2}
\right)
=O_p\left(
\frac{1}{n^{m/2-1}h^{(m-1)/2}}
\frac{1}{T\Deb^2}
\right)
\nonumber
\\
&=o_p(1),
\label{eq7.398}
\end{align}
\begin{align}
&\sup_{v\in[0,L]}
\left|
\sum_{i=[n\tau_*^\beta]+1}^{[n\tau_*^\beta+v/h\Deb^2]}
\int_0^1\int_0^1
\partial_\beta\partial_\alpha 
G_i(\hat\beta_2+v(\hat\beta_1-\hat\beta_2)|\alpha^*+u(\hat\alpha-\alpha^*))
\dd u\dd v
\otimes(\hat\alpha-\alpha^*)
\otimes(\hat\beta_1-\hat\beta_2)
\right|
\nonumber
\\
&\le
\sum_{i=[n\tau_*^\beta]+1}^{[n\tau_*^\beta+L/h\Deb^2]}
\left|
\int_0^1\int_0^1
\partial_\beta\partial_\alpha 
G_i(\hat\beta_2+v(\hat\beta_1-\hat\beta_2)|\alpha^*+u(\hat\alpha-\alpha^*))
\dd u\dd v
\right|
|\hat\alpha-\alpha^*|
|\hat\beta_1-\hat\beta_2|
\nonumber
\\
&\le
\frac{\Deb}{\sqrt{n}}
\sum_{i=[n\tau_*^\beta]+1}^{[n\tau_*^\beta+L/h\Deb^2]}
\sup_{(\alpha,\beta)\in\Theta}
\left|
\partial_\beta\partial_\alpha G_i(\beta|\alpha)
\right|
\Deb^{-1}|\hat\beta_1-\hat\beta_2|
\sqrt{n}|\hat\alpha-\alpha^*|
\nonumber
\\
&=
O_p\left(
\frac{1}{\sqrt{T}\Deb}
\right)
=o_p(1)
\label{eq7.397}
\end{align}
and
\begin{align}
&\sum_{i=[n\tau_*^\beta]+1}^{[n\tau_*^\beta+v/h\Deb^2]}
\partial_\beta^j G_i(\beta_k^*|\alpha^*)
\otimes(\hat\beta_k-\beta_k^*)^{\otimes j}
\nonumber
\\
&=
\sum_{i=[n\tau_*^\beta]+1}^{[n\tau_*^\beta+v/h\Deb^2]}
\left(
\partial_\beta^j G_i(\beta_k^*|\alpha^*)
-\EE_{\beta_2^*}[\partial_\beta^j G_i(\beta_k^*|\alpha^*)|\GG]
\right)
\otimes(\hat\beta_k-\beta_k^*)^{\otimes j}
\nonumber
\\
&\qquad
+
\sum_{i=[n\tau_*^\beta]+1}^{[n\tau_*^\beta+v/h\Deb^2]}
\EE_{\beta_2^*}[\partial_\beta^j G_i(\beta_k^*|\alpha^*)|\GG]
\otimes(\hat\beta_k-\beta_k^*)^{\otimes j}.
\label{eq7.396}
\end{align}
By Theorem 2.11 of Hall and Heyde (1980), we have
\begin{align*}
&\EE_{\beta_2^*}\left[
\frac1{T^j}
\sup_{v\in[0,L]}
\left|
\sum_{i=[n\tau_*^\beta]+1}^{[n\tau_*^\beta+v/h\Deb^2]}
\left(
\partial_\beta^j G_i(\beta_k^*|\alpha^*)
-\EE_{\beta_2^*}[\partial_\beta^j G_i(\beta_k^*|\alpha^*)|\GG]
\right)\right|^2
\right]\\
&\le
\frac{C}{T^j}
\sum_{i=[n\tau_*^\beta]+1}^{[n\tau_*^\beta+L/h\Deb^2]}
\EE_{\beta_2^*}\left[
\EE_{\beta_2^*}\left[
\left|
\partial_\beta^j G_i(\beta_k^*|\alpha^*)
-\EE[\partial_\beta^j G_i(\beta_k^*|\alpha^*)|\GG]
\right|^2
\middle|\GG\right]\right]\\
&\le\frac{C'}{T}\frac{h}{h\Deb^2}
=\frac{C'}{T\Deb^2}
\lto0
\end{align*}
and
\begin{align}
\frac1{T^{j/2}}
\sup_{v\in[0,L]}
\left|
\sum_{i=[n\tau_*^\beta]+1}^{[n\tau_*^\beta+v/h\Deb^2]}
\left(
\partial_\beta^j G_i(\beta_k^*|\alpha^*)
-\EE_{\beta_2^*}[\partial_\beta^j G_i(\beta_k^*|\alpha^*)|\GG]
\right)
\right|=o_p(1).
\label{eq7.395}
\end{align}
Moreover, we see
\begin{align}
&\sup_{v\in[0,L]}
\left|
\sum_{i=[n\tau_*^\beta]+1}^{[n\tau_*^\beta+v/h\Deb^2]}
\EE_{\beta_2^*}[\partial_\beta G_i(\beta_k^*|\alpha^*)|\GG]
\right|
|\hat\beta_k-\beta_k^*|
\nonumber
\\
&=
\sup_{v\in[0,L]}
\left|
\sum_{i=[n\tau_*^\beta]+1}^{[n\tau_*^\beta+v/h\Deb^2]}
\left(
-2h
\Bigl[
\partial_{\beta^{\ell_1}}b_{i-1}(\beta_k^*)^\TT
A_{i-1}^{-1}(\alpha^*)
(b_{i-1}(\beta_2^*)-b_{i-1}(\beta_k^*))
\Bigr]_{\ell_1}
+\Rd
\right)
\right|
|\hat\beta_k-\beta_k^*|
\nonumber
\\
&=
\sup_{v\in[0,L]}
\left|
\sum_{i=[n\tau_*^\beta]+1}^{[n\tau_*^\beta+v/h\Deb^2]}
\biggl(
-2h
\Bigl(
\Xi_{i-1}^\beta(\alpha^*,\beta_k^*)(\beta_2^*-\beta_k^*)
\right.
\nonumber
\\
&\qquad
+\int_0^1(1-u)
\Bigl[\partial_{\beta^{\ell_1}} b_{i-1}(\beta_k^*)^\TT A_{i-1}^{-1}(\alpha^*)
\partial_{\beta^{\ell_3}}\partial_{\beta^{\ell_2}} 
b_{i-1}(\beta_k^*+u(\beta_2^*-\beta_k^*))
\Bigr]_{\ell_1,\ell_2,\ell_3}
\dd u
\otimes(\beta_2^*-\beta_k^*)^{\otimes2}
\Bigr)
\nonumber
\\
&\qquad
+\Rd
\biggr)
\Biggr|
|\hat\beta_k-\beta_k^*|
\nonumber
\\
&=
\sup_{v\in[0,L]}
\left|
\sum_{i=[n\tau_*^\beta]+1}^{[n\tau_*^\beta+v/h\Deb^2]}
\biggl(
-2h
\Bigl(
\Xi_{i-1}^\beta(\alpha^*,\beta_0)(\beta_2^*-\beta_k^*)
\right.
\nonumber
\\
&\qquad+
\int_0^1
\partial_{\beta}\Xi_{i-1}^\beta(\alpha^*,\beta_0+u(\beta_k^*-\beta_0))
\dd u
\otimes(\beta_2^*-\beta_k^*)
\otimes(\beta_k^*-\beta_0)
\nonumber
\\
&\qquad
+\int_0^1(1-u)
\Bigl[\partial_{\beta^{\ell_1}} b_{i-1}(\beta_k^*)^\TT A_{i-1}^{-1}(\alpha^*)
\partial_{\beta^{\ell_3}}\partial_{\beta^{\ell_2}} 
b_{i-1}(\beta_k^*+u(\beta_2^*-\beta_k^*))
\Bigr]_{\ell_1,\ell_2,\ell_3}
\dd u
\otimes(\beta_2^*-\beta_k^*)^{\otimes2}
\Bigr)
\nonumber
\\
&\qquad
+\Rd
\biggr)
\Biggr|
|\hat\beta_k-\beta_k^*|
\nonumber
\\
&\le
\frac{2h\Deb}{\sqrt{T}}
\sup_{v\in[0,L]}
\left|
\sum_{i=[n\tau_*^\beta]+1}^{[n\tau_*^\beta+v/h\Deb^2]}
\Xi_{i-1}^\beta(\alpha^*,\beta_0)
\right|
\Deb^{-1}|\beta_2^*-\beta_k^*|
\sqrt{T}|\hat\beta_k-\beta_k^*|
\nonumber
\\
&\qquad+
\frac{h\Deb^2}{\sqrt T}
\sum_{i=[n\tau_*^\beta]+1}^{[n\tau_*^\beta+L/h\Deb^2]}
\Ro
\Deb^{-1}|\beta_2^*-\beta_k^*|
\Deb^{-1}|\beta_k^*-\beta_0|
\sqrt{T}|\hat\beta_k-\beta_k^*|
\nonumber
\\
&\qquad+
\frac{h\Deb^2}{\sqrt T}
\sum_{i=[n\tau_*^\beta]+1}^{[n\tau_*^\beta+L/h\Deb^2]}
\Ro
(\Deb^{-1}|\beta_2^*-\beta_k^*|)^2
\sqrt{T}|\hat\beta_k-\beta_k^*|
+\frac{1}{\sqrt T}
\sum_{i=[n\tau_*^\beta]+1}^{[n\tau_*^\beta+L/h\Deb^2]}
\Rd
\sqrt{T}|\hat\beta_k-\beta_k^*|
\nonumber
\\
&=O_p\left(\frac{1}{\sqrt{T}\Deb}\right)
+O_p\left(\frac{1}{\sqrt T}\right)
+O_p\left(\frac{h}{\sqrt{T}\Deb^2}\right)
\nonumber
\\
&=o_p(1)
\label{eq7.394}
\end{align}
and, for $j\ge 2$,
\begin{align}
\sup_{v\in[0,L]}
\left|
\sum_{i=[n\tau_*^\beta]+1}^{[n\tau_*^\beta+v/h\Deb^2]}
\EE_{\beta_2^*}[\partial_\beta^j G_i(\beta_k^*|\alpha^*)|\GG]
\right|
|\hat\beta_k-\beta_k^*|^j
&\le
\sum_{i=[n\tau_*^\beta]+1}^{[n\tau_*^\beta+L/h\Deb^2]}
\Ri
|\hat\beta_j-\beta_j^*|^j
\nonumber
\\
&=
\frac{h}{T^{j/2}}\sum_{i=[n\tau_*^\beta]+1}^{[n\tau_*^\beta+L/h\Deb^2]}
\Ro
\left(\sqrt{T}|\hat\beta_k-\beta_k^*|\right)^j
\nonumber
\\
&=O_p\left(\frac{1}{T\Deb^2}\right)
\nonumber
\\
&=o_p(1).
\label{eq7.393}
\end{align}
Therefore, from \eqref{eq7.399}-\eqref{eq7.393}, we have
\begin{align*}
\sup_{v\in[0,L]}
|{\mathcal D}_n^\beta(v)|\pto0.
\end{align*}
By the similar proof, we see
$\sup_{v\in[-L,0]}
|{\mathcal D}_n^\beta(v)|\pto0$, 
and this proof is complete.
\end{proof}

\begin{lem}\label{lem5}
Suppose that \textbf{[C1]}-\textbf{[C5]}, \textbf{[C6-II]},
\textbf{[A1-II]}-\textbf{[A3-II]} hold.
Then, for all $L>0$,
\begin{align*}
\mathbb G_n(v)\wto\mathbb G(v) \text{ in }\mathbb D[-L,L]
\end{align*}
as $n\to\infty$.
\end{lem}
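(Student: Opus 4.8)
The plan is to mirror the proof of Lemma \ref{lem3}, replacing the diffusion functional $F_i$ by the drift functional $G_i$ and carrying the Taylor expansion further because of the weaker moment structure of $G_i$. Treating $v>0$ first, I write
\[
\mathbb G_n(v)=\sum_{i=[n\tau_*^\beta]+1}^{[n\tau_*^\beta+v/(h\Deb^2)]}\bigl(G_i(\beta_1^*|\alpha^*)-G_i(\beta_2^*|\alpha^*)\bigr)
\]
and Taylor expand $G_i(\beta_1^*|\alpha^*)$ around $\beta_2^*$ up to order $m-1$, with the order-$m$ integral remainder. This yields $\mathbb G_n(v)=\mathbb G_{1,n}(v)+\mathbb G_{2,n}(v)+\bar o_p(1)$, where $\mathbb G_{1,n}$ collects the first-order term $\partial_\beta G_i(\beta_2^*)(\beta_1^*-\beta_2^*)$, $\mathbb G_{2,n}$ the second-order term $\tfrac12\partial_\beta^2 G_i(\beta_2^*)\otimes(\beta_1^*-\beta_2^*)^{\otimes2}$, and the $\bar o_p(1)$ bundles the explicit terms of orders $3,\dots,m-1$ together with the remainder.

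Each contribution is split into a martingale-difference part and its $\GG$-compensator. Using $\partial_\beta G_i(\beta_2^*)=-2(\DeX-hb_{i-1}(\beta_2^*))^\TT A_{i-1}^{-1}(\alpha^*)\partial_\beta b_{i-1}(\beta_2^*)$ and $\EE_{\beta_2^*}[\DeX-hb_{i-1}(\beta_2^*)|\GG]=R_{i-1}(h^2,\theta)$, the compensator of $\mathbb G_{1,n}$ is bounded by $\Deb\sum_i R_{i-1}(h^2,\theta)=O_p(h/\Deb)=o_p(1)$, since $h/\Deb\to0$ follows from \textbf{[A1-II]} and $nh^2\to0$. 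For the martingale part I compute the conditional variance: because $\mathrm{Var}_{\beta_2^*}(\partial_\beta G_i(\beta_2^*)|\GG)=4h\,\Xi^\beta_{i-1}(\alpha^*,\beta_2^*)+R_{i-1}(h^2,\theta)$, a Taylor expansion in $\beta$ (using \textbf{[A2-II]}(b)) replaces $\beta_2^*$ by $\beta_0$, and \textbf{[A2-II]}(a) combined with the splitting over $[0,\epsilon_n]$ and $[\epsilon_n,L]$ used after \eqref{eq7.947} gives convergence to $4\mathcal J_\beta v$. The conditional fourth-moment sum is $O_p(vh\Deb^2)=o_p(1)$, so Corollary 3.8 of McLeish (1974) yields $\mathbb G_{1,n}(v)\wto-2\mathcal J_\beta^{1/2}\mathcal W(v)$ in $\mathbb D[0,L]$.

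For $\mathbb G_{2,n}$ I use $\EE_{\beta_2^*}[\partial_\beta^2 G_i(\beta_2^*)|\GG]=2h\,\Xi^\beta_{i-1}(\alpha^*,\beta_2^*)+R_{i-1}(h^2,\theta)$; its compensator converges to $\mathcal J_\beta v$ after the same $\beta_2^*\mapsto\beta_0$ replacement and ergodic averaging, while its martingale part has second moment $O(\Deb^2)$ by Theorem 2.11 of Hall and Heyde (1980), hence is $\bar o_p(1)$. Each explicit Taylor term of order $j\in\{3,\dots,m-1\}$ is negligible as well: its compensator is $O_p(v\Deb^{j-2})$ and its martingale part has standard deviation $O(\sqrt v\,\Deb^{j-1})$, both $o_p(1)$.

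The main obstacle, and the precise reason \textbf{[A3-II]} is invoked, is the order-$m$ integral remainder. Since $\partial_\beta^m G_i$ contains the fluctuating term $-2(\DeX-hb_{i-1})^\TT A_{i-1}^{-1}(\alpha^*)\partial_\beta^m b_{i-1}$ of $L^1$-size $O(\sqrt h)$, bounding the remainder absolutely gives $\sum_i|\partial_\beta^m G_i|\,\Deb^m=O_p(h^{-1/2}\Deb^{m-2})$, which is $o_p(1)$ exactly by the hypothesis $h^{-1/2}\Deb^{m-2}\to0$ of \textbf{[A3-II]} (with $b\in C_{\uparrow}^{4,m}$ ensuring the $m$-th derivative exists and has polynomial growth). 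Collecting the pieces gives $\mathbb G_n(v)\wto-2\mathcal J_\beta^{1/2}\mathcal W(v)+\mathcal J_\beta v$ on $\mathbb D[0,L]$; the argument for $v<0$ is identical after expanding around $\beta_1^*$ under $\EE_{\beta_1^*}$, which produces the absolute value $\mathcal J_\beta|v|$ and establishes $\mathbb G_n(v)\wto\mathbb G(v)$ in $\mathbb D[-L,L]$.
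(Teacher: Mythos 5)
Your proposal is correct and follows essentially the same route as the paper's proof: the same order-$m$ Taylor expansion into $\mathbb G_{1,n}+\mathbb G_{2,n}+\bar o_p(1)$, the same martingale/compensator splitting with Theorem 2.11 of Hall and Heyde (1980) and Corollary 3.8 of McLeish (1974), the same $\epsilon_n$-splitting with \textbf{[A2-II]} for the ergodic averaging, and the same reduction of the $v<0$ case. Your explicit rate bookkeeping for the order-$j$ terms ($j\ge3$) and the identification of $h^{-1/2}\Deb^{m-2}\to0$ in \textbf{[A3-II]} as what kills the order-$m$ remainder is exactly the (unstated) justification behind the paper's $\bar o_p(1)$ claims.
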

\begin{proof}
We consider $v>0$. We have 
\begin{align*}
\mathbb G_n(v)
&=\sum_{i=[n\tau_*^\beta]+1}^{[n\tau_*^\beta+v/h\Deb^2]}
\Bigl(G_i(\beta_1^*|\alpha^*)-G_i(\beta_2^*|\alpha^*)\Bigr)\\
&=\sum_{i=[n\tau_*^\beta]+1}^{[n\tau_*^\beta+v/h\Deb^2]}
\Biggl(
\sum_{j=1}^{m-1}\frac{1}{j!}
\partial_\beta^j G_i(\beta_2^*|\alpha^*)\otimes(\beta_1^*-\beta_2^*)^{\otimes j}
\\
&\qquad
+\int_0^1\frac{(1-u)^{m-1}}{(m-1)!}\partial_\beta^m 
G_i(\beta_2^*+u(\beta_1^*-\beta_2^*)|\alpha^*)\dd u
\otimes(\beta_1^*-\beta_2^*)^{\otimes m}
\Biggr)\\
&=\sum_{i=[n\tau_*^\beta]+1}^{[n\tau_*^\beta+v/h\Deb^2]}
\left(
\partial_\beta G_i(\beta_2^*|\alpha^*)(\beta_1^*-\beta_2^*)
+\frac12\partial_\beta^2 G_i(\beta_2^*|\alpha^*)\otimes(\beta_1^*-\beta_2^*)^{\otimes2}
\right)\\
&\qquad+
\sum_{i=[n\tau_*^\beta]+1}^{[n\tau_*^\beta+v/h\Deb^2]}
\Biggl(
\sum_{j=3}^{m-1}\frac{1}{j!}
\partial_\beta^j G_i(\beta_2^*|\alpha^*)\otimes(\beta_1^*-\beta_2^*)^{\otimes j}
\\
&\qquad\qquad
+\int_0^1\frac{(1-u)^{m-1}}{(m-1)!}\partial_\beta^{m} 
G_i(\beta_2^*+u(\beta_1^*-\beta_2^*)|\alpha^*)\dd u
\otimes(\beta_1^*-\beta_2^*)^{\otimes m}
\Biggr)\\
&=\sum_{i=[n\tau_*^\beta]+1}^{[n\tau_*^\beta+v/h\Deb^2]}
\left(
\partial_\beta G_i(\beta_2^*|\alpha^*)(\beta_1^*-\beta_2^*)
+\frac12\partial_\beta^2 G_i(\beta_2^*|\alpha^*)\otimes(\beta_1^*-\beta_2^*)^{\otimes2}
\right)+\bar o_p(1)\\
&=:\mathbb G_{1,n}(v)+\mathbb G_{2,n}(v)+\bar o_p(1),
\end{align*}
where, for $j\ge 3$,
\begin{align*}
&\Deb^j \sup_{v\in[0,L]}
\left|
\sum_{i=[n\tau_*^\beta]+1}^{[n\tau_*^\beta+v/h\Deb^2]}
\left(
\partial_\beta^j G_i(\beta_2^*|\alpha^*)
-\EE[\partial_\beta^j G_i(\beta_2^*|\alpha^*)|\GG]
\right)
\right|=o_p(1),\\
&\Deb^j\sup_{v\in[0,L]}
\left|
\sum_{i=[n\tau_*^\beta]+1}^{[n\tau_*^\beta+v/h\Deb^2]}
\EE_{\beta_2^*}[\partial_\beta^j G_i(\beta_2^*|\alpha^*)|\GG]
\right|
=o_p(1),\\
&\sup_{v\in[0,L]}
\left|
\sum_{i=[n\tau_*^\beta]+1}^{[n\tau_*^\beta+v/h\Deb^2]}
\int_0^1\frac{(1-u)^{m-1}}{(m-1)!}\partial_\beta^{m} 
G_i(\beta_2^*+u(\beta_1^*-\beta_2^*)|\alpha^*)\dd u
\otimes(\beta_1^*-\beta_2^*)^{\otimes m}
\right|=o_p(1).
\end{align*}
Now, we see
\begin{align}
\mathbb G_{1,n}(v)
&=\sum_{i=[n\tau_*^\beta]+1}^{[n\tau_*^\beta+v/h\Deb^2]}
\Bigl(
\partial_\beta G_i(\beta_2^*|\alpha^*)
-\EE_{\beta_2^*}[\partial_\beta G_i(\beta_2^*|\alpha^*)|\GG]
\Bigr)
(\beta_1^*-\beta_2^*)
\nonumber
\\
&\qquad+
\sum_{i=[n\tau_*^\beta]+1}^{[n\tau_*^\beta+v/h\Deb^2]}
\EE_{\beta_2^*}[\partial_\beta G_i(\beta_2^*|\alpha^*)|\GG](\beta_1^*-\beta_2^*),
\label{eq7.499}
\end{align}
\begin{align}
\sup_{v\in[0,L]}
\left|
\sum_{i=[n\tau_*^\beta]+1}^{[n\tau_*^\beta+v/h\Deb^2]}
\EE_{\beta_2^*}[\partial_\beta G_i(\beta_2^*|\alpha^*)|\GG](\beta_1^*-\beta_2^*)
\right|
\le
\Deb\sum_{i=[n\tau_*^\beta]+1}^{[n\tau_*^\beta+L/h\Deb^2]}
\Rd
=O_p\left(\frac{h}{\Deb}\right)
=o_p(1),
\label{eq7.498}
\end{align}
\begin{align}
&\sum_{i=[n\tau_*^\beta]+1}^{[n\tau_*^\beta+v/h\Deb^2]}
\EE_{\beta_2^*}\left[
\Bigl(
\left(
\partial_\beta G_i(\beta_2^*|\alpha^*)
-\EE_{\beta_2^*}[\partial_\beta G_i(\beta_2^*|\alpha^*)|\GG]
\right)
(\beta_1^*-\beta_2^*)
\Bigr)^2
\middle|\GG\right]
\nonumber
\\
&=(\beta_1^*-\beta_2^*)^\TT
\sum_{i=[n\tau_*^\beta]+1}^{[n\tau_*^\beta+v/h\Deb^2]}
\EE_{\beta_2^*}\biggl[
\Bigl(
\partial_\beta G_i(\beta_2^*|\alpha^*)
-\EE_{\beta_2^*}[\partial_\beta G_i(\beta_2^*|\alpha^*)|\GG]
\Bigr)^\TT
\nonumber
\\
&\qquad\qquad
\Bigl(
\partial_\beta G_i(\beta_2^*|\alpha^*)
-\EE_{\beta_2^*}[\partial_\beta G_i(\beta_2^*|\alpha^*)|\GG]
\Bigr)
\biggr|\GG\biggr]
(\beta_1^*-\beta_2^*)
\nonumber
\\
&=(\beta_1^*-\beta_2^*)^\TT
\sum_{i=[n\tau_*^\beta]+1}^{[n\tau_*^\beta+v/h\Deb^2]}
\left(
4h\Xi^\beta_{i-1}(\alpha^*,\beta_2^*)
+\Rd
\right)
(\beta_1^*-\beta_2^*)
\nonumber
\\
&\pto
4e_\beta^\TT 
\int_{\mathbb R^d}
\Xi^\beta(x,\alpha^*,\beta_0)\dd\mu_{(\alpha^*,\beta_0)}(x)
e_\beta v
=4\mathcal J_\beta v
\label{eq7.497}
\end{align}
and
\begin{align}
&\sum_{i=[n\tau_*^\beta]+1}^{[n\tau_*^\beta+v/h\Deb^2]}
\EE\left[
\Bigl(
\left(
\partial_\beta G_i(\beta_2^*|\alpha^*)
-\EE_{\beta_2^*}[\partial_\beta G_i(\beta_2^*|\alpha^*)|\GG]
\right)
(\beta_1^*-\beta_2^*)
\Bigr)^4
\middle|\GG\right]
\nonumber
\\
&=
\sum_{i=[n\tau_*^\beta]+1}^{[n\tau_*^\beta+v/h\Deb^2]}
\Deb^4\Rd
\pto0.
\label{eq7.496}
\end{align}
According to Corollary 3.8 of McLeish (1974),
we obtain, from \eqref{eq7.497} and \eqref{eq7.496}, 
\begin{align}
&\sum_{i=[n\tau_*^\beta]+1}^{[n\tau_*^\beta+v/h\Deb^2]}
\Bigl(
\partial_\beta G_i(\beta_2^*|\alpha^*)
-\EE_{\beta_2^*}[\partial_\beta G_i(\beta_2^*|\alpha^*)|\GG]
\Bigr)
(\beta_1^*-\beta_2^*)
\wto
-2\mathcal J_\beta^{1/2}\mathcal W(v)
\text{ in }\mathbb D[0,L].
\label{eq7.495}
\end{align}
Further, from \eqref{eq7.499}, \eqref{eq7.498} and \eqref{eq7.495}, 
we have
$\mathbb G_{1,n}(v)\wto
-2\mathcal J_\beta^{1/2}\mathcal W(v)$ in $\mathbb D[0,L]$.

Besides, we see
\begin{align}
&
\sup_{v\in[0,L]}\left|\sum_{i=[n\tau_*^\beta]+1}^{[n\tau_*^\beta+v/h\Deb^2]}
\partial_\beta^2 G_i(\beta_2^*|\alpha^*)
\otimes(\beta_1^*-\beta_2^*)^{\otimes2}
-2\mathcal J_\beta v\right|
\nonumber
\\
&\le
\sup_{v\in[0,L]}\left|\sum_{i=[n\tau_*^\beta]+1}^{[n\tau_*^\beta+v/h\Deb^2]}
\Bigl(\partial_\beta^2 G_i(\beta_2^*|\alpha^*)
-\EE_{\beta_2^*}\left[\partial_\beta^2 G_i(\beta_2^*|\alpha^*)\middle|\GG\right]
\Bigr)
\otimes(\beta_1^*-\beta_2^*)^{\otimes2}
\right|
\nonumber
\\
&\qquad
+\sup_{v\in[0,L]}\left|\sum_{i=[n\tau_*^\beta]+1}^{[n\tau_*^\beta+v/h\Deb^2]}
\EE_{\beta_2^*}\left[\partial_\beta^2 G_i(\beta_2^*|\alpha^*)\middle|\GG\right]
\otimes(\beta_1^*-\beta_2^*)^{\otimes2}
-2\mathcal J_\beta v
\right|,
\label{eq7.599}
\end{align}
\begin{align}
\sup_{v\in[0,L]}
\left|
\sum_{i=[n\tau_*^\beta]+1}^{[n\tau_*^\beta+v/h\Deb^2]}
\Bigl(
\partial_\beta^2 G_i(\beta_2^*|\alpha^*)
-\EE_{\beta_2^*}\left[\partial_\beta^2 G_i(\beta_2^*|\alpha^*)\middle|\GG\right]
\Bigr)
\otimes(\beta_1^*-\beta_2^*)^{\otimes2}
\right|
\pto0
\label{eq7.598}
\end{align}
and
\begin{align}
&\sup_{v\in[0,L]}\left|\sum_{i=[n\tau_*^\beta]+1}^{[n\tau_*^\beta+v/h\Deb^2]}
\EE_{\beta_2^*}\left[\partial_\beta^2 G_i(\beta_2^*|\alpha^*)\middle|\GG\right]
\otimes(\beta_1^*-\beta_2^*)^{\otimes2}
-2\mathcal J_\beta v
\right|
\nonumber
\\
&=
\sup_{v\in[0,L]}\left|\sum_{i=[n\tau_*^\beta]+1}^{[n\tau_*^\beta+v/h\Deb^2]}
\left(
2h\Xi^\beta_{i-1}(\alpha^*,\beta_2^*)+\Rd
\right)\otimes(\beta_1^*-\beta_2^*)^{\otimes2}
-2\mathcal J_\beta v
\right|
\nonumber
\\
&\le
2\sup_{v\in[0,L]}\left|\sum_{i=[n\tau_*^\beta]+1}^{[n\tau_*^\beta+v/h\Deb^2]}
h\Xi^\beta_{i-1}(\alpha^*,\beta_0)\otimes(\beta_1^*-\beta_2^*)^{\otimes2}
-\mathcal J_\beta v
\right|+o_p(1)
\nonumber
\\
&\pto 0,
\label{eq7.597}
\end{align}
where \eqref{eq7.598} is obtained by
\begin{align*}
&\EE_{\beta_2^*}\left[
\sup_{v\in[0,L]}
\left|
\sum_{i=[n\tau_*^\beta]+1}^{[n\tau_*^\beta+v/h\Deb^2]}
\left(
\partial_\beta^2 G_i(\beta_2^*|\alpha^*)
-\EE_{\beta_2^*}\left[\partial_\beta^2 G_i(\beta_2^*|\alpha^*)\middle|\GG\right]
\right)
\otimes(\beta_1^*-\beta_2^*)^{\otimes2}
\right|^2
\right]\\
&\le
C\Deb^4
\sum_{i=[n\tau_*^\beta]+1}^{[n\tau_*^\beta+L/h\Deb^2]}
\EE_{\beta_2^*}\left[
\left|
\partial_\beta^2 G_i(\beta_2^*|\alpha^*)
-\EE_{\beta_2^*}\left[\partial_\beta^2 G_i(\beta_2^*|\alpha^*)\middle|\GG\right]
\right|^2
\right]\\
&\le
C'\Deb^2\lto0
\end{align*}
from Theorem 2.11 of Hall and Heyde (1980), 
and \eqref{eq7.597} is obtained by
\begin{align*}
&\sup_{v\in[0,L]}
\left|
\sum_{i=[n\tau_*^\beta]+1}^{[n\tau_*^\beta+v/h\Deb^2]}
h\Xi^\beta_{i-1}(\alpha^*,\beta_0)\otimes(\beta_1^*-\beta_2^*)^{\otimes2}
-\mathcal J_\beta v
\right|\\
&\le
\sup_{v\in[0,\epsilon_n]}
\left|
\sum_{i=[n\tau_*^\beta]+1}^{[n\tau_*^\beta+v/h\Deb^2]}
h\Xi^\beta_{i-1}(\alpha^*,\beta_0)\otimes(\beta_1^*-\beta_2^*)^{\otimes2}
-\mathcal J_\beta v
\right|\\
&\qquad
+\sup_{v\in[\epsilon_n,L]}
\left|
\sum_{i=[n\tau_*^\beta]+1}^{[n\tau_*^\beta+v/h\Deb^2]}
h\Xi^\beta_{i-1}(\alpha^*,\beta_0)\otimes(\beta_1^*-\beta_2^*)^{\otimes2}
-\mathcal J_\beta v
\right|
\\
&\le
\epsilon_n\sup_{v\in[0,\epsilon_n]}
\left|
\frac1v\sum_{i=[n\tau_*^\beta]+1}^{[n\tau_*^\beta+v/h\Deb^2]}
h\Xi^\beta_{i-1}(\alpha^*,\beta_0)\otimes(\beta_1^*-\beta_2^*)^{\otimes2}
-\mathcal J_\beta 
\right|\\
&\qquad
+L\sup_{v\in[\epsilon_n,L]}
\left|
\frac1v\sum_{i=[n\tau_*^\beta]+1}^{[n\tau_*^\beta+v/h\Deb^2]}
h\Xi^\beta_{i-1}(\alpha^*,\beta_0)\otimes(\beta_1^*-\beta_2^*)^{\otimes2}
-\mathcal J_\beta 
\right|\\
&\le
O_p(\epsilon_n)
+L\max_{[n^{1/r}]\le k\le n}
\left|
\frac1k\sum_{i=[n\tau_*^\beta]+1}^{[n\tau_*^\beta]+k}
h\Xi^\beta_{i-1}(\alpha^*,\beta_0)\otimes(\Deb^{-1}(\beta_1^*-\beta_2^*))^{\otimes2}
-\mathcal J_\beta 
\right|\\
&=o_p(1)
\end{align*}
Here $\{\epsilon_n\}_{n=1}^\infty$ is a positive sequence such that 
$\epsilon_n\lto0$ and $\epsilon_n/\Deb^2\lto\infty$, 
and $r$ is a constant with $r\in(1,2)$ and $nh^r\lto\infty$.
From \eqref{eq7.599}-\eqref{eq7.597},
we have 
$\sup_{v\in[0,L]}|\mathbb G_{2,n}(v)-\mathcal J_\beta v|\pto 0$.
Therefore we obtain 
\begin{align*}
\mathbb G_n(v)\wto-2\mathcal J_\beta^{1/2}\mathcal W(v)+\mathcal J_\beta v
\ \text{ in }\ \mathbb D[0,L].
\end{align*}
The argument for $v<0$ is proved as well.
\end{proof}

\begin{proof}[\bf{Proof of Theorem \ref{th3}}]
It is enough to show
\begin{align}\label{7.3-1}
\varlimsup_{M\to\infty}
\varlimsup_{n\to\infty}
P\left(T\Deb^2(\hat\tau_n^\beta-\tau_*^\beta)>M\right)=0.
\end{align}
Because in the same way as the proof of Theorem \ref{th1}, 
\eqref{7.3-1} leads to 
$T\Deb^2(\hat\tau_n^\beta-\tau_*^\beta)=O_p(1)$, 
and this and Lemmas \ref{lem1}, \ref{lem4}, \ref{lem5} yield
\begin{align*}
T\Deb^2(\hat\tau_n^\beta-\tau_*^\beta)
\dto
\arg\min_{v\in\mathbb R}
\mathbb G(v).
\end{align*}

We show \eqref{7.3-1} below. For $\tau>\tau_*^\beta$, we have 
\begin{align*}
\Psi_n(\tau:\beta_1,\beta_2|\alpha)-\Psi_n(\tau_*^\beta:\beta_1,\beta_2|\alpha)
&=\sum_{i=1}^{[n\tau]}G_i(\beta_1|\alpha)
+\sum_{i=[n\tau]+1}^n G_i(\beta_2|\alpha)
-\sum_{i=1}^{[n\tau_*^\beta]}G_i(\beta_1|\alpha)
-\sum_{i=[n\tau_*^\beta]+1}^n G_i(\beta_2|\alpha)\\
&=\sum_{i=[n\tau_*^\beta]+1}^{[n\tau]}
\Big(
G_i(\beta_1|\alpha)-G_i(\beta_2|\alpha)
\Bigr).
\end{align*}
Now, from
\begin{align*}
G_i(\beta_1|\alpha)-G_i(\beta_2|\alpha)
&= G_i(\beta_1|\alpha)-G_i(\beta_2|\alpha)
-\EE_{\beta_2^*}[G_i(\beta_1|\alpha)-G_i(\beta_2|\alpha)|\GG]\\
&\quad
+h \tr\Bigl(
A_{i-1}^{-1}(\alpha)
\left(b_{i-1}(\beta_1)-b_{i-1}(\beta_2)\right)^{\otimes2}
\Bigr)\\
&\quad+
2\tr\Bigl(
A_{i-1}^{-1}(\alpha)
\left(hb_{i-1}(\beta_2)-\EE_{\beta_2^*}[\DeX|\GG]\right)
\left(b_{i-1}(\beta_1)-b_{i-1}(\beta_2)\right)^\TT
\Bigr),
\end{align*}
we see
\begin{align*}
&\Psi_n(\tau:\beta_1,\beta_2|\alpha)-\Psi_n(\tau_*^\beta:\beta_1,\beta_2|\alpha)
\\
&=\sum_{i=[n\tau_*^\beta]+1}^{[n\tau]}
\Bigl(
G_i(\beta_1|\alpha)-G_i(\beta_2|\alpha)
-\EE_{\beta_2^*}[G_i(\beta_1|\alpha)-G_i(\beta_2|\alpha)|\GG]
\Bigr)\\
&\qquad
+h\sum_{i=[n\tau_*^\beta]+1}^{[n\tau]}
\tr\Bigl(
A_{i-1}^{-1}(\alpha)
\left(b_{i-1}(\beta_1)-b_{i-1}(\beta_2)\right)^{\otimes2}
\Bigr)\\
&\qquad+
2\sum_{i=[n\tau_*^\beta]+1}^{[n\tau]}
\tr\Bigl(
A_{i-1}^{-1}(\alpha)
\left(hb_{i-1}(\beta_2)-\EE_{\beta_2^*}[\DeX|\GG]\right)
\left(b_{i-1}(\beta_1)-b_{i-1}(\beta_2)\right)^\TT
\Bigr)\\
&=:
{\mathcal M}_n^\beta(\tau:\beta_1,\beta_2|\alpha)
+{\mathcal A}_n^\beta(\tau:\beta_1,\beta_2|\alpha)
+{\varrho}_n^\beta(\tau:\beta_1,\beta_2|\alpha).
\end{align*}
Let $M\ge 1$, 
$D_{n,M}^\beta=\{\tau\in[0,1]|T\Deb^2(\tau-\tau_*^\beta)>M\}$.
For all $\delta>0$, we have
\begin{align*}
P\left(T\Deb^2(\hat\tau_n^\beta-\tau_*^\beta)>M\right)
&\le
P\left(
\inf_{\tau\in D_{n,M}^\beta}
\Psi_n(\tau:\hat\beta_1,\hat\beta_2|\hat\alpha)
\le\Psi_n(\tau_*^\beta:\hat\beta_1,\hat\beta_2|\hat\alpha)
\right)\\
&=
P\left(
\inf_{\tau\in D_{n,M}^\beta}
\left(\Psi_n(\tau:\hat\beta_1,\hat\beta_2|\hat\alpha)
-\Psi_n(\tau_*^\beta:\hat\beta_1,\hat\beta_2|\hat\alpha)\right)
\le0
\right)\\
&=P\left(
\inf_{\tau\in D_{n,M}^\beta}
\left({\mathcal M}_n^\beta(\tau:\hat\beta_1,\hat\beta_2|\hat\alpha)
+{\mathcal A}_n^\beta(\tau:\hat\beta_1,\hat\beta_2|\hat\alpha)
+{\varrho}_n^\beta(\tau:\hat\beta_1,\hat\beta_2|\hat\alpha)
\right)\le0
\right)\\
&\le P\left(
\inf_{\tau\in D_{n,M}^\beta}
\frac{
{\mathcal M}_n^\beta(\tau:\hat\beta_1,\hat\beta_2|\hat\alpha)
+{\mathcal A}_n^\beta(\tau:\hat\beta_1,\hat\beta_2|\hat\alpha)
+{\varrho}_n^\beta(\tau:\hat\beta_1,\hat\beta_2|\hat\alpha)
}{h\Deb^2([n\tau]-[n\tau_*^\beta])}
\le0
\right)\\
&\le
P\left(
\inf_{\tau\in D_{n,M}^\beta}
\frac{
{\mathcal M}_n^\beta(\tau:\hat\beta_1,\hat\beta_2|\hat\alpha)
}{h\Deb^2([n\tau]-[n\tau_*^\beta])}
\le-\delta
\right) 
+P\left(
\inf_{\tau\in D_{n,M}^\beta}
\frac{
{\mathcal A}_n^\beta(\tau:\hat\beta_1,\hat\beta_2|\hat\alpha)
}{h\Deb^2([n\tau]-[n\tau_*^\beta])}
\le2\delta
\right)\\
&\quad+P\left(
\inf_{\tau\in D_{n,M}^\beta}
\frac{
{\varrho}_n^\beta(\tau:\hat\beta_1,\hat\beta_2|\hat\alpha)
}{h\Deb^2([n\tau]-[n\tau_*^\beta])}
\le-\delta
\right)\\
&\le
P\left(
\sup_{\tau\in D_{n,M}^\beta}
\frac{
|{\mathcal M}_n^\beta(\tau:\hat\beta_1,\hat\beta_2|\hat\alpha)|
}{h\Deb^2([n\tau]-[n\tau_*^\beta])}
\ge\delta
\right) 
+P\left(
\inf_{\tau\in D_{n,M}^\beta}
\frac{
{\mathcal A}_n^\beta(\tau:\hat\beta_1,\hat\beta_2|\hat\alpha)
}{h\Deb^2([n\tau]-[n\tau_*^\beta])}
\le2\delta
\right)\\
&\quad+P\left(
\sup_{\tau\in D_{n,M}^\beta}
\frac{
|{\varrho}_n^\beta(\tau:\hat\beta_1,\hat\beta_2|\hat\alpha)|
}{h\Deb^2([n\tau]-[n\tau_*^\beta])}
\ge\delta
\right)\\
&=:
P_{1,n}^\beta+P_{2,n}^\beta+P_{3,n}^\beta.
\end{align*}

[i] Evaluation of $P_{1,n}^\beta$. 
Choose $\epsilon>0$ as you like.
Because $\partial_\beta G_i(\beta|\alpha)$ 
is continuous with respect to $\beta\in\Theta_B$, 
we can choose $\bar\beta\in\mathcal O_{\hat\beta_2}$ so that 
\begin{align*}
{\mathcal M}_n^\beta(\tau:\hat\beta_1,\hat\beta_2|\alpha)
&=\sum_{i=[n\tau_*^\beta]+1}^{[n\tau]}
\left(
G_i(\hat\beta_1|\alpha)-G_i(\hat\beta_2|\alpha)
-\EE_{\beta_2^*}[G_i(\hat\beta_1|\alpha)-G_i(\hat\beta_2|\alpha)|\GG]
\right)\\
&=\sum_{i=[n\tau_*^\beta]+1}^{[n\tau]}
\left(
\partial_\beta G_i(\bar\beta|\alpha)
-\left.\EE_{\beta_2^*}[\partial_\beta G_i(\beta|\alpha)|\GG]
\right|_{\beta=\bar\beta}
\right)
(\hat\beta_1-\hat\beta_2).
\end{align*}
If $\hat\alpha\in\mathcal O_{\alpha^*}$ and
$\hat\beta_k\in\mathcal O_{\beta_0}$, then
\begin{align*}
|{\mathcal M}_n^\beta(\tau:\hat\beta_1,\hat\beta_2|\hat\alpha)|
&\le
\left|\sum_{i=[n\tau_*^\beta]+1}^{[n\tau]}
\left(
\partial_\beta G_i(\bar\beta|\hat\alpha)
-\EE_{\beta_2^*}[\partial_\beta G_i(\bar\beta|\hat\alpha)|\GG]
\right)\right|
|\hat\beta_1-\hat\beta_2|\\
&\le
\sup_{(\alpha,\beta)\in\Theta}
\left|\sum_{i=[n\tau_*^\beta]+1}^{[n\tau]}
\left(
\partial_\beta G_i(\beta|\alpha)
-\EE_{\beta_2^*}[\partial_\beta G_i(\beta|\alpha)|\GG]
\right)\right|
|\hat\beta_1-\hat\beta_2|\\
&=:
\sup_{(\alpha,\beta)\in\Theta}
|\mathbb M_n^\beta(\tau:\beta|\alpha)||\hat\beta_1-\hat\beta_2|.
\end{align*}
Hence we have
\begin{align}
P_{1,n}^\beta
&=
P\left(
\sup_{\tau\in D_{n,M}^\beta}
\frac{
|{\mathcal M}_n^\beta(\tau:\hat\beta_1,\hat\beta_2|\hat\alpha)|
}{h\Deb^2([n\tau]-[n\tau_*^\beta])}
\ge\delta
\right) 
\nonumber
\\
&\le
P\left(
\sup_{\tau\in D_{n,M}^\beta}
\frac{
|{\mathcal M}_n^\beta(\tau:\hat\beta_1,\hat\beta_2|\hat\alpha)|
}{h\Deb^2([n\tau]-[n\tau_*^\beta])}
\ge\delta,\ 
|\hat\beta_1-\hat\beta_2|\le2\Deb,\ 
\hat\alpha\in\mathcal O_{\alpha^*},\ 
\hat\beta_k\in\mathcal O_{\beta_0}
\right)
\nonumber
\\
&\qquad+P(|\hat\beta_1-\hat\beta_2|>2\Deb)
+P(\hat\alpha\notin\mathcal O_{\alpha^*})
+P(\hat\beta_1\notin\mathcal O_{\beta_0})
+P(\hat\beta_2\notin\mathcal O_{\beta_0})
\nonumber
\\
&\le
P\left(
\sup_{\tau\in D_{n,M}^\beta}
\frac{
\sup_{(\alpha,\beta)\in\Theta}
|{\mathbb M}_n^\beta(\tau:\beta|\alpha)|
}{h\Deb^2([n\tau]-[n\tau_*^\beta])}
|\hat\beta_1-\hat\beta_2|
\ge\delta,\ 
|\hat\beta_1-\hat\beta_2|\le2\Deb
\right)
\nonumber
\\
&\qquad+P(|\hat\beta_1-\hat\beta_2|>2\Deb)
+P(\hat\alpha\notin\mathcal O_{\alpha^*})
+P(\hat\beta_1\notin\mathcal O_{\beta_0})
+P(\hat\beta_2\notin\mathcal O_{\beta_0})
\nonumber
\\
&\le
P\left(
\sup_{\tau\in D_{n,M}^\beta}
\frac{
2 \sup_{(\alpha,\beta)\in\Theta}
|{\mathbb M}_n^\beta(\tau:\beta|\alpha)|
}{h\Deb([n\tau]-[n\tau_*^\beta])}
\ge\delta
\right)
\nonumber
\\
&\qquad+P(|\hat\beta_1-\hat\beta_2|>2\Deb)
+P(\hat\alpha\notin\mathcal O_{\alpha^*})
+P(\hat\beta_1\notin\mathcal O_{\beta_0})
+P(\hat\beta_2\notin\mathcal O_{\beta_0}).
\label{eq7.199}
\end{align}
By the uniform version on the H\'ajek-Renyi inequality in Lemma 2 
of Iacus and Yoshida (2012), 
we obtain
\begin{align}
&P\left(
\sup_{\tau\in D_{n,M}^\beta}
\frac{
2 \sup_{(\alpha,\beta)\in\Theta}
|{\mathbb M}_n^\beta(\tau:\beta|\alpha)|
}{h\Deb([n\tau]-[n\tau_*^\beta])}
\ge\delta
\right)
\nonumber
\\
&=
P\left(
\sup_{\tau\in D_{n,M}^\beta}
\frac{
2 \sup_{(\alpha,\beta)\in\Theta}
|{\mathbb M}_n^\beta(\tau:\beta|\alpha)|
}{[n\tau]-[n\tau_*^\beta]}
\ge\delta h\Deb
\right)
\nonumber
\\
&\le
P\left(
\max_{j>M/h\Deb^2-1}
\frac{2}{j}
\sup_{(\alpha,\beta)\in\Theta}
\left|
\sum_{i=[n\tau_*^\beta]+1}^{[n\tau_*^\beta]+j}
\left(
\partial_\beta G_i(\beta|\alpha)-
\EE[\partial_\beta G_i(\beta|\alpha)|\GG]
\right)
\right|
\ge\delta h\Dea
\right)
\nonumber
\\
&\le
\sum_{j>M/h\Deb^2-1}\frac{Ch}{(\delta h\Deb j)^2}
\nonumber
\\
&\le
\frac{C'h}{(\delta h\Deb)^2}\frac{h\Deb^2}{M}
=\frac{C'}{\delta^2M}
=:\gamma_\beta(M).
\label{eq7.198}
\end{align}
Noting that if
$|\hat\beta_k-\beta_k^*|\le\Deb/2$ holds, 
then 
$|\hat\beta_1-\hat\beta_2|
\le
|\hat\beta_1-\beta_1^*|
+|\beta_1^*-\beta_2^*|
+|\hat\beta_2-\beta_2^*|
\le 2\Deb$, 
i.e., 
\begin{align*}
\{|\hat\beta_1-\hat\beta_2|>2\Deb\}
\subset
\left\{|\hat\beta_1-\beta_1^*|>\frac{\Deb}{2}\right\}
\cup\left\{|\hat\beta_2-\beta_2^*|>\frac{\Deb}{2}\right\},
\end{align*}
and for sufficiently large $n$ so that 
$P(|\hat\beta_k-\beta_k^*|>\Deb/2)<\epsilon/2$
because of $\Deb^{-1}(\hat\beta_k-\beta_k^*)=o_p(1)$, 
we see
\begin{align}
P(|\hat\beta_1-\hat\beta_2|>2\Deb)
&\le
P\left(|\hat\beta_1-\beta_1^*|>\frac{\Deb}{2}
\right)
+P\left(
|\hat\beta_2-\beta_2^*|>\frac{\Deb}{2}
\right)
<\epsilon.
\label{eq7.197}
\end{align}
From \textbf{[C6-II]} and \textbf{[A2-II]}, 
we have 
$\hat\alpha\pto\alpha^*$ and 
$\hat\beta_k\pto\beta_0$ as $n\to\infty$,
that is, 
$P(\hat\alpha\notin\mathcal O_{\alpha^*})<\epsilon$ and
$P(\hat\beta_k\notin\mathcal O_{\beta_0})<\epsilon$ for large $n$.
Therefore, from \eqref{eq7.199}-\eqref{eq7.197} and these, we have
$P_{1,n}^\beta\le\gamma_{\beta}(M)+4\epsilon$ for large $n$.


[ii] Evaluation of $P_{2,n}^\beta$.
If $\hat\alpha\in\mathcal O_{\alpha^*}$ and $\hat\beta_k\in\mathcal O_{\beta_0}$,
then we have
\begin{align*}
&\tr\left(
A_{i-1}^{-1}(\hat\alpha)
(b_{i-1}(\hat\beta_1)-b_{i-1}(\hat\beta_2))^{\otimes2} 
\right)\\
&=
\tr\left(
A_{i-1}^{-1}(\alpha^*)
(b_{i-1}(\hat\beta_1)-b_{i-1}(\hat\beta_2))^{\otimes2} 
\right)\\
&\qquad+
\int_0^1
\left.\partial_\alpha
\tr\left(
A_{i-1}^{-1}(\alpha)
(b_{i-1}(\hat\beta_1)-b_{i-1}(\hat\beta_2))^{\otimes2}\right)
\right|_{\alpha=\alpha^*+u(\hat\alpha-\alpha^*)}
\dd u
(\hat\alpha-\alpha^*)\\
&=
\left.\partial_\beta
\tr\left(
A_{i-1}^{-1}(\alpha^*)
(b_{i-1}(\beta)-b_{i-1}(\hat\beta_2))^{\otimes2} 
\right)\right|_{\beta=\hat\beta_2}
(\hat\beta_1-\hat\beta_2)
\\
&\qquad+
\frac12\left.\partial_\beta^2
\tr\left(
A_{i-1}^{-1}(\alpha^*)
(b_{i-1}(\beta)-b_{i-1}(\hat\beta_2))^{\otimes2} 
\right)\right|_{\beta=\hat\beta_2}
\otimes(\hat\beta_1-\hat\beta_2)^{\otimes2}
\\
&\qquad+
\frac{1}{3!}\left.\partial_\beta^3
\tr\left(
A_{i-1}^{-1}(\alpha^*)
(b_{i-1}(\beta)-b_{i-1}(\hat\beta_2))^{\otimes2} 
\right)\right|_{\beta=\hat\beta_2}
\otimes(\hat\beta_1-\hat\beta_2)^{\otimes3}
\\
&\qquad+
\int_0^1
\frac{(1-u)^3}{3!}\left.\partial_\beta^4
\tr\left(
A_{i-1}^{-1}(\alpha^*)
(b_{i-1}(\beta)-b_{i-1}(\hat\beta_2))^{\otimes2} 
\right)\right|_{\beta=\hat\beta_2+u(\hat\beta_1-\hat\beta_2)}
\dd u
\otimes(\hat\beta_1-\hat\beta_2)^{\otimes4}
\\
&\qquad+
\int_0^1\int_0^1
\left.\partial_\beta\partial_\alpha
\tr\left(
A_{i-1}^{-1}(\alpha)
(b_{i-1}(\beta)-b_{i-1}(\hat\beta_2))^{\otimes2}\right)
\right|_{\substack{\alpha=\alpha^*+u(\hat\alpha-\alpha^*)\\
\beta=\hat\beta_2+v(\hat\beta_1-\hat\beta_2)
}}
\dd u\dd v
\otimes(\hat\alpha-\alpha^*)
\otimes(\hat\beta_1-\hat\beta_2)\\
&=
\Xi_{i-1}^\beta(\alpha^*,\hat\beta_2)
\otimes(\hat\beta_1-\hat\beta_2)^{\otimes2}\\
&\qquad+
\frac{1}{3!}\left.\partial_\beta^3
\tr\left(
A_{i-1}^{-1}(\alpha^*)
(b_{i-1}(\beta)-b_{i-1}(\hat\beta_2))^{\otimes2} 
\right)\right|_{\beta=\hat\beta_2}
\otimes(\hat\beta_1-\hat\beta_2)^{\otimes3}
\\
&\qquad+
\int_0^1
\frac{(1-u)^3}{3!}\left.\partial_\beta^4
\tr\left(
A_{i-1}^{-1}(\alpha^*)
(b_{i-1}(\beta)-b_{i-1}(\hat\beta_2))^{\otimes2} 
\right)\right|_{\beta=\hat\beta_2+u(\hat\beta_1-\hat\beta_2)}
\dd u
\otimes(\hat\beta_1-\hat\beta_2)^{\otimes4}
\\
&\qquad+
\int_0^1\int_0^1
\left.\partial_\beta\partial_\alpha
\tr\left(
A_{i-1}^{-1}(\alpha)
(b_{i-1}(\beta)-b_{i-1}(\hat\beta_2))^{\otimes2}\right)
\right|_{\substack{\alpha=\alpha^*+u(\hat\alpha-\alpha^*)\\
\beta=\hat\beta_2+v(\hat\beta_1-\hat\beta_2)
}}
\dd u\dd v
\otimes(\hat\alpha-\alpha^*)
\otimes(\hat\beta_1-\hat\beta_2)\\
&=
\Xi_{i-1}^\beta(\alpha^*,\beta_0)
\otimes(\hat\beta_1-\hat\beta_2)^{\otimes2}
+
\partial_\beta\Xi_{i-1}^\beta(\alpha^*,\beta_0)
\otimes(\hat\beta_1-\hat\beta_2)^{\otimes2}
\otimes(\hat\beta_2-\beta_0)\\
&\qquad+
\int_0^1
(1-u)
\partial_\beta^2\Xi_{i-1}^\beta(\alpha^*,\beta_0+u(\hat\beta_2-\beta_0))
\dd u
\otimes(\hat\beta_1-\hat\beta_2)^{\otimes2}
\otimes(\hat\beta_2-\beta_0)^{\otimes2}\\
&\qquad+
\frac{1}{3!}\left.\partial_\beta^3
\tr\left(
A_{i-1}^{-1}(\alpha^*)
(b_{i-1}(\beta)-b_{i-1}(\hat\beta_2))^{\otimes2} 
\right)\right|_{\beta=\hat\beta_2}
\otimes(\hat\beta_1-\hat\beta_2)^{\otimes3}
\\
&\qquad+
\int_0^1
\frac{(1-u)^3}{3!}\left.\partial_\beta^4
\tr\left(
A_{i-1}^{-1}(\alpha^*)
(b_{i-1}(\beta)-b_{i-1}(\hat\beta_2))^{\otimes2} 
\right)\right|_{\beta=\hat\beta_2+u(\hat\beta_1-\hat\beta_2)}
\dd u
\otimes(\hat\beta_1-\hat\beta_2)^{\otimes4}
\\
&\qquad+
\int_0^1\int_0^1
\left.\partial_\beta\partial_\alpha
\tr\left(
A_{i-1}^{-1}(\alpha)
(b_{i-1}(\beta)-b_{i-1}(\hat\beta_2))^{\otimes2}\right)
\right|_{\substack{\alpha=\alpha^*+u(\hat\alpha-\alpha^*)\\
\beta=\hat\beta_2+v(\hat\beta_1-\hat\beta_2)
}}
\dd u\dd v
\otimes(\hat\alpha-\alpha^*)
\otimes(\hat\beta_1-\hat\beta_2)\\
&\ge
\left(
\lambda_1[\Xi_{i-1}^\beta(\alpha^*,\beta_0)]+r_{i-1}
\right)
|\hat\beta_1-\hat\beta_2|^2,
\end{align*}
where $r_{i-1}$ satisfies 
\begin{align*}
\sup_{\tau\in D_{n,M}^\beta}
\left|
\frac{1}{[n\tau]-[n\tau_*^\beta]}
\sum_{i=[n\tau_*^\beta]+1}^{[n\tau]}r_{i-1}
\right|
=o_p(1)
\end{align*}
from \textbf{[A1-II]} and \textbf{[A2-II]}. Therefore we have
\begin{align}
P_{2,n}^\beta
&=
P\left(
\inf_{\tau\in D_{n,M}^\beta}
\frac{
{\mathcal A}_n^\beta(\tau:\hat\beta_1,\hat\beta_2|\hat\alpha)
}{h\Deb^2([n\tau]-[n\tau_*^\beta])}
\le2\delta
\right)
\nonumber
\\
&\le
P\left(
\inf_{\tau\in D_{n,M}^\beta}
\frac{
{\mathcal A}_n^\beta(\tau:\hat\beta_1,\hat\beta_2|\hat\alpha)
}{h\Deb^2([n\tau]-[n\tau_*^\beta])}
\le2\delta,\ 
|\hat\beta_1-\hat\beta_2|\ge\frac{\Deb}{2},\ 
\hat\alpha\in\mathcal O_{\alpha^*},\ 
\hat\beta_k\in\mathcal O_{\beta_0}
\right)
\nonumber
\\
&\qquad+P\left(|\hat\beta_1-\hat\beta_2|<\frac{\Deb}{2}\right)
+P(\hat\alpha\notin\mathcal O_{\alpha^*})
+P(\hat\beta_1\notin\mathcal O_{\beta_0})
+P(\hat\beta_2\notin\mathcal O_{\beta_0})
\nonumber
\\
&\le
P\left(
\inf_{\tau\in D_{n,M}^\beta}
\frac{1}{\Deb^2([n\tau]-[n\tau_*^\beta])}
\sum_{i=[n\tau_*^\beta]+1}^{[n\tau]}
\left(
\lambda_1[\Xi_{i-1}^\beta(\alpha^*,\beta_0)]+r_{i-1}
\right)
|\hat\beta_1-\hat\beta_2|^2
\le2\delta,\ 
|\hat\beta_1-\hat\beta_2|\ge\frac{\Deb}{2}
\right)
\nonumber
\\
&\qquad+P\left(|\hat\beta_1-\hat\beta_2|<\frac{\Deb}{2}\right)
+P(\hat\alpha\notin\mathcal O_{\alpha^*})
+P(\hat\beta_1\notin\mathcal O_{\beta_0})
+P(\hat\beta_2\notin\mathcal O_{\beta_0})
\nonumber
\\
&\le
P\left(
\inf_{\tau\in D_{n,M}^\beta}
\frac{1}{[n\tau]-[n\tau_*^\beta]}
\sum_{i=[n\tau_*^\beta]+1}^{[n\tau]}
\left(
\lambda_1[\Xi_{i-1}^\beta(\beta_0)]+r_{i-1}
\right)
\le8\delta 
\right)
\nonumber
\\
&\qquad+P\left(|\hat\beta_1-\hat\beta_2|<\frac{\Deb}{2}\right)
+P(\hat\alpha\notin\mathcal O_{\alpha^*})
+P(\hat\beta_1\notin\mathcal O_{\beta_0})
+P(\hat\beta_2\notin\mathcal O_{\beta_0}).
\label{eq7.299}
\end{align}
According to \textbf{[A2-II]},
if we set
\begin{align*}
\delta=\frac{1}{10}
\int_{\mathbb R^d}\lambda_1[\Xi^\beta(x,\alpha^*,\beta_0)]
\dd\mu_{(\alpha^*,\beta_0)}(x)>0,
\end{align*}
then for large $n$,
\begin{align}
&P\left(
\inf_{\tau\in D_{n,M}^\beta}
\frac{1}{[n\tau]-[n\tau_*^\beta]}
\sum_{i=[n\tau_*^\beta]+1}^{[n\tau]}
\left(
\lambda_1[\Xi_{i-1}^\beta(\alpha^*,\beta_0)]+r_{i-1}
\right)
\le8\delta
\right)
\nonumber
\\
&\le
P\left(
\inf_{\tau\in D_{n,M}^\beta}
\frac{1}{[n\tau]-[n\tau_*^\beta]}
\sum_{i=[n\tau_*^\beta]+1}^{[n\tau]}
\lambda_1[\Xi_{i-1}^\beta(\alpha^*,\beta_0)]
\le9\delta
\right)
+
P\left(
\inf_{\tau\in D_{n,M}^\beta}
\frac{1}{[n\tau]-[n\tau_*^\beta]}
\sum_{i=[n\tau_*^\beta]+1}^{[n\tau]}
r_{i-1}
\le-\delta
\right)
\nonumber
\\
&\le
P\left(
\inf_{\tau\in D_{n,M}^\beta}
\frac{1}{[n\tau]-[n\tau_*^\beta]}
\sum_{i=[n\tau_*^\beta]+1}^{[n\tau]}
\lambda_1[\Xi_{i-1}^\beta(\alpha^*,\beta_0)]
\le9\delta
\right)
+
P\left(
\sup_{\tau\in D_{n,M}^\beta}
\left|
\frac{1}{[n\tau]-[n\tau_*^\beta]}
\sum_{i=[n\tau_*^\beta]+1}^{[n\tau]}
r_{i-1}
\right|
\ge\delta
\right)
\nonumber
\\
&\le
P\left(
\inf_{\tau\in D_{n,M}^\beta}
\left(
\frac{1}{[n\tau]-[n\tau_*^\beta]}
\sum_{i=[n\tau_*^\beta]+1}^{[n\tau]}
\lambda_1[\Xi_{i-1}^\beta(\alpha^*,\beta_0)]
-10\delta
\right)
\le-\delta
\right)
+\epsilon
\nonumber
\\
&\le
P\left(
\sup_{\tau\in D_{n,M}^\beta}
\left|
\frac{1}{[n\tau]-[n\tau_*^\beta]}
\sum_{i=[n\tau_*^\beta]+1}^{[n\tau]}
\lambda_1[\Xi_{i-1}^\beta(\alpha^*,\beta_0)]
-10\delta
\right|
\ge\delta
\right)
+\epsilon
\nonumber
\\
&\le
P\left(
\sup_{k> M/h\Deb^2-1}
\left|
\frac{1}{k}
\sum_{i=[n\tau_*^\beta]+1}^{[n\tau_*^\beta]+k}
\lambda_1[\Xi_{i-1}^\beta(\alpha^*,\beta_0)]
-10\delta
\right|
\ge\delta
\right)
+\epsilon
\nonumber
\\
&\le
P\left(
\max_{[n^{1/r}]\le k\le n}
\left|
\frac{1}{k}
\sum_{i=[n\tau_*^\beta]+1}^{[n\tau_*^\beta]+k}
\lambda_1[\Xi_{i-1}^\beta(\alpha^*,\beta_0)]
-10\delta
\right|
\ge\delta
\right)
+\epsilon
\nonumber
\\
&\le 2\epsilon.
\label{eq7.298}
\end{align}

Noting that if
$|\hat\beta_k-\beta_k^*|\le\Deb/4$ holds, 
then 
$\Deb=|\beta_1^*-\beta_2^*|
\le
|\beta_1^*-\hat\beta_1|
+|\hat\beta_1-\hat\beta_2|
+|\hat\beta_2-\beta_2^*|
\le |\hat\beta_1-\hat\beta_2|+\Deb/2$, 
i.e., 
\begin{align*}
\left\{|\hat\beta_1-\hat\beta_2|<\frac\Deb2\right\}
\subset
\left\{|\hat\beta_1-\beta_1^*|>\frac{\Deb}{4}\right\}
\cup\left\{|\hat\beta_2-\beta_2^*|>\frac{\Deb}{4}\right\}
\end{align*}
and for sufficiently large $n$ so that 
$P(|\hat\beta_k-\beta_k^*|>\Deb/4)<\epsilon/2$
because of $\Deb^{-1}(\hat\beta_k-\beta_k^*)=o_p(1)$,
we see
\begin{align}
P\left(
|\hat\beta_1-\hat\beta_2|<\frac\Deb2
\right)
&\le
P\left(|\hat\beta_1-\beta_1^*|>\frac{\Deb}{4}
\right)
+P\left(
|\hat\beta_2-\beta_2^*|>\frac{\Deb}{4}
\right)
<\epsilon.
\label{eq7.297}
\end{align}
Therefore, from \eqref{eq7.299}-\eqref{eq7.297}, we obtain
$P_{2,n}^\beta\le6\epsilon$ for large $n$.

[iii] Evaluation of $P_{3,n}^\beta$.
We have, for large $n$,
\begin{align*}
&\tr\Bigl(
A_{i-1}^{-1}(\hat\alpha)
(b_{i-1}(\hat\beta_2)-h\EE[\DeX|\GG])
(b_{i-1}(\hat\beta_1)-b_{i-1}(\hat\beta_2))^\TT
\Bigr)\\
&\le
h\tr\Bigl(
A_{i-1}^{-1}(\hat\alpha)
(b_{i-1}(\hat\beta_2)-b_{i-1}(\beta_2^*))
(b_{i-1}(\hat\beta_1)-b_{i-1}(\hat\beta_2))^\TT
\Bigr)
+
\Rd|b_{i-1}(\hat\beta_1)-b_{i-1}(\hat\beta_2)|\\
&=
h\left[\tr\Bigl(
A_{i-1}^{-1}(\hat\alpha)
\partial_{\beta^{\ell_1}} b_{i-1}(\beta_2^*)
(b_{i-1}(\hat\beta_1)-b_{i-1}(\hat\beta_2))^\TT
\Bigr)
\right]_{\ell_1}
(\hat\beta_2-\beta_2^*)\\
&\qquad+
h\int_0^1
(1-u)
\left.
\left[\tr\Bigl(
A_{i-1}^{-1}(\hat\alpha)
\partial_{\beta^{\ell_2}}\partial_{\beta^{\ell_1}} b_{i-1}(\beta)
(b_{i-1}(\hat\beta_1)-b_{i-1}(\hat\beta_2))^\TT
\Bigr)
\right]_{\ell_1,\ell_2}
\right|_{\beta=\beta_2^*+u(\hat\beta_2-\beta_2^*)}
\dd u\\
&\qquad\qquad
\otimes(\hat\beta_2-\beta_2^*)^{\otimes2}
+
\Rd|b_{i-1}(\hat\beta_1)-b_{i-1}(\hat\beta_2)|\\
&=
h\left[\tr\Bigl(
A_{i-1}^{-1}(\hat\alpha)
\partial_{\beta^{\ell_1}} b_{i-1}(\beta_2^*)
\partial_{\beta^{\ell_2}}b_{i-1}(\hat\beta_2)^\TT
\Bigr)
\right]_{\ell_1,\ell_2}
\otimes(\hat\beta_2-\beta_2^*)
\otimes(\hat\beta_1-\hat\beta_2)\\
&\qquad+
h\int_0^1
(1-u)
\left.
\left[\tr\Bigl(
A_{i-1}^{-1}(\hat\alpha)
\partial_{\beta^{\ell_1}} b_{i-1}(\beta_2^*)
\partial_{\beta^{\ell_3}}\partial_{\beta^{\ell_2}}b_{i-1}(\beta)^\TT
\Bigr)
\right]_{\ell_1,\ell_2,\ell_3}
\right|_{\beta=\hat\beta_2+u(\hat\beta_1-\hat\beta_2)}
\dd u\\
&\qquad\qquad
\otimes(\hat\beta_2-\beta_2^*)
\otimes(\hat\beta_1-\hat\beta_2)^{\otimes2}\\
&\qquad+
h\int_0^1
(1-u)
\left.
\left[\tr\Bigl(
A_{i-1}^{-1}(\hat\alpha)
\partial_{\beta^{\ell_2}}\partial_{\beta^{\ell_1}} b_{i-1}(\beta)
(b_{i-1}(\hat\beta_1)-b_{i-1}(\hat\beta_2))^\TT
\Bigr)
\right]_{\ell_1,\ell_2}
\right|_{\beta=\beta_2^*+u(\hat\beta_2-\beta_2^*)}
\dd u\\
&\qquad\qquad
\otimes(\hat\beta_2-\beta_2^*)^{\otimes2}
+
\Rd|b_{i-1}(\hat\beta_1)-b_{i-1}(\hat\beta_2)|\\
&=
h\Xi_{i-1}^\beta(\hat\alpha,\beta_2^*)
\otimes(\hat\beta_2-\beta_2^*)
\otimes(\hat\beta_1-\hat\beta_2)\\
&\qquad+
h\int_0^1
\left[\tr\Bigl(
A_{i-1}^{-1}(\hat\alpha)
\partial_{\beta^{\ell_1}} b_{i-1}(\beta_2^*)
\partial_{\beta^{\ell_3}}\partial_{\beta^{\ell_2}}
b_{i-1}(\beta_2^*+u(\hat\beta_2-\beta_2^*))^\TT
\Bigr)
\right]_{\ell_1,\ell_2,\ell_3}
\dd u
\\
&\qquad\qquad
\otimes(\hat\beta_2-\beta_2^*)^{\otimes2}
\otimes(\hat\beta_1-\hat\beta_2)\\
&\qquad+
h\int_0^1
(1-u)
\left[\tr\Bigl(
A_{i-1}^{-1}(\hat\alpha)
\partial_{\beta^{\ell_1}} b_{i-1}(\beta_2^*)
\partial_{\beta^{\ell_3}}\partial_{\beta^{\ell_2}}
b_{i-1}(\hat\beta_2+u(\hat\beta_1-\hat\beta_2))^\TT
\Bigr)
\right]_{\ell_1,\ell_2,\ell_3}
\dd u
\\
&\qquad\qquad
\otimes(\hat\beta_2-\beta_2^*)
\otimes(\hat\beta_1-\hat\beta_2)^{\otimes2}\\
&\qquad+
h\int_0^1
(1-u)
\left[\tr\Bigl(
A_{i-1}^{-1}(\hat\alpha)
\partial_{\beta^{\ell_2}}\partial_{\beta^{\ell_1}} 
b_{i-1}(\beta_2^*+u(\hat\beta_2-\beta_2^*))
(b_{i-1}(\hat\beta_1)-b_{i-1}(\hat\beta_2))^\TT
\Bigr)
\right]_{\ell_1,\ell_2}
\dd u
\\
&\qquad\qquad
\otimes(\hat\beta_2-\beta_2^*)^{\otimes2}
+
\Rd|b_{i-1}(\hat\beta_1)-b_{i-1}(\hat\beta_2)|\\
&=
h\Xi_{i-1}^\beta(\alpha^*,\beta_0)
\otimes(\hat\beta_2-\beta_2^*)
\otimes(\hat\beta_1-\hat\beta_2)\\
&
\qquad+
h\int_0^1
\left.
\partial_{(\alpha,\beta)}\Xi_{i-1}^\beta(\alpha,\beta)
\right|_{
\substack
{\alpha=\alpha^*+u(\hat\alpha-\alpha^*)\\
\beta=\beta_0+u(\beta_2^*-\beta_0)}
}
\dd u
\otimes(\hat\beta_2-\beta_2^*)
\otimes(\hat\beta_1-\hat\beta_2)
\otimes
\left(
\begin{array}{cc}
\hat\alpha-\alpha^*\\
\beta_2^*-\beta_0
\end{array}
\right)
\\
&\qquad+
h\int_0^1
\left[\tr\Bigl(
A_{i-1}^{-1}(\hat\alpha)
\partial_{\beta^{\ell_1}} b_{i-1}(\beta_2^*)
\partial_{\beta^{\ell_3}}\partial_{\beta^{\ell_2}}
b_{i-1}(\beta_2^*+u(\hat\beta_2-\beta_2^*))^\TT
\Bigr)
\right]_{\ell_1,\ell_2,\ell_3}
\dd u
\\
&\qquad\qquad
\otimes(\hat\beta_2-\beta_2^*)^{\otimes2}
\otimes(\hat\beta_1-\hat\beta_2)\\
&\qquad+
h\int_0^1
(1-u)
\left[\tr\Bigl(
A_{i-1}^{-1}(\hat\alpha)
\partial_{\beta^{\ell_1}} b_{i-1}(\beta_2^*)
\partial_{\beta^{\ell_3}}\partial_{\beta^{\ell_2}}
b_{i-1}(\hat\beta_2+u(\hat\beta_1-\hat\beta_2))^\TT
\Bigr)
\right]_{\ell_1,\ell_2,\ell_3}
\dd u
\\
&\qquad\qquad
\otimes(\hat\beta_2-\beta_2^*)
\otimes(\hat\beta_1-\hat\beta_2)^{\otimes2}\\
&\qquad+
h\int_0^1
(1-u)
\left[\tr\Bigl(
A_{i-1}^{-1}(\hat\alpha)
\partial_{\beta^{\ell_2}}\partial_{\beta^{\ell_1}} 
b_{i-1}(\beta_2^*+u(\hat\beta_2-\beta_2^*))
(b_{i-1}(\hat\beta_1)-b_{i-1}(\hat\beta_2))^\TT
\Bigr)
\right]_{\ell_1,\ell_2}
\dd u
\\
&\qquad\qquad
\otimes(\hat\beta_2-\beta_2^*)^{\otimes2}
+
\Rd|b_{i-1}(\hat\beta_1)-b_{i-1}(\hat\beta_2)|\\
&\le
h\Xi_{i-1}^\beta(\alpha^*,\beta_0)
\otimes(\hat\beta_2-\beta_2^*)
\otimes(\hat\beta_1-\hat\beta_2)\\
&\qquad+
\frac{h\Deb}{\sqrt T}\left|\int_0^1
\left.
\partial_{(\alpha,\beta)}\Xi_{i-1}^\beta(\alpha,\beta)
\right|_{
\substack
{\alpha=\alpha^*+u(\hat\alpha-\alpha^*)\\
\beta=\beta_0+u(\beta_2^*-\beta_0)}
}
\dd u
\right|
\sqrt{T}|\hat\beta_2-\beta_2^*|
\Deb^{-1}|\hat\beta_1-\hat\beta_2|
(|\hat\alpha-\alpha^*|+|\beta_2^*-\beta_0|)\\
&\qquad+
\frac{h\Deb}{T}\left|
\int_0^1
\left[\tr\Bigl(
A_{i-1}^{-1}(\hat\alpha)
\partial_{\beta^{\ell_1}} b_{i-1}(\beta_2^*)
\partial_{\beta^{\ell_3}}\partial_{\beta^{\ell_2}}
b_{i-1}(\beta_2^*+u(\hat\beta_2-\beta_2^*))^\TT
\Bigr)
\right]_{\ell_1,\ell_2,\ell_3}
\dd u
\right|
\\
&\qquad\qquad
\times
\left(\sqrt{T}|\hat\beta_2-\beta_2^*|\right)^2
\Deb^{-1}|\hat\beta_1-\hat\beta_2|\\
&\qquad+
\frac{h\Deb^2}{\sqrt{T}}
\left|
\int_0^1
(1-u)
\left[\tr\Bigl(
A_{i-1}^{-1}(\hat\alpha)
\partial_{\beta^{\ell_1}} b_{i-1}(\beta_2^*)
\partial_{\beta^{\ell_3}}\partial_{\beta^{\ell_2}}
b_{i-1}(\hat\beta_2+u(\hat\beta_1-\hat\beta_2))^\TT
\Bigr)
\right]_{\ell_1,\ell_2,\ell_3}
\dd u
\right|
\\
&\qquad\qquad
\times
\sqrt{T}|\hat\beta_2-\beta_2^*|
\left(\Deb^{-1}|\hat\beta_1-\hat\beta_2|\right)^2\\
&\qquad+
\frac{h\Deb}{T}
\biggl|
\int_0^1\int_0^1
(1-u)
\Bigl[\tr\Bigl(
A_{i-1}^{-1}(\hat\alpha)
\partial_{\beta^{\ell_2}}\partial_{\beta^{\ell_1}} 
b_{i-1}(\beta_2^*+u(\hat\beta_2-\beta_2^*))
\\
&\qquad\qquad\qquad
\times
\partial_{\beta^{\ell_3}}b_{i-1}(\hat\beta_2+v(\hat\beta_1-\hat\beta_2))^\TT
\Bigr)
\Bigr]_{\ell_1,\ell_2,\ell_3}
\dd u\dd v
\biggr|
\left(\sqrt{T}|\hat\beta_2-\beta_2^*|\right)^2
\Deb^{-1}|\hat\beta_1-\hat\beta_2|
\\
&\qquad\qquad+
\Rd|b_{i-1}(\hat\beta_1)-b_{i-1}(\hat\beta_2)|\\
&\le
\frac{h\Deb}{\sqrt T}\Xi_{i-1}^\beta(\alpha^*,\beta_0)
\otimes\sqrt{T}(\hat\beta_2-\beta_2^*)
\otimes\Deb^{-1}(\hat\beta_1-\hat\beta_2)
+
\left(
\frac{h\Deb}{\sqrt{nT}}
+\frac{h\Deb^2}{\sqrt T}
+\frac{h\Deb}{T}
+h^2
\right)\Ro
\end{align*}
and
\begin{align*}
\sup_{\tau\in D_{n,M}^\beta}
\frac{|\varrho_n^\beta(\tau:\hat\beta_1,\hat\beta_2|\hat\alpha)|}
{h\Deb^2([n\tau]-[n\tau_*^\beta])}
&\le
\frac{1}{\sqrt{T}\Deb}
\sup_{\tau\in D_{n,M}^\beta}
\left|
\frac{1}{[n\tau]-[n\tau_*^\beta]}
\sum_{i=[n\tau_*^\beta]+1}^{[n\tau]}
\Xi_{i-1}^\beta(\alpha^*,\beta_0)
\right|
\sqrt{T}|\hat\beta_2-\beta_2^*|
\Deb^{-1}|\hat\beta_1-\hat\beta_2|\\
&
\qquad+
\left(
\frac{h\Deb}{\sqrt{nT}}
+\frac{h\Deb^2}{\sqrt T}
+\frac{h\Deb}{T}
+h^2
\right)\sup_{\tau\in D_{n,M}^\beta}
\frac{1}{h\Deb^2([n\tau]-[n\tau_*^\beta])}
\sum_{i=[n\tau_*^\beta]+1}^{[n\tau]}
\Ro
\\
&\le
O_p\left(
\frac{1}{\sqrt{T}\Deb}
\right)
+O_p(\sqrt h\Deb)
+O_p(\sqrt{T}\Deb^2)
+O_p(\Deb)
+O_p(nh^2)
=o_p(1).
\end{align*}
Hence, we see
\begin{align*}
P_{3,n}^\beta
&\le
P\left(
\sup_{\tau\in D_{n,M}^\beta}
\frac{|\varrho_n^\beta(\tau:\hat\beta_1,\hat\beta_2|\hat\alpha)|}
{h\Deb^2([n\tau]-[n\tau_*^\beta])}
\ge\delta,\ 
\hat\alpha\in\mathcal O_{\alpha^*},\ 
\hat\beta_1,\hat\beta_2\in\mathcal O_{\beta_0}
\right)\\
&\qquad+P(\hat\alpha\notin\mathcal O_{\alpha^*})
+P(\hat\beta_1\notin\mathcal O_{\beta_0})
+P(\hat\beta_2\notin\mathcal O_{\beta_0})
\\
&\le4\epsilon
\end{align*}
for large $n$.


[iv] From the evaluations in Steps [i]-[iii], we have
\begin{align*}
\varlimsup_{n\to\infty}
P(T\Deb^2(\hat\tau_n^\beta-\tau_*^\beta)>M)
\le
\gamma_\beta(M)+14\epsilon
\end{align*}
for any $M\ge1$ and $\epsilon>0$. 
Therefore
\begin{align*}
\varlimsup_{M\to\infty}
\varlimsup_{n\to\infty}
P(T\Deb^2(\hat\tau_n^\beta-\tau_*^\beta)>M)
\le
14\epsilon.
\end{align*}

\end{proof}

\begin{proof}[\bf{Proof of Theorem \ref{th4}}]
It is sufficient to show 
\begin{align*}
\varlimsup_{M\to\infty}
\varlimsup_{n\to\infty}
P(T(\hat\tau_n^\beta-\tau_*^\beta)>M)
=0.
\end{align*} 

Let $M\ge 1$, 
$D_{n,M}^\beta=\{\tau\in[0,1]|T(\tau-\tau_*^\beta)>M\}$.
For all $\delta>0$, we have
\begin{align*}
P\left(T(\hat\tau_n^\beta-\tau_*^\beta)>M\right)
&\le
P\left(
\sup_{\tau\in D_{n,M}^\beta}
\frac{
|{\mathcal M}_n^\beta(\tau:\hat\beta_1,\hat\beta_2|\hat\alpha)|
}{h([n\tau]-[n\tau_*^\beta])}
\ge\delta
\right) 
+P\left(
\inf_{\tau\in D_{n,M}^\beta}
\frac{
{\mathcal A}_n^\beta(\tau:\hat\beta_1,\hat\beta_2|\hat\alpha)
}{h([n\tau]-[n\tau_*^\beta])}
\le2\delta
\right)\\
&\quad+P\left(
\sup_{\tau\in D_{n,M}^\beta}
\frac{
|{\varrho}_n^\beta(\tau:\hat\beta_1,\hat\beta_2|\hat\alpha)|
}{h([n\tau]-[n\tau_*^\beta])}
\ge\delta
\right)\\
&=:
P_{1,n}^\beta+P_{2,n}^\beta+P_{3,n}^\beta.
\end{align*}

[i] Evaluation of $P_{1,n}^\beta$.
For large $n$, we have
\begin{align*}
P_{1,n}^\beta
&\le
P\left(
\sup_{\tau\in D_{n,M}^\beta}
\frac{|{\mathcal M}_n^\beta(\tau:\hat\beta_1,\hat\beta_2|\hat\alpha)|}
{h([n\tau]-[n\tau_*^\beta])}
\ge\delta,\ 
\hat\alpha\in\mathcal O_{\alpha^*}.\ 
\hat\beta_1\in\mathcal O_{\beta_1^*},\ 
\hat\beta_2\in\mathcal O_{\beta_2^*} 
\right)\\
&\qquad
+P(\hat\beta_1\in\mathcal O_{\beta_1^*})+P(\hat\beta_2\in\mathcal O_{\beta_2^*})
+P(\hat\alpha\in\mathcal O_{\alpha^*})\\
&\le
P\left(
\sup_{\tau\in D_{n,M}^\beta}
\frac{
\sup_{\alpha\in\mathcal O_{\alpha^*},
\beta_k\in\mathcal O_{\beta_k^*}}
|{\mathcal M}_n^\beta(\tau:\beta_1,\beta_2|\alpha)|
}
{[n\tau]-[n\tau_*^\beta]}
\ge\delta h
\right)\\
&\qquad
+P(\hat\beta_1\in\mathcal O_{\beta_1^*})+P(\hat\beta_2\in\mathcal O_{\beta_2^*})
+P(\hat\alpha\in\mathcal O_{\alpha^*})
\end{align*}
By the uniform version on the H\'ajek-Renyi inequality in Lemma 2 
of Iacus and Yoshida (2012), 
we obtain
\begin{align*}
&P\left(
\sup_{\tau\in D_{n,M}^\beta}
\frac{1}
{[n\tau]-[n\tau_*^\beta]}
\sup_{\substack{\alpha\in\mathcal O_{\alpha^*},\\
\beta_k\in\mathcal O_{\beta_k^*}
}}
|{\mathcal M}_n^\beta(\tau:\beta_1,\beta_2|\alpha)|
\ge\delta h
\right)\\
&\le
P\left(
\max_{j>M/h-1}
\frac{1}{j}
\sup_{\substack{\alpha\in\mathcal O_{\alpha^*},\\
\beta_k\in\mathcal O_{\beta_k^*}
}}
\left|
\sum_{i=[n\tau_*^\beta]+1}^{[n\tau_*^\beta]+j}
\left(
G_i(\beta_1|\alpha)-G_i(\beta_2|\alpha)
-\EE_{\beta_2^*}[G_i(\beta_1|\alpha)-G_i(\beta_2|\alpha)|\GG]
\right)
\right|
\ge\delta h
\right)\\
&\le
\sum_{j>M/h-1}
\frac{Ch}{(\delta h j)^2}
\\
&\le
\frac{C'h}{(\delta h)^2}\frac{h}{M}
=\frac{C'}{\delta^2M}
=:\gamma_\beta(M).
\end{align*}
From \textbf{[C6-II]}, 
we have  
$P(\hat\alpha\notin\mathcal O_{\alpha^*})<\epsilon$ and
$P(\hat\beta_k\notin\mathcal O_{\beta_k^*})<\epsilon$ for large $n$.
Therefore 
$P_{1,n}^\beta\le \gamma_\beta(M)+3\epsilon$
for large $n$.

 
[ii] Evaluation of $P_{2,n}^\beta$. 
If $\hat\alpha\in\mathcal O_{\alpha^*}$ and $\hat\beta_k\in\mathcal O_{\beta_k^*}$,
then there exists a positive constant $c$ independent of $i$ such that
\begin{align*}
\Gamma^\beta_{i-1}(\hat\alpha,\hat\beta_1,\hat\beta_2)
&=
\Gamma^\beta_{i-1}(\alpha^*,\beta_1^*,\beta_2^*)
+
\int_0^1
\left.\partial_{(\alpha,\beta_1,\beta_2)}
\Gamma^\beta_{i-1}(\alpha,\beta_1,\beta_2)
\right|_{
\substack{
\alpha=\alpha^*+u(\hat\alpha-\alpha^*)\\
\beta_k=\beta_k^*+u(\hat\beta_k-\beta_k^*)
}
}
\dd u
\left(
\begin{array}{c}
\hat\alpha-\alpha^*\\
\hat\beta_1-\beta_1^*\\
\hat\beta_2-\beta_2^*
\end{array}
\right)\\
&\ge
\Gamma^\beta_{i-1}(\alpha^*,\beta_1^*,\beta_2^*)
-c(|\hat\alpha-\alpha^*|+|\hat\beta_1-\beta_1^*|+|\hat\beta_2-\beta_2^*|).
\end{align*}
According to \textbf{[B1-II]},
if we set
\begin{align*}
\delta=\frac{1}{4}\inf_{x}\Gamma^\beta(x,\alpha^*,\beta_1^*,\beta_2^*)>0,
\end{align*}
then for large $n$, 
\begin{align*}
P_{2,n}^\beta
&\le
P\left(
\inf_{\tau\in D_{n,M}^\beta}
\frac{
{\mathcal A}_n^\beta(\tau:\hat\beta_1,\hat\beta_2|\hat\alpha)
}{h([n\tau]-[n\tau_*^\beta])}
\le2\delta,\ 
\hat\alpha\in\mathcal O_{\alpha^*},\ 
\hat\beta_1\in\mathcal O_{\beta_1^*}
\hat\beta_2\in\mathcal O_{\beta_2^*}
\right)\\
&\qquad
+P(\hat\alpha\notin\mathcal O_{\alpha^*})
+P(\hat\beta_1\notin\mathcal O_{\beta_1^*})
+P(\hat\beta_2\notin\mathcal O_{\beta_2^*})\\
&\le
P\left(
\inf_{\tau\in D_{n,M}^\beta}
\frac{1}{[n\tau]-[n\tau_*^\beta]}
\sum_{i=[n\tau_*^\beta]+1}^{[n\tau]}
\left(
\Gamma^\beta_{i-1}(\alpha^*,\beta_1^*,\beta_2^*)
-c\bigl(|\hat\alpha-\alpha^*|+|\hat\beta_1-\beta_1^*|+|\hat\beta_2-\beta_2^*|
\bigr)
\right)
\le2\delta
\right)\\
&\qquad
+P(\hat\alpha\notin\mathcal O_{\alpha^*})
+P(\hat\beta_1\notin\mathcal O_{\beta_1^*})
+P(\hat\beta_2\notin\mathcal O_{\beta_2^*})\\
&\le
P\left(
\inf_{\tau\in D_{n,M}^\beta}
\frac{1}{[n\tau]-[n\tau_*^\beta]}
\sum_{i=[n\tau_*^\beta]+1}^{[n\tau]}
\Gamma^\beta_{i-1}(\alpha^*,\beta_1^*,\beta_2^*)
\le3\delta
\right)\\
&\qquad+
P\left(
-c\bigl(|\hat\alpha-\alpha^*|+|\hat\beta_1-\beta_1^*|+|\hat\beta_2-\beta_2^*|
\bigr)
\le-\delta
\right)
+P(\hat\alpha\notin\mathcal O_{\alpha^*})
+P(\hat\beta_1\notin\mathcal O_{\beta_1^*})
+P(\hat\beta_2\notin\mathcal O_{\beta_2^*})\\
&\le
P\left(
\inf_{x}
\Gamma^\beta(x,\alpha^*,\beta_1^*,\beta_2^*)
\le3\delta
\right)
+P\left(
|\hat\alpha-\alpha^*|+|\hat\beta_1-\beta_1^*|+|\hat\beta_2-\beta_2^*|
\ge\frac{\delta}{c}
\right)\\
&\qquad
+P(\hat\alpha\notin\mathcal O_{\alpha^*})
+P(\hat\beta_1\notin\mathcal O_{\beta_1^*})
+P(\hat\beta_2\notin\mathcal O_{\beta_2^*})\\
&\le 6\epsilon
\end{align*}
thanks to
\begin{align*}
&P\left(
|\hat\alpha-\alpha^*|+|\hat\beta_1-\beta_1^*|+|\hat\beta_2-\beta_2^*|
\ge\frac{\delta}{c}
\right)
\\
&\le
P\left(
|\hat\alpha-\alpha^*|
\ge\frac{\delta}{3c}
\right)
+P\left(
|\hat\beta_1-\beta_1^*|
\ge\frac{\delta}{3c}
\right)
+P\left(
|\hat\beta_2-\beta_2^*|
\ge\frac{\delta}{3c}
\right)\\
&\le3\epsilon
\end{align*}
from \textbf{[C6-II]}.

[iii] Evaluation of $P_{3,n}^\beta$. 
We have, for large $n$,
\begin{align*}
&\tr\Bigl(
A_{i-1}^{-1}(\hat\alpha)
(b_{i-1}(\hat\beta_2)-h\EE_{\beta_2^*}[\DeX|\GG])
(b_{i-1}(\hat\beta_1)-b_{i-1}(\hat\beta_2))^\TT
\Bigr)\\
&=
h\tr\Bigl(
A_{i-1}^{-1}(\hat\alpha)
(b_{i-1}(\hat\beta_2)-b_{i-1}(\beta_2^*))
(b_{i-1}(\hat\beta_1)-b_{i-1}(\hat\beta_2))^\TT
\Bigr)
+\Rd\\
&=
h\int_0^1
\left.
\partial_{\beta}\tr\Bigl(
A_{i-1}^{-1}(\hat\alpha)
(b_{i-1}(\beta)-b_{i-1}(\beta_2^*))
(b_{i-1}(\hat\beta_1)-b_{i-1}(\hat\beta_2))^\TT
\Bigr)
\right|_{\beta=\beta_2^*+u(\hat\beta_2-\beta_2^*)}
\dd u
(\hat\beta_2-\beta_2^*)
+\Rd
\end{align*}
and
\begin{align*}
&\sup_{\tau\in D_{n,M}^\beta}
\frac{|\varrho_n^\beta(\tau:\hat\beta_1,\hat\beta_2)|}
{h([n\tau]-[n\tau_*^\beta])}\\
&\le
\frac{1}{\sqrt{T}}
\sup_{\tau\in D_{n,M}^\beta}
\Biggl|
\frac{1}{[n\tau]-[n\tau_*^\beta]}
\sum_{i=[n\tau_*^\beta]+1}^{[n\tau]}
\\
&\qquad\qquad
\int_0^1
\left.
\partial_{\beta}\tr\Bigl(
A_{i-1}^{-1}(\hat\alpha)
(b_{i-1}(\beta)-b_{i-1}(\beta_2^*))
(b_{i-1}(\hat\beta_1)-b_{i-1}(\hat\beta_2))^\TT
\Bigr)
\right|_{\beta=\beta_2^*+u(\hat\beta_2-\beta_2^*)}
\dd u
\Biggr|
\sqrt{T}|\hat\beta_2-\beta_2^*|\\
&\qquad
+\sup_{\tau\in D_{n,M}^\beta}
\frac{h^2}{h([n\tau]-[n\tau_*^\beta])}
\sum_{i=[n\tau_*^\beta]+1}^{[n\tau]}
\Ro
\\
&\le
\frac{1}{\sqrt{T}}
\sup_{x,\alpha,\beta_k}
\left|
\left[
\tr\Bigl(
A^{-1}(x,\alpha)
\partial_{\beta^\ell} b(x,\beta_1)
\bigl(b(x,\beta_2)-b(x,\beta_3)\bigr)^\TT
\Bigr)
\right]_\ell
\right|
\sqrt{T}|\hat\beta_2-\beta_2^*|\\
&\qquad
+\frac{h^2}{M}
\sum_{i=[n\tau_*^\beta]+1}^{n}
\Ro
\\
&=o_p(1).
\end{align*}
Hence, we see 
\begin{align*}
P_{3,n}^\beta
&\le
P\left(
\sup_{\tau\in D_{n,M}^\beta}
\frac{|\varrho_n^\beta(\tau:\hat\beta_1,\hat\beta_2)|}
{h([n\tau]-[n\tau_*^\beta])}
\ge\delta,\ 
\hat\alpha\in\mathcal O_{\alpha^*},\ 
\hat\beta_1\in\mathcal O_{\beta_1^*},\ 
\hat\beta_2\in\mathcal O_{\beta_2^*}
\right)
\\
&\qquad+
P(\hat\alpha\notin\mathcal O_{\alpha^*})
+P(\hat\beta_1\notin\mathcal O_{\beta_1^*})
+P(\hat\beta_2\notin\mathcal O_{\beta_2^*})
\\
&\le4\epsilon
\end{align*}
for large $n$.


[iv] From the evaluations in Steps [i]-[iii], we have
\begin{align*}
\varlimsup_{n\to\infty}
P(T(\hat\tau_n^\beta-\tau_*^\beta)>M)
\le
\gamma_\beta(M)+13\epsilon
\end{align*}
for any $M\ge1$ and $\epsilon>0$. 
Therefore
\begin{align*}
\varlimsup_{M\to\infty}
\varlimsup_{n\to\infty}
P(T(\hat\tau_n^\beta-\tau_*^\beta)>M)
\le
13\epsilon.
\end{align*}
\end{proof}

\begin{proof}[\bf{Proof of Proposition \ref{prop3}}]
We see
\begin{align}
{\mathcal T}_n^\alpha
=\frac{1}{\sqrt{2dn}}\max_{1\le k\le n}
\left|
\sum_{i=1}^k\hat\eta_i-\frac{k}{n}\sum_{i=1}^n\hat\eta_i
\right|
&\ge
\frac{1}{\sqrt{2dn}}
\left|
\sum_{i=1}^{[n\tau^\alpha_*]}\hat\eta_i
-\frac{[n\tau^\alpha_*]}{n}\sum_{i=1}^n\hat\eta_i
\right|
\nonumber
\\
&=
\sqrt{\frac{n\vartheta_\alpha^2}{2d}}
\left|
\frac{1}{n\vartheta_\alpha}\sum_{i=1}^{[n\tau^\alpha_*]}\hat\eta_i
-\frac{[n\tau^\alpha_*]}{n}\frac{1}{n\vartheta_\alpha}\sum_{i=1}^n\hat\eta_i
\right|.
\label{eq7.690}
\end{align}
Now we can express
\begin{align*}
\hat\eta_i
&=\tr\left(
A_{i-1}^{-1}(\hat\alpha)\frac{(\DeX)^{\otimes2}}{h}
\right)\\
&=\tr\left(
A_{i-1}^{-1}(\alpha_0)\frac{(\DeX)^{\otimes2}}{h}
\right)
+\left.\partial_\alpha
\tr\left(
A_{i-1}^{-1}(\alpha)\frac{(\DeX)^{\otimes2}}{h}
\right)\right|_{\alpha=\alpha_0}
(\hat\alpha-\alpha_0)\\
&\quad+
\int_0^1\left.(1-u)
\partial_\alpha^2
\tr\left(
A_{i-1}^{-1}(\alpha)\frac{(\DeX)^{\otimes2}}{h}
\right)\right|_{\alpha=\alpha_0+u(\hat\alpha-\alpha_0)}\dd u
\otimes(\hat\alpha-\alpha_0)^{\otimes2}\\
&=
\tr\left(
A_{i-1}^{-1}(\alpha_0)\frac{(\DeX)^{\otimes2}}{h}
\right)
+\left(\tr\left(
A^{-1}\partial_{\alpha^\ell}AA^{-1}_{i-1}(\alpha_0)\frac{(\DeX)^{\otimes2}}{h}
\right)\right)_\ell
(\hat\alpha-\alpha_0)\\
&\quad+
\int_0^1(1-u)
\left.\partial_\alpha^2
\tr\left(
A_{i-1}^{-1}(\alpha)\frac{(\DeX)^{\otimes2}}{h}
\right)\right|_{\alpha=\alpha_0+u(\hat\alpha-\alpha_0)}\dd u
\otimes(\hat\alpha-\alpha_0)^{\otimes2}\\
&=:\eta_{1,i}
+\eta_{2,i}(\hat\alpha-\alpha_0)
%
+
\int_0^1(1-u)
\left.\partial_\alpha^2
\tr\left(
A_{i-1}^{-1}(\alpha)\frac{(\DeX)^{\otimes2}}{h}
\right)\right|_{\alpha=\alpha_0+u(\hat\alpha-\alpha_0)}\dd u
\otimes(\hat\alpha-\alpha_0)^{\otimes2}.
\end{align*}
Therefore we have, from \textbf{[E1]}, 
\begin{align}
&\frac{1}{n\vartheta_\alpha}\sum_{i=1}^{[n\tau^\alpha_*]}\hat\eta_i
-\frac{[n\tau^\alpha_*]}{n}\frac{1}{n\vartheta_\alpha}\sum_{i=1}^n\hat\eta_i
\nonumber
\\
&=
\frac{1}{n\vartheta_\alpha}\sum_{i=1}^{[n\tau^\alpha_*]}\eta_{1,i}
-\frac{[n\tau^\alpha_*]}{n}\frac{1}{n\vartheta_\alpha}\sum_{i=1}^n\eta_{1,i}
+
\left(
\frac{1}{n}\sum_{i=1}^{[n\tau^\alpha_*]}\eta_{2,i}
-\frac{[n\tau^\alpha_*]}{n}\frac{1}{n}\sum_{i=1}^n\eta_{2,i}
\right)
\vartheta_\alpha^{-1}(\hat\alpha-\alpha_0)
+o_p(1)
\nonumber
\\
&=:
\Hi+\Hd\vartheta_\alpha^{-1}(\hat\alpha-\alpha_0)
+o_p(1).
\label{eq7.699}
\end{align}
Here $\Hi$ and $\Hd$ can be transformed as follows.
\begin{align*}
\Hi
&=
\left(1-\frac{[n\tau^\alpha_*]}{n}\right)\frac{1}{n\vartheta_\alpha}
\sum_{i=1}^{[n\tau^\alpha_*]}\eta_{1,i}
-\frac{[n\tau^\alpha_*]}{n}\frac{1}{n\vartheta_\alpha}
\sum_{i=[n\tau^\alpha_*]+1}^n\eta_{1,i},
\\
\Hd
&=
\left(1-\frac{[n\tau^\alpha_*]}{n}\right)\frac{1}{n}\sum_{i=1}^{[n\tau^\alpha_*]}\eta_{2,i}
-\frac{[n\tau^\alpha_*]}{n}\frac{1}{n}\sum_{i=[n\tau^\alpha_*]+1}^n\eta_{2,i}.
\end{align*}
We have 
\begin{align*}
&\left(1-\frac{[n\tau^\alpha_*]}{n}\right)\frac{1}{n\vartheta_\alpha}
\sum_{i=1}^{[n\tau^\alpha_*]}
\EE_{\alpha_1^*}[\eta_{1,i}|\GG]
-\frac{[n\tau^\alpha_*]}{n}\frac{1}{n\vartheta_\alpha}
\sum_{i=[n\tau^\alpha_*]+1}^n
\EE_{\alpha_2^*}[\eta_{1,i}|\GG]\\
&=
\left(1-\frac{[n\tau^\alpha_*]}{n}\right)\frac{1}{n\vartheta_\alpha}
\sum_{i=1}^{[n\tau^\alpha_*]}
\tr(A_{i-1}^{-1}(\alpha_0)A_{i-1}(\alpha_1^*))\\
&\qquad
-\frac{[n\tau^\alpha_*]}{n}\frac{1}{n\vartheta_\alpha}\sum_{i=[n\tau^\alpha_*]+1}^n
\tr(A_{i-1}^{-1}(\alpha_0)A_{i-1}(\alpha_2^*))
+O_p(h/\vartheta_\alpha)\\
&=
\left(1-\frac{[n\tau^\alpha_*]}{n}\right)\frac{1}{n\vartheta_\alpha}
\sum_{i=1}^{[n\tau^\alpha_*]}
\biggl(
d
+\left.
\partial_\alpha\tr(A_{i-1}^{-1}(\alpha_0)A_{i-1}(\alpha))
\right|_{\alpha=\alpha_0}
(\alpha_1^*-\alpha_0)
\\
&\qquad\qquad+\int_0^1(1-u)
\left.\partial_\alpha^2\tr(A_{i-1}^{-1}(\alpha_0)A_{i-1}(\alpha))
\right|_{\alpha=\alpha_0+u(\alpha_1^*-\alpha_0)}
\otimes(\alpha_1^*-\alpha_0)^{\otimes2}
\biggr)\\
&\quad-
\frac{[n\tau^\alpha_*]}{n}\frac{1}{n\vartheta_\alpha}\sum_{i=[n\tau^\alpha_*]+1}^n
\biggl(
d
+\left.
\partial_\alpha\tr(A_{i-1}^{-1}(\alpha_0)A_{i-1}(\alpha))
\right|_{\alpha=\alpha_0}
(\alpha_2^*-\alpha_0)
\\
&\qquad\qquad+\int_0^1(1-u)
\left.\partial_\alpha^2\tr(A_{i-1}^{-1}(\alpha_0)A_{i-1}(\alpha))
\right|_{\alpha=\alpha_0+u(\alpha_2^*-\alpha_0)}
\otimes(\alpha_2^*-\alpha_0)^{\otimes2}
\biggr)
+o_p(1)\\
&=
\left(1-\frac{[n\tau^\alpha_*]}{n}\right)\frac{1}{n}
\sum_{i=1}^{[n\tau^\alpha_*]}
\Bigl(
\tr\bigl(
A_{i-1}^{-1}(\alpha_0)\partial_{\alpha^\ell} A_{i-1}(\alpha_0)
\bigr)
\Bigr)_\ell
\vartheta_\alpha^{-1}(\alpha_1^*-\alpha_0)
\\
&\quad-
\frac{[n\tau^\alpha_*]}{n}\frac{1}{n}\sum_{i=[n\tau^\alpha_*]+1}^n
\Bigl(
\tr\bigl(
A_{i-1}^{-1}(\alpha_0)\partial_{\alpha^\ell} A_{i-1}(\alpha_0)
\bigr)
\Bigr)_\ell
\vartheta_\alpha^{-1}(\alpha_2^*-\alpha_0)
+o_p(1)\\
&\pto
\tau^\alpha_*(1-\tau^\alpha_*)
\int_{\mathbb R^d}
\Bigl(
\tr\bigl(
A^{-1}\partial_{\alpha^\ell} A(x,\alpha_0)
\bigr)
\Bigr)_\ell
\dd\mu_{\alpha_0}(x)(c_1-c_2)
\end{align*}
and
\begin{align*}
&\left(1-\frac{[n\tau^\alpha_*]}{n}\right)^2\frac{1}{(n\vartheta_\alpha)^2}
\sum_{i=1}^{[n\tau^\alpha_*]}
\EE_{\alpha_1^*}[\eta_{1,i}^2|\GG]
-\left(\frac{[n\tau^\alpha_*]}{n}\right)^2\frac{1}{(n\vartheta_\alpha)^2}
\sum_{i=[n\tau^\alpha_*]+1}^n
\EE_{\alpha_2^*}[\eta_{1,i}^2|\GG]\\
&=
\frac{1}{n\vartheta_\alpha^2}
\left(
\left(1-\frac{[n\tau^\alpha_*]}{n}\right)^2\frac{1}{n}
\sum_{i=1}^{[n\tau^\alpha_*]}
\EE_{\alpha_1^*}[\eta_{1,i}^2|\GG]
-\left(\frac{[n\tau^\alpha_*]}{n}\right)^2\frac{1}{n}
\sum_{i=[n\tau^\alpha_*]+1}^n
\EE_{\alpha_2^*}[\eta_{1,i}^2|\GG]
\right)\\
&\pto 0.
\end{align*}
Therefore, from Lemma 9 of Genon-Catalot and Jacod (1993), we obtain 
\begin{align}
\Hi
\pto
\tau^\alpha_*(1-\tau^\alpha_*)
\int_{\mathbb R^d}
\Bigl(
\tr\bigl(
A^{-1}\partial_{\alpha^\ell} A(x,\alpha_0)
\bigr)
\Bigr)_\ell
\dd\mu_{\alpha_0}(x)(c_1-c_2).
\label{eq7.698}
\end{align}
Further, we have
\begin{align*}
&\left(1-\frac{[n\tau^\alpha_*]}{n}\right)\frac{1}{n}
\sum_{i=1}^{[n\tau^\alpha_*]}
\EE_{\alpha_1^*}[\eta_{2,i}|\GG]
-\frac{[n\tau^\alpha_*]}{n}\frac{1}{n}\sum_{i=[n\tau^\alpha_*]+1}^n
\EE_{\alpha_2^*}[\eta_{2,i}|\GG]\\
&=
\left(1-\frac{[n\tau^\alpha_*]}{n}\right)\frac{1}{n}
\sum_{i=1}^{[n\tau^\alpha_*]}
\left[
\tr\Bigl(
A^{-1}\partial_{\alpha^\ell}A A_{i-1}^{-1}(\alpha_0)A_{i-1}(\alpha_1^*)
\Bigr)
\right]_\ell\\
&\quad-
\frac{[n\tau^\alpha_*]}{n}\frac{1}{n}\sum_{i=[n\tau^\alpha_*]+1}^n
\left[
\tr\Bigl(
A^{-1}\partial_{\alpha^\ell}A A_{i-1}^{-1}(\alpha_0)A_{i-1}(\alpha_2^*)
\Bigr)
\right]_\ell
+O_p(h)\\
&=
\left(1-\frac{[n\tau^\alpha_*]}{n}\right)\frac{1}{n}
\sum_{i=1}^{[n\tau^\alpha_*]}
\biggl(
\left[
\tr\Bigl(
A^{-1}\partial_{\alpha^\ell} A_{i-1}(\alpha_0)
\Bigr)
\right]_\ell
\\
&\qquad\qquad+\int_0^1
\left.
\partial_{\alpha}
\left[
\tr\Bigl(
A^{-1}\partial_{\alpha^\ell}A A_{i-1}^{-1}(\alpha_0)A_{i-1}(\alpha)
\Bigr)
\right]_\ell
\right|_{\alpha=\alpha_0+u(\alpha_1^*-\alpha_0)}
(\alpha_1^*-\alpha_0)
\biggr)\\
&\quad-
\frac{[n\tau^\alpha_*]}{n}\frac{1}{n}\sum_{i=[n\tau^\alpha_*]+1}^n
\biggl(
\left[
\tr\Bigl(
A^{-1}\partial_{\alpha^\ell} A_{i-1}(\alpha_0)
\Bigr)
\right]_\ell
\\
&\qquad\qquad+\int_0^1
\left.
\partial_{\alpha}
\left[
\tr\Bigl(
A^{-1}\partial_{\alpha^\ell}A A_{i-1}^{-1}(\alpha_0)A_{i-1}(\alpha)
\Bigr)
\right]_\ell
\right|_{\alpha=\alpha_0+u(\alpha_2^*-\alpha_0)}
(\alpha_2^*-\alpha_0)
\biggr)
+o_p(1)\\
&=
\left(1-\frac{[n\tau^\alpha_*]}{n}\right)\frac{1}{n}
\sum_{i=1}^{[n\tau^\alpha_*]}
\left[
\tr\Bigl(
A^{-1}\partial_{\alpha^\ell} A_{i-1}(\alpha_0)
\Bigr)
\right]_\ell
-\frac{[n\tau^\alpha_*]}{n}\frac{1}{n}\sum_{i=[n\tau^\alpha_*]+1}^n
\left[
\tr\Bigl(
A^{-1}\partial_{\alpha^\ell} A_{i-1}(\alpha_0)
\Bigr)
\right]_\ell
+o_p(1)\\
&\pto0
\end{align*}
and
\begin{align*}
&\left(1-\frac{[n\tau^\alpha_*]}{n}\right)^2\frac{1}{n^2}
\sum_{i=1}^{[n\tau^\alpha_*]}
\EE_{\alpha_1^*}[\eta_{2,i}^2|\GG]
-\left(\frac{[n\tau^\alpha_*]}{n}\right)^2\frac{1}{n^2}
\sum_{i=[n\tau^\alpha_*]+1}^n
\EE_{\alpha_2^*}[\eta_{2,i}^2|\GG]\\
&=
\frac{1}{n}
\left(
\left(1-\frac{[n\tau^\alpha_*]}{n}\right)^2\frac{1}{n}
\sum_{i=1}^{[n\tau^\alpha_*]}
\EE_{\alpha_1^*}[\eta_{2,i}^2|\GG]
-\left(\frac{[n\tau^\alpha_*]}{n}\right)^2\frac{1}{n}
\sum_{i=[n\tau^\alpha_*]+1}^n
\EE_{\alpha_2^*}[\eta_{2,i}^2|\GG]
\right)\\
&\pto 0.
\end{align*}
Therefore, from Lemma 9 of Genon-Catalot and Jacod (1993), we see
$\Hd\pto0$.
Hence, from \eqref{eq7.699}, \eqref{eq7.698} and this, we obtain
\begin{align*}
\frac{1}{n\vartheta_\alpha}\sum_{i=1}^{[n\tau^\alpha_*]}\hat\eta_i
-\frac{[n\tau^\alpha_*]}{n}\frac{1}{n\vartheta_\alpha}\sum_{i=1}^n\hat\eta_i
\pto
\tau^\alpha_*(1-\tau^\alpha_*)
\int_{\mathbb R^d}
\left[
\tr\Bigl(
A^{-1}\partial_{\alpha^\ell} A(x,\alpha_0)
\Bigr)
\right]_\ell
\dd\mu_{\alpha_0}(x)(c_1-c_2)
\neq0,
\end{align*}
which implies $\mathcal T_n^\alpha\lto\infty$ 
from \eqref{eq7.690} and $n\Dea^2\lto\infty$,
that is,  
$P(\mathcal T_n^\alpha>w_1(\epsilon))\lto1$.
\end{proof}


\begin{proof}[\bf{Proof of Proposition \ref{prop4}}]
We see
\begin{align}
{\mathcal T}_{1,n}^\beta
=\frac{1}{\sqrt{dT}}\max_{1\le k\le n}
\left|
\sum_{i=1}^k\hat\xi_i-\frac{k}{n}\sum_{i=1}^n\hat\xi_i
\right|
&\ge
\frac{1}{\sqrt{dT}}
\left|
\sum_{i=1}^{[n\tau^\beta_*]}\hat\xi_i-\frac{[n\tau^\beta_*]}{n}\sum_{i=1}^n\hat\xi_i
\right|
\nonumber
\\
&=
\sqrt{\frac{T\vartheta_\beta^2}{d}}
\left|
\frac{1}{T\vartheta_\beta}\sum_{i=1}^{[n\tau^\beta_*]}\hat\xi_i
-\frac{[n\tau^\beta_*]}{n}\frac{1}{T\vartheta_\beta}\sum_{i=1}^n\hat\xi_i
\right|.
\label{eq7.790}
\end{align}
Now we can express
\begin{align*}
\hat\xi_i
&=1_d^\TT a_{i-1}^{-1}(\hat\alpha)(\DeX-hb_{i-1}(\hat\beta))\\
&=1_d^\TT a_{i-1}^{-1}(\alpha^*)(\DeX-hb_{i-1}(\hat\beta))
+\int_0^1\left.\partial_\alpha
\left(
1_d^\TT a_{i-1}^{-1}(\alpha)(\DeX-hb_{i-1}(\hat\beta))
\right)
\right|_{\alpha=\alpha^*+u(\hat\alpha-\alpha^*)}
\dd u
(\hat\alpha-\alpha^*)\\
&=1_d^\TT a_{i-1}^{-1}(\alpha^*)(\DeX-hb_{i-1}(\bar\beta^*))
-h1_d^\TT a_{i-1}^{-1}(\alpha^*)(b_{i-1}(\hat\beta)-b_{i-1}(\bar\beta^*))\\
&\quad+\int_0^1\left.\partial_\alpha
\left(
1_d^\TT a_{i-1}^{-1}(\alpha)(\DeX-hb_{i-1}(\hat\beta))
\right)
\right|_{\alpha=\alpha^*+u(\hat\alpha-\alpha^*)}
\dd u
(\hat\alpha-\alpha^*)\\
&=:\xi_i
-h\int_0^11_d^\TT a_{i-1}^{-1}(\alpha^*)
\partial_{\beta} b_{i-1}(\bar\beta^*+u(\hat\beta-\bar\beta^*))\dd u
(\hat\beta-\bar\beta^*)\\
&\quad+\int_0^1\left.\partial_\alpha
\left(
1_d^\TT a_{i-1}^{-1}(\alpha)(\DeX-hb_{i-1}(\hat\beta))
\right)
\right|_{\alpha=\alpha^*+u(\hat\alpha-\alpha^*)}
\dd u
(\hat\alpha-\alpha^*)
\end{align*}
Therefore, we have
\begin{align}
\frac{1}{T\Deb}\sum_{i=1}^{[n\tau^\beta_*]}\hat\xi_i
-\frac{[n\tau^\beta_*]}{n}\frac{1}{T\Deb}\sum_{i=1}^n\hat\xi_i
&=
\frac{1}{T\Deb}\sum_{i=1}^{[n\tau^\beta_*]}\xi_i
-\frac{[n\tau^\beta_*]}{n}\frac{1}{T\Deb}\sum_{i=1}^n\xi_i
+o_p(1)
\nonumber
\\
&=
\left(1-\frac{[n\tau^\beta_*]}{n}\right)\frac{1}{T\vartheta_\beta}
\sum_{i=1}^{[n\tau^\beta_*]}\xi_i
-\frac{[n\tau^\beta_*]}{n}\frac{1}{T\vartheta_\beta}\sum_{i=[n\tau^\beta_*]+1}^n\xi_i
+o_p(1),\label{eq7.799}
\end{align}
\begin{align*}
&\left(1-\frac{[n\tau^\beta_*]}{n}\right)\frac{1}{T\vartheta_\beta}
\sum_{i=1}^{[n\tau^\beta_*]}
\EE_{\beta_1^*}[\xi_i|\GG]
-\frac{[n\tau^\beta_*]}{n}\frac{1}{T\vartheta_\beta}
\sum_{i=[n\tau^\beta_*]+1}^n
\EE_{\beta_2^*}[\xi_i|\GG]\\
&=
\left(1-\frac{[n\tau^\beta_*]}{n}\right)\frac{1}{T\vartheta_\beta}
\sum_{i=1}^{[n\tau^\beta_*]}
h1_d^\TT a_{i-1}^{-1}(\alpha^*)(b_{i-1}(\beta_1^*)-b_{i-1}(\bar\beta^*))\\
&\qquad-\frac{[n\tau^\beta_*]}{n}\frac{1}{T\vartheta_\beta}
\sum_{i=[n\tau^\beta_*]+1}^n
h1_d^\TT a_{i-1}^{-1}(\alpha^*)(b_{i-1}(\beta_2^*)-b_{i-1}(\bar\beta^*))
+O_p(h/\vartheta_\beta)\\
&=
\left(1-\frac{[n\tau^\beta_*]}{n}\right)\frac{1}{n\vartheta_\beta}
\sum_{i=1}^{[n\tau^\beta_*]}
1_d^\TT a_{i-1}^{-1}(\alpha^*)(b_{i-1}(\beta_0)-b_{i-1}(\bar\beta^*))\\
&\qquad-\frac{[n\tau^\beta_*]}{n}\frac{1}{n\vartheta_\beta}
\sum_{i=[n\tau^\beta_*]+1}^n
1_d^\TT a_{i-1}^{-1}(\alpha^*)(b_{i-1}(\beta_0)-b_{i-1}(\bar\beta^*))\\
&\qquad+
\left(1-\frac{[n\tau^\beta_*]}{n}\right)\frac{1}{n\vartheta_\beta}
\sum_{i=1}^{[n\tau^\beta_*]}
1_d^\TT a_{i-1}^{-1}(\alpha^*)\partial_\beta b_{i-1}(\beta_0)(\beta_1^*-\beta_0)
\\
&\qquad
-\frac{[n\tau^\beta_*]}{n}\frac{1}{n\vartheta_\beta}
\sum_{i=[n\tau^\beta_*]+1}^n
1_d^\TT a_{i-1}^{-1}(\alpha^*)\partial_\beta b_{i-1}(\beta_0)(\beta_2^*-\beta_0)\\
&\qquad+
\left(1-\frac{[n\tau^\beta_*]}{n}\right)\frac{1}{n\vartheta_\beta}
\sum_{i=1}^{[n\tau^\beta_*]}
1_d^\TT a_{i-1}^{-1}(\alpha^*)
\int_0^1(1-u)\partial_\beta^2 b_{i-1}(\beta_0+u(\beta_1^*-\beta_0))\dd u
\otimes(\beta_1^*-\beta_0)^{\otimes2}\\
&\qquad-
\frac{[n\tau^\beta_*]}{n}\frac{1}{n\vartheta_\beta}
\sum_{i=[n\tau^\beta_*]+1}^n
1_d^\TT a_{i-1}^{-1}(\alpha^*)
\int_0^1(1-u)\partial_\beta^2 b_{i-1}(\beta_0+u(\beta_2^*-\beta_0))\dd u
\otimes(\beta_2^*-\beta_0)^{\otimes2}
+o_p(1)\\
&=
-\left(1-\frac{[n\tau^\beta_*]}{n}\right)\frac{1}{n\vartheta_\beta}
\sum_{i=1}^{[n\tau^\beta_*]}
1_d^\TT a_{i-1}^{-1}(\alpha^*)
\biggl(
\partial_\beta b_{i-1}(\beta_0)(\bar\beta^*-\beta_0)
\\
&\qquad\qquad
+\int_0^1(1-u)\partial_\beta^2 b_{i-1}(\beta_0+u(\bar\beta^*-\beta_0))\dd u
\otimes(\bar\beta^*-\beta_0)^{\otimes2}
\biggr)
\\
&\qquad+\frac{[n\tau^\beta_*]}{n}\frac{1}{n\vartheta_\beta}
\sum_{i=[n\tau^\beta_*]+1}^n
1_d^\TT a_{i-1}^{-1}(\alpha^*)
\biggl(
\partial_\beta b_{i-1}(\beta_0)(\bar\beta^*-\beta_0)
\\
&\qquad\qquad
+\int_0^1(1-u)\partial_\beta^2 b_{i-1}(\beta_0+u(\bar\beta^*-\beta_0))\dd u
\otimes(\bar\beta^*-\beta_0)^{\otimes2}
\biggr)
\\
&\qquad+
\left(1-\frac{[n\tau^\beta_*]}{n}\right)\frac{1}{n}
\sum_{i=1}^{[n\tau^\beta_*]}
1_d^\TT a_{i-1}^{-1}(\alpha^*)\partial_\beta b_{i-1}(\beta_0)
\vartheta_\beta^{-1}(\beta_1^*-\beta_0)\\
&\qquad-\frac{[n\tau^\beta_*]}{n}\frac{1}{n}
\sum_{i=[n\tau^\beta_*]+1}^n
1_d^\TT a_{i-1}^{-1}(\alpha^*)\partial_\beta b_{i-1}(\beta_0)
\vartheta_\beta^{-1}(\beta_2^*-\beta_0)
+o_p(1)\\
&=
-\left(1-\frac{[n\tau^\beta_*]}{n}\right)\frac{1}{n}
\sum_{i=1}^{[n\tau^\beta_*]}
1_d^\TT a_{i-1}^{-1}(\alpha^*)
\partial_\beta b_{i-1}(\beta_0)\vartheta_\beta^{-1}(\bar\beta^*-\beta_0)
\\
&\qquad+\frac{[n\tau^\beta_*]}{n}\frac{1}{n}
\sum_{i=[n\tau^\beta_*]+1}^n
1_d^\TT a_{i-1}^{-1}(\alpha^*)
\partial_\beta b_{i-1}(\beta_0)\vartheta_\beta^{-1}(\bar\beta^*-\beta_0)
\\
&\qquad+
\left(1-\frac{[n\tau^\beta_*]}{n}\right)\frac{1}{n}
\sum_{i=1}^{[n\tau^\beta_*]}
1_d^\TT a_{i-1}^{-1}(\alpha^*)\partial_\beta b_{i-1}(\beta_0)
\vartheta_\beta^{-1}(\beta_1^*-\beta_0)\\
&\qquad-\frac{[n\tau^\beta_*]}{n}\frac{1}{n}
\sum_{i=[n\tau^\beta_*]+1}^n
1_d^\TT a_{i-1}^{-1}(\alpha^*)\partial_\beta b_{i-1}(\beta_0)
\vartheta_\beta^{-1}(\beta_2^*-\beta_0)
+o_p(1)\\
&\pto
\tau^\beta_*(1-\tau^\beta_*)
\int_{\mathbb R^d}
1_d^\TT a^{-1}(x,\alpha^*)\partial_\beta b(x,\beta_0)
\dd\mu_{(\alpha^*,\beta_0)}(x)(d_1-d_2),
\end{align*}
and
\begin{align*}
&\left(1-\frac{[n\tau^\beta_*]}{n}\right)^2\frac{1}{(T\vartheta_\beta)^2}
\sum_{i=1}^{[n\tau^\beta_*]}
\EE_{\beta_1^*}[\xi_i^2|\GG]
-\left(\frac{[n\tau^\beta_*]}{n}\right)^2\frac{1}{(T\vartheta_\beta)^2}
\sum_{i=[n\tau^*]+1}^n
\EE_{\beta_2^*}[\xi_i^2|\GG]\\
&=
\frac{1}{T\vartheta_\beta^2}
\left(
\left(1-\frac{[n\tau^\beta_*]}{n}\right)^2\frac{1}{T}
\sum_{i=1}^{[n\tau^\beta_*]}
\EE_{\beta_1^*}[\xi_i^2|\GG]
-\left(\frac{[n\tau^\beta_*]}{n}\right)^2\frac{1}{T}\sum_{i=[n\tau^*]+1}^n
\EE_{\beta_2^*}[\xi_i^2|\GG]
\right)\\
&\pto 0.
\end{align*}
Therefore, from Lemma 9 of Genon-Catalot and Jacod (1993), we see
\begin{align*}
\frac{1}{T\vartheta_\beta}\sum_{i=1}^{[n\tau^\beta_*]}\xi_i
-\frac{[n\tau^\beta_*]}{n}\frac{1}{T\vartheta_\beta}\sum_{i=1}^n\xi_i
\pto
\tau^\beta_*(1-\tau^\beta_*)
\int_{\mathbb R^d}
1_d^\TT a^{-1}(x,\alpha^*)\partial_\beta b(x,\beta_0)
\dd\mu_{\beta_0}(x)(d_1-d_2).
\end{align*}
Hence, from \eqref{eq7.799} and this, we obtain
\begin{align*}
\frac{1}{T\vartheta_\beta}\sum_{i=1}^{[n\tau^\beta_*]}\hat\xi_i
-\frac{[n\tau^\beta_*]}{n}\frac{1}{T\vartheta_\beta}\sum_{i=1}^n\hat\xi_i
\pto
\tau^\beta_*(1-\tau^\beta_*)
\int_{\mathbb R^d}
1_d^\TT a^{-1}(x,\alpha^*)\partial_\beta b(x,\beta_0)
\dd\mu_{\beta_0}(x)(d_1-d_2)
\neq0,
\end{align*}
which implies $\mathcal T_{1,n}^\beta\lto\infty$ 
from \eqref{eq7.790} and $T\Deb^2\lto\infty$,
that is,  
$P(\mathcal T_{1,n}^\beta>w_1(\epsilon))\lto1$.
\end{proof}


\begin{proof}[\bf{Proof of Proposition \ref{prop5}}]
We see
\begin{align}
{\mathcal T}_{2,n}^\beta
&=\frac{1}{\sqrt{T}}\max_{1\le k\le n}
\left\|
\mathcal I_n^{-1/2}
\left(
\sum_{i=1}^k\hat\zeta_i-\frac{k}{n}\sum_{i=1}^n\hat\zeta_i
\right)
\right\|
\nonumber
\\
&\ge
\frac{1}{\sqrt{T}}
\left\|
\mathcal I_n^{-1/2}
\left(
\sum_{i=1}^{[n\tau^\beta_*]}\hat\zeta_i
-\frac{[n\tau^\beta_*]}{n}\sum_{i=1}^n\hat\zeta_i
\right)
\right\|
\nonumber
\\
&=
\sqrt{T\vartheta_\beta^2}
\left\|
\mathcal I_n^{-1/2}
\left(
\frac{1}{T\vartheta_\beta}\sum_{i=1}^{[n\tau^\beta_*]}\hat\zeta_i
-\frac{[n\tau^\beta_*]}{n}\frac{1}{T\vartheta_\beta}\sum_{i=1}^n\hat\zeta_i
\right)
\right\|.
\label{eq7.890}
\end{align}
Now we can express
\begin{align*}
\hat\zeta_i
&=\partial_{\beta}b_{i-1}(\hat\beta)^\TT A_{i-1}^{-1}(\hat\alpha)
(\DeX-hb_{i-1}(\hat\beta))\\
&=\partial_{\beta}b_{i-1}(\hat\beta)^\TT A_{i-1}^{-1}(\alpha^*)
(\DeX-hb_{i-1}(\hat\beta))\\
&\qquad+\int_0^1\left.\partial_\alpha
\left(
\partial_{\beta}b_{i-1}(\hat\beta)^\TT A_{i-1}^{-1}(\alpha)
(\DeX-hb_{i-1}(\hat\beta))\right)
\right|_{\alpha=\alpha^*+u(\hat\alpha-\alpha^*)}
\dd u
(\hat\alpha-\alpha^*)\\
&=\partial_{\beta}b_{i-1}(\hat\beta)^\TT A_{i-1}^{-1}(\alpha^*)
(\DeX-hb_{i-1}(\bar\beta^*))
-h\partial_{\beta}b_{i-1}(\hat\beta)^\TT A_{i-1}^{-1}(\alpha^*)
(b_{i-1}(\hat\beta)-b_{i-1}(\bar\beta^*))\\
&\qquad+\int_0^1\left.\partial_\alpha
\left(
\partial_{\beta}b_{i-1}(\hat\beta)^\TT A_{i-1}^{-1}(\alpha)(\DeX-hb_{i-1}(\hat\beta))
\right)
\right|_{\alpha=\alpha^*+u(\hat\alpha-\alpha^*)}
\dd u
(\hat\alpha-\alpha^*)\\
&=
\partial_{\beta}b_{i-1}(\beta^*)^\TT A_{i-1}^{-1}(\alpha^*)
(\DeX-hb_{i-1}(\bar\beta^*))
\\
&\qquad+\sum_{j=1}^{M-1}\partial_{\beta}^j
\left.
\Bigl(
\partial_{\beta}b_{i-1}(\beta)^\TT A_{i-1}^{-1}(\alpha^*)
(\DeX-hb_{i-1}(\bar\beta^*))
\Bigr)
\right|_{\beta=\bar\beta^*}
\otimes(\hat\beta-\bar\beta^*)^{\otimes j}
\\
&\qquad+
\int_0^1
\partial_{\beta}^M
\left.
\Bigl(
\partial_{\beta}b_{i-1}(\beta)^\TT A_{i-1}^{-1}(\alpha^*)
(\DeX-hb_{i-1}(\bar\beta^*))
\Bigr)
\right|_{\beta=\bar\beta^*+u(\hat\beta-\bar\beta^*)}
\otimes(\hat\beta-\bar\beta^*)^{\otimes M}
\\
&\qquad-
h\int_0^1\partial_{\beta}b_{i-1}(\hat\beta)^\TT A_{i-1}^{-1}(\alpha^*)
\partial_{\beta} b_{i-1}(\bar\beta^*+u(\hat\beta-\bar\beta^*))\dd u
(\hat\beta-\bar\beta^*)\\
&\qquad+\int_0^1\left.\partial_\alpha
\left(
\partial_{\beta}b_{i-1}(\hat\beta)^\TT A_{i-1}^{-1}(\alpha)(\DeX-hb_{i-1}(\hat\beta))
\right)
\right|_{\alpha=\alpha^*+u(\hat\alpha-\alpha^*)}
\dd u
(\hat\alpha-\alpha^*)\\
&=:\zeta_i
+\sum_{j=1}^{M-1}\partial_{\beta}^j
\left.
\Bigl(
\partial_{\beta}b_{i-1}(\beta)^\TT A_{i-1}^{-1}(\alpha^*)
(\DeX-hb_{i-1}(\bar\beta^*))
\Bigr)
\right|_{\beta=\bar\beta^*}
\otimes(\hat\beta-\bar\beta^*)^{\otimes j}
\\
&\qquad+
\int_0^1
\partial_{\beta}^M
\left.
\Bigl(
\partial_{\beta}b_{i-1}(\beta)^\TT A_{i-1}^{-1}(\alpha^*)
(\DeX-hb_{i-1}(\bar\beta^*))
\Bigr)
\right|_{\beta=\bar\beta^*+u(\hat\beta-\bar\beta^*)}
\otimes(\hat\beta-\bar\beta^*)^{\otimes M}
\\
&\qquad-
h\int_0^1\partial_{\beta}b_{i-1}(\hat\beta)^\TT A_{i-1}^{-1}(\alpha^*)
\partial_{\beta} b_{i-1}(\bar\beta^*+u(\hat\beta-\bar\beta^*))\dd u
(\hat\beta-\bar\beta^*)\\
&\qquad+\int_0^1\left.\partial_\alpha
\left(
\partial_{\beta}b_{i-1}(\hat\beta)^\TT A_{i-1}^{-1}(\alpha)(\DeX-hb_{i-1}(\hat\beta))
\right)
\right|_{\alpha=\alpha^*+u(\hat\alpha-\alpha^*)}
\dd u
(\hat\alpha-\alpha^*).
\end{align*}
Thus, we have
\begin{align}
\frac{1}{T\Deb}\sum_{i=1}^{[n\tau^\beta_*]}\hat\zeta_i
-\frac{[n\tau^\beta_*]}{n}\frac{1}{T\Deb}\sum_{i=1}^n\hat\zeta_i
&=
\frac{1}{T\Deb}\sum_{i=1}^{[n\tau^\beta_*]}\zeta_i
-\frac{[n\tau^\beta_*]}{n}\frac{1}{T\Deb}\sum_{i=1}^n\zeta_i
+o_p(1)
\nonumber
\\
&=
\left(1-\frac{[n\tau^\beta_*]}{n}\right)\frac{1}{T\Deb}
\sum_{i=1}^{[n\tau^\beta_*]}\zeta_i
-\frac{[n\tau^\beta_*]}{n}\frac{1}{T\Deb}\sum_{i=[n\tau^\beta_*]+1}^n\zeta_i
+o_p(1),
\label{eq7.899}
\end{align}
\begin{align*}
&\left(1-\frac{[n\tau^\beta_*]}{n}\right)\frac{1}{T\Deb}
\sum_{i=1}^{[n\tau^\beta_*]}
\EE_{\beta_1^*}[\zeta_i|\GG]
-\frac{[n\tau^\beta_*]}{n}\frac{1}{T\Deb}
\sum_{i=[n\tau^\beta_*]+1}^n
\EE_{\beta_2^*}[\zeta_i|\GG]\\
&=
\left(1-\frac{[n\tau^\beta_*]}{n}\right)\frac{1}{T\Deb}
\sum_{i=1}^{[n\tau^\beta_*]}
h\partial_{\beta}b_{i-1}(\bar\beta^*)^\TT A_{i-1}^{-1}(\alpha^*)
(b_{i-1}(\beta_1^*)-b_{i-1}(\bar\beta^*))\\
&\qquad
-\frac{[n\tau^\beta_*]}{n}\frac{1}{T\vartheta_\beta}
\sum_{i=[n\tau^\beta_*]+1}^n
h\partial_{\beta}b_{i-1}(\bar\beta^*)^\TT A_{i-1}^{-1}(\alpha^*)
(b_{i-1}(\beta_2^*)-b_{i-1}(\bar\beta^*))
+O_p(h/\vartheta_\beta)\\
&=
\left(1-\frac{[n\tau^\beta_*]}{n}\right)\frac{1}{n\vartheta_\beta}
\sum_{i=1}^{[n\tau^\beta_*]}
\partial_{\beta}b_{i-1}(\bar\beta^*)^\TT A_{i-1}^{-1}(\alpha^*)
(b_{i-1}(\beta_0)-b_{i-1}(\bar\beta^*))\\
&\qquad-
\frac{[n\tau^\beta_*]}{n}\frac{1}{n\vartheta_\beta}
\sum_{i=[n\tau^\beta_*]+1}^n
\partial_{\beta}b_{i-1}(\bar\beta^*)^\TT A_{i-1}^{-1}(\alpha^*)
(b_{i-1}(\beta_0)-b_{i-1}(\bar\beta^*))\\
&\qquad+
\left(1-\frac{[n\tau^\beta_*]}{n}\right)\frac{1}{n\vartheta_\beta}
\sum_{i=1}^{[n\tau^\beta_*]}
\partial_{\beta}b_{i-1}(\bar\beta^*)^\TT A_{i-1}^{-1}(\alpha^*)
\partial_\beta b_{i-1}(\beta_0)(\beta_1^*-\beta_0)\\
&\qquad-
\frac{[n\tau^\beta_*]}{n}\frac{1}{n\vartheta_\beta}
\sum_{i=[n\tau^\beta_*]+1}^n
\partial_{\beta}b_{i-1}(\bar\beta^*)^\TT A_{i-1}^{-1}(\alpha^*)
\partial_\beta b_{i-1}(\beta_0)
(\beta_2^*-\beta_0)\\
&\qquad+
\left(1-\frac{[n\tau^\beta_*]}{n}\right)\frac{1}{n\vartheta_\beta}
\sum_{i=1}^{[n\tau^\beta_*]}
\partial_{\beta}b_{i-1}(\bar\beta^*)^\TT A_{i-1}^{-1}(\alpha^*)
\int_0^1(1-u)\partial_\beta^2 b_{i-1}(\beta_0+u(\beta_1^*-\beta_0))\dd u
\otimes(\beta_1^*-\beta_0)^{\otimes2}\\
&\qquad-
\frac{[n\tau^\beta_*]}{n}\frac{1}{n\vartheta_\beta}
\sum_{i=[n\tau^\beta_*]+1}^n
\partial_{\beta}b_{i-1}(\bar\beta^*)^\TT A_{i-1}^{-1}(\alpha^*)
\int_0^1(1-u)\partial_\beta^2 b_{i-1}(\beta_0+u(\beta_2^*-\beta_0))\dd u
\otimes(\beta_2^*-\beta_0)^{\otimes2}
+o_p(1)\\
&=
-\left(1-\frac{[n\tau^\beta_*]}{n}\right)\frac{1}{n\vartheta_\beta}
\sum_{i=1}^{[n\tau^\beta_*]}
\partial_{\beta}b_{i-1}(\bar\beta^*)^\TT A_{i-1}^{-1}(\alpha^*)
\biggl(
\partial_\beta b_{i-1}(\beta_0)(\bar\beta^*-\beta_0)
\\
&\qquad\qquad+
\int_0^1(1-u)\partial_\beta^2 b_{i-1}(\beta_0+u(\bar\beta^*-\beta_0))\dd u
\otimes(\bar\beta^*-\beta_0)^{\otimes2}
\biggr)
\\
&\quad+\frac{[n\tau^\beta_*]}{n}\frac{1}{n\vartheta_\beta}
\sum_{i=[n\tau^\beta_*]+1}^n
\partial_{\beta}b_{i-1}(\bar\beta^*)^\TT A_{i-1}^{-1}(\alpha^*)
\biggl(
\partial_\beta b_{i-1}(\beta_0)(\bar\beta^*-\beta_0)
\\
&\qquad\qquad+
\int_0^1(1-u)\partial_\beta^2 b_{i-1}(\beta_0+u(\bar\beta^*-\beta_0))\dd u
\otimes(\bar\beta^*-\beta_0)^{\otimes2}
\biggr)
\\
&\quad+
\left(1-\frac{[n\tau^\beta_*]}{n}\right)\frac{1}{n}
\sum_{i=1}^{[n\tau^\beta_*]}
\partial_{\beta}b_{i-1}(\bar\beta^*)^\TT A_{i-1}^{-1}(\alpha^*)
\partial_\beta b_{i-1}(\beta_0)
\vartheta_\beta^{-1}(\beta_1^*-\beta_0)\\
&\quad-\frac{[n\tau^\beta_*]}{n}\frac{1}{n}
\sum_{i=[n\tau^\beta_*]+1}^n
\partial_{\beta}b_{i-1}(\bar\beta^*)^\TT A_{i-1}^{-1}(\alpha^*)
\partial_\beta b_{i-1}(\beta_0)
\vartheta_\beta^{-1}(\beta_2^*-\beta_0)
+o_p(1)\\
&=
-\left(1-\frac{[n\tau^\beta_*]}{n}\right)\frac{1}{n}
\sum_{i=1}^{[n\tau^\beta_*]}
\partial_{\beta}b_{i-1}(\bar\beta^*)^\TT A_{i-1}^{-1}(\alpha^*)
\partial_\beta b_{i-1}(\beta_0)\vartheta_\beta^{-1}(\bar\beta^*-\beta_0)
\\
&\quad+\frac{[n\tau^\beta_*]}{n}\frac{1}{n}
\sum_{i=[n\tau^\beta_*]+1}^n
\partial_{\beta}b_{i-1}(\bar\beta^*)^\TT A_{i-1}^{-1}(\alpha^*)
\partial_\beta b_{i-1}(\beta_0)\vartheta_\beta^{-1}(\bar\beta^*-\beta_0)
\\
&\quad+
\left(1-\frac{[n\tau^\beta_*]}{n}\right)\frac{1}{n}
\sum_{i=1}^{[n\tau^\beta_*]}
\partial_{\beta}b_{i-1}(\bar\beta^*)^\TT A_{i-1}^{-1}(\alpha^*)
\partial_\beta b_{i-1}(\beta_0)
\vartheta_\beta^{-1}(\beta_1^*-\beta_0)\\
&\quad-\frac{[n\tau^\beta_*]}{n}\frac{1}{n}
\sum_{i=[n\tau^\beta_*]+1}^n
\partial_{\beta}b_{i-1}(\bar\beta^*)^\TT A_{i-1}^{-1}(\alpha^*)
\partial_\beta b_{i-1}(\beta_0)
\vartheta_\beta^{-1}(\beta_2^*-\beta_0)
+o_p(1)\\
&\pto
\tau^\beta_*(1-\tau^\beta_*)
\int_{\mathbb R^d}
\partial_{\beta}b(x,\beta_0)^\TT A^{-1}(x,\alpha^*)
\partial_\beta b(x,\beta_0)
\dd\mu_{\beta_0}(x)(d_1-d_2),
\end{align*}
and
\begin{align*}
&\left(1-\frac{[n\tau^\beta_*]}{n}\right)^2\frac{1}{(T\vartheta_\beta)^2}
\sum_{i=1}^{[n\tau^\beta_*]}
\EE_{\beta_1^*}[\zeta_i^2|\GG]
-\left(\frac{[n\tau^\beta_*]}{n}\right)^2\frac{1}{(T\vartheta_\beta)^2}
\sum_{i=[n\tau^*]+1}^n
\EE_{\beta_2^*}[\zeta_i^2|\GG]\\
&=
\frac{1}{T\vartheta_\beta^2}
\left(
\left(1-\frac{[n\tau^\beta_*]}{n}\right)^2\frac{1}{T}
\sum_{i=1}^{[n\tau^\beta_*]}
\EE_{\beta_1^*}[\zeta_i^2|\GG]
-\left(\frac{[n\tau^\beta_*]}{n}\right)^2\frac{1}{T}\sum_{i=[n\tau^*]+1}^n
\EE_{\beta_2^*}[\zeta_i^2|\GG]
\right)\\
&\pto 0.
\end{align*}
Therefore, from Lemma 9 of Genon-Catalot and Jacod (1993), we see
\begin{align*}
\frac{1}{T\vartheta_\beta}\sum_{i=1}^{[n\tau^\beta_*]}\zeta_i
-\frac{[n\tau^\beta_*]}{n}\frac{1}{T\vartheta_\beta}\sum_{i=1}^n\zeta_i
\pto
\tau^\beta_*(1-\tau^\beta_*)
\int_{\mathbb R^d}
\partial_{\beta}b(x,\beta_0)^\TT A^{-1}(x,\alpha^*)
\partial_\beta b(x,\beta_0)
\dd\mu_{\beta_0}(x)(d_1-d_2)\end{align*}
Hence, from \eqref{eq7.899} and this, we obtain
\begin{align*}
\frac{1}{T\vartheta_\beta}\sum_{i=1}^{[n\tau^\beta_*]}\hat\zeta_i
-\frac{[n\tau^\beta_*]}{n}\frac{1}{T\vartheta_\beta}\sum_{i=1}^n\hat\zeta_i
\pto
\tau^\beta_*(1-\tau^\beta_*)
\int_{\mathbb R^d}
\partial_{\beta}b(x,\beta_0)^\TT A^{-1}(x,\alpha^*)
\partial_\beta b(x,\beta_0)
\dd\mu_{\beta_0}(x)(d_1-d_2)\neq0,
\end{align*}
which implies $\mathcal T_{2,n}^\beta\lto\infty$ 
from \eqref{eq7.890} and $T\Deb^2\lto\infty$,
that is,  
$P(\mathcal T_{2,n}^\beta>w_q(\epsilon))\lto1$.
\end{proof}


\section*{acknowledgements}
This work was partially supported by JST CREST Grant Number JPMJCR14D7 
and JSPS KAKENHI Grant Number JP17H01100.



\end{document}